\newtheorem{theorem}{Theorem}[section]
\newtheorem{proposition}[theorem]{Proposition}
\newtheorem{lemma}[theorem]{Lemma}
\newtheorem{definition}[theorem]{Definition}
\newtheorem{remark}[theorem]{Remark} 
\numberwithin{equation}{section} 
\numberwithin{figure}{section}  
\newcommand \barM {\overline M}
\newcommand \barU {\overline U}
\newcommand \underM {\underline M}
\newcommand \tildeV {\widetilde V}
\newcommand \tildeU {\widetilde U}
\newcommand \tildeM {\widetilde M}
\newcommand \trianglerightNEW \triangleright 
\newcommand \hatv {\widehat v}
\newcommand \barr {\overline r}
\newcommand \underr {\underline r} 
\newcommand \sgn {\text{sgn}} 
\newcommand \Uscr {\mathscr U}
\newcommand \rhot {\widetilde \rho}
\newcommand \velt {\widetilde v}
\newcommand \tilder {\widetilde r}
\newcommand \Mscr {\mathscr M}
\newcommand \Omegat {\widetilde \Omega}
\newcommand \auth {\textsc} 
\newcommand \str {\mathcal E}
\newcommand \bei {\begin{itemize}}
\newcommand \eei {\end{itemize}}
\newcommand \be {\begin{equation}}
\newcommand \bel {\begin{equation}\label}
\newcommand \ee {\end{equation}}
\newcommand \del \partial
\newcommand \RR {\mathbb R}
\newcommand \eps \epsilon 
\let\oldmarginpar\marginpar
\renewcommand\marginpar[1]{\-\oldmarginpar[\raggedleft\footnotesize #1]%
{\raggedright\footnotesize #1}}
\begin{document}

\title{\bf \Large Weakly regular fluid flows with bounded variation
\\
on a Schwarzschild background} 
\author{Philippe G. LeFloch\footnote{
\normalsize Laboratoire Jacques-Louis Lions, Centre National de la Recherche Scientifique, Universit\'e Pierre et Marie Curie (Paris VI), 
4, Place Jussieu, 75252 Paris, France. 
\newline
E-mail : {\sl contact@philippelefloch.org, xiang@ljll.math.upmc.fr}}
\, 
and Shuyang Xiang$^*$}

\date{July 2015}

\maketitle

\abstract{We study the global dynamics of isothermal fluids evolving in the domain of outer communication of a Schwarzschild black hole. We first formulate the initial value problem within a class of weak solutions with bounded variation (BV), possibly containing shock waves. We then introduce a version of the random choice method and establish a global-in-time existence theory for the initial value problem within the proposed class of weakly regular fluid flows. The initial data may have arbitrary large bounded variation and can possibly blow up near the horizon of the black hole. Furthermore, we study the class of possibly discontinuous, equilibrium solutions and design a version of the random choice method in which these fluid equilibria are exactly preserved. This leads us to a nonlinear stability property for fluid equilibria under small perturbations with bounded variation. Furthermore, we can also encompass several limiting regimes (stiff matter, non-relativistic flows, extremal black hole) by letting the physical parameters (mass of the black hole, light speed, sound speed) reach extremal values. 
}

\setcounter{tocdepth}{4}

\tableofcontents 


\section{Introduction} 

We are interested in compressible fluids evolving on a curved background and, specifically, on the domain of outer communication of a Schwarzschild black hole spacetime. The fluid flows under consideration may contain shock waves and we must work within a class of weak solutions to the Euler equations. Our main result in this paper is a global-in-time existence theory for the initial value problem, when the fluid data are prescribed on a spacelike hypersurface. We also establish the nonlinear stability of equilibrium fluid solutions and investigate various limiting regimes when the light speed denoted by $c \in (0,+\infty)$, the (constant) sound speed denoted by $k \in [0,+\infty)$, and the mass of the back hole denoted by $M \in [0, +\infty)$ reach extremal values. 

Recall that Schwarzschild spacetime is a spherically symmetric\footnote{that is, invariant under the group of rotations $SO(3)$}  solution to the vacuum Einstein equations of general relativity, and describes a massive body surrounded by a vacuum region. It is one of the simplest non-flat solution to the Einstein equations, but yet the analysis of (linear and) nonlinear waves propagating on this spacetime 
is very challenging and has attracted a lot of attention by mathematicians in recent years. The present paper is part of a program initiated by the first author on the Cauchy problem for the Einstein-Euler equations \cite{BLSS,BL,GL,PLFbook,LR,LS2,LS3}. 

In the so-called Schwarzschild coordinates $t\geq 0$ and $r \in (2M, +\infty)$, the domain of outer communication of Schwarz\-schild spacetime is described by the metric 
\bel{Scharz} 
g = - \Bigg(1 - {2 M \over r} \Bigg) c^2 \, dt^2 + \Bigg(1 - {2 M \over r} \Bigg)^{-1} dr^2 + r^2 \, g_{S^2},
 \qquad r>2M, 
\ee
in which $g_{S^2}:=d\theta^2+(\sin \theta)^2 \, d\varphi^2$ is the canonical metric on the two-sphere $S^2$, 
with $\theta \in [0, 2\pi)$ and $\varphi \in [0, \pi]$. Observe that the metric coefficients are singular as $r \to 2M$, but this boundary is not a genuine singularity of the spacetime and the coefficients would become regular at $r=2M$ by suitably changing coordinates and the metric could be extended beyond this boundary. The boundary $r=2M$ is the horizon of the black hole, and it is natural to study the dynamics of nonlinear waves outside the black hole region. 

The Levi-Civita connection associated with \eqref{Scharz} being denoted by $\nabla$, the Euler equations for a perfect compressible fluid on this spacetime read
\bel{re-Euler}
\nabla_\alpha \big( T^\alpha_ \beta(\rho,u) \big) = 0,
\ee
in which the energy-momentum tensor 
\bel{tensor-form}
T^\alpha_\beta (\rho,u) = \rho  c^2 u^\alpha u_\beta + p(\rho) \Big( g^\alpha_\beta+ u^\alpha u_\beta\Big)
\ee
(with $c> 0$ denoting the speed of light) 
depends on the mass-energy density of the fluid $\rho: M \mapsto (0, +\infty)$ 
and its velocity field $u= (u^\alpha)$, normalized  to be unit and future oriented: 
\be
u^\alpha u_\alpha=-1, \qquad u^0 > 0. 
\ee 
The pressure $p$ is prescribed as a function  $p=p(\rho)$ of the mass energy density and, for the sake of simplicity, we assume that the fluid flow is isothermal, that is, $p(\rho) = k^2 \rho$ where $k \in (0, c)$ represents the speed of sound. We use here standard notation for the metric $g= (g_{\alpha\beta})$ and its inverse $g^{-1} = (g^{\alpha\beta})$ in an arbitrary local coordinate system 
$x= (x^\alpha)$, where the Greek indices describe $0,1,2,3$. 
We raise and lower indices by using this metric and, for instance, we write $u_\alpha = g_{\alpha\beta} u^\beta$ and we have 
$g^\alpha_\beta = \delta^\alpha_\beta$ (the Kronecker symbol).

The content of this paper is as follows. In Section~\ref{sec:2}, we formulate the Euler equations in our context and establish hyperbolicity and genuine nonlinearity properties. In Section~\ref{sec:3}, we formally derive several simpler models, arising when the light speed 
sound speed and/or black hole mass approach extremal values.  Our model takes the form of a nonlinear hyperbolic system of balance laws. such systems were first investigated (for rather different applications) by Dafermos and Hsiao \cite{DH}, Liu \cite{Liu2} and, later, \cite{Glimm-marshal,HL,Dafermos}; see also Dafermos \cite{Dafermos} the references cited therein. We also refer to \cite{BL,Christodoulou1,GroahTemple} for the related problem of self-gravitating fluids in spherical symmetry. 

A systematic study of the class of steady state solutions to the Euler model under consideration is one of the main contribution of the present paper. In Section~\ref{sec:4}, we first study the non-relativistic model, by taking into account the effect of the mass of the black hole. Next, in Section~\ref{sec:5}, we treat the full Euler model on a Schwarzschild background and, in particular, we establish that (smooth) steady state solutions are defined on intervals of the form $(2M, \underr_*)$ or $(\barr_*, +\infty)$. 

Our next task is to study the Riemann problem which is solved in Section~\ref{sec:6}, while the generalized Riemann problem based on prescribing two steady state solutions (rather than constant states) separated by a jump discontinuity is investigated in Section~\ref{sec:7}. 

In Section~\ref{sec:8}, we are then in a position to establish an existence theory for general flows of isothermal fluids evolving in the domain of outer communication \eqref{Scharz}. The technique developed earlier in Grubic and LeFloch \cite{GL} (in a different geometric setup) applies and provides us with the desired global-in-time result. Recall that, according to Nishida \cite{Nishida} and Smoller and Temple \cite{SmollerTemple} who treated fluid flows in flat space, provided all curved geometrical effects are (formally) suppressed, a suitable notion of total variation is available and, specifically, the total variation of the log of the matter density is non-increasing in time. For the fluids on a Schwarzschild background under consideration in the preset paper, we also need to take geometrical terms into account and the total variation may grow, but yet is uniformly controlled on any compact interval of time. Furthermore, an analysis of the solutions near the horizon is also necessary and we observe that no boundary condition is required at $r=2M$ and that solutions need not have finite bounded variation near the horizon, as is the case for some steady state solutions.  

We also propose here a version of the random choice method which we design from piecewise equilibrium solutions and, in turn, preserves equilibria exactly. We then prove that equilibria are nonlinearly stable under small BV perturbations, and the proposed technique provides a possible approach in order to investigate the time-asymptotic behavior of weak solutions.   
Finally, in Section~\ref{sec:9}, we briefly consider the models obtained when the physical parameters take extremal values. Our total variation estimate is uniform with respect to these parameters, so that our main theorem has counterparts for these limiting systems. 


\section{The Euler equations on a Schwarzschild background}
\label{sec:2}

\subsection*{Derivation of the Euler equations} 

By using the subscripts $(0,1,2,3)$ to denote the coordinates $(t,r,\theta,\varphi)$, we can write 
\be
(g_{\alpha\beta}) = 
\left(
\begin{array}{cccc}
 -(1-2M/r)c^2 & 0 & 0 & 0 \\
0 & (1-2M/r)^{-1} & 0 &0 \\
 0 & 0 & r^2&0 \\
 0 & 0& 0& r^2(\sin \theta)^2 \, 
\end{array}
\right),
\ee
with inverse 
\be
(g^{\alpha\beta}) = 
\left(
\begin{array}{cccc}
 -(1-2M/r)^{-1} c^{-2}& 0 & 0 & 0 \\
0 & (1-2M/r) & 0 &0 \\
 0 & 0 & r^{-2}&0 \\
 0 & 0& 0& r^{-2}\sin^{-2}\theta
\end{array}
\right)
\ee
and, by using $\Gamma_{\alpha\beta}^\gamma := {1\over 2}g^{\gamma\theta}(\del_\alpha g_{\beta\theta}+\del_ \beta g_{\alpha\theta} -\del_\theta  g_{\alpha\beta})$, a tedious calculation shows that the non-vanishing Christoffel symbols are  
\bel{Christoffel}
\aligned
& \Gamma_{00}^1= {c^2 M\over r^2}(r-2M),
\qquad 
&&\Gamma_{11}^1= - {M\over r(r-2M)},
\qquad
&&&&\Gamma_{01}^0= {M\over r(r-2M)},
\\
& \Gamma_{12}^2= {1\over r},
\qquad
&&\Gamma_{22}^1=-(r-2M),
\qquad 
&&&&\Gamma_{13}^3= {1\over r},
\\
& \Gamma_{33}^1=-(r-2M)(\sin \theta)^2,
\qquad
&&\Gamma_{33}^2=-\sin\theta\cos\theta, 
\qquad
&&&&\Gamma_{23}^3= {\cos\theta\over \sin\theta}.
\endaligned
\ee

On the other hand, we can express the Euler equations \eqref{re-Euler} in the form 
$$
\aligned
\del_0 T^{0 \beta}+\del_jT^{j\beta}+\Gamma_{00}^0T^{0 \beta}
& +\Gamma_{j0}^jT^{\beta0}+\Gamma_{0j}^0T^{j\beta}+\Gamma_{jk}^jT^{k\beta}
  +\Gamma_{00}^\beta T^{00}+2\Gamma_{j0}^\beta T^{j0}+\Gamma_{jk}^\beta T^{jk} = 0
\endaligned
$$
and, in view of \eqref{Christoffel}, write the Euler equations on a Schwarzschild background as 
\bel{Euler0}
\aligned
&\del_0 \Big( r(r-2M)T^{00} \Big) + \del_1 \Big( r(r-2M)T^{01} \Big) = 0,
\\ 
&\del_0 \Big( r(r-2M)T^{01} \Big) + \del_1  \Big( r(r-2M)T^{11} \Big) = \Omega_1, 
\\
& \Omega_1 := 3MT^{11} - {c^2M \over r^2} (r-2M)^2T^{00} + r(r-2M)^2T^{22} + r(\sin \theta)^2 \, (r-2M)^2T^{33}.
\endaligned
\ee
Here, we have assumed that not only the background geometry but also the fluid flows are {\sl spherically symmetric}, so that the ``transverse'' components of the fluid velocity vanish: 
$
T^{02} =T^{03} = 0.
$
Next, recalling the expression \eqref{tensor-form} of the energy-momentum tensor, we find (with $=p(\rho)$) 
\bel{EulerC}  
\aligned
& \del_0 \Big(  r(r-2M) \big( pu^1u^1+(1-2M/r)^2c^4\rho u^0u^0 \big) \Big)
 + \del_1 \Big(  r(r-2M) (p+c^2\rho)u^0u^1 \Big) = 0,
\\
& \del_0 \Big(  r(r-2M) (p+c^2\rho)u^0u^1 \Big) 
  + \del_1 \Bigg(  r(r-2M) \Big( pu^0 u^0+(1-2M/r)^{-2}\rho u^1u^1\Big) \Bigg) 
= \Omega_1, 
\\
&\Omega_1 = 2r(r-2M)^2p + 3M \Big( pu^1 u^1+(1-2M/r)^2 c^4\rho u^0u^0 \Big)
\\
 & \hskip.8cm   - {c^2M \over r^2} (r-2M)^2 \Big( pu^0u^0+(1-2M/r)^{-2}\rho  u^1u^1\Big).
\endaligned
\ee
Observe that the `first' Euler equation admits a `conservative form', while the second one is a general `balance law'.

By definition, the velocity vector satisfies $(1-2M/r)c^2 u^0 u^0- (1-2M/r)^{-1}u^1 u^1=1$ and $u^0 > 0$, and we find it 
convenient to introduce the {\bf rescaled velocity vector} 
\bel{eq:def-v} 
v^0 := {u^0 \over \eps}, \qquad v^1 := {u^1 \over \eps}, \qquad \text{ with } \eps := {1\over c}. 
\ee
Hence, the components of the energy-momentum tensor read 
$$
 \aligned
& T^{00}  = (1-2M/r)^2 {1  \over  \eps^2} \rho v^0 v^0+  \eps^2 p v^1 v^1, \quad &&T^{01} = (\rho+ \eps^2 p )v^0 v^1, 
\\
& T^{11} = \eps^2 pv^0 v^0+ (1-2M/r)^{-2} \eps^2 \rho v^1 v^1, \quad  &&T^{22} =T^{33} =p.
\endaligned
$$
and the system \eqref{EulerC} takes the form:
\bel{EulerNEW}
\aligned
& \del_0 \Big(  r(r-2M) \big( (1-2M/r)^2 \rho v^0v^0+\eps^4pv^1v^1\big) \Big) + \del_1 \Big(  r(r-2M) \eps^2(\rho+ \eps^2p)v^0v^1 \Big) = 0,
\\
& \del_0 \Big(  r(r-2M) \Big( ( \rho+\eps^2p)v^0v^1 \Big) \Big) + \del_1 \Big(  r(r-2M) \Big(  \eps^2(pv^0 v^0+ (1-2M/r)^{-2} \rho v^1v^1) \Big) \Big) = \Omegat,
\\
& \Omegat := {3M\over  \eps^2} \Big( \eps^4pv^1 v^1+(1-2M/r)^2 \rho v^0v^0 \Big)
\\
& \qquad - {M \over r^2} (r-2M)^2 \Big( pv^0v^0+(1-2M/r)^{-2}\rho  v^1v^1\Big) +2r(r-2M)^2p, 
\endaligned
\ee
supplemented by the relation for the velocity vector
\bel{eq:395}
(1-2M/r) v^0 v^0- \eps^2 (1-2M/r)^{-1}v^1 v^1=1, \qquad v^0 > 0.
\ee
It is convenient also to introduce the {\bf scalar velocity} 
\bel{eq-def-scalV}
v:= {1 \over (1 - 2M/r)} {v^1 \over v^0}, 
\ee
leading us to 
$$
(v^0)^2 = {1 \over (1-\eps^2 v^2)(1-2M/r)},
\qquad (v^1)^2= (1-2M/r){v^2 \over 1- \eps^2 v^2}.
$$
In summary, we have shown that the {\bf Euler system on a Schwarzschild background} takes the form: 
\bel{Euler}
\aligned 
& \del_0 \Big( r^2 {\rho+ \eps^4 p v^2 \over 1 - \eps^2 v^2} \Big) +\del_1 \Big(r(r-2M) {\rho+\eps^2 p \over 1 - \eps^2 v^2} v\Big) = 0,
\\
& \del_0 \Big(r(r-2M) {\rho+\eps^2 p \over 1 - \eps^2 v^2}v \Big) + \del_1 \Big((r-2M)^2 {\rho v^2+ p \over 1 - \eps^2 v^2} \Big)
\\& =3M   \Big( 1 - {2M \over r} \Big) {\rho v^2+ p \over 1 - \eps^2 v^2}  -M {r-2M \over \eps^2 r} {\rho+ \eps^4 p v^2 \over 1 - \eps^2 v^2}+2{(r-2M)^2 \over r}p. 
\endaligned 
\ee

\begin{remark}
\label{Minkowski}
1. 
In the limit $M\to0$, the Schwarzschild metric converges to the Minkowski metric in radial coordinates 
\bel{eq-Mino} 
g = - c^2 \, dt^2 + dr^2 + r^2 \, g_{S^2}
\ee
and from \eqref{Euler} we deduce the radially-symmetric Euler equations in Minkowski space:  
\bel{inMinkowski}
\aligned
& \del_0 \Big(  r^2 (\rho v^0v^0+\eps^4pv^1v^1) \Big) + \del_1 \Big(  r^2 \eps^2(\rho+p \eps^2)v^0v^1 \Big) = 0,
\\
& \del_0 \Big( r^2(\rho+\eps^2p)v^0v^1 \Big) + \del_1  \Big(  r^2 \eps^2(pv^0 v^0+ \rho v^1v^1) \Big) = 2 r \, p, 
\endaligned
\ee
with $v^0 v^0- \eps^2 v^1 v^1=1$ and $v^0 > 0$ and $p=p(\rho)$. 

2. In the singular limit $v \to \pm 1/\eps$, the (unit) velocity vector $v=(v^0, v^1)$ converges (after normalization!) to a null vector, namely:  
$$
(1-\eps^2 v^2)^{1/2}(1-2M/r)^{1/2} (v^0, v^1 )  =
(1, (1-2M/r) v ) \to (1, \pm (1-2M/r)/\eps).
$$

\end{remark}


\subsection*{Hyperbolicity and genuine nonlinearity properties}

Throughout the rest of this section, we regard \eqref{Euler} as a {\bf system of nonlinear balance laws}, that is, 
\be
\del_0 U + \del_1 F(U,r) = S(U,r)
\ee
(with obvious notation) and we study the homogeneous part  $\del_0 U+ \del_1 F(U,r_0) = 0$, where the expressions $F$ and $S$ are evaluated at some fixed $r_0 >2M$. We determine necessary and sufficient conditions ensuring the hyperbolicity and genuine nonlinearity properties for \eqref{Euler}. We are going to rewrite the homogeneous part of \eqref{EulerNEW} in the diagonal form (with the source-terms suppressed)
\be
\del_0 w + \lambda(w,z, r_0) \del_1 w = 0, \qquad
\del_0 z + \mu(w,z, r_0) \del_1 z = 0 
\ee
for a suitable choice of functions $w =w(\rho,v)$ and $z =z(\rho,v)$, refered to as the {\bf Riemann invariants,}
and $\lambda=\lambda(\rho,v, r_0)$ and $\mu= \mu(\rho,v,r_0)$, refered to as the {\bf wave speeds}.

\begin{lemma}
\label{Riemann-inva} 
For the Euler system on a Schwarzschild background \eqref{Euler}, a choice of Riemann invariants is
\bel{Riemannin}
\aligned
w = {1 \over 2\eps}\ln \Bigg({1 + \eps v\over 1 -\eps v} \Bigg) + \int_1^\rho {\sqrt{p'(s)}\over s+\eps^2 p(s)}ds,
\qquad
z = {1 \over 2\eps}\ln \Bigg({1 +\eps v\over 1 -\eps v} \Bigg) - \int_1^\rho {\sqrt{p'(s)}\over s+\eps^2 p(s)}ds, 
\endaligned
\ee
while the corresponding eigenvalues read 
\bel{eignvalue}
\aligned
& \lambda := \Bigg(1- {2M\over r_0} \Bigg){v-\sqrt {p'(\rho)}\over{1-\eps^2 \sqrt{p'(\rho)}v}},
\qquad
 \mu := \Bigg(1- {2M\over r_0} \Bigg){v+\sqrt {p'(\rho)} \over{1 + \eps^2  \sqrt{p'(\rho)}v}}. 
\endaligned
\ee 
\end{lemma}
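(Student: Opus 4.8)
The plan is to freeze the coefficient $r=r_0$, reduce the homogeneous part of \eqref{Euler} to the already-understood special-relativistic system, and then verify the stated formulas by a direct differentiation. Introduce the profile functions
\[
m:=\frac{\rho+\eps^4 p v^2}{1-\eps^2 v^2},\qquad
n:=\frac{\rho+\eps^2 p}{1-\eps^2 v^2}\,v,\qquad
q:=\frac{\rho v^2+p}{1-\eps^2 v^2},
\]
so that the left-hand side of \eqref{Euler} at $r=r_0$ equals $\del_0(r_0^2\,m)+\del_1\big(r_0(r_0-2M)\,n\big)$ for the first equation and $\del_0\big(r_0(r_0-2M)\,n\big)+\del_1\big((r_0-2M)^2\,q\big)$ for the second. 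Dividing the first by $r_0^2$ and the second by $r_0(r_0-2M)$, and setting $\kappa:=1-2M/r_0=(r_0-2M)/r_0$, the homogeneous part becomes $\del_0 m+\kappa\,\del_1 n=0$, $\del_0 n+\kappa\,\del_1 q=0$, since the ratios $r_0(r_0-2M)/r_0^2$ and $(r_0-2M)^2/(r_0(r_0-2M))$ both equal $\kappa$. Thus, once $r_0$ is frozen, the system is the radially reduced relativistic Euler system (the case $M=0$ of \eqref{Euler}, equivalently \eqref{inMinkowski}) with its spatial flux multiplied by the positive constant $\kappa$.

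Multiplying a flux by a constant multiplies the characteristic speeds by that constant and leaves the characteristic directions --- hence the Riemann invariants --- unchanged, so it suffices to take $\kappa=1$ and reinstate the factor at the end. Passing to the primitive variables $V=(\rho,v)$ and writing $U=(m,n)$, $F=(n,q)$ as functions of $V$, the homogeneous system reads $\del_0 V+B(V)\,\del_1 V=0$ with $B=(\del U/\del V)^{-1}(\del F/\del V)$, and the wave speeds are the roots $\xi$ of $\det\big(\del F/\del V-\xi\,\del U/\del V\big)=0$, which avoids inverting $\del U/\del V$ explicitly. Evaluating the six first-order partials of $m,n,q$ --- using only $p=p(\rho)$ and $p'>0$ --- produces the two roots $\xi_\pm=\dfrac{v\pm\sqrt{p'(\rho)}}{1\pm\eps^2\sqrt{p'(\rho)}\,v}$, i.e.\ the relativistic composition of the fluid velocity $v$ with the local sound speed $\pm\sqrt{p'(\rho)}$; multiplying by $\kappa$ yields exactly $\lambda$ and $\mu$ in \eqref{eignvalue}.

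It remains to identify $w$ and $z$ in \eqref{Riemannin} as Riemann invariants. From the closed forms one computes directly
\[
\del_v w=\del_v z=\frac{1}{1-\eps^2 v^2},\qquad
\del_\rho w=-\,\del_\rho z=\frac{\sqrt{p'(\rho)}}{\rho+\eps^2 p(\rho)},
\]
so the gradients $\nabla_V w$ and $\nabla_V z$ are explicit and differ only in the sign of their first component. The claim is equivalent to checking that $\nabla_V w$ and $\nabla_V z$ are, up to a nonzero scalar, the two left eigenvectors of $B$ associated with the two roots $\xi_\pm$ --- equivalently, that each is orthogonal to the right eigenvector of the opposite characteristic family. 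Either one verifies $\nabla_V w\cdot B=\xi\,\nabla_V w$ and $\nabla_V z\cdot B=\xi'\,\nabla_V z$ for the appropriate eigenvalues, which at once gives the diagonal form $\del_0 w+\lambda\,\del_1 w=0$, $\del_0 z+\mu\,\del_1 z=0$; or, avoiding $B$ altogether, one substitutes $\del_0 w,\del_1 w,\del_0 z,\del_1 z$ --- re-expressed through $\del_0\rho,\del_1\rho,\del_0 v,\del_1 v$ via the gradients above --- into the two balance laws and checks that the linear system obtained for $(\del_0\rho,\del_0 v)$ is triangularized into this form. In either route the verification is a bare algebraic identity.

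The sole difficulty is bookkeeping: carrying the factors $1-\eps^2 v^2$, $\rho+\eps^2 p$, $\rho+\eps^4 p v^2$ and the root $\sqrt{p'(\rho)}$ through the inversion of $\del U/\del V$ and the eigenvector relations without error. There is no conceptual obstacle once the frozen-$r_0$ reduction is made; the one point deserving attention is to confirm that the $r_0$-dependence enters only through the prefactor $\kappa=1-2M/r_0$ of the speeds, and not at all through $w$ and $z$, which is precisely what the homogeneity step in the first paragraph ensures. As a consistency check, for the isothermal law $p=k^2\rho$ one has $\sqrt{p'}\equiv k$, $\int_1^\rho\frac{\sqrt{p'(s)}}{s+\eps^2 p(s)}\,ds=\frac{k}{1+\eps^2 k^2}\ln\rho$, and $\eps k=k/c<1$ keeps every denominator positive.
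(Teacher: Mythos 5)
Your proposal is correct in substance but proceeds along a genuinely different route from the paper. The paper \emph{derives} the invariants: it looks at self-similar solutions of the frozen-coefficient system, parametrizes them by $\rho$, obtains the differential relations \eqref{eq:272} for $(v^0)',(v^1)'$, integrates to get \eqref{Riemannin}, and then reads off $\lambda$ from the ratio formula \eqref{eq:270}. You instead \emph{verify} the stated formulas after an a priori reduction: you observe that, once $r=r_0$ is frozen, dividing the two equations of \eqref{Euler} by $r_0^2$ and $r_0(r_0-2M)$ turns the principal part into the radial special-relativistic system (the homogeneous part of \eqref{inMinkowski} without the $r^2$ weights) with its flux multiplied by the constant $1-2M/r_0$, so the speeds carry exactly that factor while the invariants cannot depend on $r_0$ at all; the remaining work is the classical Minkowski computation (characteristic determinant for $\xi_\pm$, left-eigenvector check for $w,z$ using the gradients you compute, which are correct). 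This buys a transparent explanation of the structure of \eqref{eignvalue} and reduces the lemma to standard relativistic-Euler algebra, whereas the paper's longer derivation has the side benefit of producing the relations \eqref{eq:272} that are reused verbatim in the proof of Proposition~\ref{nonlinearpo}. Two caveats for a complete write-up: the determinant computation and the left-eigenvector identity are asserted rather than carried out, and they should be displayed (they do hold, but they are the entire computational content of the lemma); and your closing pairing of $w$ with $\lambda$ and $z$ with $\mu$ follows the paper's loose labelling --- in the transport sense it is $w$ that propagates with the ``$+$'' speed and $z$ with the ``$-$'' speed (check the non-relativistic isothermal limit $w\to v+k\ln\rho$) --- but this does not affect the statement being proved, which only lists the set of invariants and the set of eigenvalues.
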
 

\begin{proof} 1. In order to determine the Riemann invariants, we may fix a time $t_0 \geq 0$ and search for solutions depending on the self-similar variable $y := {r - r_0 \over t-t_0}$ (further studied in Section~\ref{sec-raf} below), therefore satisfying $- y {dw \over dy} + \lambda(w,z, r_0) {dw \over dy} = 0$ and $- y {dz \over dy} + \mu(w,z, r_0) {dz \over dy} = 0$. Either $w$ or $z$ must thus be constant for such solutions. Moreover, by parametrizing such solutions by one of the unknown variables, say with the density $\rho$, we can regard the unknowns $v^0$ and $v^1$ as functions of $\rho$ and, using a prime to denote the derivative with respect to  $\rho$, we find 
\bel{eq:257}
\aligned 
\Big( (1-2M/r_0)^2 \rho v^0v^0+\eps^4pv^1v^1\Big) '\del_0  \rho+
\Big(  \eps^2(\rho+ \eps^2p)v^0v^1 \Big)'\del_1\rho = 0,
\\ 
 \Big( ( \rho+\eps^2p)v^0v^1 \Big) '\del_0  \rho
 +\Big(  \eps^2(pv^0 v^0+ (1-2M/r_0)^{-2} \rho v^1v^1\Big) '\del_1\rho= 0,
\endaligned
\ee
where we have neglected low-order, algebraic terms. By differentiating \eqref{eq:395}, we also have 
\bel{eq:260}
(1-2M/r_0)(v^0)' v^0- (1-2M/r_0)^{-1}\eps^2 (v^1)' v^1= 0.
\ee
By combining the two equations in \eqref{eq:257} together, we obtain 
$$
\Big((\eps^2p+\rho)v^0v^1\Big)'^2= \Big(pv^0v^0+(1-2M/r_0)^{-2} \rho v^1v^1\Big)'\Big( \eps^4pv^1v^1+ (1-2M/r_0)^2 \rho v^0v^0 \Big)', 
$$
from which we deduce  
$$
\aligned
& p'\Big((1-2M/r_0)(v^0)^2-(1-2M/r_0)^{-1}(\eps v^1)^2\Big)^2
\\
& = (\eps^2 p+\rho)^2\Big( \big( (v^0)'v^1+v^0(v^1)' \big)^2-4v^0(v^0)'v^1(v^1)'\Big).
\endaligned
$$
Using again \eqref{eq:395}, we find
\bel{eq:262}
{(v^0)'\eps v^1 -c v^0(\eps v^1)' \over (1-2M/r_0) (v^0)^2-(1-2M/r_0)^{-1}\eps^2 (v^1)^2}\pm {\eps \sqrt {p'} \over \eps^2 p+ \rho} = 0.
\ee
After integration, we see that 
\bel{eq:263}
{1 \over 2}\ln \Bigg({(1-2M/r_0) v^0+\eps v^1\over(1-2M/r_0)  v^0-\eps v^1} \Bigg) \pm \int_1^\rho \eps {\sqrt{p'(s)}\over s+\eps^2 p(s)}ds
\ee
is a constant for the solutions under consideration. This calculation provides us with the Riemann invariants
$$
\aligned
w = {1 \over 2\eps}\ln \Bigg({(1-2M/r_0)v^0+\eps v^1\over (1-2M/r_0) v^0-\eps v^1} \Bigg)
+ \int_1^\rho {\sqrt{p'(s)}\over s+\eps^2 p(s)}ds,
\\
z = {1 \over 2\eps}\ln \Bigg({(1-2M/r_0) v^0+\eps v^1\over (1-2M/r_0) v^0-\eps v^1} \Bigg) -\int_1^\rho {\sqrt{p'(s)}\over s+\eps^2 p(s)}ds, 
\endaligned
$$
which take the form \eqref{Riemannin} by replacing $v^0$ and $v^1$ by their expression in terms of $v= {1  \over 1-2M/r_0} {v^1  \over v^0}$. 

\

2. We determine the eigenvalue $\lambda$ from the first equation in the system $\del_ t U+ \del_r F(U,r_0) = 0$, 
that is, 
\bel{eq:270}
 \lambda= {{(1-2M/ r_0)^2 \Big( (p \eps^2+\rho)v^0v^1\Big)'}\over{\Big( \eps^4 pv^1v^1+(1-2M/r_0)^2 \rho v^0v^0 \Big)'}}.
\ee
In view of \eqref{eq:260} and \eqref{eq:262} (where we take the minus sign), we have 
\bel{eq:272}
(v^0)'=-(1-2M/r_0)^{-1}{\eps^2 \sqrt{p'} \over \eps^2 p+\rho}v^1,
\qquad 
(v^1)'=-(1-2M/r_0){\sqrt {p'} \over \eps^2p+ \rho} v^0.
\ee
Therefore, the `first' eigenvalue reads 
$$
\aligned 
 \lambda= & (1-2M/r_0)^2{{(p' \eps^2+1)v^0 v^1 - \sqrt{p'} \Big((1-2M/r_0)(v^0)^2+(1-2M/r_0)^{-1}(\eps v^1)^2\Big)}\over{\eps^4  p'v^1v^1+ (1-2M/r_0)^2v^0v^0-2(1-2M/r_0) \eps^2 \sqrt{p'}v^0v^1}}
 \\
  = & (1-2M/r_0)^2{{((1-2M/r_0) v^0-\eps^2 \sqrt{p'} v^1)((1-2M/r_0)^{-1}v^1- \sqrt{p'}v^0)}\over{((1-2M/r_0) v^0-\eps^2  \sqrt{p'}v^1)^2}}
 \\
 =& (1-2M/r_0){{(1-2M/r_0)^{-1} v^1-\sqrt{p'}v^0}\over{v^0- \eps^2 (1-2M/r_0)^{-1}  \sqrt{p'}v^1}}.
\endaligned
$$
Recalling that $v= {1  \over 1-2M/r_0} {v^1  \over v^0}$, we obtain the desired expression for $\lambda$. The arguments for $\mu$ are entirely similar.  
\end{proof}

We arrive at the following result.

\begin{proposition}[Necessary and sufficient conditions for hyperbolicity and genuine nonlinearity]
\label{nonlinearpo}
\noindent
\begin{enumerate}
\item The Euler system on a Schwarzschild background \eqref{Euler} (within the range ${r>2M}$)
is {\bf strictly hyperbolic}, that is, admits two real and distinct wave speeds, if and only if the pressure satisfies the condition
\bel{eq=-hyp}
p'(\rho) > 0  \qquad \text {for all } \rho> 0.
\ee

\item This system is {\bf genuinely nonlinear}, that is, the derivatives ${\del \lambda \over dw}$ and ${\del \mu \over dz}$ never vanish, if and only if the pressure satisfies the condition
\bel{nonlinear}
\rho \, p'' +2 p' +\eps^2 \big( p'' p-2 (p')^2 \big) > 0 \qquad \text {for all } \rho> 0
\ee
and, therefore, is {\bf linearly degenerate},  that is, the derivatives ${\del \lambda \over dw}$ and ${\del \mu \over dz}$ identically vanish, 
 if and only if the pressure satisfies the condition
\bel{linearnly-degenerate}
\rho \, p'' +2 p' +\eps^2 \big( p''  p - 2 (p')^2 \big) = 0  \qquad \text {for all } \rho> 0.
\ee

\end{enumerate}
\end{proposition}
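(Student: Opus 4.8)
The plan is to read both assertions directly off the explicit formulas in Lemma~\ref{Riemann-inva}, keeping $r_0>2M$ fixed and writing $\kappa:=1-2M/r_0>0$ and $a:=\sqrt{p'(\rho)}$ throughout.

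\emph{Strict hyperbolicity.} First I would subtract the two wave speeds in \eqref{eignvalue}; a one-line computation gives
\[
\mu-\lambda \;=\; \kappa\,\frac{2\,a\,\bigl(1-\eps^2 v^2\bigr)}{1-\eps^4 a^2 v^2}.
\]
Here $\kappa>0$ because $r_0>2M$, and $1-\eps^2 v^2>0$ by the velocity constraint \eqref{eq:395} (which forces $|\eps v|<1$); finally $1-\eps^4 a^2 v^2=1-(\eps^2 a v)^2>0$ in the subluminal range $p'(\rho)<c^2$, and in particular always for the isothermal law $p=k^2\rho$ with $0<k<c$. Hence $\mu-\lambda$ has the sign of $\sqrt{p'(\rho)}$: the two speeds are real and distinct at every admissible state exactly when $p'(\rho)>0$ for all $\rho>0$ --- if $p'(\rho_0)<0$ the speeds become complex, and if $p'(\rho_0)=0$ they coincide. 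This proves \eqref{eq=-hyp}.

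\emph{Genuine nonlinearity.} Next I would change variables to the Riemann invariants $(w,z)$ of \eqref{Riemannin}. The key structural fact is that $w+z$ is a function of $v$ alone, while $w-z$ is a function of $\rho$ alone; inverting these two scalar relations yields the chain-rule data
\[
\frac{\del v}{\del w}=\frac{\del v}{\del z}=\frac{1-\eps^2 v^2}{2},
\qquad
\frac{\del\rho}{\del w}=-\frac{\del\rho}{\del z}=\frac{\rho+\eps^2 p}{2\sqrt{p'}}.
\]
I would then differentiate $\lambda$, and symmetrically $\mu$, from \eqref{eignvalue}, remembering that $\rho$ enters only through $a=\sqrt{p'(\rho)}$ with $a'=p''/(2\sqrt{p'})$. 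After collecting terms, the derivative factors as a manifestly positive prefactor, of the form $\kappa(1-\eps^2 v^2)/\bigl(4p'(1\mp\eps^2 a v)^2\bigr)$, times the quantity
\[
\rho\,p''+2p'+\eps^2\bigl(p''p-2(p')^2\bigr).
\]
Consequently the relevant derivatives of $\lambda$ and $\mu$ never vanish for $\rho>0$ if and only if \eqref{nonlinear} holds, which is genuine nonlinearity, and they vanish identically if and only if \eqref{linearnly-degenerate} holds, which is linear degeneracy. As a sanity check, for $p=k^2\rho$ the bracket reduces to $2k^2(1-\eps^2 k^2)=2k^2(1-k^2/c^2)>0$, so the system is genuinely nonlinear whenever $0<k<c$, consistently with strict hyperbolicity.

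The two pieces of genuine algebra --- the closed form for $\mu-\lambda$ and the factorization of the eigenvalue derivatives --- are routine. The only subtlety lies in the second part: one must keep straight which Riemann invariant is preserved along which characteristic family, so that the derivative being computed is the one that governs nonlinearity of that family, and one must carry the $\eps^2$-terms faithfully through a fairly long cancellation. I expect that final consolidation onto the single bracket above to be the main obstacle --- a matter of care rather than of new ideas.
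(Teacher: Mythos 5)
Your strategy for part (1) and the chain-rule computation for part (2) are essentially the paper's own proof: the paper likewise reads strict hyperbolicity off the formulas of Lemma~\ref{Riemann-inva} and obtains the genuine-nonlinearity bracket by differentiating the eigenvalues, only it works in the variables $(\rho,v^0,v^1)$ with the relations \eqref{eq:272} instead of your $(\rho,v)$ together with the Riemann-invariant chain rule; your closed form for $\mu-\lambda$ is a harmless refinement of the paper's one-line remark on realness and distinctness of the speeds.

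The one point you flag but do not settle is exactly where a naive execution would go wrong, so let me settle it. With the invariants \eqref{Riemannin}, $w$ is the quantity held constant along $1$-rarefaction curves (this is how the wave curves are defined in Section~6), so the directional derivative governing genuine nonlinearity of the $\lambda$-family is the derivative of $\lambda$ at \emph{fixed} $w$, i.e. $\del_z\lambda$, and symmetrically $\del_w\mu$ for the $\mu$-family. For these your claimed factorization is correct: writing $a=\sqrt{p'(\rho)}$,
\[
\del_z\lambda=\Big(1-\tfrac{2M}{r_0}\Big)\,\frac{1-\eps^2v^2}{4p'\,(1-\eps^2 a v)^2}\,\Big(\rho p''+2p'+\eps^2\big(p''p-2(p')^2\big)\Big),
\]
and the same bracket appears in $\del_w\mu$ with $(1+\eps^2 a v)^2$ in the denominator. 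If instead you differentiate literally as the statement's notation suggests, i.e. compute $\del_w\lambda$ and $\del_z\mu$, the same chain rule produces the different bracket $2p'(1-\eps^2 p')-(\rho+\eps^2 p)\,p''$, which does not coincide with \eqref{nonlinear} unless $p''=0$; note that your sanity check $p=k^2\rho$ cannot detect this slip, since both brackets then reduce to $2k^2(1-\eps^2k^2)$. The paper's proof, despite writing $\del_w\lambda$, differentiates along the curve on which $w$ is constant (it uses \eqref{eq:272}, which is precisely that parametrization), so it computes the correct quantity; make sure your write-up does the same.
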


When the sound speed is a constant $k$ (which is the case of main interest in the present paper), that is, when $p= k^2 \rho$ (with $0 < k< 1/\eps$), the eigenvalues read 
\be
\aligned
\lambda=\Big(1- {2M\over r} \Big){v-k \over 1 - \eps^2 k v},
\qquad
\mu=\Big(1- {2M\over r} \Big){v+k  \over 1 + \eps^2  k v}, 
\endaligned
\ee
and the Riemann invariants take the form 
\be
\aligned
w = {1 \over 2\eps}\ln \Big({1 + \eps v\over 1 -\eps v} \Big) + {k \over 1 + \eps^2 k^2} \ln \rho,
\qquad
z = {1 \over 2\eps}\ln \Big({1 +\eps v\over 1 -\eps v} \Big) - {k \over 1 + \eps^2 k^2} \ln \rho.
\endaligned
\ee
The Euler system, therefore, is strictly hyperbolic and genuinely nonlinear in this case.

\begin{proof} In view of Lemma~\ref{Riemann-inva}, the condition $p'> 0$ is the necessary and sufficient condition for the eigenvalues to be real. Moreover, by definition, the first family $\lambda$ (the second family $\mu$, respectively) is genuinely nonlinear if and only if $\del_w\lambda \neq 0$ (and  $\del_z\mu\neq 0$, resp.). We compute  
$$
\del_w\lambda=\Big({{\del \lambda}\over {\del v^0}}(v^0)'+ {{\del \lambda}\over {\del v^1}}(v^1)'{{\del \lambda}\over {\del v^0}}+ {{\del \lambda}\over {\del \rho}} \Big){\del \rho\over\del w},
$$
following with the calculations: 
$$
\aligned
& {\del \lambda  \over \del v^0} =-(1-2M/r){1 - (\eps^2p'(\rho))v^1\over\Big((1-2M/r)v^0- \eps^2 \sqrt {p'(\rho)}v^1\Big)^2}, 
\\
& {\del \lambda \over \del v^1} = (1-2M/r){(1-\eps^2p'(\rho))v^0 \over \Big((1-2M/r)v^0-\eps^2 \sqrt {p'(\rho)}v^1\Big)^2}, 
\\
& {\del \lambda \over \del\rho} =- (1-2M/r){p''(\rho) \over 2 \sqrt {p'(\rho)} \Big((1-2M/r)v^0- \eps^2  \sqrt {p'(\rho)} v^1\Big)^2}.
\endaligned
$$
Combining these formulas, we obtain
$$
\del_w\lambda= -(1-2M/r){{p''\over 2 \sqrt {p'(\rho)}}+ {(1- \eps^2p) \sqrt {p'(\rho)}  \over{\eps^2p+\rho}}\over\Big((1-2M/r)v^0- \eps^2  \sqrt {p'(\rho)} v^1\Big)^2} {\del \rho\over\del w}.
$$
A similar calculation gives the result associated with the second eigenvalue:
$$
\del_z\mu=-(1-2M/r){{p''\over 2 \sqrt {p'(\rho)}}+ {(1- \eps^2p) \sqrt {p'(\rho)}  \over{\eps^2p+\rho}} \over
\Big((1-2M/r)v^0+ \eps^2  \sqrt {p'(\rho)} v^1\Big)^2}{\del \rho\over\del z}.
$$
Therefore, the sufficient and necessary condition for genuine nonlinearity is \eqref{nonlinear}. On the contrary, the system is linearly degenerate if and only if  \eqref{linearnly-degenerate} holds.
\end{proof}


\subsection*{Linearly degenerate equations of state}
 
The following special case is of particular interest. 

\begin{proposition}[Linearly degenerate equations of state]
\label{linearlly-degenerate} 
The Euler system \eqref{Euler} is linearly degenerate if and only if the pressure (which is defined up to a constant) takes one of the forms (for all $\rho> 0$) 
\bel{eq:847}
p(\rho) = 0
\qquad \text{ or } 
\qquad 
p(\rho) = {\rho \over \eps^2}, 
\qquad \text{ or } 
\qquad p(\rho) = - {A^2 \over \rho+\eps^2  B}, 
\ee
where $A, B > 0$ are arbitrary constants and only the latter two pressure-laws lead to a strictly hyperbolic model. 
\end{proposition}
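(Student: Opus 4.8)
The plan is to reduce the statement to the integration of a single ordinary differential equation for $p=p(\rho)$. By the second part of Proposition~\ref{nonlinearpo}, the system \eqref{Euler} is linearly degenerate if and only if $p$ solves \eqref{linearnly-degenerate}, which after an elementary regrouping of the terms reads
\[
(\rho+\eps^2 p)\,p'' + 2\,p'\,(1-\eps^2 p') = 0 , \qquad \rho>0 ,
\]
while, by the first part of the same proposition, the resulting model is strictly hyperbolic precisely when $p'>0$ on $(0,+\infty)$. Thus it suffices to (i) describe all solutions of this ODE, and (ii) retain among them those with everywhere positive first derivative.

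For step (i), the natural move is to take as unknown the combination $m:=\rho+\eps^2 p$ — the very quantity already figuring in the Riemann invariants \eqref{Riemannin}. Since $m'=1+\eps^2 p'$ and $m''=\eps^2 p''$, a short computation turns the equation into the autonomous form
\[
m\,m'' + 2\,(m'-1)(2-m') = 0 .
\]
As $\rho$ no longer appears explicitly, I would lower the order by regarding $n:=m'=\tfrac{dm}{d\rho}$ as a function of $m$, so that $m''=n\,\tfrac{dn}{dm}$ and the equation becomes the first-order separable equation
\[
m\,n\,\frac{dn}{dm} = 2\,(n-1)(n-2) .
\]
This carries the two constant solutions $n\equiv 1$ and $n\equiv 2$, which must be isolated since separating variables divides by $(n-1)(n-2)$; away from them, one separates variables and uses the partial-fraction decomposition $\tfrac{n}{2(n-1)(n-2)}=-\tfrac{1}{2(n-1)}+\tfrac{1}{n-2}$ to obtain, after one integration, the implicit relation $\tfrac{(n-2)^2}{|n-1|}=K\,m^2$ for an arbitrary constant $K>0$.

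It then remains to translate back via $p=(m-\rho)/\eps^2$. The branch $n\equiv 1$ forces $m'\equiv 1$, hence $p$ constant; the branch $n\equiv 2$ forces $m'\equiv 2$, hence $p=\rho/\eps^2$ — each only up to the additive constant that the pressure carries. For the generic branch one substitutes $n=m'$ into the implicit relation and integrates once more in $\rho$; solving the resulting algebraic relation for $m$, with attention to the branch of the square root, and again absorbing an additive constant into $p$, produces $p=-A^2/(\rho+\eps^2 B)$, where $B>0$ is chosen so that the pole $\rho=-\eps^2 B$ stays outside $(0,+\infty)$ and the sign of the coefficient is dictated by $K>0$. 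Conversely, each of the three expressions in \eqref{eq:847} is readily verified by direct substitution to solve the ODE, which closes step~(i).

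Step~(ii) is then immediate: $p\equiv 0$ has $p'\equiv 0$ and is excluded by \eqref{eq=-hyp}; $p=\rho/\eps^2$ has $p'=1/\eps^2>0$; and $p=-A^2/(\rho+\eps^2 B)$ has $p'=A^2/(\rho+\eps^2 B)^2>0$ for every $\rho>0$. Hence, by the first part of Proposition~\ref{nonlinearpo}, the last two laws — and only these — yield a strictly hyperbolic system. The step I expect to be the real obstacle is the explicit inversion in the generic branch: the reduction of order delivers $n$ only implicitly as a function of $m$, so recovering $p$ as an explicit function of $\rho$ requires a second quadrature together with careful bookkeeping of the branch of the square root and of the additive constant hidden in the pressure — and it is precisely that bookkeeping which establishes that no pressure law beyond those in \eqref{eq:847} can occur.
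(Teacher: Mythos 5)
Your proposal is correct and takes essentially the same route as the paper: the paper likewise sets $q:=\rho+\eps^2 p$, reduces the resulting autonomous second-order equation by treating $q'$ as a function of $q$, isolates the constant branches $q'\equiv 1$ (constant pressure) and $q'\equiv 2$ (giving $p=\rho/\eps^2$ up to a constant), and obtains the third law by integrating the same separable equation $\frac{\nu}{(\nu-1)(\nu-2)}\,d\nu=\frac{2}{q}\,dq$. Like the paper, you leave the final quadrature and inversion to an explicit pressure law essentially implicit, so your argument matches the published proof in both structure and level of detail.
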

 
We thus have only two strictly hyperbolic and linearly degenerate models: 

\bei 

\item {\bf Case $p= {1\over \eps^2} \rho$.} The system is well-defined within the full range $\rho>0$ and $|v| < 1/\eps$. The eigenvalues 
$$
- \lambda = \mu = (1-2M/r) /\eps
$$
are independent of the dependent variables, while the Riemann invariants read 
$$
w = {1\over2\eps}\ln\Bigg({1 + \eps v \over 1 - \eps v} \Bigg) + {1\over2\eps}\ln\rho,
\qquad 
z = {1\over2\eps}\ln\Bigg({1 + \eps v \over 1 - \eps v} \Bigg) - {1\over2\eps}\ln\rho.
$$

\item When $p= - {A^2\over  \rho+\eps^2  B}$, the system is well-defined in {\sl limited range of $\rho$, only.} For instance, when 
 $p= - {A^2\over  \rho+\eps^2  B}$,
the eigenvalues read 
$$
\lambda= (1-2M/r){\rho v-A \over \rho -A\eps^2v},
\qquad 
\mu= (1-2M/r){\rho v +A \over \rho +A\eps^2v}
$$
while the Riemann invariants are 
$$
w = {1\over2\eps} \ln\Bigg({1 + \eps v\over 1 -\eps v} \Bigg) + {1 \over 2\eps}\ln \Bigg({\rho-\eps A \over\rho + \eps A} \Bigg),
\qquad 
z = {1\over2\eps} \ln\Bigg({1 + \eps v \over  1 - \eps v} \Bigg) - {1 \over 2\eps}\ln \Bigg({\rho-\eps A \over\rho + \eps A} \Bigg).
$$
This model can be considered within the range $|\rho| < \eps A$ and $|v| < 1/\eps$ (even with negative density values). 

\eei 
 
\begin{proof} From Proposition~\ref{nonlinearpo}, we recall the condition $\rho \, p'' +2 p' +\eps^2(p'' p-2 p'^2) = 0$. If we set $q := \eps^2p+\rho$, we thus need to solve the ordinary differential equation
\bel{eq:280}
q''q-2(q')^2+6q'-4 = 0.
\ee
We treat $q$ as an {\sl independent variable} and set ${dq (\rho) \over d \rho} =: \nu(q)$, hence  
$$
{d^2q \over d \rho^2} = {d\nu \over d q}{dq \over d\rho} = \nu(q) {d\nu \over dq}. 
$$
We see that \eqref{eq:280} transforms into a {\sl separable} equation for the function $\nu=\nu(q)$, that is, provided ${(\nu-1)(\nu-2)}$ does not vanish 
\bel{eq:493}
{\nu \over (\nu-1)(\nu-2) } {d\nu \over dq} = {2 \over q}
\ee
or else $\nu \equiv 1$ or $\nu \equiv 2$. Solutions satisfying ${dq \over d\rho} \equiv 1$ correspond a constant pressure function, since
${dq \over d\rho} := \eps^2p'+1 =1$ implies that $p$ is a constant. The condition ${dq (\rho) \over d \rho} =2$ generates the solutions of the form $p(\rho) = {\rho \over \eps^2} + C$. Finally, by integrating \eqref{eq:493}, we find the third class of solutions. 
\end{proof}


\section{Formal derivation of simplified models}
\label{sec:3}

\subsection*{Fluid flows with constant sound speed}

In this section, we formally analyze the structure of the Euler equation in a Schwarzschild background. 
We focus our attention on the Euler system \eqref{Euler} when the sound speed is assumed to be a constant $k \in [0, 1/\eps]$, that is, with the pressure law $p(\rho) = k^2 \rho$, \eqref{Euler} becomes 
\bel{Euler-conM}
\aligned 
& \del_t\Bigg( r^2 {1 + \eps^4 k^2 v^2 \over 1 - \eps^2 v^2} \rho\Bigg) +\del_r \Bigg(r(r-2M) {1 + \eps^2 k^2 \over 1 - \eps^2 v^2} \rho v\Bigg) = 0,
\\
& \del_t \Bigg(r(r-2M) {1 + \eps^2 k^2\over 1 - \eps^2 v^2} \rho v \Bigg) + \del_r \Bigg((r-2M)^2 {v^2+ k^2 \over 1 - \eps^2 v^2} \rho \Bigg) 
\\
&=3M   \Big( 1 - {2M \over r} \Big) {v^2+ k^2 \over 1 - \eps^2 v^2} \rho -M {r-2M \over \eps^2 r} {1 + \eps^4 k^2 v^2 \over 1 - \eps^2 v^2} \rho+2{(r-2M)^2 \over r}k^2 \rho. 
\endaligned 
\ee
It will be necessary to rescale the mass $M$ and we thus set 
\be
m := {M \over \eps^2}
\ee
and refer to the following system as {\bf the family of Euler models $\Mscr (\eps, k, m)$} 
\bel{Euler-con}
\aligned 
& \del_t\Bigg( r^2 {1 + \eps^4 k^2 v^2 \over 1 - \eps^2 v^2} \rho\Bigg) +\del_r \Bigg(r(r-2\eps^2 m) {1 + \eps^2 k^2 \over 1 - \eps^2 v^2} \rho v\Bigg) = 0,
\\
& \del_t \Bigg(r(r-2\eps^2 m) {1 + \eps^2 k^2\over 1 - \eps^2 v^2} \rho v \Bigg) + \del_r \Bigg((r-2\eps^2 m)^2 {v^2+ k^2 \over 1 - \eps^2 v^2} \rho \Bigg) 
\\
&= {3\eps^2 m \over r} \big( r-2\eps^2 m \big) {v^2+ k^2 \over 1 - \eps^2 v^2} \rho
    - {m  \over r} \big( r-2\eps^2 m \big) {1 + \eps^4 k^2 v^2 \over 1 - \eps^2 v^2} \rho
    + {2 \over r} \big( r-2\eps^2 m\big)^2 k^2 \rho. 
\endaligned 
\ee
Here the main unknowns are the mass-energy density $\rho> 0$ and the scalar velocity $|v |< 1/\eps$, and are defined for $r>2\eps^2 m$. %
We are interested in investigating limiting regimes determined by extremal values of the physical parameters, i.e. the mass of the black hole $m \in (0, +\infty)$, the light speed ${1\over \eps} \in (0, +\infty)$, and the sound speed $k \in (0, 1/\eps)$. 
Figure~\ref{LIMIT} provides an illustration of this family of models. Let us also summarize, for this family of models, our conclusions in the previous section. 

\begin{proposition}
Consider the Euler equation \eqref{Euler-con}, take the pressure $p$ as a linear function of the density $\rho> 0$, that is, $p(\rho) = k^2 \rho$ where the sound speed $k$ is a positive constant. When $0 < k< 1/\eps$, \eqref{Euler-con} is strictly hyperbolic and genuinely nonlinear. When $k= 0$, it is non-strictly hyperbolic and linearly degenerate; when $k= {1\over \eps}$, it is strictly hyperbolic but linearly degenerate. 
\end{proposition}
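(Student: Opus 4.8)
The plan is to obtain this proposition as a direct specialization of Lemma~\ref{Riemann-inva} and Proposition~\ref{nonlinearpo} to the linear pressure law $p(\rho)=k^2\rho$, together with the elementary observation that passing from the metric parameter $M$ to the rescaled mass $m$ via $M=\eps^2 m$, which recasts \eqref{Euler} as the family $\Mscr(\eps,k,m)$ in \eqref{Euler-con}, affects neither the hyperbolicity nor the genuine-nonlinearity structure.

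First I would record the preliminary reductions. With $p(\rho)=k^2\rho$ one has $p'(\rho)\equiv k^2$ and $p''(\rho)\equiv 0$, and the homogeneous part of \eqref{Euler-con} at a fixed radius $r_0>2\eps^2 m$ coincides, up to the renaming $M=\eps^2 m$, with the homogeneous part of \eqref{Euler} analyzed in Section~\ref{sec:2}; the source terms are lower order and play no role in either property. Since $r_0>2\eps^2 m$, the factor $1-2\eps^2 m/r_0$ is strictly positive, so it enters the eigenvalues of Lemma~\ref{Riemann-inva} and the derivatives $\del_w\lambda$, $\del_z\mu$ computed in the proof of Proposition~\ref{nonlinearpo} only as an overall positive multiple, hence does not influence the sign conditions \eqref{eq=-hyp} and \eqref{nonlinear}.

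Then I would carry out the two sign computations. For strict hyperbolicity, from \eqref{eignvalue} with $\sqrt{p'(\rho)}=k$ a short computation gives
$$
\mu-\lambda=\Big(1-\tfrac{2\eps^2 m}{r_0}\Big)\,\frac{2k\,(1-\eps^2 v^2)}{1-\eps^4 k^2 v^2}.
$$
In the physical range $r_0>2\eps^2 m$, $|v|<1/\eps$, $0\le k\le 1/\eps$ one has $1-\eps^2 v^2>0$ and $1-\eps^4 k^2 v^2=1-(\eps k)^2(\eps v)^2>0$, so $\mu-\lambda>0$ precisely when $k>0$; when $k=0$ the two speeds reduce to $(1-2\eps^2 m/r_0)\,v$ and the system is only non-strictly hyperbolic. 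For genuine nonlinearity I would evaluate the left-hand side of \eqref{nonlinear}: with $p'=k^2$, $p''=0$,
$$
\rho\,p''+2p'+\eps^2\big(p''p-2(p')^2\big)=2k^2\big(1-\eps^2 k^2\big)=2k^2\big(1-(\eps k)^2\big),
$$
which is strictly positive for $0<k<1/\eps$ and vanishes for $k=0$ and for $k=1/\eps$; in the latter two cases \eqref{linearnly-degenerate} holds and the system is linearly degenerate, consistently with Proposition~\ref{linearlly-degenerate} since $k=0$ corresponds to $p\equiv 0$ and $k=1/\eps$ to $p=\rho/\eps^2$. Combining the two computations yields the three asserted regimes.

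I do not expect a genuine obstacle here, as the statement is a specialization of results already proved. The only points needing (minor) care are checking that $1-\eps^4 k^2 v^2$ remains positive up to and including the boundary value $k=1/\eps$ — which follows from the strict inequality $|\eps v|<1$ — and confirming that the substitution $M\mapsto\eps^2 m$ together with the (lower-order) source terms is irrelevant to the diagonalization, both of which are immediate.
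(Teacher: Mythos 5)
Your proposal is correct and follows essentially the route the paper intends: the proposition is stated as a direct specialization of Proposition~\ref{nonlinearpo} (and Lemma~\ref{Riemann-inva}) to $p(\rho)=k^2\rho$, for which $p'=k^2$, $p''=0$ reduce the hyperbolicity criterion to $k>0$ and the genuine-nonlinearity expression to $2k^2(1-\eps^2k^2)$, exactly as you compute. Your explicit formula for $\mu-\lambda$ and the remark that the rescaling $M=\eps^2 m$ and the source terms are irrelevant are correct and, if anything, slightly more detailed than the paper, which offers no separate proof.
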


\begin{figure}[htbp]
\label{LIMIT}
\centering
\epsfig{figure=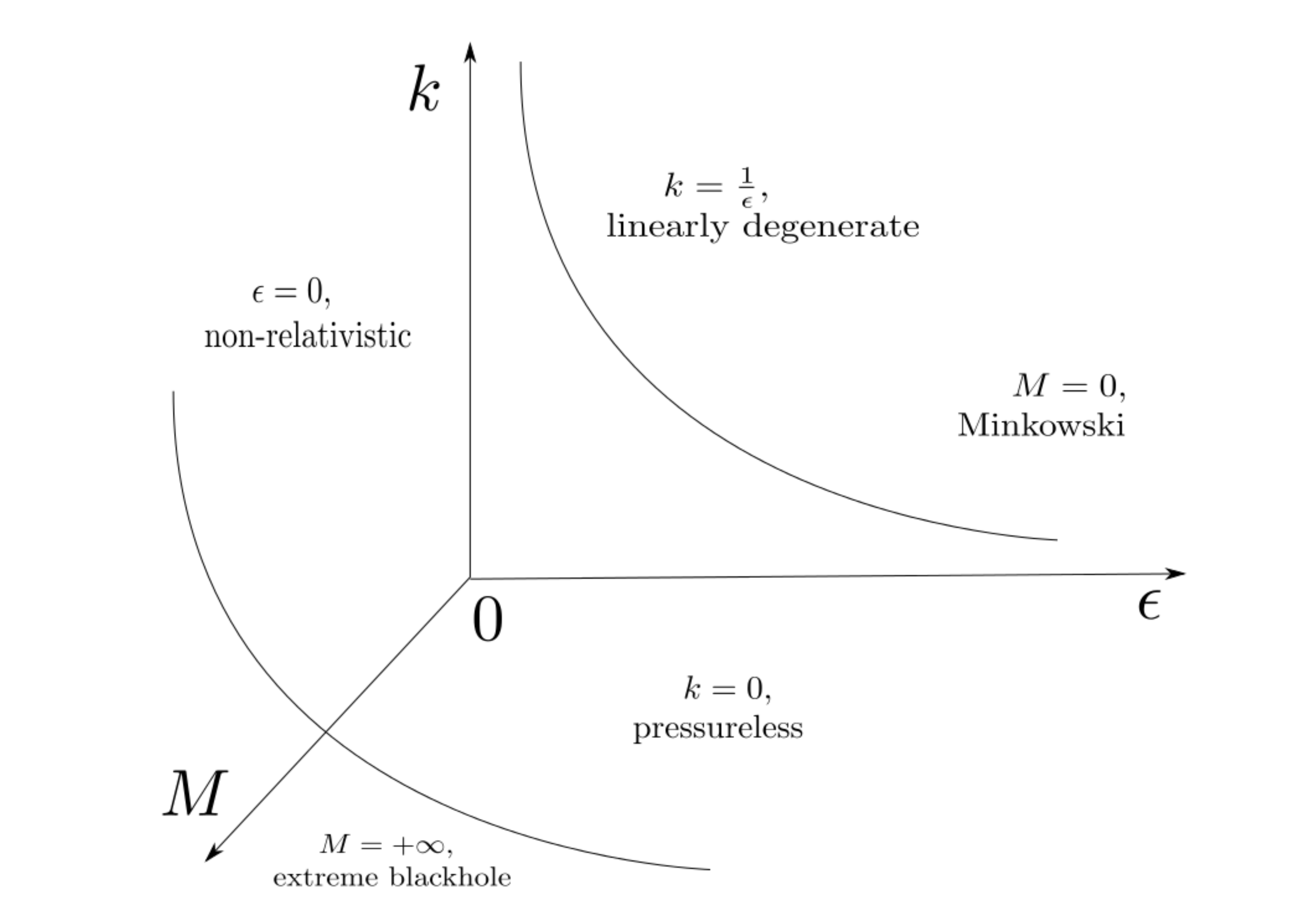,height=2.5in} 
\caption{Limit regimes of model $\Mscr (\eps, k, m)$.}
\end{figure}


\subsection*{Formal limits on the light speed and sound speed}

\subsubsection*{Non-relativistic fluid flows}

First of all, when $\eps \to 0$, the light speed goes to infinity and in order to avoid a blow-up of the source term,
$M {r-2M \over \eps^2 r} {1 + \eps^4 k^2 v^2 \over 1 - \eps^2 v^2} \rho$
in the `second' Euler equations in \eqref{Euler-conM}, we keep the ratio $m= {M\over \eps^2}$ constant. 
Letting $\eps \to 0$, we arrive at the {\bf Euler model for non-relativistic fluid flows on a Schwarzschild background}, denoted by $\Mscr (0, k, m)$: 
\bel{nonrelativ-con}
\aligned
&\del_t(r^2\rho) +\del_r(r^2\rho v) = 0,
\\
&\del_t(r^2\rho v) +\del_r\Big(r^2 \rho( v^2+k^2) \Big) -2k^2 r \rho + m \rho = 0, \qquad t \geq 0, \, r > 0.
\endaligned
\ee
Interestingly, this model applies to non-relativistic flows but yet contains a ``relaxation term'', that is $m \rho$, which is induced by the black hole geometry. 
Provided $k> 0$, this model is strictly hyperbolic (for $\rho> 0$ and $v \in \RR$) and admits two genuinely nonlinear characteristic fields. 
In Section~\ref{sec:4}, we will first study the family of steady state solutions and, for the Cauchy problem in Section~\ref{sec:92}, we will establish a global-in-time theory of weak solutions. 


\subsubsection*{Stiff fluid flows}

Returning to the regime of finite light speed, we now consider limiting regimes for the sound speed $k \in (0, 1/\eps)$. By definition, a stiff fluid is governed by the equation $p=\eps^{-2} \rho$ for which the sound speed coincides with the light speed. Letting therefore $k  \to 1/ \eps$, we define the {\bf Euler model for stiff fluid flows on a Schwarzschild background} $\Mscr (\eps,{1\over \eps}, m)$ as 
\bel{sonetlu}
\aligned 
& \del_t\Big( r^2 {1 + \eps^2 v^2 \over 1 - \eps^2 v^2} \rho\Big) +\del_r \Big(r(r-2\eps^2 m) {2 \rho  v \over 1 - \eps^2 v^2} \Big) = 0,
\\
& \del_t \Big(r(r-2\eps^2 m) {2 \rho v  \over 1 - \eps^2 v^2} \Big) + \del_r \Big((r-2\eps^2 m)^2 {1 + \eps^2 v^2 \over \eps^2(1-\eps^2 v^2)} \rho \Big)
 \\
 & =2\eps^2 m {r-2\eps^2 m \over \eps^2 r} {1 + \eps^2 v^2 \over 1 - \eps^2 v^2} \rho  +2{(r-2\eps^2 m)^2 \over \eps^2 r} \rho. 
\endaligned 
\ee
According to Proposition~\ref{nonlinearpo}, this model has two linearly degenerate characteristic fields. The Cauchy problem for this system will be studied in Section~\ref{sec:91}, below.


\subsubsection*{Pressureless fluid flows}

Letting now the sound speed $k  \to 0$, we obtain a regime where the pressure vanishes identically and we can introduce the 
{\bf Euler model of pressureless fluid flow} $\Mscr (\eps,0, m)$: 
\bel{sonnul}
\aligned 
& \del_t\Bigg( r^2 {\rho \over 1 - \eps^2 v^2} \Bigg) +\del_r \Bigg(r(r-2\eps^2 m) {\rho v  \over 1 - \eps^2 v^2} \Bigg) = 0,
\\
& \del_t \Bigg(r(r-2\eps^2 m) {\rho v \over 1 - \eps^2 v^2} \Bigg) + \del_r \Bigg((r-2\eps^2 m)^2 {\rho v^2 \over 1 - \eps^2 v^2} \Bigg)
  = {m \over r} (3\eps^2 -  1)(r-2\eps^2 m) {\rho v^2 \over 1 - \eps^2 v^2}. 
\endaligned 
\ee
Observe that this system is not hyperbolic, since it admits only one eigenvalue: $\lambda=\mu=\Big(1- {2M\over r} \Big)v$. 
Note also $p\equiv 0$ obviously satisfies \eqref{linearnly-degenerate}, so that \eqref{sonnul} admits one linearly degenerate characteristic field, while it can be checked that the other field is genuinely nonlinear. This model can not be handled by the techniques in the present paper, and we postpone its analysis to a follow-up work. 


\subsubsection*{Non-relativistic pressureless regime} 

In addition to having $k \to 0$, we can also take the limit  $\eps \to 0$ in \eqref{sonnul} and thus define the {\bf Euler model for pressureless non-relativistic flows} $\Mscr (0,0, m)$: 
\bel{nonrelative-san}
\aligned
&\del_t(r^2\rho) +\del_r(r^2\rho v) = 0,
\qquad\del_t(r^2\rho v) +\del_r\Big(r^2  \rho v^2\Big) +m \rho= 0.
\endaligned
\ee


\subsection*{Vanishing black hole mass}
 
\subsubsection*{Relativistic regime} 
 
When the black hole mass is taken to vanish, that is, $m \to 0$, the Schwarzschild metric approaches the Minkowski metric \eqref{Minkowski}, and we arrive at the {\bf Euler model for radially symmetric fluid flows in Minkowski space} denoted by $\Mscr (\eps,k, 0)$: 
\bel{EulerMI}
\aligned 
& \del_t\Bigg( {1 + \eps^4 k^2 v^2 \over 1 - \eps^2 v^2}\rho \Bigg) +\del_r \Bigg( {1 + \eps^2 k^2 \over 1 - \eps^2 v^2}\rho  v\Bigg) = 0,
\\
& \del_t \Bigg({1 + \eps^2 k^2  \over 1 - \eps^2 v^2}\rho v \Bigg) + \del_r \Bigg( {v^2+ k^2 \over 1 - \eps^2 v^2} \rho \Bigg) = 0.
\endaligned 
\ee 

 
\subsubsection*{Relativistic pressureless regime} 

If in addition we let the sound speed $k \to 0$ in \eqref{EulerMI}, we have the {\bf Euler model for radially symmetric, pressureless flows in Minkowski space} $\Mscr (\eps,0, 0)$: 
 \bel{EulerMI-k1}
\aligned 
& \del_t\Big( {\rho \over 1 - \eps^2 v^2} \Big) +\del_r \Big( {\rho  v \over 1 - \eps^2 v^2} \Big) = 0,
\qquad
 \del_t \Big({\rho  v \over 1 - \eps^2 v^2} \Big) + \del_r \Big( {\rho v^2 \over 1 - \eps^2 v^2}  \Big) = 0.
\endaligned 
\ee
Observe that, for sufficiently regular solutions, these equations are equivalent to  
$$
\aligned 
& \del_t v+ \del_r \Big( {v^2 \over 2}  \Big) = 0,
\qquad
 \del_t\Big( {\rho \over 1 - \eps^2 v^2} \Big) +\del_r \Big( {\rho  v \over 1 - \eps^2 v^2} \Big) = 0,
\endaligned
$$
from which we see that the velocity component satisfies Burgers' equation. 


\subsubsection*{Non-relativistic regime} 

Finally, letting both $m \to 0$ and $\eps \to 0$, we obtain the {\bf Euler model for radially symmetric, non-relativistic fluid flows} $\Mscr (0,k, 0)$: 
\bel{nonrelativ-con'}
\aligned
&\del_t(r^2\rho) +\del_r(r^2\rho v) = 0,
\qquad
\del_t(r^2\rho v) +\del_r\Big(r^2 \rho ( v^2+k^2) \Big) = 2k^2 \rho r, 
\endaligned
\ee 
and its pressureless version
\bel{nonrelativ-conpzero}
\aligned
&\del_t(r^2\rho) +\del_r(r^2\rho v) = 0,
\qquad \del_t(r^2\rho v) +\del_r\Big(r^2 \rho v^2 \Big) = 0. 
\endaligned
\ee 

\subsection*{Fluid flows in a black hole background with extreme mass}

Another limit of interest is obtained when $M\to +\infty$. In order to analyze this regime, we fix $\eps > 0$ and $k \in (0, 1/\eps)$ and we  define a rescaled variable $\tilder := {r\over 2M} \in (1, +\infty)$. We can rewrite \eqref{Euler-con} in the form
\bel{Euler-change}
\aligned 
&  \del_t\Bigg( \tilder^2 {1 + \eps^4 k^2 v^2 \over 1 - \eps^2 v^2} \rho \Bigg) + {1\over 2M} \del_ {\tilder} \Bigg(\tilder(\tilder-1) {1 + \eps^2 k^2 \over 1 - \eps^2 v^2} \rho v\Bigg) = 0,
\\
& \del_t \Bigg(\tilder(\tilder-1) {1 + \eps^2 k^2 \over 1 - \eps^2 v^2}\rho v \Bigg) + {1\over 2M} \del_ {\tilder} \Bigg((\tilder-1)^2 {v^2+ k^2 \over 1 - \eps^2 v^2} \rho\Bigg) = \Omegat,
\\
& \Omegat := {3 \over 4M} {\tilder-1 \over \tilder} {v^2+ k^2 \over 1 - \eps^2 v^2} \rho 
            - {1 \over 4M} {\tilder-1\over \eps^2 \tilder} {1 + \eps^4 k^2 v^2 \over 1 - \eps^2 v^2}\rho + {(\tilder-1)^2 \over M \tilder}k^2 \rho, 
\endaligned 
\ee 
and we now formally investigate the singular limit $M \to +\infty$. 

\begin{lemma}
\label{form-express}
For solutions to \eqref{Euler-change} expanded in the form (for $t \geq 0$ and $\tilder>1$)
$$
\aligned
& \rho(t, \tilder) =  \sum\limits_{j= 0}^{+\infty} {1\over M^j}\rho^{(j)}(t,\tilder),
\qquad 
v(t, \tilder) = \sum\limits_{j= 0}^{+\infty} {1\over M^j}v^{(j)}(t,\tilder),
\endaligned
$$
it follows that the functions $\rho^{(0)}, v^{(0)}$ must be independent of the time variable $t$, while 
$\rho^{(j)}, v^{(j)}$ satisfy a coupled system of ordinary differential equations in the time variable: 
\bel{ODE-j}
\aligned
\del_t \rho^{(j)} (t, \cdot) & = \sum_{i = 0}^{j-1}  \Big( A_3^{j,i} \rho^{(i)} (t, \cdot) +A_4^{j,i} v^{(i)}(t, \cdot) \Big),
\\
  \del_t v^{(j)}(t, \cdot) & = \sum_{i = 0}^{j-1} \Big( B_3^{j, i} \rho^{(i)}(t, \cdot) +B_4^{j,i} v^{(i)}(t, \cdot) \Big),
\endaligned
\ee
in which the coefficients are constants depending upon $\eps$ and $k$ only. 
\end{lemma}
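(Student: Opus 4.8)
The plan is to substitute the formal power series expansions of $\rho$ and $v$ into the rescaled Euler system \eqref{Euler-change} and collect terms order by order in powers of $1/M$. First I would observe that every spatial-derivative term in \eqref{Euler-change} carries an explicit prefactor $1/(2M)$, while the source term $\Omegat$ carries an explicit prefactor of order $1/M$ as well; hence the only terms of order $M^0$ (i.e. order $1$) in either equation are the time-derivative terms $\del_t\big(\tilder^2 \frac{1+\eps^4k^2v^2}{1-\eps^2v^2}\rho\big)$ and $\del_t\big(\tilder(\tilder-1)\frac{1+\eps^2k^2}{1-\eps^2v^2}\rho v\big)$, evaluated on the leading profiles $\rho^{(0)}, v^{(0)}$. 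Setting the $M^0$-coefficient to zero therefore forces $\del_t\big(\tilder^2 Q_1(\rho^{(0)},v^{(0)})\big) = 0$ and $\del_t\big(\tilder(\tilder-1) Q_2(\rho^{(0)},v^{(0)})\big) = 0$, where $Q_1, Q_2$ are the obvious nonlinear flux densities. Since $\tilder$ is a frozen parameter in these relations, this gives $\del_t Q_1(\rho^{(0)},v^{(0)}) = \del_t Q_2(\rho^{(0)},v^{(0)}) = 0$; because the map $(\rho,v) \mapsto (Q_1,Q_2)$ is (locally) invertible on the physical range $\rho>0$, $|v|<1/\eps$ — one can check its Jacobian is nonzero, the relevant computation being essentially the strict hyperbolicity computation of Lemma~\ref{Riemann-inva} — it follows that $\rho^{(0)}$ and $v^{(0)}$ are independent of $t$.

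Next I would extract the order-$M^{-j}$ equations for $j \geq 1$. On the left-hand side, the time-derivative terms contribute $\del_t$ of the $j$-th Taylor coefficient of the flux densities $Q_1, Q_2$, which by the chain rule and Fa\`a di Bruno expands as $\del_t\rho^{(j)}$ and $\del_t v^{(j)}$ multiplied by the partial derivatives of $Q_1, Q_2$ evaluated at the leading profiles $(\rho^{(0)}, v^{(0)})$, plus terms involving only $\del_t \rho^{(i)}, \del_t v^{(i)}$ with $i < j$ (and products of lower-order coefficients). The spatial-derivative and source terms, because of their explicit $1/(2M)$ and $1/(4M)$ prefactors, contribute at order $M^{-j}$ only through the coefficients $\rho^{(i)}, v^{(i)}$ with $i \leq j-1$; crucially these involve spatial derivatives $\del_{\tilder}$, but $\tilder$ is fixed, so after dividing through we obtain, at each fixed $\tilder$, a linear system in $\del_t\rho^{(j)}, \del_t v^{(j)}$ whose right-hand side is a linear combination (with coefficients depending on $\tilder$ and on the frozen leading profiles $\rho^{(0)}(\tilder), v^{(0)}(\tilder)$, hence ultimately on $\eps, k$ only once the leading profile is fixed) of the lower-order $\rho^{(i)}, v^{(i)}$ and their $\tilder$-derivatives. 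Inverting the $2\times 2$ Jacobian matrix of $(Q_1,Q_2)$ with respect to $(\rho,v)$ — which is invertible by the same nonvanishing-determinant observation — puts the system in the explicit solved form \eqref{ODE-j}, with coefficients $A_3^{j,i}, A_4^{j,i}, B_3^{j,i}, B_4^{j,i}$ obtained by this inversion.

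I would then clean up the statement that the coefficients depend only on $\eps$ and $k$: strictly speaking they also depend on $\tilder$ and on the leading profile, but the point is that they do not depend on $M$ and do not involve any further unknowns, so the recursion \eqref{ODE-j} determines all higher-order coefficients from the lower-order ones together with initial data. I would note that the spatial-derivative terms produce $\del_{\tilder}$-derivatives of the lower-order coefficients, so implicitly the $A$'s and $B$'s here are differential operators in $\tilder$ rather than pure multiplication operators; if the intended reading of \eqref{ODE-j} is that they are genuine constants, then one interprets the sum as being over the coefficient functions and their derivatives, and I would make that convention explicit before carrying out the induction.

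The main obstacle, and the only genuinely non-routine point, is the bookkeeping of the chain-rule (Fa\`a di Bruno) expansion of the nonlinear flux densities $Q_1(\rho,v), Q_2(\rho,v)$ along the power series: one must verify carefully that the coefficient of $M^{-j}$ on the left-hand side is exactly (Jacobian at leading order)$\cdot(\del_t\rho^{(j)}, \del_t v^{(j)})^{T}$ plus strictly-lower-order data, with no $\del_t\rho^{(j)}$ or $\del_t v^{(j)}$ hiding elsewhere, and that the Jacobian of $(\rho,v)\mapsto(Q_1,Q_2)$ is nonzero on the physical range so that the inversion producing \eqref{ODE-j} is legitimate. Everything else is a routine, if tedious, matching of coefficients. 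Since the statement is explicitly labelled a formal computation, I would not dwell on convergence of the series.
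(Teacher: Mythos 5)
Your proposal is correct and follows essentially the same route as the paper's proof: substitute the expansion into \eqref{Euler-change}, observe that the spatial-derivative and source terms carry explicit $1/M$ prefactors so that the order-$M^{0}$ equations force $\del_t\rho^{(0)}=\del_t v^{(0)}=0$ via the invertibility (non-degeneracy) of the flux map, and then read off the order-$M^{-j}$ equations as a non-degenerate linear system in $\del_t\rho^{(j)},\del_t v^{(j)}$ driven by lower-order coefficients. Your remark that the coefficients in \eqref{ODE-j} really depend on $\tilder$ and the frozen leading profile, and act through $\del_{\tilder}$-derivatives of the lower-order terms, is a fair and slightly more careful reading than the paper's own phrasing, but it does not change the argument.
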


\begin{proof} Keeping only the terms of zero-order in ${1\over M}$, we easily find the ordinary differential system
\bel{1/M0}
\aligned 
&  \del_t\Bigg( {1 + \eps^4 k^2 (v^{(0)})^2 \over 1 - \eps^2 (v^{(0)})^2} \rho^{(0)} \Bigg) = 0,
\qquad
 \del_t \Bigg( {1 + \eps^2 k^2 \over 1 - \eps^2 (v^{(0)})^2}\rho^{(0)}v^{(0)} \Bigg) = 0.
\endaligned 
\ee
which is equivalent to saying that $\del_t\rho^{(0)} =  \del_t v^{(0)} = 0$, so that $\rho^{(0)} = \rho^{(0)}(\tilder)$
and $v^{(0)} = v^{(0)}(\tilder)$ depend on the spatial variable only. Next, keeping the terms of the first-order in ${1\over M}$, we find the following system of equations: 
\bel{1/M1}
\aligned 
&   \tilder^2  \del_t\Bigg( \rho^{(1)} {1 + \eps^4 k^2 (v^{(0)})^2 \over 1 - \eps^2 (v^{(0)})^2} +\rho^{(0)} v^{(0)} v^{(1)} \Big({2 \eps^4 k^2  \over 1 - \eps^2 (v^{(0)})^2}+2\eps(1 + \eps^4 k^2  v^{(0)^2}) \Big) \Bigg)
\\
& \hskip5.cm + {1\over 2} \del_ {\tilder} \Big( \tilder(\tilder-1) {1 + \eps^2 k^2 \over 1 - \eps^2 v^{(0)^2}} \rho^{(0)} v^{(0)} \Big) = 0,
\\
& \tilder(\tilder-1) \del_t \Big( {1 + \eps^2 k^2 \over 1 - \eps^2 v^{(0)^2}}(\rho^{(1)} v^{(0)}+ \rho^{(0)} v^{(1)}) +2\eps^2(1 + \eps^2 k^2) \rho^{(0)}v^{(0)^2} v^{(1)} \Big)
\\
&  \hskip5.cm + {1\over 2} \del_ {\tilder} \Big((\tilder-1)^2 {v^{(0)^2}+ k^2 \over 1 - \eps^2 v^{(0)^2}} \rho^{(0)} \Big)
\\
& = {3 \over 4} {\tilder-1 \over \tilder} {v^{(0)^2}+ k^2 \over 1 - \eps^2 v^{(0)^2}} \rho^{(0)} 
 - {1 \over 4} {\tilder-1\over \eps^2 \tilder} {1 + \eps^4 k^2 v^{(0)^2} \over 1 - \eps^2 v^{(0)^2}}\rho^{(0)} + {(\tilder-1)^2 \over  \tilder}k^2 \rho^{(0)}. 
\endaligned 
\ee
The functions $\rho^{(0)}, v^{(0)}$ being already fixed, we see that \eqref{1/M1} is a differential system in the time variable $t$, which has the general form (higher-order terms $\rho^{(j)}, v^{(j)}$ (with $j>2$) being determined similarly):   
$$
\aligned
A_1^j \del_t \rho^{(j)} (t, \cdot) +A_2^j  \del_t v^{(j)}(t, \cdot) & = \sum_{i = 0}^{j-1}  \Big( A_3^{j,i} \rho^{(i)} (t, \cdot) +A_4^{j,i} v^{(i)}(t, \cdot) \Big),
\\
B_1^j \del_t \rho^{(j)} (t, \cdot) +B_2^j  \del_t v^{(j)}(t, \cdot) & = \sum_{i = 0}^{j-1} \Big( B_3^{j, i} \rho^{(i)}(t, \cdot) +B_4^{j,i} v^{(i)}(t, \cdot) \Big),
\endaligned
$$
in which the coefficients are constants depending upon $\eps$ and $k$ only. This system is non-degenerate in the sense that it can be expressed as an ordinary differential system in $t$ for the functions $\del_t \rho^{(j)} (t, \cdot)$ and $\del_t v^{(j)} (t, \cdot)$. Changing the notation, we thus arrive at \eqref{ODE-j}. 
\end{proof}

In view of Lemma~\ref{form-express}, in the extreme mass regime $M \to +\infty$, the leading-order behavior of solutions only depends on the space variable $\tilder$, that is, 
$$
\rho(t, \tilder) = \rho^{(0)}( \tilder), \qquad v(t, \tilder) = v^{(0)}( \tilder). 
$$
Proceeding at a formal level, the following result is now immediate. It would be interesting to rigorously justify the expansion below, but this is outside the scope of the present paper. 

\begin{proposition}[Asymptotic solutions for black holes with extreme mass] 
Consider the Euler model \eqref{Euler-change} with initial data prescribed at $t=0$:
$$ 
\rho(0, \tilder) = \rho^{(0)}( \tilder), \qquad v(0, \tilder) = v^{(0)}( \tilder), \qquad \tilde r > 0.
$$
\begin{enumerate}
\item If the data $\rho^{(0)}, v^{(0)}$ belong to $C^l$ for some $l \geq 1$, then there exists an approximate solution, i.e. 
$$
\aligned
&\rhot(t, \tilder) := \rho^{(0)}( \tilder) +  \sum\limits_{j=1}^ l {1\over M^j}\rho^{(j)}(t,\tilder),
\qquad
 \velt(t, \tilder) := v^{(0)}( \tilder) + \sum\limits_{j=1}^ l {1\over M^j}v^{(j)}(t,\tilder), 
\endaligned
$$ 
which satisfies \eqref{Euler-change} up to an error 
${\mathcal O}(1/M^{l+1})$.  

\item If $\rho^{(0)}, v^{(0)}$ has $C^\infty$ regularity, then exists a formal series defined at all order. 

\end{enumerate}
\end{proposition}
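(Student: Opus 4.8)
The plan is to treat this as a standard formal (WKB-type) expansion in the small parameter $1/M$. I would substitute the ansatz $\rho = \sum_{j\ge 0} M^{-j}\rho^{(j)}(t,\tilder)$ and $v = \sum_{j\ge 0} M^{-j}v^{(j)}(t,\tilder)$ into the rescaled system \eqref{Euler-change}, expand every nonlinear coefficient in powers of $1/M$, collect the contributions of each homogeneous order, and solve the resulting hierarchy recursively. Since the spatial flux terms of \eqref{Euler-change} carry an explicit prefactor $1/(2M)$ and the source $\Omegat$ a prefactor of order $1/M$, the order-$j$ relation couples $\del_t\rho^{(j)}$ and $\del_t v^{(j)}$ (which enter through the time-derivative terms, and those carry no weight in $1/M$) only to the lower-order profiles $\rho^{(i)},v^{(i)}$ and finitely many of their $\tilder$-derivatives with $i\le j-1$. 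This is precisely the structure already isolated in Lemma~\ref{form-express}, whose non-degenerate system \eqref{ODE-j} I would take as the recursion.

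Next I would run the construction by induction on $j$. At order zero, \eqref{1/M0} forces $\del_t\rho^{(0)}=\del_t v^{(0)}=0$, so $\rho^{(0)}=\rho^{(0)}(\tilder)$ and $v^{(0)}=v^{(0)}(\tilder)$, consistent with the prescribed data for all $t$. Assuming $\rho^{(i)},v^{(i)}$ have been built for $i\le j-1$, the right-hand sides of \eqref{ODE-j} are known functions of $(t,\tilder)$; \eqref{ODE-j} is then a linear first-order ODE system in $t$ at each fixed $\tilder$, which I integrate with the initial condition $\rho^{(j)}(0,\cdot)=v^{(j)}(0,\cdot)=0$. This choice is exactly what makes every truncation of the series agree with the data $\rho^{(0)},v^{(0)}$ at $t=0$. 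One then checks inductively that each $\rho^{(j)},v^{(j)}$ is polynomial in $t$ with coefficients that are smooth functions of $(\rho^{(0)},v^{(0)})$ and of finitely many $\tilder$-derivatives thereof; because the flux terms differentiate the order-$(j-1)$ profile once in $\tilder$, exactly one derivative is lost per order, so that $C^l$ data yield $\rho^{(j)},v^{(j)}\in C^{l-j}$ in $\tilder$ for $0\le j\le l$, while $C^\infty$ data yield $C^\infty$ profiles at every order.

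Finally I would settle the two statements. For (1), form $\rhot=\sum_{j=0}^l M^{-j}\rho^{(j)}$ and $\velt=\sum_{j=0}^l M^{-j}v^{(j)}$, substitute into \eqref{Euler-change}, and expand once more: by construction the contributions of homogeneous orders $M^0,\dots,M^{-l}$ cancel, while the remaining terms --- products of correctors of combined order exceeding $l$, together with the flux- and source-contributions of the top corrector carried by the $1/M$ prefactors --- are collectively $\mathcal O(1/M^{l+1})$ on compact subsets of $\{t\ge 0,\ \tilder>1\}$. For (2), the $C^\infty$ hypothesis removes any obstruction to iterating \eqref{ODE-j}, so the recursion produces $\rho^{(j)},v^{(j)}$ for every $j$ and hence a formal series solving \eqref{Euler-change} order by order. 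The delicate point --- and the step I expect to demand the most care --- is the bookkeeping at the top of the truncation in (1): the flux term acting on $\rho^{(l)},v^{(l)}$ nominally requires one more $\tilder$-derivative than $C^l$ regularity supplies, so the residual estimate should either be read in a sense that does not differentiate the last corrector (legitimate, since this contribution already carries a $1/M$ factor) or else the hypothesis should be strengthened to $\rho^{(0)},v^{(0)}\in C^{l+1}$; carrying this derivative count cleanly through the nonlinear coefficients of \eqref{Euler-change} is the only genuinely technical ingredient, the rest being the order-by-order matching already prepared by Lemma~\ref{form-express}.
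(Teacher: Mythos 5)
Your argument follows the same route the paper takes: the paper declares the proposition ``immediate'' from Lemma~\ref{form-express}, i.e.\ from the time-independence of $(\rho^{(0)},v^{(0)})$ and the recursive ODE hierarchy \eqref{ODE-j}, which is exactly the induction you carry out. Your additional remark about the derivative count at the top order of the truncation is a sensible refinement that the paper does not address (it explicitly leaves rigorous justification of the expansion outside its scope), but it does not change the substance of the argument.
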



\section{Non-relativistic equilibria on a Schwarzschild background} 
\label{sec:4}

We now turn our attention to the main model of interest in this section, that is, the Euler model for non-relativistic flows on a Schwarzschild background \eqref{nonrelativ-con}, which we have denoted by $\Mscr (0, k, m)$. We begin by considering general pressure-laws, that is, 
\bel{nonrelativ}
\aligned
&\del_t(r^2\rho) +\del_r(r^2\rho v) = 0,
\\
&\del_t(r^2\rho v) +\del_r\Big(r^2(\rho v^2+p) \Big) -2pr+m\rho= 0, 
\endaligned
\ee
for solutions defined on $r \in (0,+\infty)$. We search for steady state solutions $\rho=\rho(r)$ and $v=v(r)$, which satisfy the differential system:
\bel{steady1}
\aligned 
& {d\over dr}(r^2\rho v) = 0, 
\\
&{d\over dr} \Big(r^2(\rho v^2+p) \Big) -2pr+m\rho = 0
\endaligned
\ee
with initial condition $\rho_0, v_0 > 0$ prescribed at some given radius $r=r_0 > 0$,
\bel{eq:JS98}
\rho(r_0) =\rho_0 > 0, \qquad v(r_0) =v_0.
\ee
It is straigthforward to check the following statement. 

\begin{lemma}
\label{Twoformula}
All solutions \eqref{steady1} --\eqref{eq:JS98} satisfy 
\bel{eq:308}
r^2\rho(r) v(r) =r_0^2\rho_0v_0,
\ee
\bel{eq:309}
{1\over2}v(r)^2+h(\rho(r)) -m{1\over r} = {1\over 2}v_0^2+h(\rho_0) -m {1\over{r_0}}, 
\ee
where  $h(\rho) :=\int^\rho {p'(s) \over s} \, ds$.
\end{lemma}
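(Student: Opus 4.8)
The plan is to integrate the two equations of the steady-state system \eqref{steady1} directly. The first equation $\frac{d}{dr}(r^2\rho v)=0$ is already in conservative form, so integrating from $r_0$ to $r$ immediately yields \eqref{eq:308}, namely $r^2\rho(r)v(r)=r_0^2\rho_0 v_0$. This is the mass-flux conservation and requires no further work.

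For \eqref{eq:309}, the idea is to massage the second equation of \eqref{steady1} into an exact derivative using the first. Expanding the momentum equation,
\[
\frac{d}{dr}\big(r^2\rho v^2\big) + \frac{d}{dr}\big(r^2 p\big) - 2pr + m\rho = 0,
\]
and writing $r^2\rho v^2 = (r^2\rho v)\,v$, I would use \eqref{eq:308} to treat $r^2\rho v$ as the constant $r_0^2\rho_0 v_0$, so that $\frac{d}{dr}(r^2\rho v^2) = r_0^2\rho_0 v_0\, v' = r^2\rho v\, v' = r^2\rho\, \frac{d}{dr}\!\big(\tfrac12 v^2\big)$. For the pressure terms, $\frac{d}{dr}(r^2 p) - 2pr = r^2 p'(\rho)\rho'$, which cancels the stray $-2pr$ coming from the spherical geometry. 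Thus the momentum equation collapses to
\[
r^2\rho\,\frac{d}{dr}\Big(\tfrac12 v^2\Big) + r^2 p'(\rho)\,\rho' + m\rho = 0.
\]
Dividing by $r^2\rho$ (legitimate since $\rho>0$) gives $\frac{d}{dr}\big(\tfrac12 v^2\big) + \frac{p'(\rho)}{\rho}\rho' + \frac{m}{r^2}=0$, i.e. $\frac{d}{dr}\big(\tfrac12 v^2 + h(\rho) - m/r\big)=0$ with $h(\rho)=\int^\rho p'(s)/s\,ds$ as defined, since $\frac{d}{dr}h(\rho) = h'(\rho)\rho' = \frac{p'(\rho)}{\rho}\rho'$ and $\frac{d}{dr}(-m/r) = m/r^2$. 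Integrating from $r_0$ to $r$ produces the Bernoulli-type relation \eqref{eq:309}.

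The only mild subtlety, and the step I would be most careful about, is the manipulation $\frac{d}{dr}(r^2\rho v^2) = r^2\rho\,\frac{d}{dr}(\tfrac12 v^2)$, which is not a pointwise identity but relies essentially on \eqref{eq:308}: one must substitute the conserved quantity \emph{before} differentiating the $v$ factor, rather than applying the product rule blindly (the latter would generate extra terms $v^2\frac{d}{dr}(r^2\rho)$ that only vanish after invoking \eqref{eq:308} again). Everything else is routine calculus for smooth solutions on an interval where $\rho>0$, so the statement "straightforward to check" is accurate; no compactness, regularity, or existence argument is needed here since we are merely deriving algebraic constraints satisfied by any classical solution.
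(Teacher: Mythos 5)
Your proof is correct and is exactly the computation the paper has in mind: the paper omits the proof of this lemma as ``straightforward to check,'' and its proof of the relativistic analogue (Lemma~\ref{sol-ODE}) proceeds by the same manipulation -- use the first (conservative) equation to reduce the momentum balance to $v\,v' + \tfrac{p'(\rho)}{\rho}\rho' + \tfrac{m}{r^2}=0$ and integrate. Your cautionary remark about $\tfrac{d}{dr}(r^2\rho v^2)$ is not really a subtlety: applying the product rule to $(r^2\rho v)\cdot v$ and invoking $\tfrac{d}{dr}(r^2\rho v)=0$ gives the same result directly.
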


In view of \eqref{eq:308}, we see that the solution $v$ has the sign of the initial condition $v_0$, and without loss of generality, we now assume that $v_0 \geq 0$. We are especially interested in a constant sound speed, that is, $p= k^2 \rho$ with $k> 0$, hence 
\bel{steady1-con}
\aligned
& {d\over dr}(r^2\rho v) = 0,
 \\
 &{d\over dr} \Big(r^2 \rho (v^2+k^2) \Big) -2k^2 r \rho +m\rho= 0. 
\endaligned
\ee
According to Lemma~\ref{Twoformula}, we must solve the system 
\bel{eq:ksKEY}
\aligned 
& r^2\rho v=r_0^2\rho_0v_0,
\\
& {1\over2}v^2+k^2 \ln \rho -m{1\over r} = {1\over 2}v_0^2+k^2 \ln \rho_0-m {1\over{r_0}}.
\endaligned 
\ee
After eliminating $\rho$, we find an algebraic equation for the velocity, i.e. 
\bel{eq:306}
{1\over2}v^2+k^2 \ln {r_0^2 v_0 \over r^2 v} -m{1\over r} = {1\over 2}v_0^2-m {1\over{r_0}} 
\ee
and we now focus on this equation. 

Let us introduce the function  
\bel{Fon-G}
G(r,v;r_0,v_0):= {1 \over 2} \big(v^2 - v_0^2 \big) + k^2 \ln {r_0^2 v_0 \over r^2 v} - {m\over r}+ {m \over{r_0}}.
\ee
By definition, if $v=v(r)$ is a steady state solution, then $G(r,v(r);r_0,v_0) \equiv 0$ and, in addition, $v(r_0) =v_0$. Clearly, we have $G(r_0,v_0;r_0,v_0) = 0$. Differentiating $G$ with respect to $v$ and $r$, we obtain  
$$
\del_v G(r,v;r_0,v_0) =v- {k^2\over v}, \qquad
\del_r G(r,v;r_0,v_0) = {1\over r^2} (m-2k^2 r). 
$$
Hence, the function $G$ is decreasing with respect to $v$ when $v< k$, and is increasing when $v> k$ (that is, a non-sonic velocity). Also, the derivative of a solution $v=v(r)$ is found to be  
\bel{derive-one}
{dv\over  dr} = {v\over  r^2} {2k^2 r -m \over v^2-k^2}. 
\ee

\begin{figure}[htbp]
\centering
\epsfig{figure=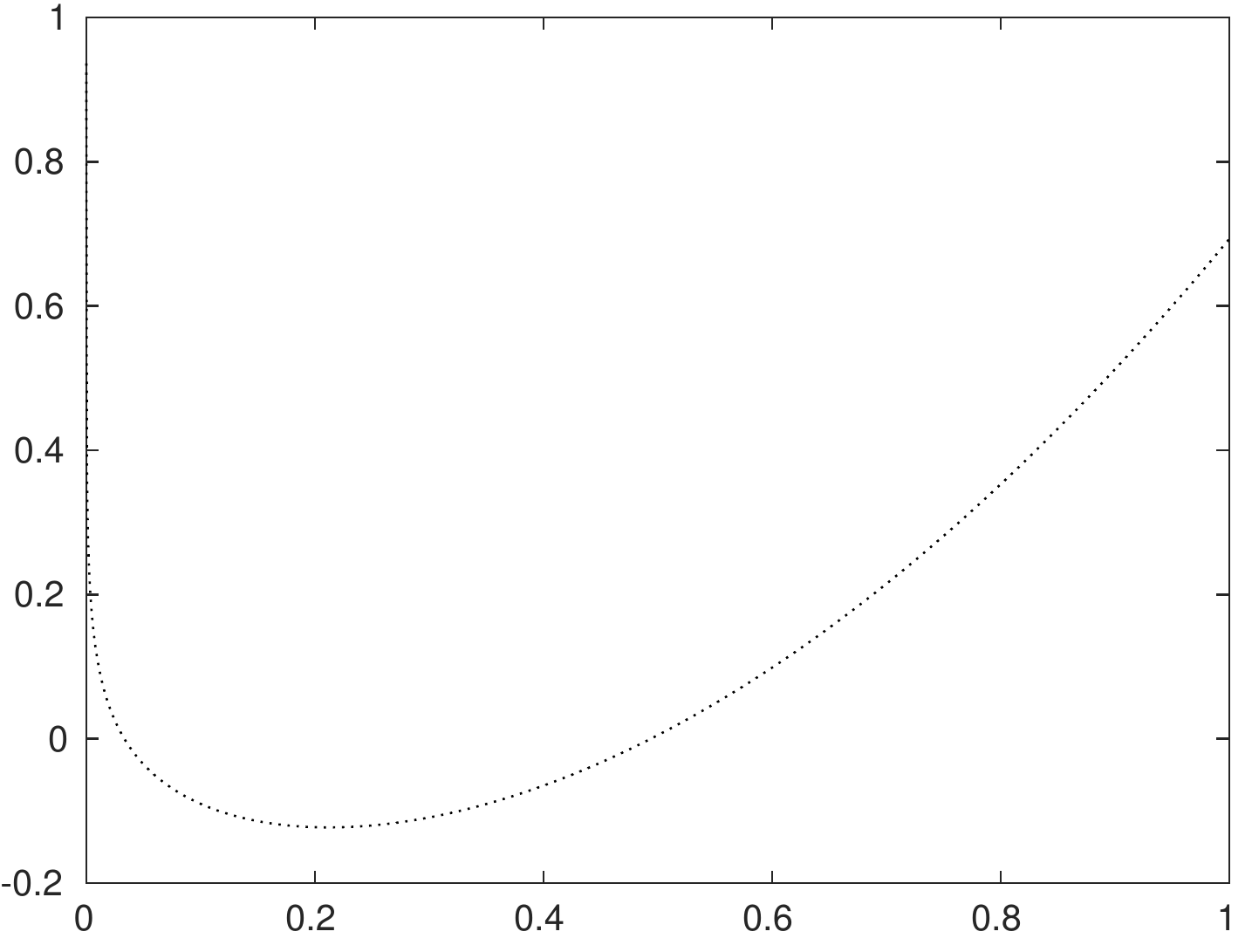,height=2.in} 
\caption{Plot of the map $v \mapsto G(v) =G(r,v; r_0, v_0)$.}
\end{figure}

Since $\del_v G(r_0,v_0;r_0,v_0) \neq 0 $ when $v_0 \neq k$, it is immediate to apply the implicit function theorem for the function $v=v(r)$ and then recover the density $\rho=\rho(r)$ by \eqref{eq:ksKEY}. We thus have the following local existence statement. 

\begin{lemma}[Locally-defined steady state solutions]
\label{localsol}
Given any values $r_0 > 0$, $\rho_0 > 0$, $v_0 \geq 0$ with $v_0 \neq k$, the system \eqref {steady1-con} with initial condition \eqref{eq:JS98} at $r=r_0$ admits a unique smooth solution $\rho=\rho(r)$ and $v=v(r)$ defined in a neighborhood $\Uscr_0$ of $r_0$ and denoted by 
$$
\rho=\rho(r;r_0,\rho_0,v_0), \qquad v=v(r;r_0,\rho_0,v_0). 
$$
\end{lemma}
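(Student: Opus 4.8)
The plan is to apply the implicit function theorem to the scalar equation $G(r,v;r_0,v_0)=0$ near the point $(r,v)=(r_0,v_0)$, and then recover $\rho$ from the first relation in \eqref{eq:ksKEY}. First I would observe that $G$, as defined in \eqref{Fon-G}, is smooth (indeed $C^\infty$) in a neighborhood of $(r_0,v_0)$ as long as $r>0$ and $v>0$; the only potential failures of smoothness are at $r=0$, $v=0$, neither of which occurs at the base point since $r_0>0$ and $v_0\geq 0$ — here one does have to use $v_0\neq k$ together with $v_0\geq 0$ to conclude $v_0>0$ is the relevant case, or else note that if $v_0=0$ the conservation law forces $\rho_0 v_0 r_0^2=0$, so in fact the hypotheses implicitly mean $v_0>0$ (I would state this reduction explicitly). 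We already computed $\del_v G(r,v;r_0,v_0)=v-k^2/v$, so $\del_v G(r_0,v_0;r_0,v_0)=v_0-k^2/v_0\neq 0$ precisely because $v_0\neq k$ (and $v_0\neq -k$ automatically, since $v_0\geq 0$).

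The second step is the routine invocation: since $G(r_0,v_0;r_0,v_0)=0$ and $\del_v G\neq 0$ there, the implicit function theorem yields an open neighborhood $\Uscr_0$ of $r_0$ and a unique $C^\infty$ function $v=v(r;r_0,\rho_0,v_0)$ on $\Uscr_0$ with $v(r_0)=v_0$ and $G(r,v(r);r_0,v_0)\equiv 0$; moreover $v$ depends smoothly on the parameters $r_0,v_0$ as well. Shrinking $\Uscr_0$ if necessary we keep $v(r)>0$ and bounded away from $k$ throughout $\Uscr_0$, which guarantees the denominator $v^2-k^2$ in \eqref{derive-one} stays nonzero, so $v$ genuinely solves the ODE form of the steady system. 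Then define $\rho(r;r_0,\rho_0,v_0):=\dfrac{r_0^2\rho_0 v_0}{r^2 v(r)}$; this is smooth and positive on $\Uscr_0$ because $v>0$ and $r>0$ there, and it satisfies $\rho(r_0)=\rho_0$. One checks that $(\rho,v)$ so defined satisfies \eqref{steady1-con}: the first equation holds by construction of $\rho$, and the second follows by differentiating the energy relation in \eqref{eq:ksKEY} (equivalently $G\equiv 0$) and combining with the first, which is exactly the content of Lemma~\ref{Twoformula} read in reverse.

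Finally, uniqueness: any smooth solution $(\rho,v)$ of \eqref{steady1-con}--\eqref{eq:JS98} on a connected neighborhood of $r_0$ must, by Lemma~\ref{Twoformula}, satisfy \eqref{eq:308} and \eqref{eq:306}, hence $G(r,v(r);r_0,v_0)\equiv 0$ with $v(r_0)=v_0$; the uniqueness clause of the implicit function theorem then forces $v$ to agree with the function produced above on a possibly smaller neighborhood, and \eqref{eq:308} forces the same for $\rho$. I expect no real obstacle here — the statement is genuinely a direct corollary of the implicit function theorem. The only point requiring a little care is the bookkeeping at the degenerate loci: one must be sure that $v_0\neq k$ is exactly what makes $\del_v G\neq 0$, that $v_0>0$ (so that $G$ is smooth and $\rho$ is well-defined and positive near $r_0$), and that the sonic set $\{v=k\}$ is avoided on all of $\Uscr_0$ after shrinking, so that the differential equation \eqref{derive-one} is non-singular — this last is what would break down, and is the reason the solution is only asserted to be locally defined.
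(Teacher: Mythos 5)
Your proposal is correct and follows essentially the same route as the paper: the text immediately preceding the lemma observes that $G(r_0,v_0;r_0,v_0)=0$ and $\del_v G(r_0,v_0;r_0,v_0)=v_0-k^2/v_0\neq 0$ when $v_0\neq k$, applies the implicit function theorem to obtain $v=v(r)$, and recovers $\rho$ from the first relation in \eqref{eq:ksKEY}. Your additional bookkeeping (the reduction to $v_0>0$, shrinking $\Uscr_0$ to stay away from the sonic set, and the reverse implication from $G\equiv 0$ back to \eqref{steady1-con}) only makes explicit what the paper leaves implicit.
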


According to \eqref{derive-one}, the derivative of a solution $v=v(r)$ may blow up if at some radius $r_*$ the velocity $v(r_*) = k$ reaches the sonic value. We will use the following notation. 

\begin{definition} A radius $r_*> 0$ is called a {\bf sonic point} if it is a root of the equation 
\bel{eq:319}
{1 \over 2} \big(k^2 - v_0^2 \big) + k^2 \ln {v_0 \over k} - k^2 \ln {r^2 \over r_0^2} - {m\over r} + {m \over r_0} = 0.
\ee
\end{definition}

If such a radius $r_*$ exists, then the derivative ${dv\over dr} $ tends to infinity when $r \to r_*$ and the velocity loses its regularity. 

\begin{lemma}
\label{sonic -point} 
One can distinguish between two alternatives: 
\begin{enumerate}

\item Either $ {3\over 2}+ \ln {m^2 \over 4 k^3 r_0^2 v_0}+ {1 \over 2 k^2}(v_0^2- {2 m \over r_0}) > 0$ and there exists no sonic point.

\item Or $ {3\over 2}+ \ln {m^2 \over 4 k^3 r_0^2 v_0}+ {1 \over 2 k^2}(v_0^2- {2 m \over r_0}) \leq 0$, there exist two (possibly coinciding) sonic points, denoted by $\underr_*\leq \barr_* $. Moreover, in this case, one has: 

\bei 

\item When $r_0 \geq {m\over 2k^2}$, the roots satisfy $\underr_*\leq \barr_*\leq r_0$.

 \item When $r_0 <{m\over 2k^2}$, the rots satisfy $r_0 <\underr_*\leq \barr_* $. 

\eei 

\end{enumerate}
\end{lemma}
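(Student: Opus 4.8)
The plan is to analyze the sonic-point equation \eqref{eq:319} by studying the scalar function
$$
\Phi(r) := {1\over 2}\big(k^2 - v_0^2\big) + k^2 \ln{v_0\over k} - 2k^2 \ln{r\over r_0} - {m\over r} + {m\over r_0},
$$
so that a sonic point is precisely a root of $\Phi$. First I would compute $\Phi'(r) = -\,2k^2/r + m/r^2 = (m - 2k^2 r)/r^2$, which vanishes at the unique critical radius $r_c := m/(2k^2)$, is positive for $r < r_c$ and negative for $r > r_c$. Hence $\Phi$ is strictly increasing on $(0, r_c)$ and strictly decreasing on $(r_c, +\infty)$, with $\Phi(r) \to -\infty$ as $r\to 0^+$ and as $r\to +\infty$. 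Therefore $\Phi$ has a root if and only if its maximum value $\Phi(r_c)$ is $\geq 0$, in which case there are exactly two roots (coinciding when $\Phi(r_c) = 0$), one in $(0, r_c]$ and one in $[r_c, +\infty)$; I will call them $\underr_* \le \barr_*$.

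The second step is to evaluate $\Phi(r_c)$ explicitly and identify it with the quantity in the statement. Substituting $r = r_c = m/(2k^2)$ gives $-m/r_c = -2k^2$, $m/r_0$ stays, and $-2k^2\ln(r_c/r_0) = -2k^2\ln\big(m/(2k^2 r_0)\big)$. Collecting terms,
$$
\Phi(r_c) = {1\over 2}k^2 - {1\over 2}v_0^2 + k^2\ln{v_0\over k} - 2k^2\ln{m\over 2k^2 r_0} - 2k^2 + {m\over r_0},
$$
and dividing by $k^2$ and rearranging the logarithms (using $\ln(v_0/k) - 2\ln\big(m/(2k^2 r_0)\big) = \ln\big(4k^3 r_0^2 v_0/m^2\big) = -\ln\big(m^2/(4k^3 r_0^2 v_0)\big)$) yields
$$
{\Phi(r_c)\over k^2} = -{3\over 2} - \ln{m^2\over 4k^3 r_0^2 v_0} - {1\over 2k^2}\Big(v_0^2 - {2m\over r_0}\Big).
$$
Thus $\Phi(r_c) < 0$ is equivalent to the inequality in alternative (1) (no sonic point), and $\Phi(r_c) \geq 0$ is equivalent to the inequality in alternative (2) (two possibly coinciding sonic points). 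This is the only genuinely computational point and is routine; the main care needed is to keep the sign conventions in the logarithms straight.

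For the location of the roots relative to $r_0$ in alternative (2), I would use the identity $\Phi(r_0) = {1\over 2}(k^2 - v_0^2) + k^2\ln(v_0/k)$, together with the elementary inequality $\ln x \le x - 1$ applied to $x = (v_0/k)^2$, which gives $k^2\ln(v_0/k) = {1\over 2}k^2\ln(v_0/k)^2 \le {1\over 2}(v_0^2 - k^2)$, hence $\Phi(r_0) \le 0$, with equality only if $v_0 = k$ (excluded since $v_0\ne k$); so in fact $\Phi(r_0) < 0$. Since $\Phi < 0$ outside the closed interval $[\underr_*,\barr_*]$ and $\Phi \ge 0$ on it (strictly positive between the roots when they are distinct), the point $r_0$ lies outside $(\underr_*, \barr_*)$, so either $r_0 \le \underr_*$ or $r_0 \ge \barr_*$. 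To decide which, I observe that $\Phi$ is increasing to the left of $r_c$ and decreasing to the right: if $r_0 \ge r_c = m/(2k^2)$, then $r_0$ lies on the decreasing branch, and since $\Phi(r_0) < 0 \le \Phi(r_c)$ while the decreasing branch beyond $\barr_*$ is where $\Phi$ goes negative, we must have $r_0 \ge \barr_* \ge \underr_*$; symmetrically, if $r_0 < m/(2k^2)$, then $r_0$ is on the increasing branch, before $\underr_*$, giving $r_0 < \underr_* \le \barr_*$. I expect the sign bookkeeping in identifying $\Phi(r_c)$ with the stated expression to be the only place where an error could creep in; the monotonicity structure of $\Phi$ makes everything else essentially immediate.
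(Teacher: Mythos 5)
Your proof is correct and follows essentially the same route as the paper: the paper splits the sonic-point equation into $f(v_0/k)=g(r)$ and compares $\max f$ with $\min g$ at $r=m/(2k^2)$, which is exactly your analysis of the single function $\Phi=k^2(f(v_0/k)-g)$ at its maximum, and your identification of $\Phi(r_c)/k^2$ with $-P(r_0,v_0)$ together with the monotonicity/sign argument at $r_0$ matches the paper's reasoning (your $\ln x\le x-1$ step is just the paper's observation that $f$ attains its maximum $0$ only at $x=1$). No gaps.
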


\begin{proof} We introduce the functions 
$f(x): = {1\over 2} - {1\over 2}x^2 + \ln x$ and $g(r): = \ln {r^2 \over r_0^2}+ {m\over k^2}({1\over r} - {1\over r_0})$, 
so that  a sonic point $r_*$ is characterized by the condition $f({v_0 \over k}) =g(r_*)$. Since $f'(x) = -x + {1\over x}$, we see that $f$ reaches its maximum at $x=1$, with $f(1) = 0$. Since we assume $v_0 \neq k$, we have $-\infty < f({v_0 \over k}) < 0$. 
Turning our atention to the function $g=g(r)$, we have $g'(r) = {1\over r^2}(2r- {m\over k^2})$. Therefore, the minimum of $g=g(r)$ is obtained at $r= {m\over 2 k^2}$, with $g({m\over 2 k^2}) =  \ln {m^2 \over 4 k^4 r_0^2}+2- {m \over k^2r_0}$. We now set 
$\tilde g(x): =  \ln {x^2\over4}+2- x$. 
According to our definition, $\tilde g({m\over r_0 k^2}) = g({m\over 2 k^2})$. We have $\tilde g'(x) = {2\over x} -1$, so that $-\infty < \tilde g(x) \leq \tilde g(2) = 0$. Therefore, \eqref{eq:319} admits no solution if and only if $f({v_0 \over k}) < g({m\over 2 k^2})$. This yields us the condition 
$
{1\over 2} - {1\over 2} \Big({v_0 \over k} \Big)^2 + \ln {v_0 \over k} 
<  
\ln {m^2 \over 4 k^4 r_0^2}+2- {m \over k^2r_0} 
$, 
as announced. 
If the opposite inequality holds, then, \eqref{eq:319} admits two solutions (which may coincide). Furthermore, since $g(r_0) = 0$, we have either $\underr_*\leq \barr_*\leq r_0$ or $ r_0 \leq \underr_*\leq \barr_*$. If $\underr_*\leq \barr_*\leq r_0$, we must have $g'(r_0) \geq 0$, which gives $r_0 \geq {m\over 2 k^2}$. 
\end{proof}

We now define a function $P$ which only depends upon the initial radius $r_0$ and the initial velocity $v_0$:
\bel{function-P}
P(r_0, v_0) : = {3\over 2}+ \ln {m^2 \over 4 k^3 r_0^2 v_0}+ {1 \over 2 k^2} \Big(v_0^2- {2 m \over r_0} \Big). 
\ee
According to Lemma~\ref{sonic -point}, the existence/non-existence of sonic points is determined by the sign of $P$. 
We will now distinguish between several cases and introduce a general notation: 
\be
\aligned 
& \mathfrak A:  P(r_0, v_0) > 0, \quad &&\mathfrak B:   P(r_0, v_0) \leq 0,
\\
& \mathfrak 1: v_0 < k, \quad &&\mathfrak 2: v_0 > k,
\\
& \mathfrak i: r_0 \geq {m \over 2k^2}, \quad &&\mathfrak {ii} : r_0 < {m \over 2k^2}. 
\endaligned 
\ee 
Hence, the symbol $\mathfrak {A1}$ refers to the case where both conditions 
${3\over 2}+ \ln {m^2 \over 4 k^3 r_0^2 v_0}+ {1 \over 2 k^2}(v_0^2- {2 m \over r_0}) > 0$ and $v_0 < k$ hold.

\begin{lemma}[Extension of solutions without sonic point]
\label{extension1}
Consider the local solution $\rho=\rho(r;r_0,\rho_0,v_0)$ and $v=v(r;r_0,\rho_0,v_0)$ given by Lemma~\ref{localsol}. 
\begin{enumerate}

\item {\bf Case} $\mathfrak {A1}$:. The solution can be extended tothe whole domain $(0,+\infty)$ and globally satisfies $v< k$, with 
$$ 
\lim_{r \to 0}v(r;r_0,\rho_0,v_0) =\lim_{r \to +\infty}v(r;r_0,\rho_0,v_0) = 0.
$$
One has the following monotonicity property: the velocity $v$ is increasing with respect to $r$ on the interval $(0, {m \over 2k^2})$, while
it is decreasing on the interval $({m \over 2k^2}, +\infty)$.

\item {\bf Case} $\mathfrak {A2}$. The solution can be extended to the whole domain $(0,+\infty)$ and globally satisfies $v> k$, with 
$$ 
\lim_{r \to 0}v(r;r_0,\rho_0,v_0) =\lim_{r \to +\infty}v(r;r_0,\rho_0,v_0) =+\infty. 
$$
One has the following monotonicity property: the velocity $v$ is decreasing with respect to $r$ on the interval $(0, {m \over 2k^2})$, while
it is increasing on the interval $( {m \over 2k^2}, +\infty)$. 
\end{enumerate}
\end{lemma}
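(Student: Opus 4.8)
The plan is to exploit the fact that along any solution the relation $G(r, v(r); r_0, v_0) \equiv 0$ together with the monotonicity structure of $G$ in $v$ completely pins down the behavior of $v$. Fix case $\mathfrak{A1}$, so $P(r_0,v_0) > 0$, which by Lemma~\ref{sonic -point} means there is no sonic point, i.e. the equation $f(v_0/k) = g(r)$ has no root in $(0,+\infty)$; and $v_0 < k$. First I would record, from the explicit derivative $\del_v G(r,v;r_0,v_0) = v - k^2/v$, that for $v \in (0,k)$ the map $v \mapsto G(r,v;r_0,v_0)$ is strictly decreasing, so the equation $G(r,v;r_0,v_0)=0$ has at most one root $v$ in $(0,k)$, and whenever it has one the implicit function theorem (as in Lemma~\ref{localsol}, using $\del_v G \neq 0$ away from $v=k$) produces a smooth local branch. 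The key point is that the solution can never reach $v = k$: if it did at some radius $r_*$, then plugging $v=k$ into $G(r_*, k; r_0, v_0) = 0$ gives exactly equation \eqref{eq:319}, i.e. $r_*$ would be a sonic point, contradicting $P(r_0,v_0)>0$. Hence the local solution, which starts with $v_0 < k$, stays strictly below $k$ as long as it exists.

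Next I would establish that the solution extends to all of $(0,+\infty)$. On the region $\{0 < v < k\}$ the function $\del_v G = v - k^2/v$ stays bounded away from $0$ on compact subintervals of $(0,k)$, so no finite-$r$ breakdown can occur by $\del_v G \to 0$; the only remaining possibilities for the maximal interval of existence to be proper would be $v \to 0^+$ or $v \to k^-$ at a finite endpoint, and I would rule these out using the conservation law $r^2 \rho v = r_0^2 \rho_0 v_0$ (so $v$ cannot vanish at finite $r > 0$ with $\rho$ finite) and the no-sonic-point property just proved (so $v \to k^-$ is impossible). Therefore the branch continues to every $r \in (0,+\infty)$, and by continuity plus the impossibility of crossing $v=k$ it satisfies $0 < v < k$ globally.

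For the monotonicity, I would simply read off \eqref{derive-one}: since $v^2 - k^2 < 0$ throughout case $\mathfrak{A1}$, the sign of $dv/dr$ is the sign of $-(2k^2 r - m) = m - 2k^2 r$, which is positive on $(0, m/2k^2)$ and negative on $(m/2k^2, +\infty)$; this gives the stated increase/decrease, and in particular $v$ attains its maximum at $r = m/2k^2$. Finally, for the limits at $r \to 0^+$ and $r \to +\infty$: since $v$ is monotone near each endpoint and bounded in $(0,k)$, the limits exist; feeding them into $G(r,v(r);r_0,v_0)=0$, i.e. into \eqref{eq:306}, the term $k^2 \ln\!\big(r_0^2 v_0 / (r^2 v)\big)$ forces $v \to 0$ in both limits (as $r \to +\infty$ with $v$ bounded the logarithm $\to -\infty$ unless $v \to 0$; as $r \to 0^+$ the $-m/r$ term $\to -\infty$, which again must be balanced by $k^2 \ln(1/v) \to +\infty$, i.e. $v \to 0$). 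Case $\mathfrak{A2}$ is handled by the mirror-image argument: now $v_0 > k$, the map $v \mapsto G$ is strictly increasing on $(k,+\infty)$, the no-sonic-point hypothesis again forbids reaching $v=k$, so $v > k$ is preserved, $v^2 - k^2 > 0$ flips the sign in \eqref{derive-one} giving the opposite monotonicity, and the same asymptotic analysis of \eqref{eq:306} now yields $v \to +\infty$ at both ends (the conserved quantity $r^2\rho v$ then also shows $\rho$ behaves sensibly).

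The main obstacle is the extension-to-the-whole-line step: one must argue carefully that the maximal interval of existence of the implicitly-defined branch is not cut short, and the cleanest way is to note that the only obstructions are $\del_v G \to 0$ (excluded since that needs $v \to k$, a sonic point) and escape of $v$ to the boundary values $0$ or $k$ at finite $r$ (excluded by the conservation law and the no-sonic-point hypothesis respectively), so in fact the explicit algebraic relation \eqref{eq:306} does all the work and no ODE continuation estimate beyond these sign considerations is needed.
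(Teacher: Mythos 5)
Your proposal is correct and follows essentially the same route as the paper: the absence of a sonic point keeps $v$ away from $k$, the implicit function theorem then extends the branch to all of $(0,+\infty)$, the limits at $r\to 0$ and $r\to+\infty$ are read off from the algebraic identity $G(r,v(r);r_0,v_0)=0$, and the monotonicity comes from \eqref{derive-one}. Your extra care in ruling out breakdown of the maximal interval is welcome (the paper is terser there); the only slightly misplaced step is invoking $r^2\rho v=\mathrm{const}$ ``with $\rho$ finite'' to exclude $v\to 0$ at finite $r$ — finiteness of $\rho$ is not established there, but the relation \eqref{eq:306} itself already forbids $v\to 0$ at any finite interior radius, so the conclusion stands.
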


\begin{proof} The two cases are completely similar and we treat the case $\mathfrak {A1}$. Since we have sonic point, the velocity $v=v(r)$ never reaches the sound speed $k$ and, by the implicit function theorem, the solution can be continued and extended to the whole interval $(0,\infty)$. Its derivative, given by \eqref{derive-one}, remains finite. From the definition of the function $G$ in \eqref{Fon-G}, we obtain 
$$
{1 \over 2} \big(v^2-v_0^2 \big) + k^2 \ln {v_0 \over v} = k^2 \ln {r^2 \over r_0^2}+ {m\over r} - {m \over{r_0}}.
$$
When $r  \to 0 $ or $r \to  +\infty $, the left-hand side of this identity goes to infinity. Such a limit is reached if and only if $v$ goes to $0$ or infinity. Since $v< k$ always holds in this case, we obtain the asymptotic behavior limits, as stated in the lemma. 
Furthermore, the expression \eqref{derive-one} of ${dv \over d r}$ determines the monotonicity properties: ${dv \over d r}$ has the sign of
${m \over 2k^2} - r$. 
 \end{proof}

\begin{lemma}[Extension of solutions with sonic points]
\label{extension2}
Consider the local solutions $\rho=\rho(r;r_0,\rho_0,v_0)$ and $v=v(r;r_0,\rho_0,v_0)$ given by Lemma~\ref{localsol}. 
\begin{enumerate}

\item {\bf Case} $\mathfrak {B1i}$. The solution can be extended to the interval $(\barr_*,+\infty)$ and satisfies $v\leq k$, with 
$$ 
\lim_{r \to +\infty}v(r;r_0,\rho_0,v_0) = 0, \qquad \lim_{r \to \barr_*}v(r;r_0,\rho_0,v_0) = k. 
$$
Moreover, $v$ is decreasing with respect to $r$ on $( \barr_*, +\infty)$. 

\item {\bf Case} $\mathfrak {B2i}$. The solution can be extended to the interval $(\barr_*,+\infty)$ and satisfies $v\geq k$, with 
$$ 
\lim_{r \to +\infty}v(r;r_0,\rho_0,v_0) =+\infty, \qquad \lim_{r \to \barr_*}v(r;r_0,\rho_0,v_0) = k. 
$$
Moreover, $v$ is increasing with respect to $r$ on $( \barr_*, +\infty)$.

\item {\bf Case} $\mathfrak {B1ii}$. The solution can be extended to the interval $(0,\underr_*)$ and satisfies $v\leq k$, with 
$$ 
\lim_{r \to 0}v(r;r_0,\rho_0,v_0) = 0, \qquad \lim_{r \to \underr_*} v(r;r_0,\rho_0,v_0) = k. 
$$
Moreover, $v$ is increasing with respect to $r$ on $( 0, \underr_*)$. 

\item {\bf Case} $\mathfrak {B2ii}$. The solution can be extended to the interval $(0,\underr_*)$ and satisfies $ v \geq k$, with 
$$ 
\lim_{r \to 0}v(r;r_0,\rho_0,v_0) =+\infty, \qquad \lim_{r \to \underr_*}v(r;r_0,\rho_0,v_0) = k. 
$$
Moreover, $v$ is decreasing with respect to $r$ on $( 0, \underr_*)$. 
\end{enumerate}
\end{lemma}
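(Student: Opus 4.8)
The plan is to run the same argument as for Lemma~\ref{extension1}, the only new feature being that the interval of smooth existence is now cut off on one side by a sonic point. First I would note that, since $v_0 \neq k$, the derivative $\del_v G = v - k^2/v$ is nonzero at $(r_0,v_0)$, so Lemma~\ref{localsol} applies and the solution continues by the implicit function theorem as long as $v \neq k$; and, exactly as in Lemma~\ref{extension1}, the continuous function $v$ cannot attain the value $k$ at an interior point, for that would be a sonic radius and, by \eqref{derive-one}, $dv/dr$ would blow up there. Hence $v$ stays on the branch $0 < v < k$ in case $\mathfrak{B1}$ (resp.\ $v > k$ in case $\mathfrak{B2}$) selected by $v_0$, with positivity of $v$ furnished by \eqref{eq:308}.

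Next I would write the solution in closed form. Dividing the relation $G(r,v(r);r_0,v_0) = 0$ from \eqref{Fon-G} by $k^2$ yields $f\big(v(r)/k\big) = f(v_0/k) - g(r)$, where $f$ and $g$ are the auxiliary functions used in the proof of Lemma~\ref{sonic -point}: $f(x) = \tfrac12 - \tfrac12 x^2 + \ln x$ is strictly increasing on $(0,1]$, strictly decreasing on $[1,+\infty)$, with $f(1)=0$ and $f \to -\infty$ at $0$ and at $+\infty$; while $g$ is strictly decreasing on $(0,m/(2k^2))$, strictly increasing on $(m/(2k^2),+\infty)$, tends to $+\infty$ at both ends, and vanishes at $r_0$. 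On a fixed branch of $f$ this relation determines $v$ uniquely and smoothly on the open set $\{r>0 : g(r) > f(v_0/k)\}$; since being in case $\mathfrak{B}$ means precisely $f(v_0/k) \geq \min g = g(m/(2k^2))$, that set equals $(0,\underr_*) \cup (\barr_*,+\infty)$ with $\underr_* \leq m/(2k^2) \leq \barr_*$. By Lemma~\ref{sonic -point}, in case $\mathfrak i$ one has $\barr_* \leq r_0$ (and $r_0 \neq \barr_*$ since $v_0 \neq k$), so $r_0$ lies in the component $(\barr_*,+\infty)$, which is therefore the maximal interval of smooth existence; in case $\mathfrak{ii}$ one has $r_0 < \underr_*$, putting $r_0$ in $(0,\underr_*)$. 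The boundary limits are then read off: at the sonic endpoint $g \to f(v_0/k)$, so $f(v/k) \to 0$ and hence $v \to k$; at the other endpoint ($r\to+\infty$ in case $\mathfrak i$, $r\to 0$ in case $\mathfrak{ii}$) $g \to +\infty$, so $f(v/k) \to -\infty$, whence $v \to 0$ on the branch $v<k$ and $v \to +\infty$ on the branch $v>k$. The monotonicity comes from \eqref{derive-one}: on $(\barr_*,+\infty)$ one has $r > m/(2k^2)$, hence $2k^2 r - m > 0$, while on $(0,\underr_*)$ one has $2k^2 r - m < 0$; combining with the sign of $v^2-k^2$ on the relevant branch gives the four stated monotonicities. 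Finally the density is recovered from $\rho = r_0^2\rho_0 v_0/(r^2 v)$ via \eqref{eq:308}, which is positive and smooth wherever $v$ is.

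No single step here is hard; the proof is essentially careful bookkeeping, and the point that needs the most attention is the appeal to Lemma~\ref{sonic -point} to confirm that in case $\mathfrak{B}$ the two sonic points straddle $m/(2k^2)$ — so that each of $(0,\underr_*)$ and $(\barr_*,+\infty)$ lies entirely on one side of $m/(2k^2)$, which is exactly what forces $dv/dr$ to keep a single sign on it — and to confirm that $r_0$ lies in the claimed component. The only slightly delicate subcase is the degenerate one $\underr_* = \barr_* = m/(2k^2)$, where $dv/dr$ need not blow up at the sonic point and the solution could a priori be smoothly continued across it; this does not affect the statement, however, which asserts only existence on $(\barr_*,+\infty)$ (resp.\ $(0,\underr_*)$) together with the value of $\lim v$, and both of these follow verbatim from the branch form and the continuity of $f$ and $g$ established above.
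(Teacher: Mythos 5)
Your proof is correct and follows essentially the same route as the paper's: locate the two sonic points via Lemma~\ref{sonic -point} (using the same auxiliary functions $f$ and $g$), continue the solution on the component of $\{g>f(v_0/k)\}$ containing $r_0$, read the limits off the algebraic relation, and get monotonicity from the sign of \eqref{derive-one}. You are merely more explicit than the paper (which dismisses the limits as "easily computed"), and your remark on the degenerate case $\underr_*=\barr_*$ is a welcome extra precision.
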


\begin{proof} Consider the case $\mathfrak {B1i}$ (while the case $\mathfrak {B2i}$ is completely similar). According to Lemma~\ref{sonic -point}, there exist two sonic points  $\underr_*\leq \barr_*\leq r_0$, so that by continuation the solution can be extended to the whole interval  $(\barr_*,+\infty)$ and the limits ${r \to +\infty}$ and ${r \to \underr_*}$ are easily computed. Moreover, since in this case $r\geq \barr_* \geq {m \over 2k^2}$ and $v\leq k$, the function $v=v(r)$ is decreasing in $r$. 
 
In the case $\mathfrak {B1ii}$ (while the case $\mathfrak {B2ii}$ can be treated similarly), Lemma~\ref{sonic -point} shows that there exist two sonic points $r_0 <\underr_* < \barr_* $. In this case, the solution can be extended to $(0, \underr_*)$ and the limits 
${r \to 0}$ and ${r \to \underr_*}$ are easily computed. The condition $r <\barr_* < {m \over 2k^2}$ gives the monotonicity property. 
\end{proof}

Observe also that no solution can be defined on the interval $r \in ( \underr_*, \barr_*)$. Indeed, since $G$ reaches its minimum at $v= k$, we deduce that, for any radius $r \in ( \underr_*, \barr_*)$, the inequality  
$$
G(r,v, r_0,v_0) >G(r,k, r_0,v_0) > 0
$$
holds, that is, $G$ cannot admit roots between the two sonic points. Therefore, a solution cannot be further extended when it reaches a sonic point. We summarize our conclusions in this section in the following theorem.

\begin{theorem}[Non-relativistic steady flows on a Schwarzschild background]
\label{steady-non}
For any sound speed $k> 0$ and black hole mass $m> 0$, consider the Euler model $\Mscr (0, k, m)$ given in \eqref{nonrelativ-con}, describing non-relativistic flows on a Schwarzschild background. 
Then, given any radius $r_0 > 0$, density $\rho_0 > 0$, and velocity $v_0 \geq 0$, there exists a unique steady state solution 
denoted by 
$$
\rho=\rho(r;r_0,\rho_0,v_0), \qquad v=v(r;r_0,\rho_0,v_0), 
$$ 
satisfying the system \eqref {steady1-con}  together with the initial condition 
$\rho(r_0) =\rho_0$ and $v(r_0) =v_0$. Moreover, the velocity component satisfies $\sgn(v(r) -k) = \sgn(v_0-k)$ for all relevant values $r$, and in order to specify the range of the independent variable $r$ where this solution is defined, we distinguish between two alternatives: 
\begin{enumerate}

\item {\bf Regime without sonic point: $P(r_0, v_0) > 0$} (with $P$ defined in \eqref{function-P}). Then, the solution is defined on the whole interval $(0,+\infty)$.

\item{\bf Regime with sonic points: $P(r_0,v_0) \leq 0$.} The solution is defined on the interval ${\Xi  \varsubsetneqq (0,+\infty)}$, defined by
\bel{Xi}
\Xi :=
\begin{cases}
(0, \underr_*), \qquad & r_0 \leq {m \over 2k^2},
\\
(\barr_*,+\infty),               & r_0 > {m \over 2k^2}.
 \end{cases}
\ee 
Moreover, the velocity $v(r) \to k$ when $r$ approaches the sonic point. 
\end{enumerate}
\end {theorem}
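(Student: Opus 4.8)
The plan is to construct the steady state solution locally via Lemma~\ref{localsol} and then push it to its maximal interval of definition using the continuation analysis of Lemmas~\ref{extension1} and~\ref{extension2}, with everything organized around the algebraic relation~\eqref{eq:306} and the function $G$ of~\eqref{Fon-G}. First I would clear away two degenerate values of $v_0$: by~\eqref{eq:308} the velocity keeps the sign of $v_0$, so I may take $v_0\ge 0$; when $v_0=0$ one has $v\equiv 0$ with $\rho$ given explicitly by~\eqref{eq:309} on all of $(0,+\infty)$; and the borderline value $v_0=k$ is genuinely singular (it is implicitly excluded, as all the preparatory lemmas assume $v_0\ne k$) since then $\partial_v G(r_0,v_0)=v_0-k^2/v_0=0$, so Lemma~\ref{localsol} does not apply and $r_0$ is itself a sonic point. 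For the remaining case $v_0\in(0,+\infty)\setminus\{k\}$, Lemma~\ref{localsol} provides a unique smooth $(\rho,v)$ near $r_0$, with $v$ the implicit-function branch of $G(r,v;r_0,v_0)=0$ through $(r_0,v_0)$ and $\rho$ recovered from~\eqref{eq:308} (hence $\rho>0$ wherever $v$ keeps its sign); I would also record that on \emph{any} interval carrying such a smooth solution it is unique, being pinned down by $G(r,v(r))\equiv 0$, continuity, and $v(r_0)=v_0$, with the branch singled out by $\partial_v G(r_0,v_0)\ne 0$.

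\textbf{Sign preservation and non-continuation at a sonic point.} Since $\partial_v G(r,v)=v-k^2/v$, the map $v\mapsto G(r,v)$ is strictly decreasing on $(0,k)$, strictly increasing on $(k,+\infty)$, and has a strict minimum at $v=k$; hence the implicit-function continuation of the branch through $(r_0,v_0)$ persists as long as $v(r)\ne k$, so the only obstruction to extending the solution is $v(r)$ reaching the value $k$, i.e. $r$ reaching a sonic point (a root of~\eqref{eq:319}). As already observed just before the statement, $G(r,\cdot)=0$ has no root strictly between two sonic points, because there $G(r,v)\ge G(r,k)>0$; therefore the solution cannot be continued past a sonic point, on its maximal interval $v(r)$ stays strictly on the same side of $k$ as $v_0$, which gives $\sgn(v(r)-k)=\sgn(v_0-k)$, and if that interval is proper it terminates at a sonic point with $v\to k$ there.

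\textbf{Identifying the maximal interval, by cases.} By Lemma~\ref{sonic -point}, sonic points exist if and only if $P(r_0,v_0)\le 0$, with $P$ as in~\eqref{function-P}. If $P(r_0,v_0)>0$, then Lemma~\ref{extension1} (case $\mathfrak{A1}$ for $v_0<k$, case $\mathfrak{A2}$ for $v_0>k$) extends the solution to all of $(0,+\infty)$ with the asserted limits at $0$ and $+\infty$ and the asserted monotonicity. If $P(r_0,v_0)\le 0$, then Lemma~\ref{sonic -point} produces two (possibly equal) sonic points $\underr_*\le\barr_*$, with $\underr_*\le\barr_*\le r_0$ when $r_0\ge m/(2k^2)$ and $r_0<\underr_*\le\barr_*$ when $r_0<m/(2k^2)$; here I would observe that the borderline $r_0=m/(2k^2)$ cannot occur, since then the function $g$ from the proof of Lemma~\ref{sonic -point} attains its minimum value $0$ at $r_0$, forcing $f(v_0/k)=0$ and hence $v_0=k$, which is excluded. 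So for $v_0\ne k$ the dichotomy $r_0\le m/(2k^2)$ versus $r_0>m/(2k^2)$ is precisely the split in~\eqref{Xi}, and Lemma~\ref{extension2} (cases $\mathfrak{B1i}/\mathfrak{B2i}$, respectively $\mathfrak{B1ii}/\mathfrak{B2ii}$) extends the solution exactly to $\Xi=(\barr_*,+\infty)$, respectively $\Xi=(0,\underr_*)$, with $v\to k$ at the sonic endpoint. In every case $\rho$ is recovered on the whole interval by $\rho(r)=r_0^2\rho_0 v_0/(r^2 v(r))>0$, and the previous step supplies the sign statement; this exhausts all configurations.

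The hard part will not be any individual computation — those are all contained in Lemmas~\ref{localsol}--\ref{extension2} — but the bookkeeping in the last step: verifying that the four $\mathfrak B$ sub-cases glue into exactly the two intervals of~\eqref{Xi}, that the endpoint radius $m/(2k^2)$ is incompatible with the sonic regime once $v_0\ne k$, and that the non-continuation argument of the second step really forces the maximal interval to equal $\Xi$ rather than something larger — which is also the argument that makes the sign-preservation property $\sgn(v(r)-k)=\sgn(v_0-k)$ work.
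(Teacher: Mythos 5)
Your proposal is correct and follows essentially the same route as the paper, which obtains Theorem~\ref{steady-non} by assembling Lemmas~\ref{Twoformula}, \ref{localsol}, \ref{sonic -point}, \ref{extension1}, and \ref{extension2} together with the observation that $G$ admits no root between the two sonic points. Your extra remarks on the degenerate data $v_0=0$ and $v_0=k$ and on the borderline radius $r_0=m/(2k^2)$ (which in the sonic regime forces $v_0=k$, reconciling the inequality conventions in \eqref{Xi} with those of Lemma~\ref{sonic -point}) are correct details that the paper leaves implicit.
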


These solutions will be used to design a method of approximation o general weak solutions to the Cauchy problem. In fact (cf.~Section~\ref{sec:7}), we will need to introduce {\sl discontinuous} solutions in order to construct globally-defined steady state solution (defined for all $r$). This will be achieved with solutions containing a jump discontinuity connecting two smooth steady state solutions.

\begin{figure}[htbp]
\begin{minipage}[t]{0.3\linewidth}
\centering
\epsfig{figure=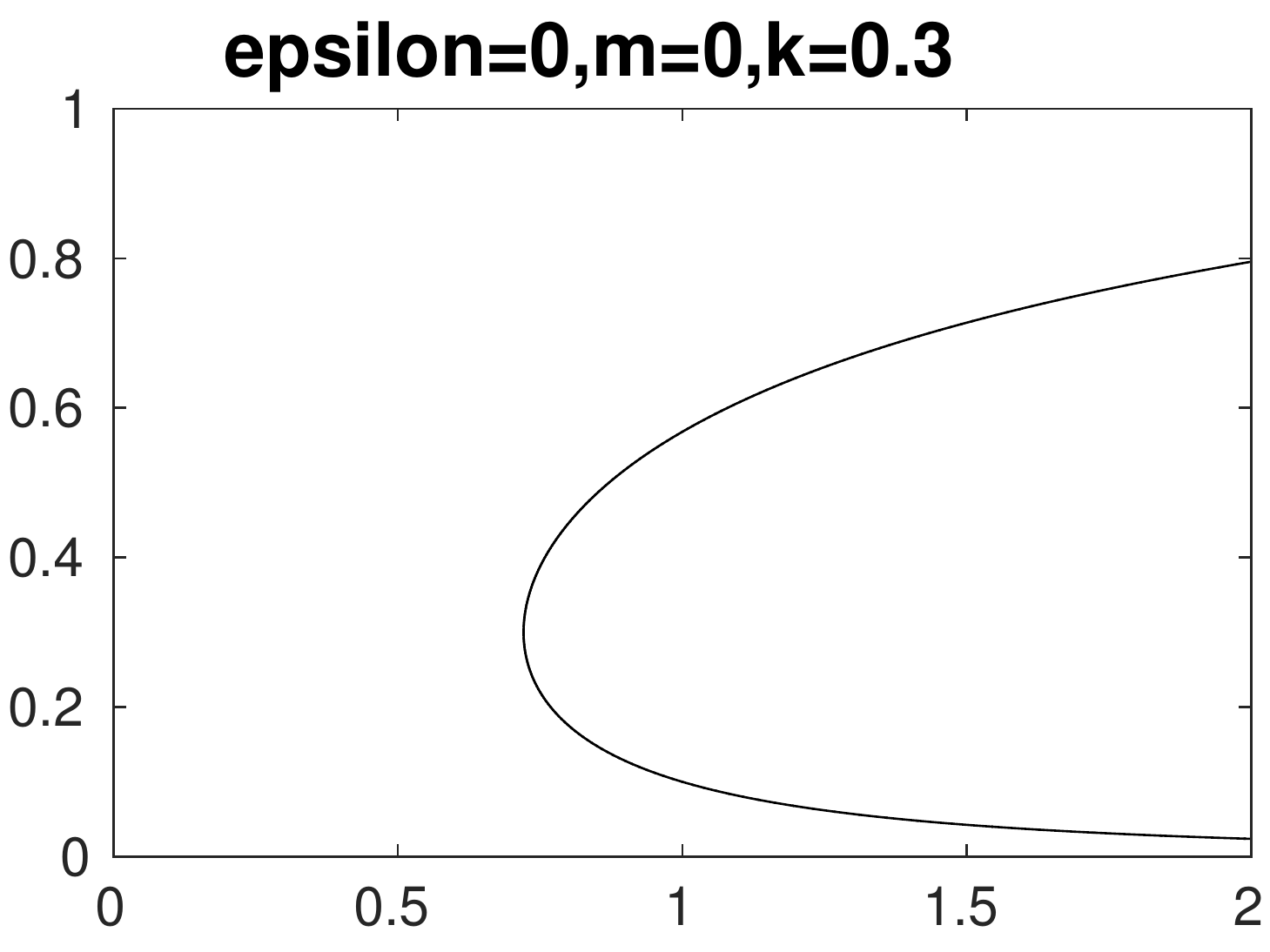,height=1.3in} 
\end{minipage}
\begin{minipage}[t]{0.3\linewidth}
\centering

\epsfig{figure=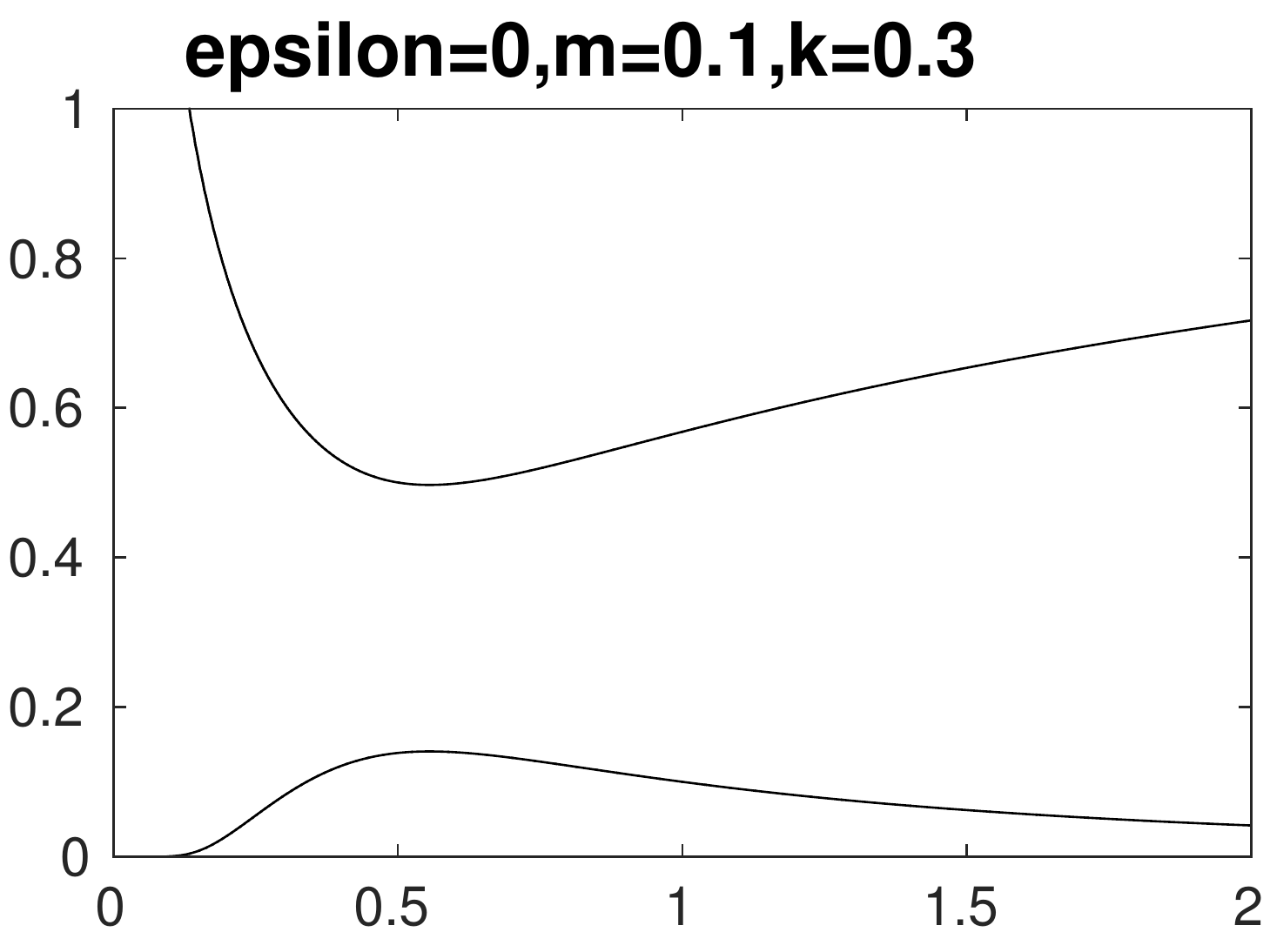,height=1.3in} 
\end{minipage}
\begin{minipage}[t]{0.3\linewidth}
\centering
\epsfig{figure=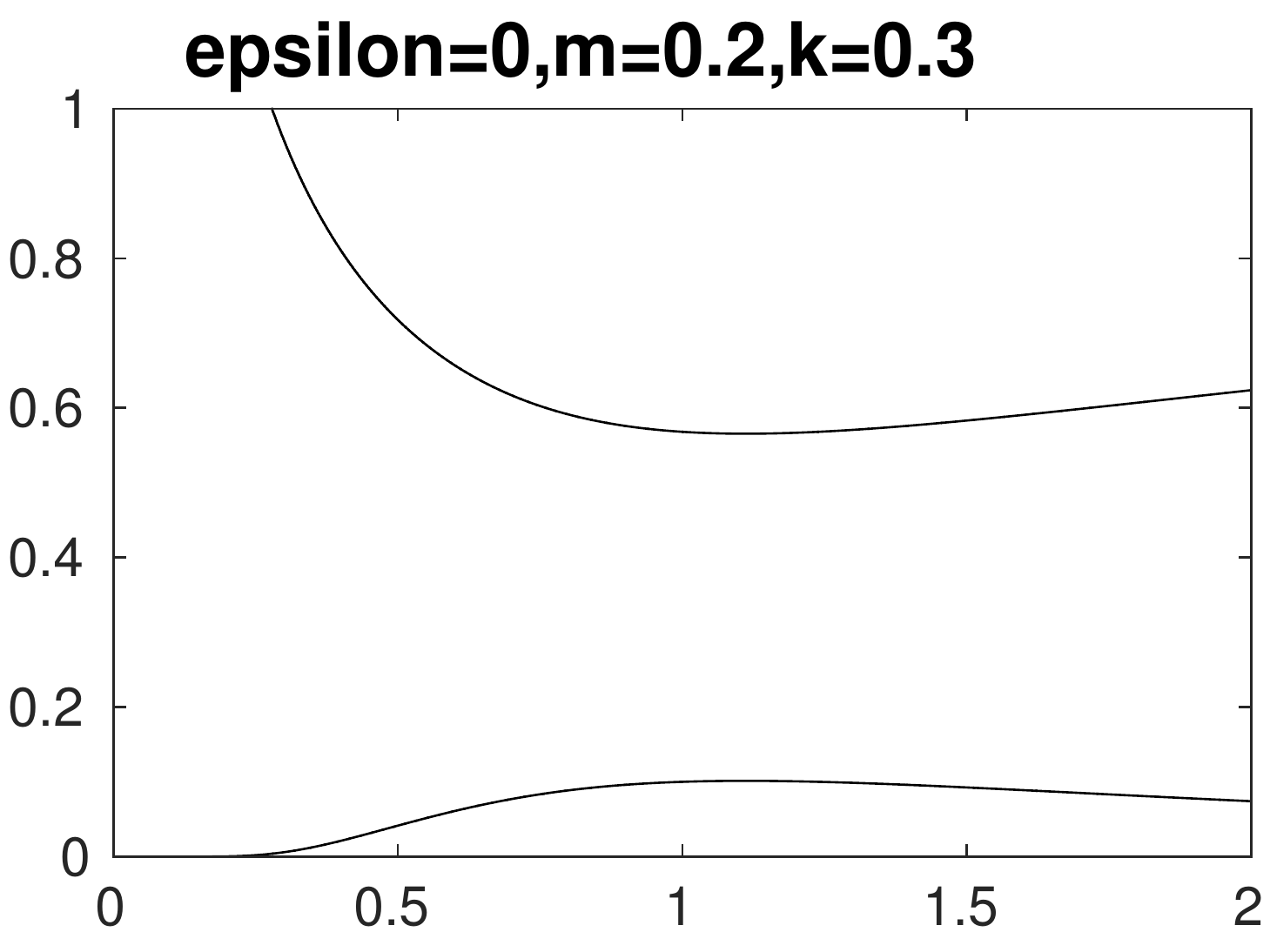,height=1.3in} 
\end{minipage}
\caption{Plots of $v=v(r)$ with sound speed $k= 0.3$ and different masses.}
\end{figure} 

\begin{figure}[htbp]
\begin{minipage}[t]{0.3\linewidth}
\centering
\epsfig{figure=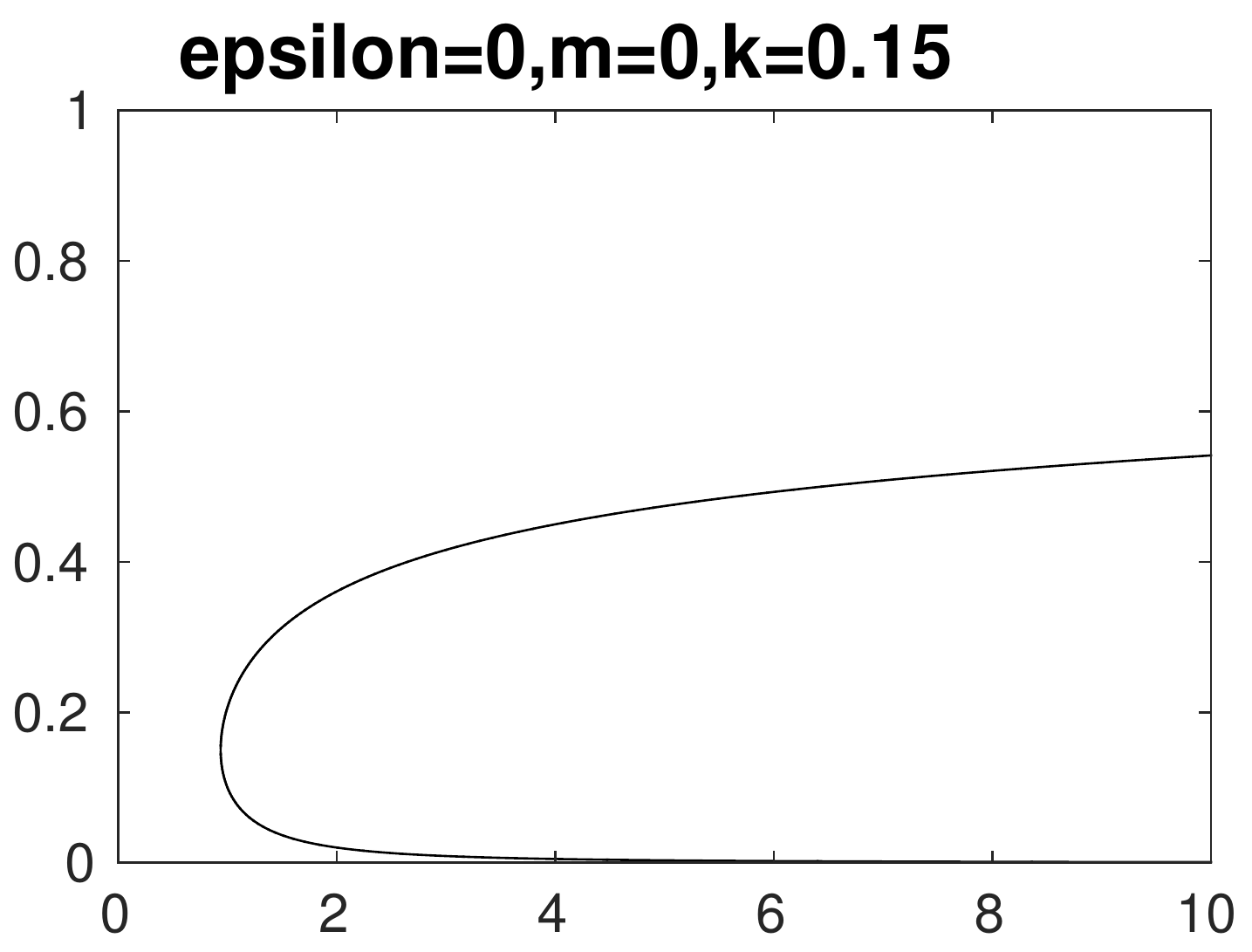,height=1.3in} 
\end{minipage}
\begin{minipage}[t]{0.3\linewidth}
\centering

\epsfig{figure=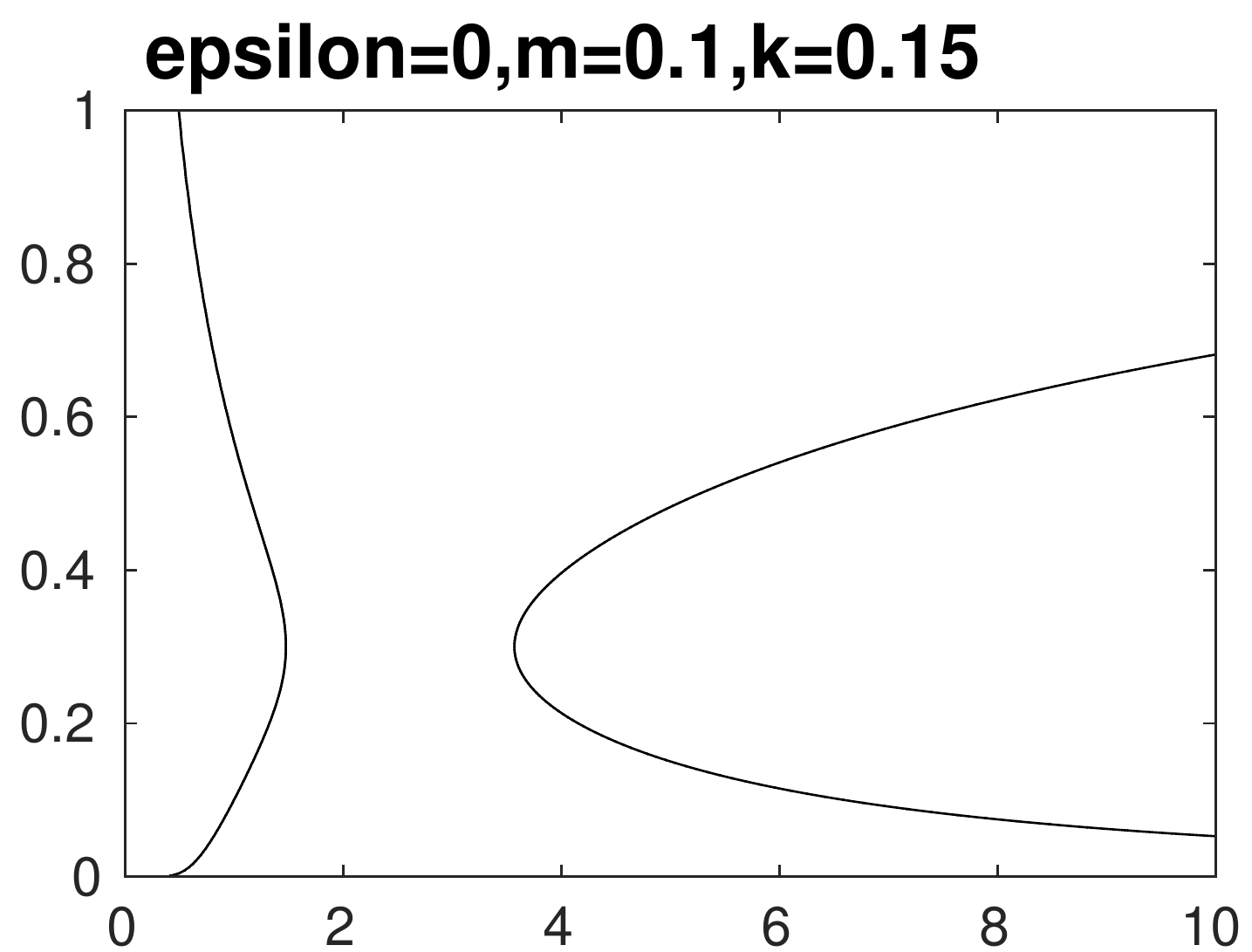,height=1.3in} 
\end{minipage}
\begin{minipage}[t]{0.3\linewidth}
\centering
\epsfig{figure=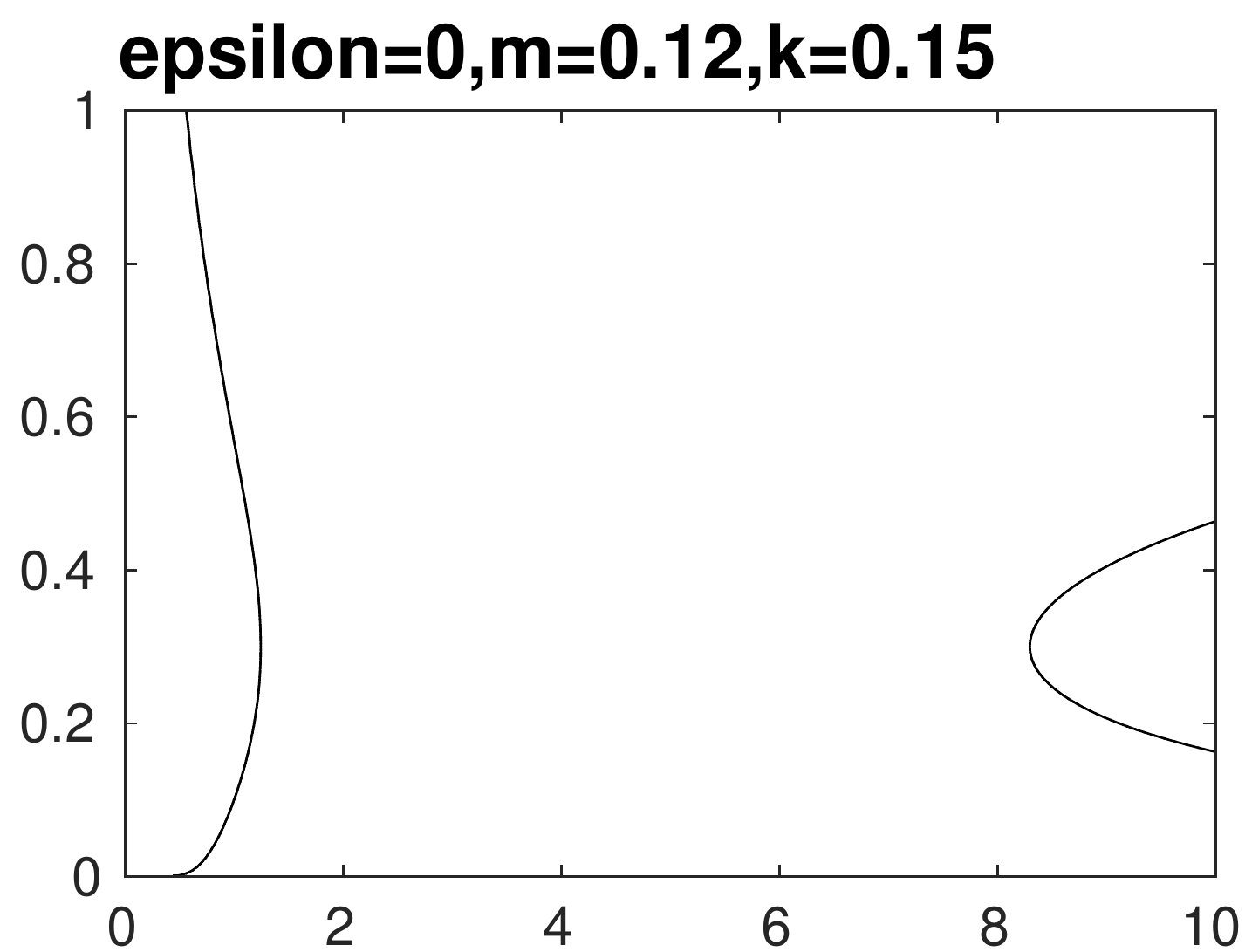,height=1.3in} 
\end{minipage}

\caption{Plots of $v=v(r)$ with sound speed $k= 0.15$ and different masses.}
\end{figure}

\section{Fluid equilibria on a Schwarzschild background}
\label{sec:5} 

\subsection*{Local existence result}
\label{sec:5-1}

This section is devoted to the analysis of (smooth) steady state solutions to the Euler system on a Schwarz\-schild background, i.e. the general model \eqref{Euler}. Such solutions must satisfy the following two coupled ordinary differential equations with unknowns $\rho=\rho(r)$ and $v=v(r)$ (defined over $r >2M$) 
\begin{subequations}
\label{steady2}
\begin{eqnarray}
 & {d\over dr} \Big(r(r-2M) {(\rho+\eps^2 p)v \over 1 - \eps^2 v^2} \Big) = 0 \label{steady2a},
 \\
& {d\over dr} \Big((r-2M)^2 {\rho v^2+ p \over 1 - \eps^2 v^2} \Big)
 = {M \over r} 
  {(r-2M)  \over 1 - \eps^2 v^2}  \Big(3 \rho v^2+ 3 p - \eps^{-2} \rho- \eps^2 p v^2\Big)  
  + {2 \over r} (r-2M)^2  \, p, 
 \label{steady2b}
\end{eqnarray}
\end{subequations}
formulated here for a general pressure-law $p=p(\rho)$. We are interested in solving the associated initial value problem for a given radius $r_0 >2M$ with data $\rho_0, v_0$ prescribed at $r=r_0$:
\bel{eq:837}
\rho(r_0) =\rho_0 > 0, \qquad v(r_0) =v_0.
\ee

\begin{lemma}
\label{sol-ODE}
If $\rho=\rho(r)$ and $v=v(r)$ is a solution to \eqref{steady2} --\eqref{eq:837}, then one has 
\bel{eq:322}
\aligned
r(r-2M){(\rho+\eps^2 p(\rho)) v \over 1 - \eps^2 v^2} &= D_0,
\\
 - {1\over 2\eps^2}\ln({1-\eps^2 v^2}) +l(\rho) + {1\over 2\eps^2} \ln\Bigg( 1 - {2M \over r} \Bigg) & = C_0,
\endaligned
\ee
where the function $l=l(\rho)$ is defined by $l'(\rho) := {p'(\rho) \over \rho+\eps^2 p(\rho)}$, and the constants above are determined by the initial conditions, that is, 
$$
\aligned
& D_0 :=r_0(r_0-2M){(\rho_0+\eps^2 p(\rho_0)) v_0 \over 1 - \eps^2 v_0^2},
\\
& C_0 := - {1\over 2\eps^2}\ln({1-\eps^2 v_0^2}) +l(\rho_0) + {1\over 2\eps^2} \ln \Bigg( 1 - {2M \over r_0} \Bigg). 
\endaligned
$$
\end{lemma}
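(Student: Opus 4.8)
The statement asserts that the ODE system \eqref{steady2} admits two first integrals. The first, \eqref{steady2a}, is already in divergence form, so integrating it directly over $[r_0, r]$ immediately yields
\[
r(r-2M)\,{(\rho+\eps^2 p(\rho))\,v \over 1 - \eps^2 v^2} = D_0,
\]
with $D_0$ fixed by the data at $r_0$. No work is needed here beyond recording the constant.

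The second relation is the one requiring actual computation. The plan is to use the first integral to simplify the second equation. First I would expand the derivative in \eqref{steady2b} using the product rule and substitute \eqref{steady2a} (in the differentiated form $\frac{d}{dr}\big(r(r-2M)\frac{(\rho+\eps^2 p)v}{1-\eps^2 v^2}\big)=0$) wherever the combination $\frac{(\rho+\eps^2 p)v}{1-\eps^2 v^2}$ or its derivative appears, in order to eliminate the cross terms. The key algebraic identity to exploit is that $\rho v^2 + p = (\rho + \eps^2 p)v^2 + p(1-\eps^2 v^2)$, which lets one split the flux in \eqref{steady2b} into a piece proportional to the conserved momentum flux and a remaining pressure piece. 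After this reduction, the goal is to show that the resulting first-order relation between $\rho'$ and $v'$ is exact, i.e. of the form $\frac{d}{dr}\big(F(\rho(r),v(r),r)\big)=0$ for
\[
F(\rho,v,r) = -{1\over 2\eps^2}\ln(1-\eps^2 v^2) + l(\rho) + {1\over 2\eps^2}\ln\!\Big(1-{2M\over r}\Big),
\]
with $l'(\rho) = p'(\rho)/(\rho+\eps^2 p(\rho))$. Concretely, I would compute $\frac{dF}{dr} = \frac{v v'}{1-\eps^2 v^2} + l'(\rho)\rho' + \frac{1}{2\eps^2}\cdot\frac{2M/r^2}{1-2M/r}$ and verify it vanishes on solutions, using the two relations obtained from \eqref{steady2a}–\eqref{steady2b} to express $\rho'$ and $v'$. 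Integrating then gives $F(\rho(r),v(r),r)=C_0$.

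The main obstacle is the bookkeeping in the second step: one must carefully combine the differentiated conservation law \eqref{steady2a} with \eqref{steady2b} so that all terms not fitting the ``$dF/dr$'' pattern cancel against the geometric source terms on the right-hand side (the $M/r$ and $(r-2M)^2 p/r$ contributions). This is a genuine computation rather than a conceptual difficulty, and the factor $(1-\eps^2 v^2)^{-1}$ appearing throughout makes the cancellations somewhat delicate; the cleanest route is probably to divide \eqref{steady2b} through by $D_0$ (assuming $v_0\neq 0$; the case $v_0=0$ forces $\rho$ constant and is immediate) so that derivatives of $\ln\big(r(r-2M)\big)$ and of $\ln\frac{(\rho+\eps^2 p)v}{1-\eps^2 v^2}$ appear naturally, then reorganize into a total derivative. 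Once the identity $dF/dr=0$ is checked, evaluating $F$ at $r=r_0$ produces the stated expression for $C_0$, completing the proof.
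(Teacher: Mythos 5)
Your plan is correct and is essentially the paper's own proof: the paper likewise integrates \eqref{steady2a} directly for the first relation, then multiplies \eqref{steady2b} by $r/(r-2M)$, uses the first integral to cancel the momentum-flux terms, and reduces to ${v \over 1-\eps^2 v^2}{dv\over dr}+{p'(\rho)\over \rho+\eps^2 p}{d\rho\over dr}+{M\over \eps^2 r(r-2M)}=0$, which is exactly your identity $dF/dr=0$; integration then gives $C_0$. One small slip in an aside: $v_0=0$ does \emph{not} force $\rho$ to be constant --- it gives $v\equiv 0$ but \eqref{steady2b} still yields the nontrivial static profile $l(\rho)+{1\over 2\eps^2}\ln(1-2M/r)=\mathrm{const}$; this is harmless since your primary route (verifying exactness of $F$ directly) never needs to divide by $D_0$.
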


Observe that by letting $\eps \to 0$ in \eqref{eq:322}, we recover our earlier formulas \eqref{eq:308}, \eqref{eq:309} for non-relativistic flows. 

\begin{proof}
The equation \eqref{steady2a} leads us immediately to the first equation in \eqref{eq:322}. 
Next, by multiplying \eqref{steady2b} by ${r\over r-2M}$, we find
$$
{d \over dr} \Big(r(r-2M){\rho v^2+ p \over 1 - \eps^2 v^2} \Bigg) = M{\rho v^2+ p \over 1 - \eps^2 v^2}  - {M \over \eps^2}{\rho+ \eps^4 p v^2 \over 1 - \eps^2 v^2}+2(r-2M)p, 
$$
which is equivalent to
$ {\rho+\eps^2 p  \over 1 - \eps^2 v^2}v{dv\over dr}+ {dp\over dr}+ {M\over r(r-2M)}(\eps^{-2} \rho+p) = 0$. 
Multiplying this equation by ${1 \over \rho+\eps^2p}$, we thus find ${v \over 1 - \eps^2 v^2}{dv\over dr}+ {1\over \rho+\eps^2 p}{dp\over dr}+ {M\over \eps^2  r(r-2M)} = 0$, which, by integration, yields the second equation in \eqref{eq:322}. 
\end{proof}

By now assuming the linear pressure law $p(\rho) = k^2 \rho$ with (constant) sound speed $0 < k<1/\eps$, we thus consider the differential system 
\bel{steady2-con}
\aligned
{d\over dr} \Big(r(r-2M) {(1 + \eps^2 k^2)  \over 1 - \eps^2 v^2} \rho v \Big)
& = 0, 
\\
{d\over dr} \Big((r-2M)^2 {v^2+ k^2 \over 1 - \eps^2 v^2} \rho \Big)
& 
= {M \over r} { (r-2M)  \over 1 - \eps^2 v^2}  \Big(3 \rho v^2+ 3 k^2 \rho - \eps^{-2} \rho- \eps^2 k^2 \rho v^2 \Big)  
  + {2k^2 \over r} (r-2M)^2  \rho. 
\endaligned
\ee 
By elementary algebra, in view of \eqref{eq:322} and $l(\rho) = {k^2 \over 1 + \eps^2 k^2} \log \rho$, we obtain 
$$
\aligned
&  r^2\rho^{1-\eps^2 k^2 \over 1 + \eps^2 k^2}v= r_0^2\rho_0^{1-\eps^2 k^2 \over 1 + \eps^2 k^2}v_0,\\ 
& \Bigg( 1 - {2M \over r} \Bigg) {1\over1-\eps^2 v^2}\rho^{2\eps^2 k^2 \over 1 + \eps^2 k^2} 
= \Bigg( 1 - {2M \over r_0} \Bigg) {1\over1-\eps^2 v_0^2}\rho_0^{2\eps^2 k^2 \over 1 + \eps^2 k^2}.
\endaligned 
$$
Consequently, by introducing the notation  
\bel{not349} 
\kappa := {1-\eps^2 k^2 \over 1 + \eps^2 k^2} \in (0,1), 
\qquad 
1 - \kappa = {2\eps^2 k^2 \over 1 + \eps^2 k^2},
\ee
we find 
\bel{330}
\aligned
r^2 \, \rho^\kappa v & = r_0^2 \, \rho_0^\kappa v_0,
\\ 
\Big( 1 - {2M \over r} \Big) {\rho^{1- \kappa} \over1-\eps^2 v^2}
& = \Big( 1 - {2M \over r_0} \Big) {\rho_0^{1- \kappa} \over1-\eps^2 v_0^2}.
\endaligned 
\ee
Clearly, the component $v$ has a constant sign and, for definiteness, we can now assume that $v_0 \geq 0$. 
By eliminating the density variable $\rho$, we arrive at an algebraic equation of the velocity $v$, i.e. 
\be
\ln {1-\eps^2 v_0^2 \over 1 - \eps^2 v^2} + {1- \kappa \over \kappa} \ln {v_0 \over v}
=
{1- \kappa \over \kappa} \ln {r^2 \over r_0^2} + \ln {r(r_0-2M) \over r_0( r-2M)}. 
\ee

Let us define a function $G_\eps$ of the variables $r,v$ (depending also upon the data $r_0,v_0$) by 
\bel{fon-G-eps}
G_\eps(r,v;r_0,v_0)
: = 
 \ln {1-\eps^2 v_0^2 \over 1 - \eps^2 v^2} + {1- \kappa \over \kappa} \ln {r_0^2 v_0 \over r^2 v} - \ln {r(r_0-2M) \over r_0(r-2M)}. 
\ee
(See Figure~\ref{Fig-51} for an illustration.) Note that, in the limit $\eps \to 0$ we recover the non-relativistic expression \eqref{Fon-G}.)  By definition, a function $v=v(r)$ is a solution to the problem \eqref{steady2-con} with initial data \eqref{eq:837} 
if and only if $G_\eps(r,v(r);r_0,v_0) \equiv 0$ and $v(r_0) =v_0$. 
We differentiate $G_\eps$ with respect to $v$ and $r$ and obtain 
$$
\aligned
\del_v G_\eps & = {v - k^2/v\over 1 - \eps^2 v^2},
\qquad 
\del_r G_\eps  = - {1 \over \eps^2 r} \Bigg({1- \kappa \over \kappa} + {M \over r-2M}
\Bigg) < 0. 
\endaligned
$$
Observe that $\del_v G_\eps= 0$ if and only if $v= k$. Moreover, $G_\eps$ is decreasing with respect to $v$ when $v< k$ and increasing when $v> k$. The derivative of a steady state solution is given by 
\bel{first-der}
{d v\over dr} = {v  \over r(r-2M)}
{{1- \kappa \over \kappa} (r-2M) - M \over{\eps^2 v^2 \over 1 - \eps^2 v^2} - {1- \kappa \over 2 \kappa}}
\ee
and changes sign once, at $r = {2- \kappa \over 1- \kappa} M \in (2M,+\infty)$. 

\begin{figure}[htbp]
\centering
\epsfig{figure=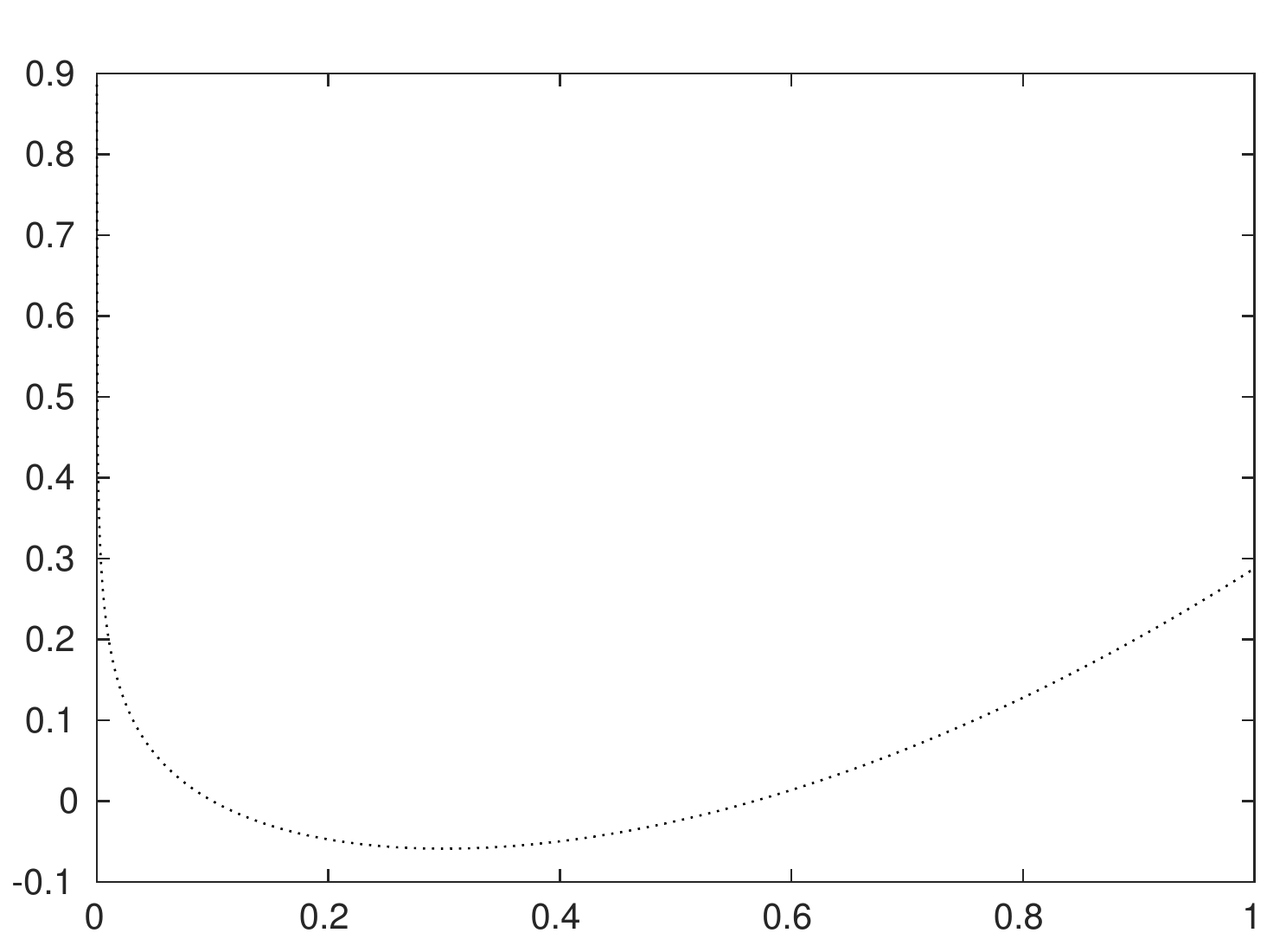,height=2.in} 
\caption{Plot of the function $v \mapsto G_\eps(v) =G_\eps(r,v; r_0, v_0)$ with $\eps=0.01$.}
\label{Fig-51}
\end{figure}


Since $G_\eps(r_0, v_0; r_0, v_0) = 0$ and $\del_v G_\eps(r_0, v_0; r_0, v_0) \neq 0$ provided $v_0 \neq k$, we can apply the implicit function theorem to a non-sonic velocity $v_0$. 

\begin{lemma}[The family of locally-defined steady states]
\label{local-sol} 
Given any radius $r_0 >2 M$ and any initial data $\rho_0 > 0$ and $v_0 \geq 0$ satisfying the non-sonic condition $v_0 \neq k$,
the initial value problem defined in \eqref{eq:837} and \eqref{steady2-con} 
admits a solution $\rho=\rho(r)$ and $v=v(r)$ denoted by 
$$
\rho= \rho(r;r_0,\rho_0,v_0), \qquad v= v(r;r_0,\rho_0,v_0),
$$ 
and defined in some neighborhood $\Uscr_0^\eps$ of $r_0 \in \RR$ (at least). 
\end{lemma}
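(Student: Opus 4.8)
The plan is to reduce the initial value problem \eqref{steady2-con}--\eqref{eq:837} to the pair of \emph{algebraic} relations \eqref{330} and then apply the implicit function theorem. Lemma~\ref{sol-ODE} already supplies one direction: every $C^1$ solution $(\rho,v)$ of \eqref{steady2-con}--\eqref{eq:837} satisfies \eqref{eq:322}, hence---after inserting $p=k^2\rho$, $l(\rho)=\frac{k^2}{1+\eps^2 k^2}\ln\rho$, and using the notation \eqref{not349}---the relations \eqref{330}. Conversely, if $C^1$ functions $\rho=\rho(r)$, $v=v(r)$ defined near $r_0$ satisfy \eqref{330} together with $\rho(r_0)=\rho_0$, $v(r_0)=v_0$, then differentiating the first relation of \eqref{330} returns \eqref{steady2a}, and eliminating $dv/dr$ between the differentiated forms of the two relations of \eqref{330} and retracing in reverse the algebra of the proof of Lemma~\ref{sol-ODE} returns \eqref{steady2b}. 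Thus it suffices to solve \eqref{330} for $(\rho,v)$ as functions of $r$ in a neighbourhood of $r_0$.

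For the generic case $v_0>0$ I would first eliminate $\rho$ between the two relations of \eqref{330}, which---as in the discussion preceding \eqref{fon-G-eps}---reduces the problem to solving the single equation $G_\eps(r,v(r);r_0,v_0)=0$ with $v(r_0)=v_0$. Since $G_\eps(r_0,v_0;r_0,v_0)=0$ and, from the formula computed above, $\del_v G_\eps(r_0,v_0;r_0,v_0)=\frac{v_0-k^2/v_0}{1-\eps^2 v_0^2}\neq 0$ under the non-sonic hypothesis $v_0\neq k$ (note $1-\eps^2 v_0^2>0$ because $|v_0|<1/\eps$), the implicit function theorem produces a unique $C^\infty$ map $r\mapsto v(r;r_0,\rho_0,v_0)$ on some neighbourhood $\Uscr_0^\eps$ of $r_0$ with $G_\eps\equiv 0$; the density is then recovered from $r^2\rho^\kappa v=r_0^2\rho_0^\kappa v_0$ as $\rho=\rho_0\big(r_0^2 v_0/(r^2 v(r))\big)^{1/\kappa}$, which is $C^\infty$ and strictly positive near $r_0$. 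By the converse direction of the previous paragraph, this pair solves \eqref{steady2-con}--\eqref{eq:837}.

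A variant that handles all admissible $v_0\geq 0$ uniformly, and that avoids the singularity of $G_\eps$ at $v_0=0$, is to apply the implicit function theorem directly to the map $\Phi(\rho,v;r):=\big(r^2\rho^\kappa v-r_0^2\rho_0^\kappa v_0,\ (1-2M/r)\rho^{1-\kappa}/(1-\eps^2 v^2)-(1-2M/r_0)\rho_0^{1-\kappa}/(1-\eps^2 v_0^2)\big)$, which vanishes at $(\rho_0,v_0;r_0)$. A short computation of $\det\big(\del\Phi/\del(\rho,v)\big)$ at this point, using $\kappa=\frac{1-\eps^2 k^2}{1+\eps^2 k^2}$ and $1-\kappa=\frac{2\eps^2 k^2}{1+\eps^2 k^2}$, gives $\frac{2\eps^2 r_0^2(1-2M/r_0)}{(1+\eps^2 k^2)(1-\eps^2 v_0^2)^2}(v_0^2-k^2)$, which is nonzero precisely when $v_0\neq k$ (as $v_0\geq 0$). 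In the static case $v_0=0$ the answer is explicit anyway: the first relation of \eqref{330} forces $v\equiv 0$, and the second gives $\rho(r)=\rho_0\big((1-2M/r_0)/(1-2M/r)\big)^{1/(1-\kappa)}$, smooth and positive on all of $(2M,+\infty)$.

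In essence this is a textbook application of the implicit function theorem, so I expect no serious obstacle; the only points demanding care are (i) establishing the full equivalence of the algebraic system \eqref{330} with the differential system \eqref{steady2-con}---in particular, checking that differentiating \eqref{330} recovers \emph{both} balance laws and not merely a combination of them---and (ii) the algebraic bookkeeping with the constant $\kappa$ of \eqref{not349} needed to confirm that the relevant Jacobian (equivalently $\del_v G_\eps$) degenerates \emph{exactly} at the sonic value $v_0=k$, together with the trivial but separate treatment of $v_0=0$.
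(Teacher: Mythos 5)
Your proposal is correct and follows essentially the same route as the paper: reduce to the first integrals \eqref{330} via Lemma~\ref{sol-ODE}, eliminate $\rho$ to obtain $G_\eps(r,v;r_0,v_0)=0$, and apply the implicit function theorem using $\del_v G_\eps(r_0,v_0;r_0,v_0)\neq 0$ under the non-sonic hypothesis, then recover $\rho$ from $r^2\rho^\kappa v=r_0^2\rho_0^\kappa v_0$. Your extra care with the static case $v_0=0$ (where the logarithm in $G_\eps$ degenerates) and with the equivalence of the algebraic and differential systems is a welcome refinement that the paper leaves implicit.
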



\subsection*{Global existence theory} 

We now analyze the possible extension of the (smooth) solutions above to their maximum domain of existence. 
Since $\del_v G_\eps(r_0, v_0; r_0, v_0) = 0$ if and only if $v= k$, a solution can always be continued, unless the velocity component $v$ reaches the sonic speed. 

\begin{definition} A radius $r=r_* >2 M$ is called a {\bf sonic point} for the problem \eqref{eq:837} and \eqref{steady2-con} 
if it is a root of the following algebraic equation:  
\bel{sonic -2} 
\ln \Bigg({1-\eps^2 v_0^2 \over 1 - \eps^2 k^2} \Bigg)
+ {1- \kappa \over \kappa} \ln \Bigg( {v_0 \over k}\Bigg) 
= {1- \kappa \over \kappa}\ln \Bigg( 
{r^2 \over r_0^2} + \ln {r(r_0-2M) \over r_0(r-2M)} \Bigg).
\ee
\end{definition}

From \eqref{first-der}, it follows that the derivative ${dv \over dr}$ of a steady state solution blows-up when one approaches a sonic value. 
In the following, it will be useful to observe that 
\bel{nota-85}
\eps^2 k^2 = {1 - \kappa \over 1 + \kappa}, 
\qquad 
1 + 3 \eps^2 k^2 = {2(2 - \kappa) \over 1 +\kappa}, 
\qquad 
{1 + 3 \eps^2 k^2 \over 2 \eps^2 k^2} = {2 - \kappa \over 1- \kappa}. 
\ee
In order to simplify the notation, we introduce the following function $P_\eps$ of the radius $r_0$ and velocity $v_0$:
\bel{function-P-eps}
P_\eps(r_0, v_0): = \ln \Bigg( {(2 - \kappa)^2 \over (1- \kappa)^2} {M^2 k \over r_0^2 v_0} \Bigg)
+ {\kappa \over 1- \kappa} \ln \Bigg( {2(2 - \kappa) \over 1 +\kappa} 
{(r_0-2M) \over  r_0(1-\eps^2 v_0^2)} \Bigg). 
\ee
The importance of the sign of $P_\eps(r_0, v_0)$ is identified in the following lemma.

\begin{lemma}[Existence/non-existence of sonic points]  
\label{sonic-point}
Consider a solution $v=v(r)$ associated with a positive and non-sonic velocity $v_0 > 0$ with $v_0 \neq k $:
\begin{enumerate}
\item If $P_\eps(r_0, v_0) > 0$, 
there exists no sonic  point.

\item If if $P_\eps(r_0, v_0) \leq 0$, 
there exist two sonic  points $\underr_*\leq \barr_*$. Moreover, one has: 

\bei 
 
\item If $r_0 \geq {2 - \kappa \over 1- \kappa} M$, the roots satisfy $2M<\underr_*\leq \barr_* \leq r_0$.

\item If $r_0 < {2 - \kappa \over 1- \kappa} M$, the roots satisfy $2M<r_0 <\underr_*\leq \barr_* $. 
\eei
\end{enumerate}
\end{lemma}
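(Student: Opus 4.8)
The plan is to mirror the proof of Lemma~\ref{sonic -point}, replacing the non-relativistic auxiliary functions $f,g$ by their relativistic analogues read off from $G_\eps$ in \eqref{fon-G-eps}. By definition, a radius $r_*>2M$ is a sonic point if and only if $G_\eps(r_*,k;r_0,v_0)=0$, which, after separating the $v_0$-dependent and the $r$-dependent terms, amounts to $h(v_0)-h(k)=\psi(r_*)$, where $h(v):=\ln(1-\eps^2v^2)+\frac{1-\kappa}{\kappa}\ln v$ on $(0,1/\eps)$ and $\psi(r):=\frac{1-\kappa}{\kappa}\ln\frac{r^2}{r_0^2}+\ln\frac{r(r_0-2M)}{r_0(r-2M)}$ on $(2M,+\infty)$. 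Thus the statement reduces to comparing the constant $h(v_0)-h(k)$ with the range of $\psi$.

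First I would study $h$. Using $\eps^2k^2=\frac{1-\kappa}{1+\kappa}$ from \eqref{nota-85}, one computes $h'(v)=\frac{1}{v}\big(\frac{1-\kappa}{\kappa}-\frac{2\eps^2v^2}{1-\eps^2v^2}\big)$, which vanishes exactly at $v=k$; since $v\mapsto\frac{2\eps^2v^2}{1-\eps^2v^2}$ is strictly increasing on $(0,1/\eps)$, $h'$ changes sign from $+$ to $-$ at $v=k$, so $h$ has a strict maximum there. Hence the non-sonic hypothesis $v_0\neq k$ (with $v_0>0$) forces $h(v_0)-h(k)<0$, in perfect analogy with $f(v_0/k)<0=f(1)$ in the non-relativistic case.

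Next I would study $\psi$. One has $\psi(r_0)=0$, and $\psi(r)\to+\infty$ both as $r\to2M^+$ (because of the term $-\ln(r-2M)$) and as $r\to+\infty$ (because of the term $\frac{1-\kappa}{\kappa}\ln r^2$), while $\psi'(r)=\frac{2(1-\kappa)r-2(2-\kappa)M}{\kappa\,r(r-2M)}$ vanishes only at $r=\frac{2-\kappa}{1-\kappa}M\in(2M,+\infty)$, is negative for smaller $r$ and positive for larger $r$. So $\psi$ is strictly decreasing and then strictly increasing, with a unique interior minimum at $r_{\min}:=\frac{2-\kappa}{1-\kappa}M$. Using $r_{\min}-2M=\frac{\kappa}{1-\kappa}M$ (so that $\frac{r_{\min}}{r_{\min}-2M}=\frac{2-\kappa}{\kappa}$) I would compute $\psi(r_{\min})$ explicitly and verify — the one genuinely computational step — that the inequality $h(v_0)-h(k)<\psi(r_{\min})$ becomes, after multiplication by $\frac{\kappa}{1-\kappa}>0$ and simplification via \eqref{nota-85} (in particular $1-\eps^2k^2=\frac{2\kappa}{1+\kappa}$), exactly $P_\eps(r_0,v_0)>0$ with $P_\eps$ as in \eqref{function-P-eps}. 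This gives part (1); and when $P_\eps(r_0,v_0)\leq0$ the horizontal level $h(v_0)-h(k)$ meets the $U$-shaped graph of $\psi$ at two points $\underr_*\leq\barr_*$ in $(2M,+\infty)$ (coinciding iff equality holds), which is the first assertion of part (2).

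Finally, to place the roots relative to $r_0$, I would use that $\psi(r_0)=0$ lies strictly above the level $h(v_0)-h(k)<0$, together with the strict monotonicity of $\psi$ on each branch: if $r_0\geq\frac{2-\kappa}{1-\kappa}M$ then $r_0$ sits on the increasing branch, hence $\barr_*\leq r_0$, while $\underr_*<\frac{2-\kappa}{1-\kappa}M\leq r_0$; and $\underr_*>2M$ since $\psi\to+\infty$ near $2M$. The case $r_0<\frac{2-\kappa}{1-\kappa}M$ is symmetric and yields $2M<r_0<\underr_*\leq\barr_*$. The only real obstacle is the bookkeeping involved in recognizing $\psi(r_{\min})$ as the stated closed form of $P_\eps$; everything else is a transcription of the non-relativistic argument.
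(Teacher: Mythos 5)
Your proposal is correct and follows essentially the same route as the paper: the paper's proof likewise splits the sonic equation $G_\eps(r,k;r_0,v_0)=0$ into a velocity part (their $L_\eps$, your $h(v_0)-h(k)$, maximized at $v_0=k$ with value $0$) and a radial part (their $R_\eps$, your $\psi$, U-shaped with minimum at $r=\frac{2-\kappa}{1-\kappa}M$ and blowing up at $r=2M$ and $r=+\infty$), then identifies the comparison of the minimum with the level as the sign condition on $P_\eps$; the only difference is the harmless normalization by $\frac{\kappa}{1-\kappa}$, and your explicit verification that $\psi(r_{\min})-(h(v_0)-h(k))$ reduces to $P_\eps$ via $1-\eps^2k^2=\frac{2\kappa}{1+\kappa}$ checks out.
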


\begin{proof} Introduce the following function of the velocity variable $v_0 > 0$: 
$$
L_\eps (v_0) := {\kappa \over 1 - \kappa} 
\ln \Bigg( {1-\eps^2 v_0^2 \over 1 - \eps^2 k^2} \Bigg) + \ln {v_0 \over k}, 
$$
which satisfies $L_\eps' (v_0) = {1\over v_0} \Big(1- {v_0^2 \over k^2} {1-\eps^2 k^2\over 1 - \eps^2 v_0^2} \Big)$. Thus, $L_\eps' (v_0) = 0 $ if and only if $v_0= k$. Hence, $L_\eps$ achieves its maximum at $k$, that is, 
$L_\eps (v_0 \leq L_\eps (k) = 0$. Therefore for all non-sonic $v_0$, we have  
$-\infty<L_\eps (v_0) < 0$. 

Now, consider the following function of the spatial variable 
$$
R_\eps(r): =\ln {r^2 \over r_0^2} + {\kappa \over 1- \kappa} \Bigg( \ln {r\over r-2M} - \ln {r_0 \over r_0-2M} \Bigg), 
$$
which satisfies
$R_\eps'(r) = {2 \over r(r-2M)} \Big((r-2M) - {\kappa \over 1- \kappa} 
M \Big)$. 
Therefore, the function $R_\eps$ reaches its minimum at $r_{min} := {2 - \kappa \over 1- \kappa} M$ 
and
$$
R_\eps (r_{min}) = \ln \Bigg( {(2 - \kappa) \over (1- \kappa)} {M^2 \over r_0^2} \Bigg) 
+ {\kappa \over 1- \kappa}  \ln\Bigg( {2 - \kappa \over \kappa} \Big( 1 - {2M \over r_0} \Big) \Bigg).
$$ 
Observe also that the mininum value $R_\eps (r_{min})$ reaches its maximum value $0$ when $r_0=r_{min}$. Therefore, if and only if $R_\eps (r_{min}) -L_\eps (v_0) > 0$, no sonic  point can be found; otherwise, we have two sonic points. The positions of $r_0$ and $r_{min}$ determine the location of the sonic  points $\underr_*\leq \barr_*$. Furthermore, since $R_\eps(2M) = +\infty$, we have the lower bound $2M<\underr_*$. 
\end{proof}

We need now to distinguish between several cases and the following notation will be useful: 
\be
\aligned 
&  \tilde {\mathfrak A}: P_\eps(r_0, v_0) > 0, \quad 
&&&&\tilde {\mathfrak B}: P_\eps(r_0, v_0) \leq 0,
\\
& \tilde {\mathfrak 1}:  v_0 < k, \quad 
&&&&\tilde {\mathfrak 2}:  v_0 > k,
\\
& \tilde {\mathfrak i}: r_0 \geq {2 - \kappa \over 1- \kappa} M, \quad 
&&&& \tilde {\mathfrak {ii}}: r_0 < {2 - \kappa \over 1- \kappa} M. 
\endaligned 
\ee
We are now ready to continue the local solutions in Lemma~\ref{local-sol} beyond the neighborhood $\Uscr_0^\eps$. There are two main regimes, which we now discuss. 

\begin{lemma}[Extension of steady state solutions without sonic point]
\label{extension3}
Given a radius $r_0 >2M$, a density $\rho_0 > 0$, and a non-sonic velocity $0 \leq v_0 < 1/\eps$ (satisfying $v_0 \neq k$), the local solution  $\rho=\rho(r;r_0,\rho_0,v_0)$ and $v=v(r;r_0,\rho_0,v_0)$ given in Lemma~\ref{local-sol} satisfies the following properties:  
\begin{enumerate}

\item {\bf Case} $\widetilde {\mathfrak {A1}}$. The solution can be extended to $(2M,+\infty)$ satisfying $v< k$ with 
$$
\lim_{r \to 2M}v(r;r_0,\rho_0,v_0) =\lim_{r \to +\infty}v(r;r_0,\rho_0,v_0) = 0.
$$
The solution satisfies the monotonicity that $v$ is increasing with respect to $r$ on the interval $(2M, {2 - \kappa \over 1- \kappa} M)$ while it is decreasing on $({2 - \kappa \over 1- \kappa} M, +\infty)$. 

\item {\bf Case} $\widetilde {\mathfrak {A2}}$. The solution can be extended to $(2M,+\infty)$ satisfying $v> k$ with 
$$ 
\lim_{r \to 2M}v(r;r_0,\rho_0,v_0) =\lim_{r \to +\infty}v(r;r_0,\rho_0,v_0) = {1 \over  \eps}. 
$$
The following monotonicity property holds:  $v$ is decreasing with respect to $r$ on the interval $(2M, {2 - \kappa \over 1- \kappa} M)$ while it is increasing on $({2 - \kappa \over 1- \kappa} M, +\infty)$. 
\end{enumerate}
\end{lemma}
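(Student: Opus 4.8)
The plan is to mimic the structure of the non-relativistic arguments in Lemma~\ref{extension1}, now using the function $G_\eps$ and its monotonicity properties established just above the statement. First I would observe that in case $\widetilde{\mathfrak{A}}$ the hypothesis $P_\eps(r_0,v_0)>0$ together with Lemma~\ref{sonic-point} guarantees that there is no sonic point, so the velocity $v=v(r)$ never crosses the value $k$; by the sign condition $\sgn(v-k)=\sgn(v_0-k)$ (which follows because $v$ is continuous and cannot reach $k$), we are in the regime $v<k$ for $\widetilde{\mathfrak{A1}}$ and $v>k$ for $\widetilde{\mathfrak{A2}}$. Since $\del_v G_\eps \neq 0$ away from $v=k$, the implicit function theorem applies at every point of the maximal interval of existence, and the derivative formula \eqref{first-der} shows ${dv\over dr}$ stays finite as long as $v$ stays away from $k$ and $r$ stays in $(2M,+\infty)$; hence the solution extends to all of $(2M,+\infty)$ unless $v$ escapes to a boundary value of its allowed range.

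Next I would extract the asymptotic limits from the conservation identity $G_\eps(r,v(r);r_0,v_0)\equiv 0$. Rewriting this identity isolates
\[
\ln {1-\eps^2 v_0^2 \over 1 - \eps^2 v^2} + {1- \kappa \over \kappa} \ln {v_0 \over v}
= {1- \kappa \over \kappa} \ln {r^2 \over r_0^2} + \ln {r(r_0-2M) \over r_0(r-2M)},
\]
and the key point is the behavior of the right-hand side: as $r\to 2M$ the term $\ln\big(r(r_0-2M)/(r_0(r-2M))\big)\to+\infty$, and as $r\to+\infty$ the term ${1-\kappa\over\kappa}\ln(r^2/r_0^2)\to+\infty$ as well (here I use $\kappa\in(0,1)$, so ${1-\kappa\over\kappa}>0$). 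Thus the left-hand side must blow up to $+\infty$ at both ends. In case $\widetilde{\mathfrak{A1}}$, with $0<v<k<1/\eps$, the logarithmic term $\ln\big((1-\eps^2 v_0^2)/(1-\eps^2 v^2)\big)$ stays bounded, so the divergence can only come from ${1-\kappa\over\kappa}\ln(v_0/v)\to+\infty$, forcing $v\to 0$. In case $\widetilde{\mathfrak{A2}}$, with $k<v<1/\eps$, the term $\ln(v_0/v)$ stays bounded, so the blow-up must come from $\ln\big(1/(1-\eps^2 v^2)\big)\to+\infty$, forcing $v\to 1/\eps$; this is precisely the singular null-vector limit noted in Remark~\ref{Minkowski}(2). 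Finally, the monotonicity statements follow directly from \eqref{first-der}: the denominator ${\eps^2 v^2\over 1-\eps^2 v^2}-{1-\kappa\over 2\kappa}$ has a fixed sign on each interval (it is negative precisely when $v<k$, using \eqref{nota-85} to identify ${1-\kappa\over 1+\kappa}=\eps^2 k^2$), and the numerator ${1-\kappa\over\kappa}(r-2M)-M$ vanishes exactly at $r={2-\kappa\over 1-\kappa}M$, so ${dv\over dr}$ changes sign there.

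The main obstacle I anticipate is not any single step but the bookkeeping around the sign of the denominator in \eqref{first-der} and making the "$v$ cannot reach a boundary of its range" argument fully rigorous: one must rule out that $v\to 1/\eps$ or $v\to 0$ at a \emph{finite} radius (which would still give a finite maximal interval), and this requires showing, via the conservation identity, that such a limit of $v$ forces $r\to 2M$ or $r\to+\infty$ respectively — i.e. the map $r\mapsto v(r)$ restricted to each monotonicity branch is a homeomorphism onto the appropriate half-open interval. I would handle this by noting that on $(2M,{2-\kappa\over 1-\kappa}M)$ and on $({2-\kappa\over 1-\kappa}M,+\infty)$ the relation $G_\eps=0$ can be solved globally for $v$ as a strictly monotone function of $r$ (since $G_\eps$ is strictly monotone in $v$ on each side of $v=k$ and strictly monotone in $r$), and then the endpoint limits computed above pin down the range exactly. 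The case $\widetilde{\mathfrak{A2}}$ is genuinely similar to $\widetilde{\mathfrak{A1}}$ after replacing the role of the $\ln(v_0/v)$ term by the $\ln\big(1/(1-\eps^2v^2)\big)$ term, so I would write out $\widetilde{\mathfrak{A1}}$ in detail and indicate the parallel modifications for $\widetilde{\mathfrak{A2}}$.
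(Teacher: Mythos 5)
Your proposal is correct and follows essentially the same route as the paper: the paper's proof of this lemma is simply the statement that it ``follows the same lines'' as the non-relativistic Lemma~\ref{extension1}, and your argument is precisely that adaptation — no sonic point plus the implicit function theorem gives the extension, the identity $G_\eps(r,v(r);r_0,v_0)\equiv 0$ forces $v\to 0$ (resp.\ $v\to 1/\eps$) as the right-hand side blows up at $r\to 2M$ and $r\to+\infty$, and the sign analysis of \eqref{first-der} (numerator vanishing at $r={2-\kappa\over 1-\kappa}M$, denominator changing sign only at $v=k$) gives the monotonicity. Your extra care in ruling out $v$ reaching a boundary value at a finite radius via the same identity is a point the paper glosses over, but it is consistent with, and slightly more rigorous than, the paper's treatment.
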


\begin{lemma}[Extension of steady state solutions with sonic points]
\label{extension4}

Given a radius $r_0 >2M$, a density $\rho_0 > 0$, and a non-sonic velocity $0 \leq v_0 < 1/\eps$ (satisfying $v_0 \neq k$), the local solution  $\rho=\rho(r;r_0,\rho_0,v_0)$ and $v=v(r;r_0,\rho_0,v_0)$ given in Lemma~\ref{local-sol} satisfies the following properties, in which $\barr_* \leq \underr_*$ denotes the sonic points given by Lemma~\ref{sonic-point}: 
\begin{enumerate}

\item {\bf Case} $\widetilde {\mathfrak {B1i}}$. The solution $v=v(r)$ can be extended to $(\barr_*,+\infty)$ and satisfies $v\leq k$, with 
$$ 
\lim_{r \to +\infty}v(r;r_0,\rho_0,v_0) = 0, \qquad \lim_{r \to \barr_*}v(r;r_0,\rho_0,v_0) = k. 
$$
Moreover, $v$ is decreasing with respect to $r$ on $( \barr_*, +\infty)$. 

\item {\bf Case} $\widetilde {\mathfrak {B2i}}$. The solution $v=v(r)$ can be extended to $(\barr_*,+\infty)$ and satisfies  $v\geq k$, with 
$$ 
\lim_{r \to +\infty}v(r;r_0,\rho_0,v_0) = {1 \over  \eps}, \qquad \lim_{r \to \barr_*}v(r;r_0,\rho_0,v_0) = k. 
$$
Moreover, $v$ is increasing with respect to $r$ on $( \barr_*, +\infty)$.

\item {\bf Case} $\widetilde {\mathfrak {B1ii}}$. The solution $v=v(r)$ can be extended to $(0,\underr_*)$ and satisfies  $v\leq k$, with 
$$ 
\lim_{r \to 2M}v(r;r_0,\rho_0,v_0) = 0, \qquad \lim_{r \to \underr_*}v(r;r_0,\rho_0,v_0) = k. 
$$
Moreover, $v$ is increasing with respect to $r$ on $( 0, \underr_*)$. 

\item {\bf Case} $\widetilde {\mathfrak {B2ii}}$. The solution $v=v(r)$ can be extended to $(0,\underr_*)$ and satisfies  $v\geq k$, with 
$$ 
\lim_{r \to 2M}v(r;r_0,\rho_0,v_0) = {1 \over  \eps}, \qquad \lim_{r \to \underr_*}v(r;r_0,\rho_0,v_0) = k. 
$$
Moreover, $v$ is decreasing with respect to $r$ on $( 0, \underr_*)$. 
\end{enumerate}
\end{lemma}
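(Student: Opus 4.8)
The plan is to follow the same route as the proof of Lemma~\ref{extension2} for the non-relativistic model, working now with the function $G_\eps$ of \eqref{fon-G-eps}, the derivative formula \eqref{first-der}, and the sonic-point dichotomy of Lemma~\ref{sonic-point}. The four cases are pairwise analogous under the exchanges $v<k\leftrightarrow v>k$ and ``horizon side $\leftrightarrow$ far side'', so I would carry out case $\widetilde{\mathfrak{B1i}}$ in detail and indicate only the sign changes for the others. Throughout I take the two sonic points ordered as $\underr_*\le\barr_*$, consistently with Lemma~\ref{sonic-point}.

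First I would record the two monotonicity facts that drive the argument. Isolating in \eqref{fon-G-eps} the $v$-dependent and the $r$-dependent parts, one may write $G_\eps(r,v;r_0,v_0)=\Phi(v)-\Psi(r)+\mathrm{const}(r_0,v_0)$, where $\Phi(v):=-\ln(1-\eps^2v^2)-\tfrac{1-\kappa}{\kappa}\ln v$ is strictly decreasing on $(0,k]$, strictly increasing on $[k,1/\eps)$, and tends to $+\infty$ as $v\to0^+$ or $v\to1/\eps^-$, while $\Psi(r):=\tfrac{1-\kappa}{\kappa}\ln r^2+\ln\tfrac{r}{r-2M}$ is U-shaped on $(2M,+\infty)$ with unique minimum at $r_{\min}:=\tfrac{2-\kappa}{1-\kappa}M$ and tends to $+\infty$ as $r\to2M^+$ or $r\to+\infty$. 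Using \eqref{nota-85}, the critical value $v=k$ of $\Phi$ is exactly where the denominator of \eqref{first-der} vanishes, and $r=r_{\min}$ is exactly where its numerator vanishes. Since a sonic point is a value of $r$ at which $\Psi(r)=\Phi(k)$ (absorbing the additive constant), and $\Psi$ is U-shaped, the two sonic points produced by Lemma~\ref{sonic-point} always straddle $r_{\min}$: one has $2M<\underr_*\le r_{\min}\le\barr_*$, together with $\barr_*\le r_0$ in case $\widetilde{\mathfrak i}$ and $r_0\le\underr_*$ in case $\widetilde{\mathfrak{ii}}$.

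For case $\widetilde{\mathfrak{B1i}}$ I would argue as follows. Because $\del_v G_\eps$ vanishes only at $v=k$, the implicit function theorem continues the local solution of Lemma~\ref{local-sol} as long as $v\ne k$; and since by \eqref{first-der} the derivative $\tfrac{dv}{dr}$ can blow up only at a sonic point, the sign of $v-k$ is preserved and the solution stays in $0<v<k$. On $(\barr_*,+\infty)$ there is no sonic point, on every compact subinterval $v$ ranges in a compact subset of $(0,k)$ (away from the singularity $v=k$ of \eqref{first-der}) and $r$ stays away from $2M$, so no obstruction to continuation arises and the solution extends to all of $(\barr_*,+\infty)$; conversely it cannot be continued across $\barr_*$, since for $\underr_*<r<\barr_*$ one has $\Psi(r)<\Phi(k)$ (because $\Psi$ is U-shaped and equals $\Phi(k)$ precisely at $\underr_*$ and $\barr_*$), so $G_\eps(r,v;r_0,v_0)\ge\Phi(k)-\Psi(r)>0$ for every admissible $v$ and $G_\eps(r,\cdot)$ has no zero there. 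Monotonicity on $(\barr_*,+\infty)$ follows from \eqref{first-der}: the numerator $\tfrac{1-\kappa}{\kappa}(r-2M)-M$ is positive because $r>\barr_*\ge r_{\min}$, while the denominator $\tfrac{\eps^2v^2}{1-\eps^2v^2}-\tfrac{1-\kappa}{2\kappa}$ is negative because $v<k$ (using \eqref{nota-85}), hence $\tfrac{dv}{dr}<0$. Being monotone and bounded, $v$ has limits at both ends; the identity $\Phi(v(r))=\Psi(r)+\mathrm{const}$ forces $\Phi(v(r))\to+\infty$ as $r\to+\infty$ and $\Phi(v(r))\to\Phi(k)$ as $r\to\barr_*^+$, so with $v<k$ this yields $v\to0$ and $v\to k$ respectively. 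The remaining cases $\widetilde{\mathfrak{B2i}}$ (take $v>k$: then $\tfrac{dv}{dr}>0$ and the limit at $+\infty$ is $1/\eps$), $\widetilde{\mathfrak{B1ii}}$ and $\widetilde{\mathfrak{B2ii}}$ (here $r_0\le\underr_*\le r_{\min}$, so extension takes place on the interval $(2M,\underr_*)$, on which the numerator of \eqref{first-der} is negative, reversing the monotonicity, and $r\to2M^+$ plays the role formerly played by $r\to+\infty$) are obtained by the same reasoning with the obvious sign changes.

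The step I expect to be the main obstacle is the endpoint analysis: verifying that the solution extends all the way up to the sonic point (and no further), all the way down to $r=2M$ in the ``$\mathfrak{ii}$'' cases --- here one should read the domains in items (3)--(4) as $(2M,\underr_*)$ rather than $(0,\underr_*)$ --- and that $v$ attains exactly the boundary values $0$, $k$, or $1/\eps$ there. This is precisely where the monotone--bounded argument together with the reformulation $\Phi(v)=\Psi(r)+\mathrm{const}$ --- using that $\Psi$ blows up only at $2M$ and $+\infty$ whereas $\Phi$ blows up only at $0$ and $1/\eps$ --- is essential; everything else is bookkeeping over the four cases via the signs in \eqref{first-der} and the identities \eqref{nota-85}.
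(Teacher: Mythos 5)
Your argument is correct and is essentially the paper's own: the paper proves this lemma by a one-line reference to the proof of Lemma~\ref{extension2} (continuation via the implicit function theorem away from $v=k$, limits read off from the level-set identity for $G_\eps$, monotonicity from \eqref{first-der}, and non-extendability past a sonic point from $G_\eps(r,v)>G_\eps(r,k)>0$ on $(\underr_*,\barr_*)$), and your decomposition $G_\eps=\Phi(v)-\Psi(r)+\mathrm{const}$ is just an explicit write-up of that scheme. Your reading of the domain in cases (3)--(4) as $(2M,\underr_*)$ rather than $(0,\underr_*)$, and of the sonic-point ordering as $\underr_*\le\barr_*$, is the correct interpretation of the statement (consistent with the stated limits $r\to 2M$ and with Lemma~\ref{sonic-point}).
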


The proof of Lemmas~\ref{extension3} and \ref{extension4} follows the same lines as the ones of Lemmas~\ref{extension1} and  \ref{extension2}, respectively. Note that since $G_\eps$ has its minimum at $v= k$, we have $G_\eps(r, v; r_0, v_0) >G_\eps(r,k; r_0, v_0) > 0$ for all $r \in (\underr_*, \barr_*)$, and we see that no solution can be defined on the interval $(\underr_*, \barr_*)$ limited by the two roots. 


\subsection*{Main conclusion for this section}

We can now summarize the properties of steady state solutions. We refer to Figures~\ref{Fig-JD8} to \ref{Fig-JD10} for an illustration for several values of the physical parameters $\eps, k,m$. 

\begin{theorem}[Steady flows on a Schwarzschild background]
\label{steady-relativistic} 
Given some values of the light speed $\eps> 0$, sound speed $k \in (0, 1/\eps)$, and black hole mass $M> 0$, consider the Euler model $\Mscr(\eps, k, m= M/\eps^2)$ in \eqref{Euler-con} describing fluid flows on a Schwarzschild background. 
Then, for any given any radius $r_0 >2M$, density $\rho_0 > 0$, and velocity $v_0 \geq 0$ with $v_0 \neq k$, there exists a unique steady state solution denoted by 
$$
\rho=\rho(r;r_0,\rho_0,v_0), \qquad v=v(r;r_0,\rho_0,v_0), 
$$ 
satisfying the steady state equations \eqref{steady2-con} together with the initial condition 
$\rho(r_0) =\rho_0$ and $v(r_0) =v_0$. Moreover, the velocity component satisfies $\sgn(v(r) -k) = \sgn(v_0-k)$ for all relevant values $r$, and in order to specify the range of the independent variable $r$ where this solution is defined, one distinguishes between two alternatives:  
\begin{enumerate}

\item {\bf Regime without sonic point.} If $ P_\eps(r_0, v_0) > 0$ (this function being introduced in \eqref{function-P-eps}), the solution is defined on the whole interval $(2M,+\infty)$. 

\item {\bf Regime with sonic points.}  If $P_\eps(r_0, v_0) \leq 0$, the solution is defined on the interval ${\Pi \varsubsetneqq (2M,+\infty)}$ defined by    
\bel{pi}
\Pi :=
\begin{cases}
(0, \underr_*), \qquad & r_0 < {2 - \kappa \over 1- \kappa} M,
\\
(\barr_*,+\infty),          & r_0 \geq {2 - \kappa \over 1- \kappa} M. 
\end{cases} 
\ee
Moreover, the velocity $v(r)$ tends to the sonic velocity $k$ when $r$ approaches the sonic radius ($\underr_*$ or $\barr_*$, introduced in Lemma~\ref{sonic-point}). 
\end{enumerate}
\end{theorem}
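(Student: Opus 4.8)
The plan is to assemble the theorem from the chain of lemmas proved in this section; the only genuinely new content is the identification of the maximal interval of existence and the uniqueness assertion. First I would reduce the differential system \eqref{steady2-con} to its first integrals. By Lemma~\ref{sol-ODE} specialized to $p(\rho)=k^2\rho$, any solution satisfies the two algebraic relations in \eqref{330}; since $\rho>0$, the first of these shows that $v$ keeps a constant sign, so we may assume $v_0\geq 0$ without loss of generality. Eliminating $\rho$ from \eqref{330} yields the single scalar equation $G_\eps(r,v(r);r_0,v_0)\equiv 0$, with $G_\eps$ as in \eqref{fon-G-eps}, and conversely any $C^1$ function satisfying this equation together with $v(r_0)=v_0$ recovers a solution of \eqref{steady2-con} through \eqref{330}. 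The computation $\partial_v G_\eps=(v-k^2/v)/(1-\eps^2 v^2)$ shows that $G_\eps(r,\cdot;r_0,v_0)$ is strictly monotone on each of the two branches $\{v<k\}$ and $\{v>k\}$ and attains its unique minimum at $v=k$; together with $\partial_r G_\eps<0$ this yields uniqueness of $v=v(r)$ on any connected interval on which it stays non-sonic, and it also forces $\sgn(v(r)-k)=\sgn(v_0-k)$, since a solution that reached $v=k$ could not be continued through the sonic value.

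Next I would invoke local existence: because $v_0\neq k$ we have $\partial_v G_\eps(r_0,v_0;r_0,v_0)\neq 0$, so the implicit function theorem applies and produces the locally defined smooth solution of Lemma~\ref{local-sol}. The continuation criterion is read off from \eqref{first-der}: the solution extends as long as the velocity does not hit the sonic value $k$, i.e.\ as long as no sonic point is met. Lemma~\ref{sonic-point} then dichotomizes according to the sign of $P_\eps(r_0,v_0)$ from \eqref{function-P-eps}. If $P_\eps(r_0,v_0)>0$ there is no sonic point and, by Lemma~\ref{extension3}, the solution extends to all of $(2M,+\infty)$ with the stated limits at the horizon and at infinity and the single change of monotonicity at $r=\frac{2-\kappa}{1-\kappa}M$.

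If $P_\eps(r_0,v_0)\leq 0$ there are two sonic points $\underr_*\leq\barr_*$, and Lemma~\ref{sonic-point} locates them on one side of $r_{\min}:=\frac{2-\kappa}{1-\kappa}M$ according to the position of $r_0$. Then Lemma~\ref{extension4} shows that the solution occupies exactly the connected component of $(2M,+\infty)\setminus\{\underr_*,\barr_*\}$ containing $r_0$: the interval $(2M,\underr_*)$ when $r_0<r_{\min}$ and $(\barr_*,+\infty)$ when $r_0\geq r_{\min}$, which is precisely the set $\Pi$ of \eqref{pi}, and $v\to k$ at the relevant endpoint. That no solution can live on the middle interval $(\underr_*,\barr_*)$ is the remark following Lemma~\ref{extension4}: there one has $G_\eps(r,v;r_0,v_0)\geq G_\eps(r,k;r_0,v_0)>0$, so $G_\eps(r,\cdot)$ has no root. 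Finally the density is recovered from $v$ via the second relation in \eqref{330} and stays positive throughout, completing the proof.

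The main obstacle I anticipate is not a single computation but the careful matching performed in the $P_\eps\leq 0$ case: one must check that the two sonic points obtained algebraically really lie entirely on the correct side of $r_{\min}$, and that the sign of $v$ on the admissible interval — governed by the sign of $\frac{1-\kappa}{\kappa}(r-2M)-M$ in \eqref{first-der} together with the sign of the denominator there — is consistent with the prescribed limiting values as $r\to 2M$, $r\to+\infty$, and $r\to\underr_*$ or $\barr_*$. Once Lemmas~\ref{sonic-point}, \ref{extension3}, and \ref{extension4} are available this is bookkeeping, but it is where all the case distinctions $\widetilde{\mathfrak A}/\widetilde{\mathfrak B}$, $\widetilde{\mathfrak 1}/\widetilde{\mathfrak 2}$, $\widetilde{\mathfrak i}/\widetilde{\mathfrak{ii}}$ must be reconciled into the single clean statement of the theorem.
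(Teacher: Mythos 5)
Your proposal is correct and follows essentially the same route as the paper: Theorem~\ref{steady-relativistic} is stated there as a summary of Lemmas~\ref{sol-ODE}, \ref{local-sol}, \ref{sonic-point}, \ref{extension3} and \ref{extension4}, assembled exactly as you do (first integrals, elimination to the scalar equation $G_\eps=0$, implicit function theorem off the sonic value, the $P_\eps$ dichotomy, and the remark that $G_\eps>0$ on $(\underr_*,\barr_*)$ excludes the middle interval). Your added observations on uniqueness and sign preservation via the strict monotonicity of $G_\eps(r,\cdot)$ on each branch are consistent with, and slightly more explicit than, the paper's presentation.
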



\begin{figure}[htbp]
\begin{minipage}[t]{0.3\linewidth}
\centering
\epsfig{figure=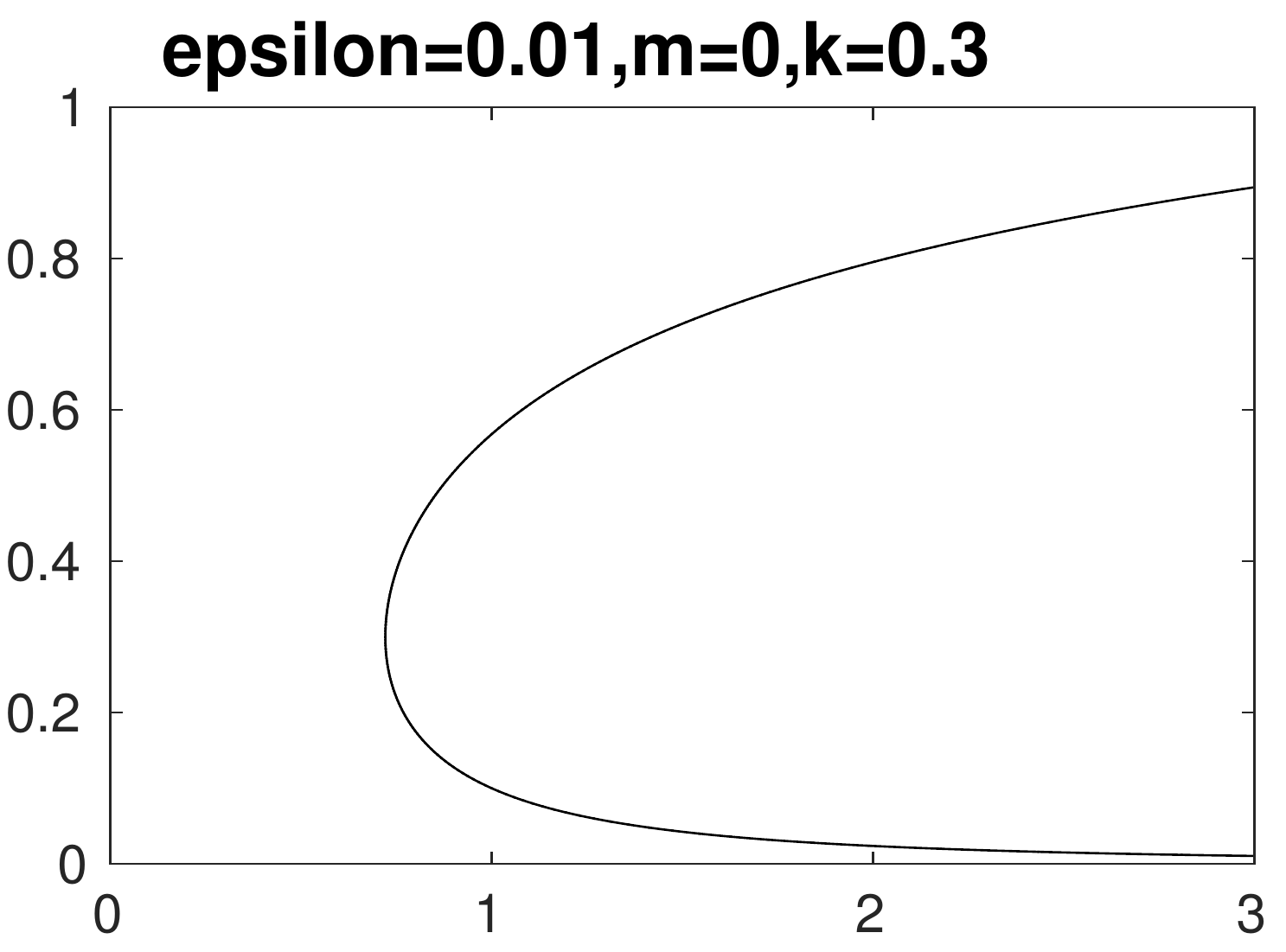,height=1.3in} 
\end{minipage}
\begin{minipage}[t]{0.3\linewidth}
\centering

\epsfig{figure=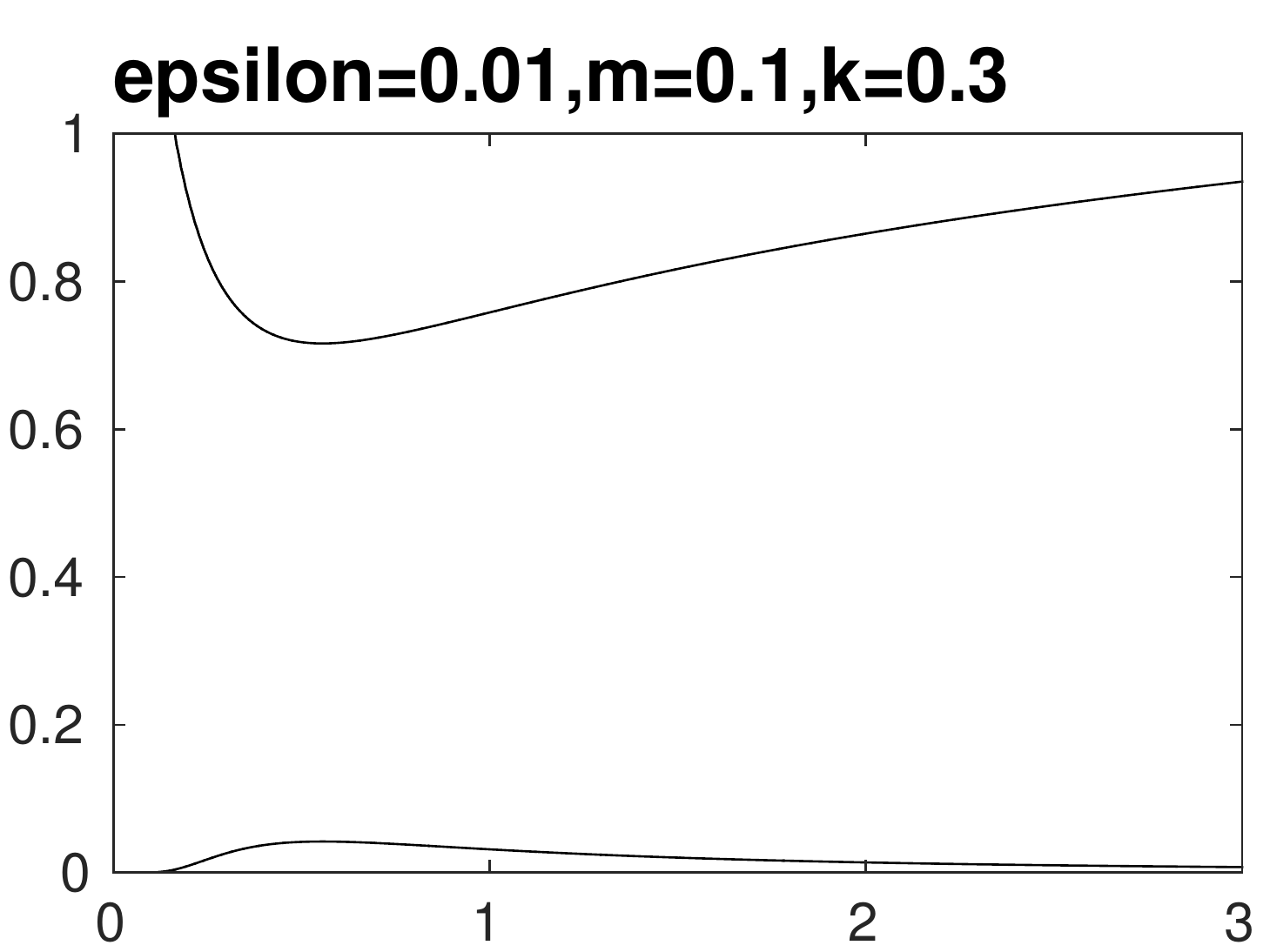,height=1.3in} 
\end{minipage}
\begin{minipage}[t]{0.3\linewidth}
\centering 
\epsfig{figure=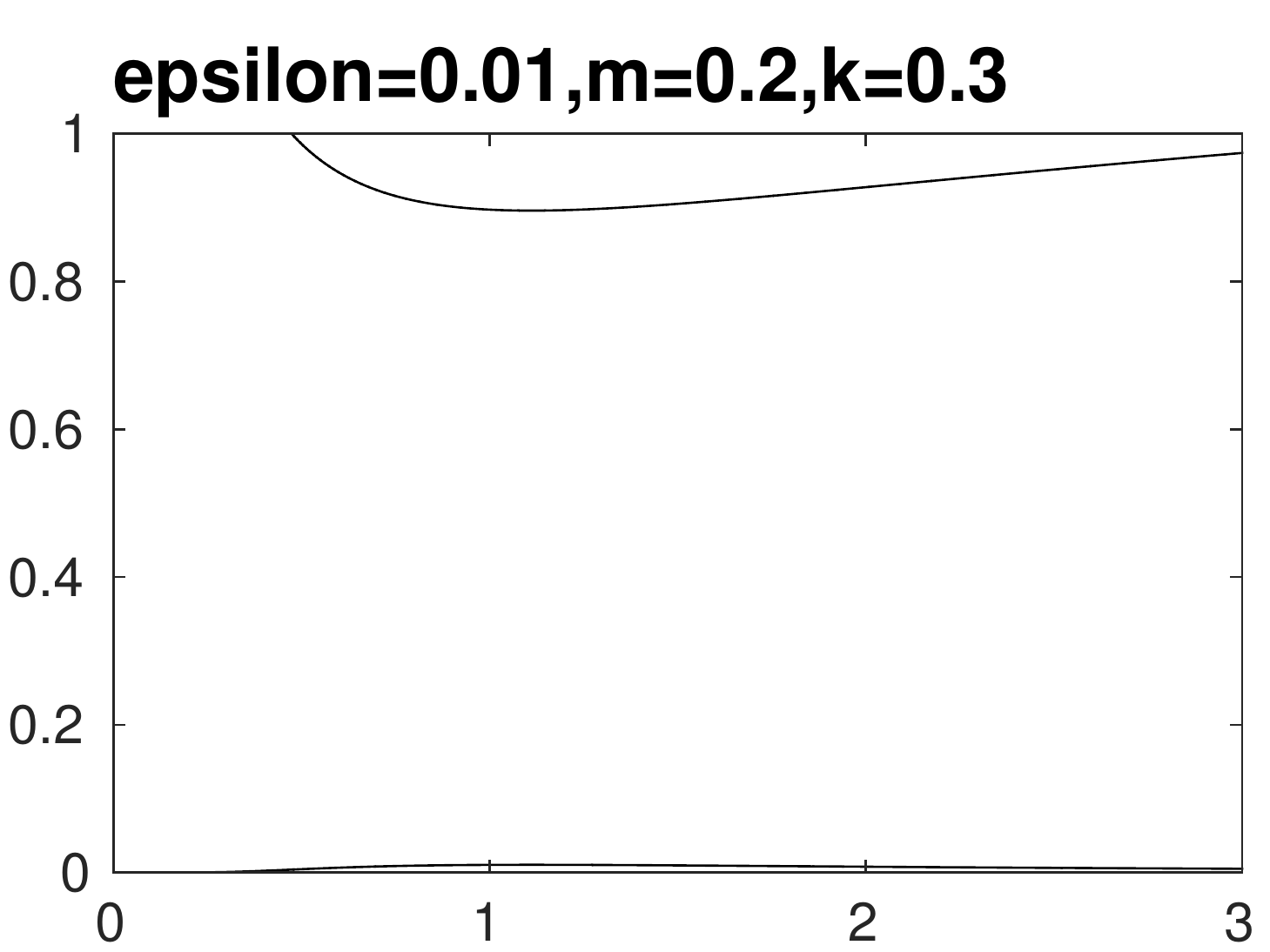,height=1.3in} 
\end{minipage}
\caption{Solution $v=v(r)$ for $\eps= 0.01, k= 0.3$ and several values $m= M/\eps^2$.}
\label{Fig-JD8}
\end{figure}

\begin{figure}[htbp]
\begin{minipage}[t]{0.3\linewidth}
\centering
\epsfig{figure=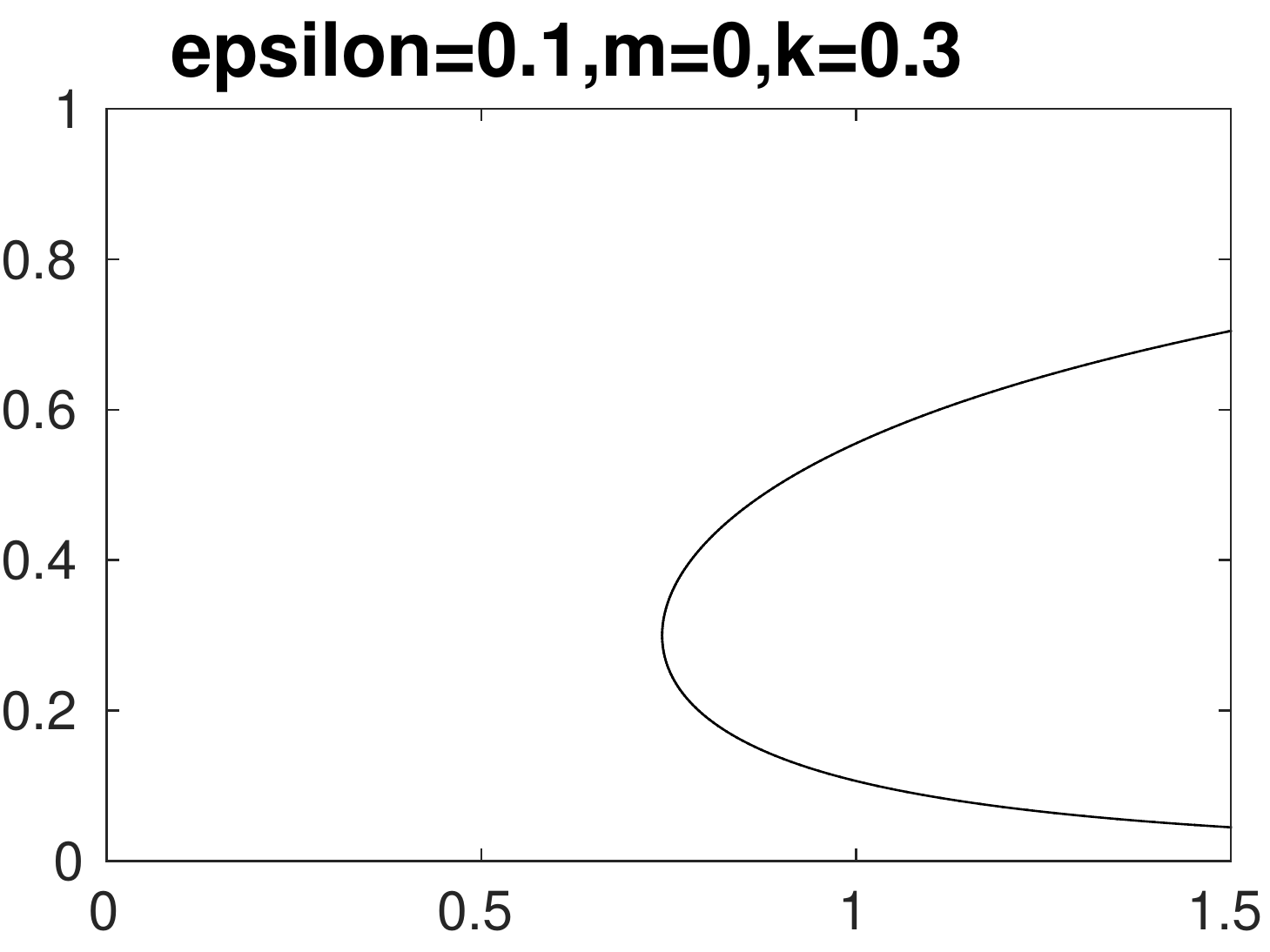,height=1.3in} 
\end{minipage}
\begin{minipage}[t]{0.3\linewidth}
\centering

\epsfig{figure=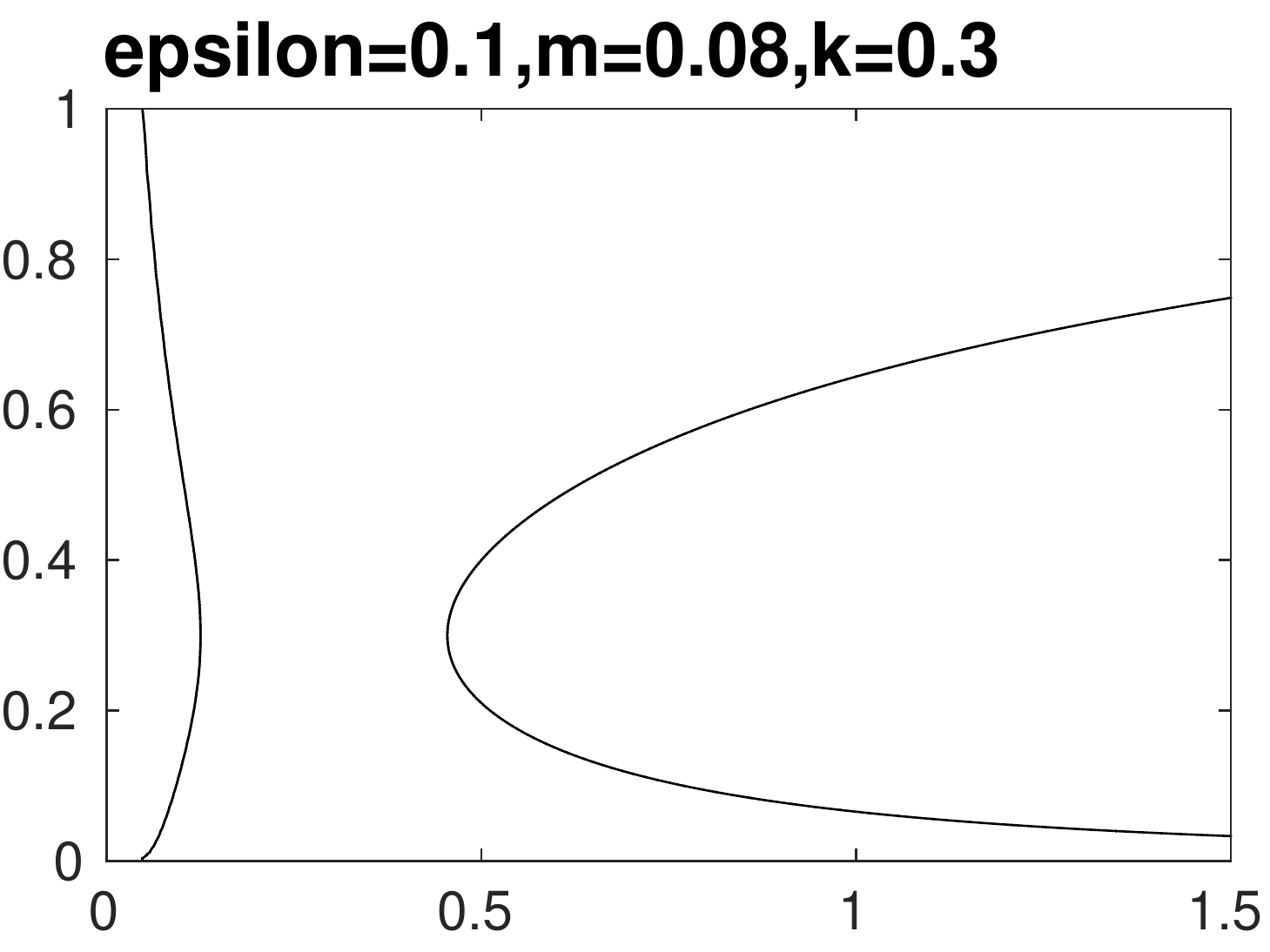,height=1.3in} 
\end{minipage}
\begin{minipage}[t]{0.3\linewidth}
\centering

\epsfig{figure=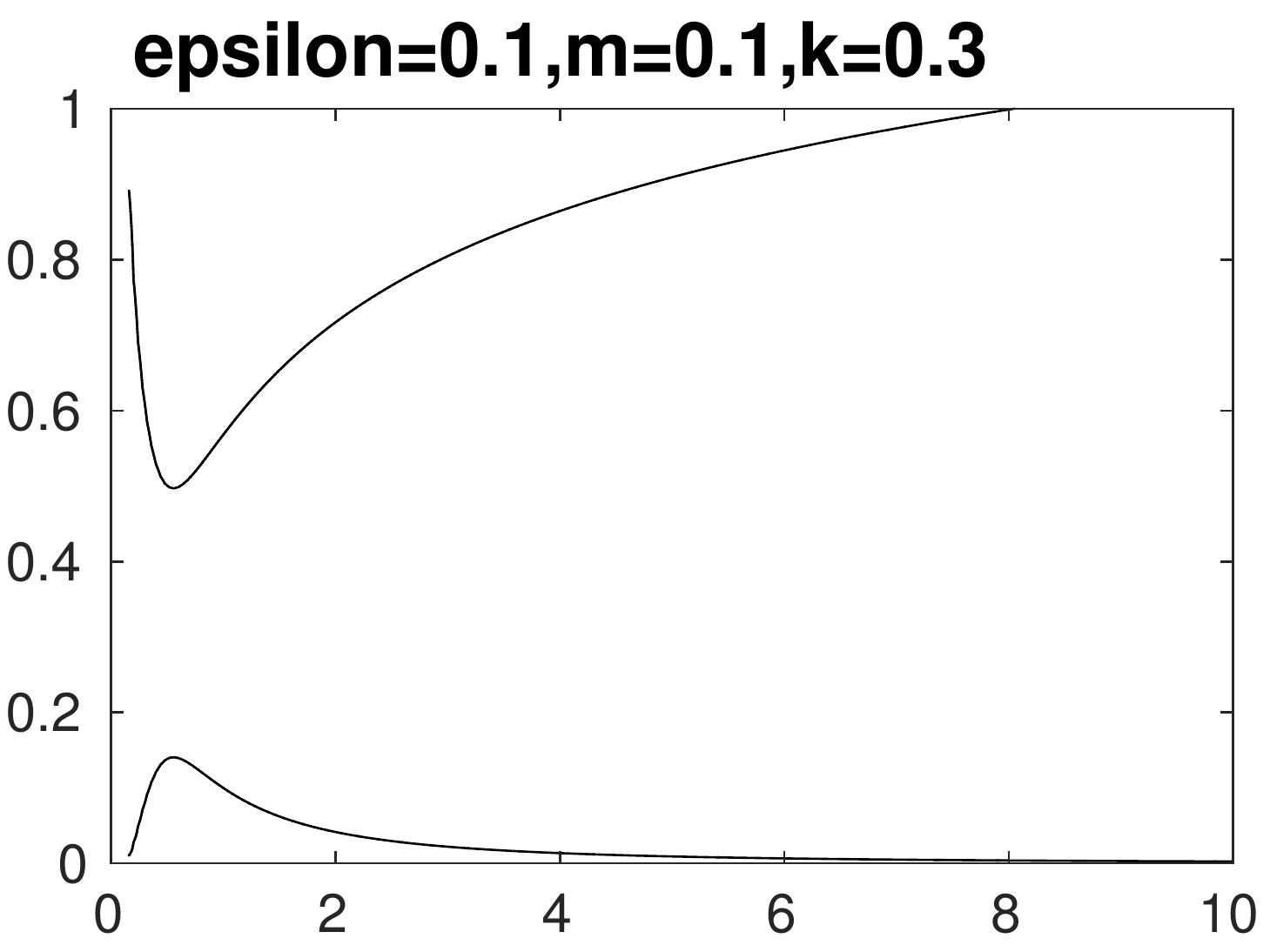,height=1.3in} 
\end{minipage}
\caption{Solution $v=v(r)$ for $\eps= 0.1, k= 0.3$ and several values $m= M/\eps^2$.}
\label{Fig-JD9}
\end{figure} 

\begin{figure}[htbp]
\begin{minipage}[t]{0.3\linewidth}
\centering
\epsfig{figure=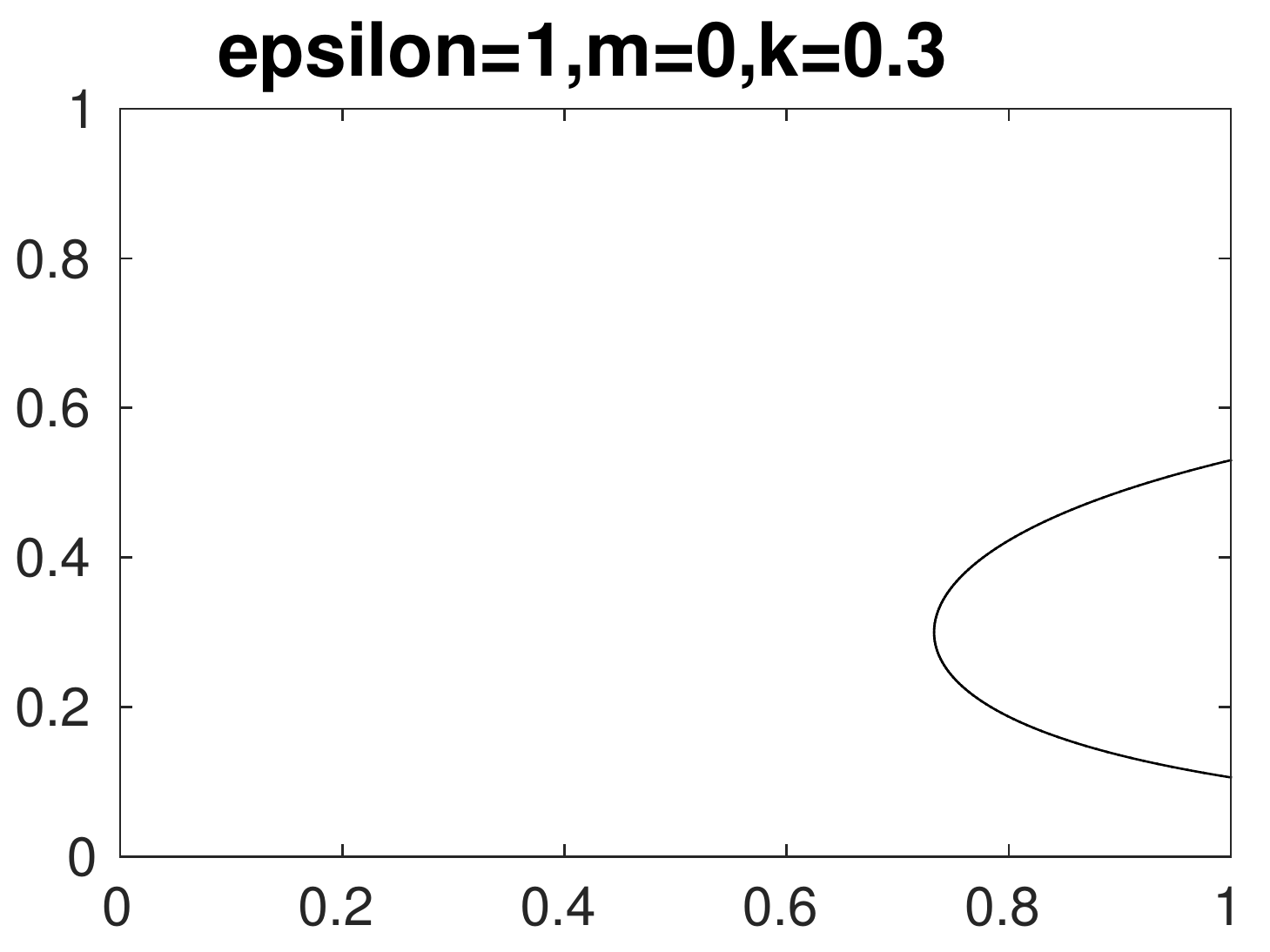,height=1.3in} 
\end{minipage}
\begin{minipage}[t]{0.3\linewidth}
\centering

\epsfig{figure=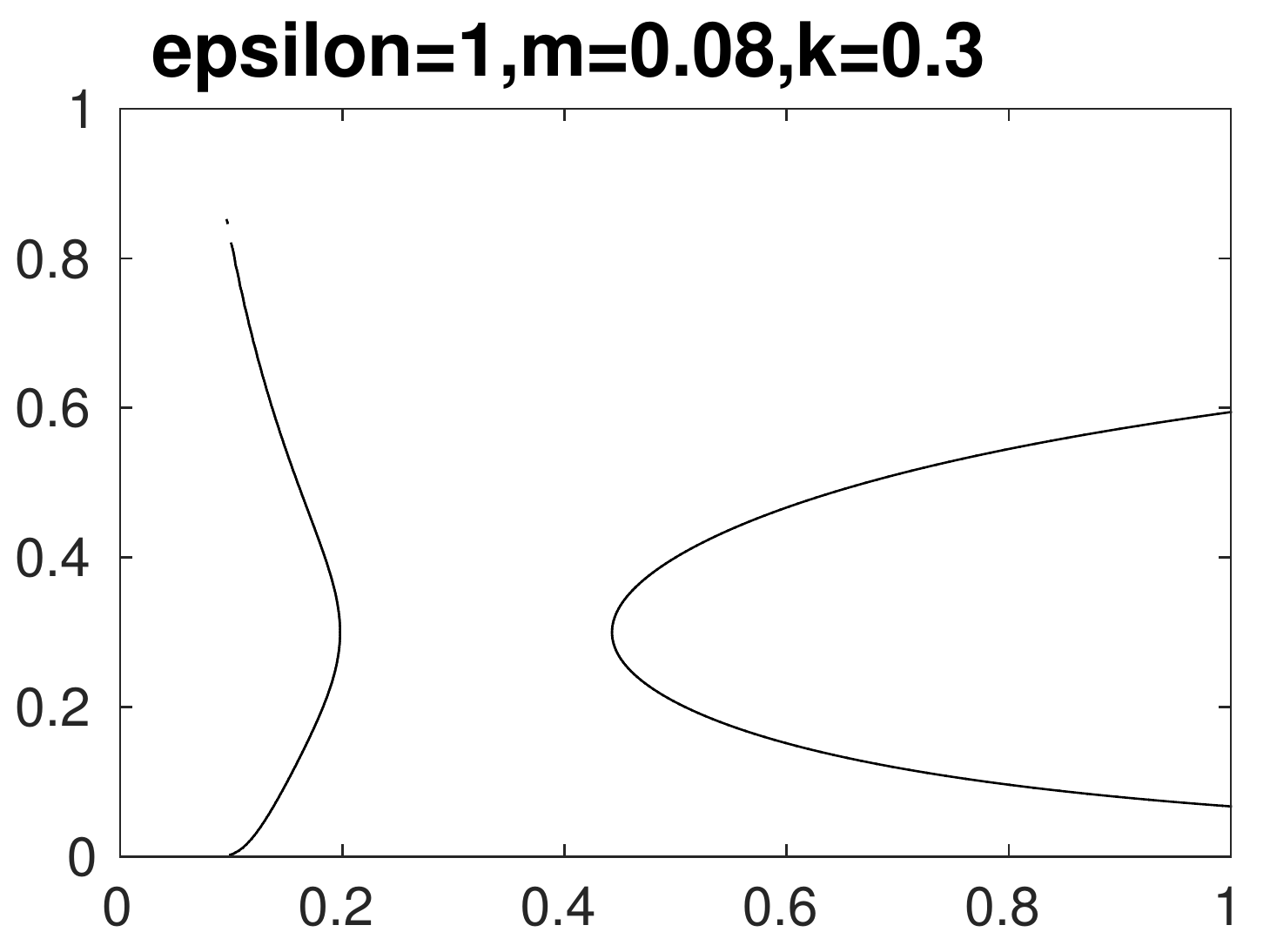,height=1.3in} 
\end{minipage}
\begin{minipage}[t]{0.3\linewidth}
\centering

\epsfig{figure=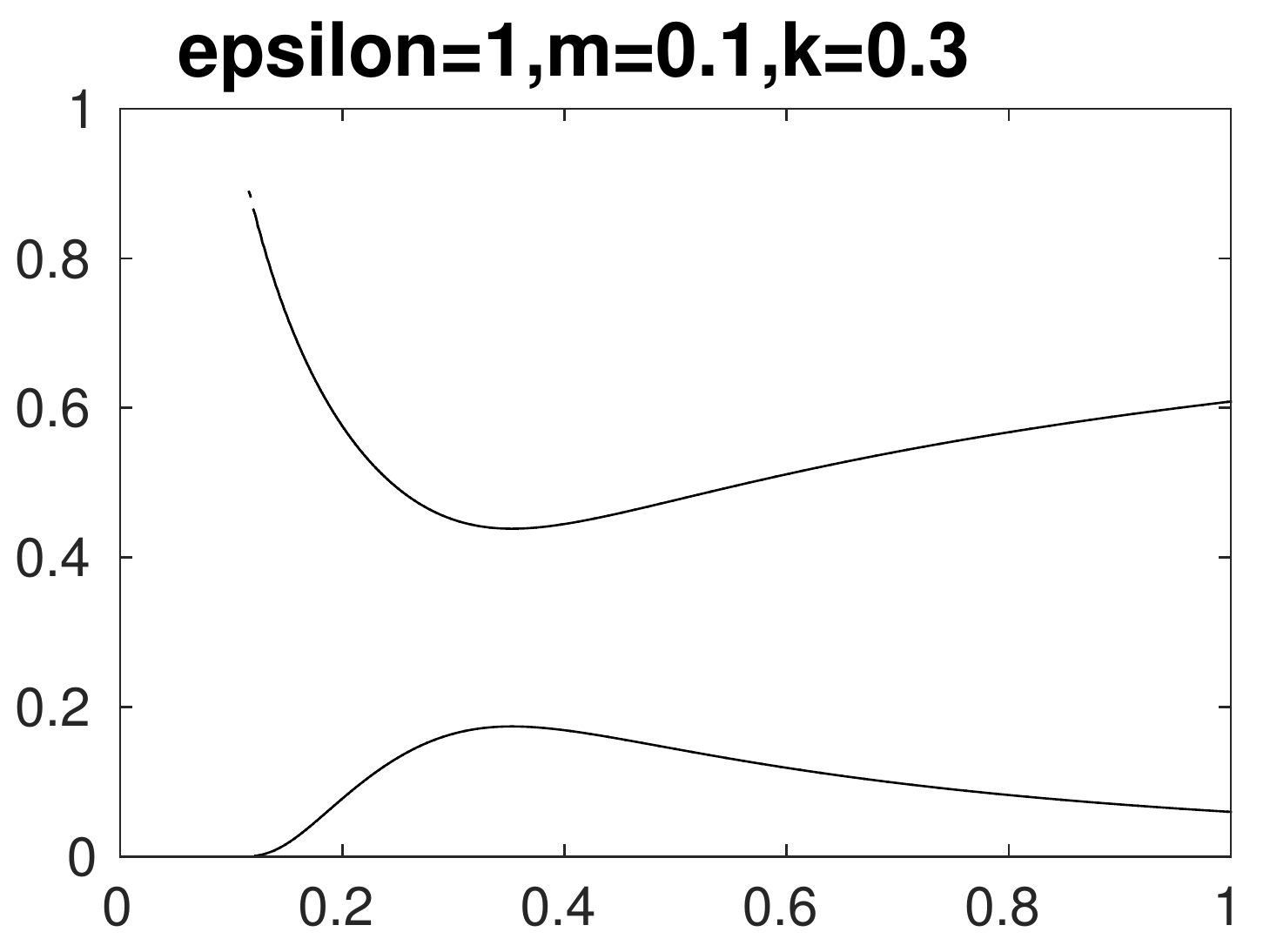,height=1.3in} 
\end{minipage}
\caption{Solution $v=v(r)$ for $\eps=1, k= 0.3$ and several values $m= M/\eps^2$.}
\label{Fig-JD10}
\end{figure}


\section{The Riemann problem for the Euler equations}
\label{sec:6}

\subsection*{Preliminaries}

In this section, we consider the solution of the Riemann problem for our general Euler model in a Schwarzschild background \eqref{Euler-con}, which has the form of a nonlinear hyperbolic system of balance laws: 
\bel{Eulerform}
\del_ t U+ \del_r F(U,r) = S(U,r),
\ee 
where the ``conservative variables'' and ``flux variables'' are 
\bel{Eulerform22}
U=\Bigg(
\begin{array}{cccc}
 U_1\\
U_2\\
\end{array}
\Bigg)
=\Bigg(
\begin{array}{cccc}
    r^2 {1 + \eps^4 k^2 v^2 \over 1 - \eps^2 v^2} \rho\\
 r(r-2M) {1 + \eps^2 k^2 \over 1 - \eps^2 v^2} \rho v\\

\end{array}
\Bigg),
\ee
\bel{Eulerform23}
F(U,r) =\Bigg(
\begin{array}{cccc}
  F_1(U,r) \\
F_2(U,r) \\
\end{array}
\Bigg)
=\Bigg(
\begin{array}{cccc}
   r(r-2M) {1 + \eps^2 k^2 \over 1 - \eps^2 v^2} \rho v\\
(r-2M)^2 {v^2+ k^2 \over 1 - \eps^2 v^2} \rho\\
\end{array}
\Bigg),
\ee
respectively, while the ``source term'' reads  
\bel{Eulerform24}
S(U,r) =\Bigg(
\begin{array}{cccc}
  S_1(U,r) \\
S_2(U,r) \\
\end{array}
\Bigg)
=
\Bigg(
\begin{array}{cccc}
  0 \\
3M   \Big( 1 - {2M \over r} \Big) {v^2+ k^2 \over 1 - \eps^2 v^2} \rho
 -M {r-2M \over \eps^2 r} {1 + \eps^4 k^2 v^2 \over 1 - \eps^2 v^2}\rho +2{(r-2M)^2 \over r}k^2 \rho
\end{array}
\Bigg). 
\ee
By definition, the {\bf Riemann problem} for \eqref{Eulerform} is the initial value problem associated with an initial data $U_0$ consisting of a  left-hand constant state $U_L= (\rho_L,v_L)$ and a right-hand constant state $U_R= (\rho_R,v_R)$, separated by a jump discontinuity at some point $r=r_0$ (with $r_0 >2M$). In other words, we set 
\bel{eq:500}
U_0(r) =
\begin{cases}
U_L, \qquad & r < r_0,
\\
U_R              & r > r_0.
\end{cases} 
\ee
In Proposition~\ref{nonlinearpo}, we have seen that both eigenvalues of \eqref{Eulerform} are genuinely nonlinear, when the sound speed $k$ is a constant satisfying $0 < k< 1/\eps$, which we now assume throughout. We are going to solve the Riemann problem first 
for the {\bf homogeneous system} 
\bel{Eulerform-homo}
\del_ t U+ \del_r F(U,r_0) = 0
\ee 
for a given $r_0 >2M$. 
in the class of self-similar functions (depending only on the variable $y:= {r-r_0 \over t}$) 
consisting of constant states, separated by either shock waves or rarefaction waves. 
Furthermore, it is convenient to introduce the {\bf fluid constant} $\chi$  
and the {\bf scaled velocity} defined by  
\bel{def-nu}
\chi
:= {2\eps k \over 1 + \eps^2 k^2} \in (0,1), \qquad  
\nu : = {1\over 2\eps}{1 +\eps v\over 1 - \eps v} \in (-\infty, +\infty). 
\ee


\subsection*{Rarefaction curves}
\label{sec-raf}

We begin by searching for smooth solutions to the Euler system  depending only upon the sef-similar variable. The partial differential system \eqref{Eulerform} then reduces to an ordinary differential system for functions $\rho=\rho(y)$ and $\nu=\nu(y)$ and, according to the discussion in the proof of Lemma~\ref{Riemann-inva}, we know that one of the Riemann invariants $w,z$ must remain constant throughout. Hence, we are led to the notion of {\bf rarefaction curves}: given any state $U_L$, the $1$-rarefaction curve $R_1^\rightarrow(U_L)$ is the curve passing throught $U_L$ along which the Riemann invariant $w$ remains constant and, in addition, the first eigenvalue $\lambda$ is {\sl increasing}. The definition of the (backward) curve $R_2^{\leftarrow}(U_R)$ for a given right-hand state $U_R$ is similar: the Riemann invariant $z$ remains constant and, in addition, the second eigenvalue $\mu$ is {\sl decreasing}.
We thus have 
$$ 
\aligned
& R_1^{\rightarrow}(U_L) =  \Big\{w(\rho,v) = w(\rho_L,v_L), \quad z(\rho,v) < z(\rho_L,v_L) \Big\},
\\
& R_2^{\leftarrow}(U_R) = \Big\{ z(\rho,v) = z(\rho_R,v_R), \quad  w(\rho,v) > w(\rho_R,v_R) \Big\}. 
\endaligned
$$
By observing that 
\be
w = {1 \over 2\eps} \ln \big(2\eps \nu \rho^\chi \big), 
\qquad 
 z = {1 \over 2\eps}\ln \big(2\eps \nu \rho^{-\chi} \big), 
\ee
the following statement is immediate. 

\begin{lemma} 
\label{Rarefac}
The two rarefaction curves associated with constant states $U_L$ and $U_R$, respectively, are given by 
\bel{Rarefac-curves} 
\aligned
& R_1^{\rightarrow}(U_L) =  \Bigg\{ {\nu \over \nu_L} = \Big({\rho \over \rho_L} \Big)^{-\chi}, \quad \rho>\rho_L \Bigg\},
\qquad 
 R_2^{\leftarrow}(U_R) =\Bigg\{ {\nu \over \nu_R} = \Big({\rho \over \rho_R} \Big)^{\chi}, \quad \rho<\rho_R \Bigg\}. 
\endaligned
\ee
\end{lemma}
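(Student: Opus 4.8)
The plan is to reduce the whole computation to the two identities $2\eps\, w = \ln\big(2\eps\,\nu\,\rho^{\chi}\big)$ and $2\eps\, z = \ln\big(2\eps\,\nu\,\rho^{-\chi}\big)$ recorded just above the statement. These follow at once by inserting $\chi = 2\eps k/(1+\eps^2 k^2)$ and $\nu = \tfrac1{2\eps}\tfrac{1+\eps v}{1-\eps v}$ into the Riemann invariants of Lemma~\ref{Riemann-inva} specialized to $p = k^2\rho$ (note $\nu>0$ and $\rho>0$, so the logarithms and powers below are well defined). In particular, along any curve in the $(\rho,v)$ plane, holding $w$ fixed is equivalent to holding the product $\nu\,\rho^{\chi}$ fixed, and holding $z$ fixed is equivalent to holding $\nu\,\rho^{-\chi}$ fixed; moreover subtracting the two identities yields the useful relation $w - z = \tfrac{\chi}{\eps}\ln\rho$. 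Since $v\mapsto \tfrac1{2\eps}\ln\tfrac{1+\eps v}{1-\eps v}$ is strictly increasing, the map $(\rho,v)\mapsto(w,z)$ has nowhere-vanishing Jacobian, so each level set of $w$ (or of $z$) is a smooth curve, which may be parametrized by $\rho$.

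Next I would treat $R_1^{\rightarrow}(U_L)$. By definition $w$ is constant along it, equal to $w(\rho_L,v_L)$, so $\nu\,\rho^{\chi} = \nu_L\,\rho_L^{\chi}$, i.e. $\nu/\nu_L = (\rho/\rho_L)^{-\chi}$, which is the asserted equation. To pick out the correct branch, I use the accompanying requirement $z(\rho,v) < z(\rho_L,v_L)$ from the definition: since $w\equiv w_L$ on the curve, the relation $z = w_L - \tfrac{\chi}{\eps}\ln\rho$ shows this inequality is equivalent to $\ln\rho > \ln\rho_L$, that is $\rho > \rho_L$. Genuine nonlinearity (Proposition~\ref{nonlinearpo}) guarantees that this half does carry a strictly monotone $\lambda$, consistently with the definition of the rarefaction curve.

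The curve $R_2^{\leftarrow}(U_R)$ is handled symmetrically. Now $z$ is frozen at $z(\rho_R,v_R)$, so $\nu\,\rho^{-\chi} = \nu_R\,\rho_R^{-\chi}$, hence $\nu/\nu_R = (\rho/\rho_R)^{\chi}$. For the range of $\rho$, I use the monotonicity condition in the definition, namely that $\mu$ decreases along the curve away from $U_R$: from the eigenvalue formula \eqref{eignvalue} the map $v\mapsto\mu$ is strictly increasing at fixed $r_0$, while on the level set $\{z = z(\rho_R,v_R)\}$ the map $\rho\mapsto v$ is strictly increasing; therefore ``$\mu$ decreasing'' is equivalent to $\rho < \rho_R$, which completes the proof.

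There is no genuine obstacle here; the only point requiring care is orienting each rarefaction half-curve, i.e. deciding on which side of $\rho_L$ (respectively $\rho_R$) it lies. This is pure bookkeeping: one translates the defining monotonicity of the relevant eigenvalue into a one-sided inequality for $\rho$, either directly through the inequalities on $w,z$ and the identity $w - z = \tfrac{\chi}{\eps}\ln\rho$, or through the strict monotonicity of $\lambda,\mu$ in $v$ combined with the strict monotonicity of $v$ along a level set of the complementary Riemann invariant.
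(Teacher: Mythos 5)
Your proof takes essentially the same route as the paper: the paper simply records the identities $w=\tfrac{1}{2\eps}\ln(2\eps\nu\rho^{\chi})$, $z=\tfrac{1}{2\eps}\ln(2\eps\nu\rho^{-\chi})$ and declares the lemma ``immediate,'' and your argument is exactly the fleshed-out version of that observation, with the curve equations read off from constancy of $w$ (resp.\ $z$).

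One remark on the only part where you add content, namely the orientation of the half-curves. You orient $R_1^{\rightarrow}(U_L)$ using the inequality $z<z_L$ together with $w-z=\tfrac{\chi}{\eps}\ln\rho$, but you orient $R_2^{\leftarrow}(U_R)$ using the monotonicity of $\mu$ along the curve. These two criteria are not interchangeable here: with the sign convention of the displayed identities (which is that of Lemma~\ref{Riemann-inva}), the condition ``$\lambda$ increasing away from $U_L$'' applied to $R_1^{\rightarrow}(U_L)$ would give $\rho<\rho_L$ (since $\lambda$ is increasing in $v$ while $v$ is \emph{decreasing} in $\rho$ on a level set of $w$), whereas ``$z<z_L$'' gives $\rho>\rho_L$; symmetrically, ``$w>w_R$'' on $R_2^{\leftarrow}(U_R)$ gives $\rho>\rho_R$, not the $\rho<\rho_R$ you obtain from the $\mu$-monotonicity. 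So the two defining conditions quoted in the text are mutually inconsistent under this sign convention (the paper itself flips the signs of $w,z$ between \eqref{Riemannin} and \eqref{eq:535}), and your proof lands on the stated pair of inequalities only because you happen to use a different criterion for each family. This is an inconsistency inherited from the paper rather than an error of your own making, but a uniform choice of orientation criterion (preferably the eigenvalue monotonicity, which is the invariant definition of a rarefaction) should be made and the resulting inequalities stated accordingly.
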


\subsection*{Shock curves}
\label{Shock}

We next search for solutions consisting of two constant states separating a single jump discontinuity satisfying the Euler system \eqref{Eulerform}. Along a shock curve we impose the Rankine-Hugoniot relations (see below) as well as Lax entropy inequalities (see \eqref{ineq-Lax}, below), which can be stated as follows: the characteristic speed $\lambda$ must be {\sl decreasing} when moving away from the left-hand state $U_L$ on the $1$-shock curve $S_1^{\rightarrow}(U_L)$, while 
$\mu$ is {\sl increasing} as one moves away from the right-hand state $U_R$ on the {\sl backward} $2$-shock curve $S_2^{\leftarrow}(U_R)$. 
 
\begin{lemma} 
 \label{avshock}
The $1$-shock curve and the $2$-shock curve issuing from given constant states, denoted by $U_L= (\rho_L,v_L)$ and $U_R= (\rho_R,v_R)$, respectively, are given by  
\bel{Shock-curves}
\aligned 
& S_1^{\rightarrow}(U_L) = \Bigg\{ \sqrt{\nu \over \nu_L} - \sqrt{\nu_L \over \nu} =-\chi  \Big( \sqrt{\rho \over \rho_L} -\sqrt{\rho_L\over \rho} \Big), \quad \rho>\rho_L \Bigg\},
\\
& S_2^{\leftarrow}(U_R) =  \Bigg\{ \sqrt{\nu \over \nu_R} - \sqrt{\nu_R \over \nu} = \chi \Big( \sqrt{\rho \over \rho_R} -\sqrt{\rho_R\over \rho} \Big), \quad \rho<\rho_R \Bigg\}. 
\endaligned 
\ee 
The speed $s_1(U_L, U)$ along the $1$-shock curve and the speed $s_2(U, U_R)$ along the $2$-shock curve are given by 
\bel{shockspeed}
\aligned
& \eps s_1(U_L, U) =-\Big(1- {2M\over r} \Big)
    \Bigg({\rho \over \rho-\rho_L} {\eps^2 v^2 \over 1 - \eps^2 v^2} + {\eps^2 k^2 \over 1 + \eps^2 k^2} \Bigg)^{1/2}
      \Bigg({\rho \over \rho-\rho_L} {\eps^2 v^2 \over 1 - \eps^2 v^2} + {1 \over 1 + \eps^2 k^2} \Bigg)^{-1/2}, 
 \\
& \eps s_2(U, U_R) = \Big(1- {2M\over r} \Big)
           \Bigg({\rho \over \rho-\rho_R} {\eps^2 v^2 \over 1 - \eps^2 v^2} + {\eps^2 k^2 \over 1 + \eps^2 k^2} \Bigg)^{1/2}
            \Bigg({\rho \over \rho-\rho_R} {\eps^2 v^2 \over 1 - \eps^2 v^2} + {1\over 1 + \eps^2 k^2} \Bigg)^{-1/2}.
\endaligned 
\ee
\end{lemma}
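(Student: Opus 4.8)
The plan is to impose the Rankine--Hugoniot relations for the homogeneous system \eqref{Eulerform-homo} at the fixed radius $r_0$ and then rewrite the resulting algebraic conditions in the convenient variables $\nu$ and $\rho$. Writing $[\![A]\!] := A - A_L$ for the jump between the left state $U_L$ and a generic state $U = (\rho,v)$ connected to it by a single discontinuity of speed $s$, the Rankine--Hugoniot conditions read $s \, [\![U_1]\!] = [\![F_1(\cdot,r_0)]\!]$ and $s\,[\![U_2]\!] = [\![F_2(\cdot,r_0)]\!]$, with $U_1,U_2,F_1,F_2$ as given in \eqref{Eulerform22}--\eqref{Eulerform23}. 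Since $r_0$ is frozen, the geometric prefactors $r_0^2$, $r_0(r_0-2M)$ and $(r_0-2M)^2$ are constants, and the only nontrivial algebra involves the fluid factors ${1+\eps^4 k^2 v^2 \over 1-\eps^2 v^2}$, ${1+\eps^2 k^2 \over 1-\eps^2 v^2}v$ and ${v^2+k^2 \over 1-\eps^2 v^2}$. First I would eliminate $s$ between the two Rankine--Hugoniot equations, obtaining one scalar relation between $(\rho,v)$ and $(\rho_L,v_L)$, which after clearing denominators becomes a polynomial identity. The key computational step is to factor this identity: I expect it to factor through the quantity $\sqrt{\nu/\nu_L} - \sqrt{\nu_L/\nu}$ on one side and $\sqrt{\rho/\rho_L} - \sqrt{\rho_L/\rho}$ on the other, with the proportionality constant equal to $\mp\chi$ where $\chi = {2\eps k \over 1+\eps^2 k^2}$; this is the relativistic analogue of the classical Nishida--Smoller identity for isothermal shock curves, and the substitution \eqref{def-nu} relating $v$ and $\nu$ is designed precisely so that the symmetric combination $\sqrt{\nu/\nu_L}-\sqrt{\nu_L/\nu}$ appears. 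The sign (minus for the $1$-curve, plus for the $2$-curve) and the admissible branch ($\rho > \rho_L$ for $S_1^\rightarrow$, $\rho < \rho_R$ for $S_2^\leftarrow$) are then fixed by the Lax entropy inequalities \eqref{ineq-Lax}: one checks that $\lambda$ (resp.\ $\mu$) is monotone decreasing (resp.\ increasing) along the selected branch, using the expressions \eqref{eignvalue} for the eigenvalues together with the already-established genuine nonlinearity from Proposition~\ref{nonlinearpo}.

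For the shock speeds \eqref{shockspeed}, I would go back to either one of the two Rankine--Hugoniot relations — say $s\,[\![U_1]\!] = [\![F_1(\cdot,r_0)]\!]$, which is the cleanest since $U_1$ and $F_1$ share the factor ${1 \over 1-\eps^2 v^2}$ — and solve directly for $s$. The expression $s = {r_0(r_0-2M) \over r_0^2} \cdot {[\![(1+\eps^2 k^2)\rho v/(1-\eps^2 v^2)]\!] \over [\![(1+\eps^4 k^2 v^2)\rho/(1-\eps^2 v^2)]\!]}$ carries the prefactor $(1 - 2M/r_0)$ out front, and the remaining ratio of jumps has to be massaged, using the shock relation \eqref{Shock-curves} just derived to re-express differences of $v$-quantities in terms of differences of $\rho$-quantities, until it matches the square-root form displayed in \eqref{shockspeed}. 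The combinations ${\eps^2 k^2 \over 1+\eps^2 k^2}$ and ${1 \over 1+\eps^2 k^2}$ inside the radicals, together with ${\rho \over \rho - \rho_L}{\eps^2 v^2 \over 1-\eps^2 v^2}$, strongly suggest that after substituting the curve equation the ratio of jumps becomes a perfect quotient of the two radical expressions; verifying this is essentially an exercise in rationalizing and regrouping.

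The main obstacle I anticipate is not conceptual but organizational: the shock relation is a quadratic-type constraint, so solving it and substituting back into the speed formula produces lengthy intermediate expressions, and one must choose the right auxiliary variable (here $\nu$, and possibly $\sqrt{\rho}$) at the outset to keep everything symmetric and avoid a combinatorial explosion. Concretely, the hardest single step is showing that the eliminant of $s$ from the two Rankine--Hugoniot equations factors exactly into the product form $\bigl(\sqrt{\nu/\nu_L}-\sqrt{\nu_L/\nu} \pm \chi(\sqrt{\rho/\rho_L}-\sqrt{\rho_L/\rho})\bigr)\cdot(\text{nonvanishing factor})$; once that factorization is in hand, the entropy-admissible branch selection and the derivation of \eqref{shockspeed} are routine. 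A useful sanity check along the way is the Minkowski limit $M \to 0$ of Remark~\ref{Minkowski} and the non-relativistic limit $\eps \to 0$, in which \eqref{Shock-curves} and \eqref{shockspeed} should reduce to the known isothermal Euler shock curves and the classical shock-speed formula, respectively.
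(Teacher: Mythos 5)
Your plan coincides with the paper's own proof: impose the Rankine--Hugoniot relations for the frozen-coefficient system \eqref{Eulerform-homo}, eliminate $s$, recognize that the eliminant factors through $\sqrt{\nu/\nu_L}-\sqrt{\nu_L/\nu}$ and $\sqrt{\rho/\rho_L}-\sqrt{\rho_L/\rho}$ with constant $\mp\chi$, let Lax's inequalities \eqref{ineq-Lax} select the branch, and read off the speed from one of the two jump relations. The only thing your proposal leaves as a conjecture --- the factorization you ``expect'' --- is exactly what the paper verifies, and its device for taming the algebra is worth recording: it works in the components $(v^0,v^1)$ and uses the normalization $(1-2M/r)(v^0)^2-(1-2M/r)^{-1}\eps^2 (v^1)^2=1$ to collapse the eliminant into the quadratic $\big({\rho / \rho_L}\big)^2-\Big(2+{(1+\eps^2k^2)^2 (v_L^0v^1-v^0v_L^1)^2 / k^2}\Big){\rho / \rho_L}+1=0$, and then observes $(v_L^0v^1-v^0v_L^1)^2=\tfrac{1}{4\eps^2}\big(\sqrt{\nu/\nu_L}-\sqrt{\nu_L/\nu}\big)^2$, from which $\big(\sqrt{\rho/\rho_L}-\sqrt{\rho_L/\rho}\big)^2=\chi^{-2}\big(\sqrt{\nu/\nu_L}-\sqrt{\nu_L/\nu}\big)^2$ and the two displayed branches follow.
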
 

\begin{proof} 1. We use here the notation $U_i = (\rho_i,v_i)$ and $U= (\rho,v)$ for the two states on each side of a jump discontinuity, which must satisfy the {\bf Rankine-Hugoniot relations} associated with \eqref{Eulerform}. To simplify the calculation, we use the tensor components $v^0,v^1$ rather than the scalar velocity $v$. Denoting the shock speed by $s$, we see that the Rankine-Hugoniot jump conditions read
$$
\aligned
s \Big[ r(r-2M) \Big((1-2M/r)^2 \rho v^0v^0+\eps^4 k^2 \rho v^1v^1\Big) \Big]
& = \Big[ r(r-2M) \eps^2\Big( (1 + \eps^2 k^2) \rho v^0v^1 \Big) \Big],
\\
s \Big[ r(r-2M) \Big( (1 + \eps^2 k^2) \rho v^0 v^1 \Big) \Big] 
& = 
\Big[ r(r-2M) \eps^2\Big(k^2 \rho  v^0 v^0+(1-2M/r)^{-2}\rho v^1v^1\Big) \Big].
\endaligned
$$
where, in our notation, the bracket $\big[\Phi \big] := \Phi - \Phi_i$ denotes the jump  a quantity $\Phi$. 
Eliminating $s$, we find
$$
\left [ (k^2\eps^2+1) \rho v^0v^1 \right]^2=\left[( k^2 \eps^4 \rho v^1v^1+ (1-2M/r)^2\rho v^0v^0 \right ] \left[ k^2 \rho v^0 v^0+ {\rho v^1v^1 \over (1-2M/r)^2}\right].
$$
On the other hand, a straighforward calculation gives 
$$
\aligned
 0= \, & k^2  \rho^2 ( (1-2M/r)v^0 v^0 -(1-2M/r)^{-1}\eps^2 v^1 v^1)^2 \\
 & + k^2  \rho_i^2 ((1-2M/r)v^0 v^0 -(1-2M/r)^{-1}\eps^2 v^1 v^1)^2 \\
& -2k^2  \rho \rho_i ( (1-2M/r)v^0 v^0 -(1-2M/r)^{-1}\eps^2v^1 v^1)^2+2(k^2\eps^2+1)^2 \rho\rho_i v^0 v^0_iv^1 v^1_i \\
& -k^4  \eps^4\rho \rho_i v^0_i v^0_i v^1 v^1-2 \eps^2 k^2  \rho \rho_i v^1 v^1_i v^1 v_i^1- \rho \rho_i v^0 v^0v^1_i v^1_i \\
& -k^4  \eps^4\rho \rho_i v^0 v^0v^1_i v^1_i-2 \eps^2 k^2  \rho \rho_i v^0 v^0_i v^0v^0_i- \rho \rho_i v^0_i v^0_i v^1 v^1.
\endaligned   
$$
Using the fact that the velocity vector is unit, that is, $(1-2M/r)v^0 v^0 -(1-2M/r)^{-1}\eps^2 v^1 v^1=1$, we find 
$$
0= k^2 (\rho^2+\rho_i^2) -2 k^2   \rho\rho_i-(1 + \eps^2 k^2)^2\rho  \rho_i (v^0_iv^1-v^0 v^1_i)^2. 
$$
Thus, we have arrived at an equation for the density ratio $\rho \over \rho_i $:
\bel{shock1}
0= \Big({\rho \over \rho_i} \Big)^2 - \Bigg(2+ {(1 + \eps^2 k^2)^2 (v^0_iv^1-v^0 v^1_i)^2   \over k^2} \Bigg){\rho \over \rho_i}+1.
\ee

Furthermore, we obtain 
${1-2 \eps \nu \over 1+2 \eps \nu} =-\eps(1-2M/r)^{-1}{v^1 \over  v^0}$ in view of the definition of the velocity variable $\nu$ in \eqref{def-nu} and, therefore, 
\bel{eq:410}
\aligned
(v^0_iv^1-v^0 v^1_i)^2   =& {(1-2M/r)^{-2}({v^1\over v^0} - {v^1_i\over v^0_i})^2 \over \Big(1-(1-2M/r)^{-2}({\eps v^1\over v^0})^2  \Big) \Big(1-(1-2M/r)^{-2}({\eps v^1_i\over v^0_i})^2 \Big)} \\ 
=& {({1-2 \eps \nu \over 1+2 \eps \nu} - {1-2 \eps \nu_i \over 1+2 \eps\nu_i})^2 \over  \eps^2 \Big(1-({1-2 \eps \nu \over 1+2 \eps \nu})^2  \Big) \Big(1-({1-2 \eps \nu_i \over 1+2 \eps \nu_i})^2 \Big)} = {1\over 4 \eps^2} \Big( \sqrt{\nu \over \nu_i} - \sqrt{\nu_i \over \nu} \Big)^2.
\endaligned
\ee
Now, plugging \eqref{eq:410} into \eqref{shock1}, we find 
$$
{\rho \over \rho_i} =1+ {1\over 2 \chi^2} \Big( \sqrt{\nu \over \nu_i} - \sqrt{\nu_i \over \nu} \Big)^2\pm \sqrt{{1\over 4(\chi)^4} \Big( \sqrt{\nu \over \nu_i} - \sqrt{\nu_i \over \nu} \Big)^4+ {1\over \chi^2} \Big( \sqrt{\nu \over \nu_i} - \sqrt{\nu_i\over \nu} \Big)^2}, 
$$
or 
$$
\Bigg(\sqrt{\rho \over \rho_i} -\sqrt{\rho_i\over \rho} \Bigg)^2
=
{1 \over \chi^2} \Big( \sqrt{\nu \over \nu_i} - \sqrt{\nu_i \over \nu} \Big)^2.
$$
We have $v < v_L$ for $1$-shock, so we take the minus sign to guarantee that $\nu < \nu_L$. The analysis of the $2$-shock curve is similar.

\vskip.3cm 

2. With the Rankine-Hugoniot relations, we obtain
$$
s^2=\Big(1- {2M\over r} \Big)^2 \Big({\rho \over \rho-\rho_i} {v^2 \over 1 - \eps^2 v^2} + {k^2 \over 1 + \eps^2 k^2} \Big) \big/ \Big({\rho \over \rho-\rho_i} {\eps^2 v^2 \over 1 - \eps^2 v^2} + {\eps^2 k^2 \over 1 + \eps^2 k^2} \Big). 
$$
Lax's shock inequalities require that 
\bel{ineq-Lax}
\lambda(U_L) > s_1 > \lambda(U), \qquad \mu(U) > s_2 > \mu(U_R),
\ee
which provide us with the relevant signs for both characteristic families. 
\end{proof}


\subsection*{Wave curves and wave interaction estimates} 

Combining shock waves and rarefaction waves together, we are able to construct the solution to the Riemann problem, as follows. 
Given a left-hand state $U_L$ and a right-hand state $U_R$, by concatenating the two types of curves above, we define the {\bf $1$-wave curve} and the {\bf $2$-wave curve}, respectively, by 
\bel{eq:525}
\aligned 
& W_1^{\rightarrow}(U_L) : = S_1^{\rightarrow}(U_L) \cup R_1^{\rightarrow}(U_L), 
\qquad
W_2^{\leftarrow}(U_R) : = S_2^{\leftarrow}(U_R) \cup R_2^{\leftarrow}(U_R). 
\endaligned 
\ee
The following observation are in order: 

\bei 

\item Observe that in the special case that the initial states satisfy $U_R  \in R_1^{\rightarrow}(U_L)$ or $U_L  \in R_2^{\leftarrow}(U_R)$, then the Riemann problem can be solved by a single rarefaction wave. In this case, each state $U$ in the solution lie between $U_L$ and $U_R$ along the corresponding rarefaction curve and the associated propagation speed is $\lambda(U)$ and $\mu(U)$, respectively.

\item Similarly, in the special case that $U_R  \in S_1^{\rightarrow}(U_L)$ or $U_L  \in S_2^{\leftarrow}(U_R)$, the Riemann solution consists of a single shock propagating at the speed given by \eqref{shockspeed}.

\item  Moreover, it can be checked that the curves $S_1$ and $S_2$ are tangent up to second-order derivatives with the corresponding integral curves. Consequently, the wave curves $W_1^{\rightarrow}(U_L)$ and $W_2^{\leftarrow}(U_R) $ are of class $C^2$.

\eei 

\noindent Furthermore, according to  \eqref{Rarefac-curves} and  \eqref{Shock-curves}, the density component $\rho$ is increasing (from $-\infty$ to $+\infty$) along the wave curve $W_1^{\rightarrow}(U_L)$, while it is decreasing (from $+\infty$ to $-\infty$) along the wave curve $W_2^{\leftarrow}(U_R)$. This implies that the velocity component $\nu$ is increasing along $W_1^{\rightarrow}(U_L)$, and is decreasing along $W_2^{\leftarrow}(U_R)$. 

To proceed, we need the following technical lemma.

\begin{lemma}
\label{intersect}
The $1$-shock curve $S_1^{\rightarrow}(U_L)$ satisfies 
\bel{eq:530}
0 \leq {d z \over d w} = - {\sqrt{2 \beta+\chi^2 \beta^2} -\sqrt{2 \beta+ \beta^2} \over -\sqrt{2 \beta+\chi^2 \beta^2} -\sqrt{2 \beta+ \beta^2}}
<1, 
\ee
while the $2$-shock curve $S_2^{\leftarrow}(U_R)$ satisfies 
\bel{eq:532}
 {d z \over d w} = - {-\sqrt{2 \beta+\chi^2 \beta^2} -\sqrt{2 \beta+ \beta^2} \over \sqrt{2 \beta+\chi^2 \beta^2} -\sqrt{2 \beta+ \beta^2}}
>1, 
\ee
with the notation 
$$
\beta =\beta (U,U_i) := {1\over 2 \chi^2} \Bigg(\sqrt{\nu \over \nu_i} - \sqrt{\nu_i \over \nu} \Bigg)^2. 
$$
 \end{lemma}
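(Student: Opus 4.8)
The plan is to parametrize each shock curve by the scalar $\beta=\beta(U,U_i)$ and to reduce the computation of $dz/dw$ to two one-line differentiations. The starting point is the observation recorded just before Lemma~\ref{Rarefac}, that $w=\frac{1}{2\eps}\ln\bigl(2\eps\nu\rho^{\chi}\bigr)$ and $z=\frac{1}{2\eps}\ln\bigl(2\eps\nu\rho^{-\chi}\bigr)$, so that along \emph{any} smooth curve in the $(\rho,\nu)$-plane
\[
2\eps\,dw=\frac{d\nu}{\nu}+\chi\,\frac{d\rho}{\rho},\qquad 2\eps\,dz=\frac{d\nu}{\nu}-\chi\,\frac{d\rho}{\rho},\qquad\text{hence}\qquad \frac{dz}{dw}=\frac{\frac{d\nu}{\nu}-\chi\,\frac{d\rho}{\rho}}{\frac{d\nu}{\nu}+\chi\,\frac{d\rho}{\rho}}.
\]
Thus everything reduces to computing the ratio $\frac{d\nu}{\nu}:\frac{d\rho}{\rho}$ along $S_1^{\rightarrow}(U_L)$ and along $S_2^{\leftarrow}(U_R)$.

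Next I would parametrize the $1$-shock curve by $\beta$. By definition $\beta=\frac{1}{2\chi^2}\bigl(\sqrt{\nu/\nu_L}-\sqrt{\nu_L/\nu}\bigr)^2$, and the shock relation \eqref{Shock-curves} forces $\bigl(\sqrt{\rho/\rho_L}-\sqrt{\rho_L/\rho}\bigr)^2=2\beta$ while $\bigl(\sqrt{\nu/\nu_L}-\sqrt{\nu_L/\nu}\bigr)^2=2\chi^2\beta$. Since $\rho>\rho_L$ along $S_1^{\rightarrow}(U_L)$ and consequently $\nu<\nu_L$ (the ``minus sign'' chosen in the proof of Lemma~\ref{avshock}), the signs are pinned down: $\sqrt{\rho/\rho_L}-\sqrt{\rho_L/\rho}=\sqrt{2\beta}$ and $\sqrt{\nu/\nu_L}-\sqrt{\nu_L/\nu}=-\chi\sqrt{2\beta}$. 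I then differentiate each identity in $\beta$, using $d\bigl(\sqrt{x/x_i}-\sqrt{x_i/x}\bigr)=\tfrac12\bigl(\sqrt{x/x_i}+\sqrt{x_i/x}\bigr)\frac{dx}{x}$ together with the algebraic identity $\bigl(\sqrt{x/x_i}+\sqrt{x_i/x}\bigr)^2=\bigl(\sqrt{x/x_i}-\sqrt{x_i/x}\bigr)^2+4$, which here gives $\sqrt{\rho/\rho_L}+\sqrt{\rho_L/\rho}=\sqrt{4+2\beta}$ and $\sqrt{\nu/\nu_L}+\sqrt{\nu_L/\nu}=\sqrt{4+2\chi^2\beta}$. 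After the elementary simplifications $\sqrt{2\beta}\,\sqrt{4+2\beta}=2\sqrt{2\beta+\beta^2}$ and $\sqrt{2\beta}\,\sqrt{4+2\chi^2\beta}=2\sqrt{2\beta+\chi^2\beta^2}$, one obtains the clean expressions
\[
\frac{d\rho}{\rho}=\frac{d\beta}{\sqrt{2\beta+\beta^2}},\qquad \frac{d\nu}{\nu}=-\,\frac{\chi\,d\beta}{\sqrt{2\beta+\chi^2\beta^2}}.
\]

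Substituting these into the ratio for $dz/dw$, a common factor $\chi\,d\beta$ cancels and $dz/dw$ reduces to the quotient involving $\sqrt{2\beta+\chi^2\beta^2}$ and $\sqrt{2\beta+\beta^2}$ displayed in \eqref{eq:530}; the mirror computation on $S_2^{\leftarrow}(U_R)$, where instead $\rho<\rho_R$ and the sign in the shock relation is $+\chi$, flips one sign and produces \eqref{eq:532}. The two-sided bounds are then immediate: for $\beta>0$ and $\chi\in(0,1)$ one has $0<2\beta+\chi^2\beta^2<2\beta+\beta^2$, so the two square roots are positive and distinct, which confines the quotient to $[0,1)$ along $S_1^{\rightarrow}(U_L)$ (with the value $0$ reached only in the limit $\beta\to0^+$, i.e.\ at $U_L$, consistent with the second-order tangency of $S_1^{\rightarrow}$ and $R_1^{\rightarrow}$ recorded above) and to $(1,+\infty)$ along $S_2^{\leftarrow}(U_R)$.

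The one genuinely delicate point — and the step I would be most careful about — is the sign and branch bookkeeping: selecting the correct root $\rho/\rho_L=1+\beta+\sqrt{\beta^2+2\beta}$ of the quadratic \eqref{shock1} for a $1$-shock, fixing the sign of $\sqrt{\nu/\nu_L}-\sqrt{\nu_L/\nu}$ from $\nu<\nu_L$, and carrying out the mirror-image choices for the $2$-family. Everything else is routine algebra, but I would cross-check the final signs against the monotonicity of $\rho$ and $\nu$ along the wave curves recorded just before this lemma and against the tangency of shock and rarefaction curves at the base state, since that is precisely what forces the endpoint values $0$ and $1$ (respectively $1$ and $+\infty$) to come out correctly.
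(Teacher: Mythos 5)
Your differentials are correct and your method is in substance the same as the paper's: both parametrize the shock curve by $\beta$ and compute $dw/d\beta$, $dz/d\beta$ (your implicit differentiation of $\sqrt{x/x_i}-\sqrt{x_i/x}=\pm c\sqrt{2\beta}$ is exactly equivalent to the paper's differentiation of $\ln g_+$, since $\tfrac{d}{d\beta}\ln g_+(\beta)=1/\sqrt{\beta^2+2\beta}$). The formulas $\tfrac{d\rho}{\rho}=\tfrac{d\beta}{\sqrt{2\beta+\beta^2}}$ and $\tfrac{d\nu}{\nu}=-\tfrac{\chi\,d\beta}{\sqrt{2\beta+\chi^2\beta^2}}$ on $S_1^{\rightarrow}(U_L)$ check out, as does the sign bookkeeping $\rho>\rho_L$, $\nu<\nu_L$.

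The gap is in the final step, which you assert rather than carry out. With the convention you adopt, $2\eps\,dw=\tfrac{d\nu}{\nu}+\chi\tfrac{d\rho}{\rho}$ and $2\eps\,dz=\tfrac{d\nu}{\nu}-\chi\tfrac{d\rho}{\rho}$, so writing $A=\sqrt{2\beta+\chi^2\beta^2}$ and $B=\sqrt{2\beta+\beta^2}$ your own substitution gives, on $S_1^{\rightarrow}(U_L)$,
\begin{equation*}
\frac{dz}{dw}=\frac{-1/A-1/B}{-1/A+1/B}=\frac{A+B}{B-A}>1,
\end{equation*}
and on $S_2^{\leftarrow}(U_R)$ the mirror computation gives $(B-A)/(B+A)\in[0,1)$. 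That is, your formulas land in the ranges $(1,\infty)$ and $[0,1)$ \emph{interchanged} relative to \eqref{eq:530}--\eqref{eq:532}, and neither equals the displayed right-hand side of \eqref{eq:530} (which, read literally, is $(A-B)/(A+B)\in(-1,0)$ and so cannot satisfy $0\le \cdot<1$ either). The source of the mismatch is that the paper's proof of this lemma works with $w,z$ in \eqref{eq:535} having the signs of the $\ln\rho$ terms swapped relative to the definition you quote from before Lemma~\ref{Rarefac}; under that swap the slopes are inverted and your $S_1$ and $S_2$ conclusions trade places, recovering the stated ranges. Your tangency check is affected the same way: with your convention $R_1=\{w=\mathrm{const}\}$ is a vertical line in the $(w,z)$-plane, so the slope at $\beta\to0^+$ must tend to $+\infty$ (as $\tfrac{A+B}{B-A}\sim \tfrac{8}{(1-\chi^2)\beta}$ indeed does), not to $0$. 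None of this damages the conclusion that matters downstream — the two shock curves have slopes lying strictly on opposite sides of the diagonal $z=w$, which is what guarantees the unique intersection in Proposition~\ref{Riemannpro} and the monotonicity in Proposition~\ref{interaction} — but as written your last paragraph claims an identification and a range that do not follow from your own intermediate formulas; you need either to adopt the convention of \eqref{eq:535} explicitly or to swap the two conclusions.
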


\begin{proof} The  Riemann invariants read 
\bel{eq:535}
\aligned
w = {1\over 2\eps}\ln \Bigg({1 + \eps v\over 1 - \eps v} \Bigg) - {k \over 1 + \eps^2 k^2} \ln \rho,
\\
z = {1\over 2\eps}\ln \Bigg({1 + \eps v\over 1 - \eps v} \Bigg) + {k \over 1 + \eps^2 k^2} \ln \rho.
\endaligned
\ee 
By introducing the functions $g_\pm(\beta) =1+\beta \Big(1\pm\sqrt {1+ {2 \over \beta}} \Big)$, it is straightforward to see that 
\bel{eq:540}
{\rho \over \rho_i} =g_\pm(\beta). 
\ee
Moreover, we can check that $g_+(\beta)g_-(\beta) =1$. By Lemma~\ref{avshock}, we have
$2\beta \chi^2=\Big( \sqrt{\nu \over \nu_i} - \sqrt{\nu_i \over \nu} \Big)^2$ and, therefore,
\bel{eq:543}
{\nu \over \nu_i} =1+ {\chi^2\beta \over 2} \Big(1\pm \sqrt{1+ {4 \over \chi^2\beta}} \Big) =g_\pm(\chi^2\beta/2).
\ee
For definiteness, we consider $1$-shocks. The tangent to the shock curve $S_1^{\rightarrow}(U_L)$ in the  $(w,z)$-plane satisfies
$$
{dz \over dw} = {d(z-z_L) \over d(w -w_L)} = {d(z-z_L) \over d \beta}{d \beta \over d(w -w_L)}.
$$
Plugging \eqref{eq:540} and \eqref{eq:543} into the expression of the Riemann invariants \eqref{eq:535}, we obtain (for $1$-shocks) 
\bel{eq:545}
\aligned
& w-w_L= {1\over 2\eps} \Big( \ln g_+(\beta) + \chi \ln g_+(\chi^2\beta/2) \Big),
\\
& z-z_L= - {1\over 2\eps} \Big( \ln g_+(\beta) - \chi \ln g_+(\chi^2\beta/2) \Big),
\endaligned
\ee
and, thus,
$$
{dz \over dw} = {d(z-z_L) \over d(w -w_L)} = - {\sqrt{2 \beta+\chi^2 \beta^2} -\sqrt{2 \beta+ \beta^2} \over -\sqrt{2 \beta+\chi^2 \beta^2} -\sqrt{2 \beta+ \beta^2}}.
$$
Since $\chi = {2 \eps k   \over 1 + \eps^2 k^2}<1 $, we have $0 \leq{dz \over dw} <1$. 
\end{proof}

We have arrived at the main result of the present section. 

\begin{proposition}[The Riemann problem for fluid flows] 
\label{Riemannpro}
The homogeneous Euler system \eqref{Eulerform-homo} supplemented with Riemann initial data \eqref{eq:500} 
admits an entropy weak solution for arbitrary initial data $r_0 >2M$, $U_L= (\rho_L,v_L)$, and $U_R= (\rho_R,v_R)$. 
This solution depends on the self-similarity variable $r/t$, only, and is picewise smooth: it consists of two (shock or rarefaction) waves separated by constant states. 
\end{proposition}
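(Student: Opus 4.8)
\emph{Proof proposal.} The plan is to run, at the fixed radius $r_0$, the classical Lax construction of the Riemann solution by concatenating elementary waves, using — as in the flat‑space theory of Nishida and of Smoller–Temple — the fact that for the isothermal law $p=k^2\rho$ the wave curves are \emph{globally} defined, so that no smallness of $U_L-U_R$ is required. With the source term dropped and $F(\cdot,r_0)$ autonomous, a self‑similar function of $y:=(r-r_0)/t$ solves \eqref{Eulerform-homo} if and only if it solves the associated ordinary Riemann problem for the $2\times2$ system, which by Proposition~\ref{nonlinearpo} is strictly hyperbolic and genuinely nonlinear. I would work in the Riemann‑invariant plane $(w,z)$ of Lemma~\ref{Riemann-inva}; recalling $w=\tfrac1{2\eps}\ln(2\eps\nu\rho^\chi)$ and $z=\tfrac1{2\eps}\ln(2\eps\nu\rho^{-\chi})$, the combination $w-z=\tfrac{\chi}{\eps}\ln\rho$ is an increasing bijection of $(0,+\infty)$ onto $\RR$, while $w+z=\tfrac1\eps\ln(2\eps\nu)$ is a function of the velocity $\nu$ alone.

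\emph{Step 1: the wave curves are global graphs.} From Lemmas~\ref{Rarefac} and \ref{avshock} I would check that the forward $1$‑wave curve $W_1^{\rightarrow}(U_L)=S_1^{\rightarrow}(U_L)\cup R_1^{\rightarrow}(U_L)$ and the backward $2$‑wave curve $W_2^{\leftarrow}(U_R)=S_2^{\leftarrow}(U_R)\cup R_2^{\leftarrow}(U_R)$ are each of class $C^2$ (the shock and rarefaction sub‑arcs being second‑order tangent at the base point) and that the density is strictly monotone along each and exhausts $(0,+\infty)$. Parametrizing by $\rho$, the velocity $\nu$ is then a continuous, strictly \emph{decreasing} bijection $\nu=\nu_1(\rho)$ of $(0,+\infty)$ along $W_1^{\rightarrow}(U_L)$, and a strictly \emph{increasing} bijection $\nu=\nu_2(\rho)$ along $W_2^{\leftarrow}(U_R)$; this follows directly from the explicit formulas \eqref{Rarefac-curves}, \eqref{Shock-curves} ($\nu/\nu_L=(\rho/\rho_L)^{-\chi}$ on the rarefaction arc, an implicit differentiation on the shock arc, with the sign of the slope governed by $\chi=\tfrac{2\eps k}{1+\eps^2k^2}<1$, cf. Lemma~\ref{intersect}).

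\emph{Step 2: the unique intermediate state.} A point $U=(\rho,v)$ lies on both wave curves precisely when $\nu_1(\rho)=\nu_2(\rho)$, the velocity being then recovered from $\nu$; hence the intersection $W_1^{\rightarrow}(U_L)\cap W_2^{\leftarrow}(U_R)$ is in bijection with the zeros of $\rho\mapsto\nu_1(\rho)-\nu_2(\rho)$. By Step~1 this function is continuous and strictly decreasing on $(0,+\infty)$, tends to $+\infty$ as $\rho\to0^+$ and to $-\infty$ as $\rho\to+\infty$, and therefore has exactly one root $\rho_M$; this determines a unique intermediate state $U_M=(\rho_M,v_M)$. This global solvability — the Schwarzschild analogue of Nishida's theorem — is the only genuinely non‑routine point: one must control the unbounded $C^2$ wave curves over their entire range and rule out that $\nu_1$ and $\nu_2$ are asymptotic to one another or cross more than once, and it is precisely the monotonicity granted by $\chi<1$ that prevents this.

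\emph{Step 3: assembling and admissibility.} I would then define the self‑similar solution to consist, from left to right, of the state $U_L$, the elementary $1$‑wave from $U_L$ to $U_M$ — a centred rarefaction fan spanning $[\lambda(U_L),\lambda(U_M)]$ if $U_M\in R_1^{\rightarrow}(U_L)$, a single shock of speed $s_1(U_L,U_M)$ given by \eqref{shockspeed} if $U_M\in S_1^{\rightarrow}(U_L)$ — then the state $U_M$, then the $2$‑wave from $U_M$ to $U_R$ (a $2$‑rarefaction or a $2$‑shock of speed $s_2(U_M,U_R)$), then the state $U_R$. Each piece solves $\del_tU+\del_rF(U,r_0)=0$ weakly: rarefaction pieces are Lipschitz solutions along the corresponding integral curve with monotone characteristic speed, and shock pieces satisfy the Rankine–Hugoniot relations together with the Lax inequalities \eqref{ineq-Lax}, both built into the definitions of $S_1^{\rightarrow}$ and $S_2^{\leftarrow}$. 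That the $1$‑wave lies strictly to the left of the $2$‑wave — so the constant region $U_M$ genuinely appears — is then verified directly from \eqref{ineq-Lax} and strict hyperbolicity $\lambda<\mu$ (Proposition~\ref{nonlinearpo}), since characteristics of the second family cross every $1$‑wave and those of the first family cross every $2$‑wave; one checks the four shock/rarefaction combinations in turn. The monotonicity and $C^2$ structure recorded above are moreover the precise inputs needed for the generalized Riemann problem of Section~\ref{sec:7} and for the wave‑interaction estimates underlying the random‑choice scheme of Section~\ref{sec:8}.
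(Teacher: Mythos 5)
Your proposal is correct and follows essentially the same route as the paper: both construct the solution by intersecting the globally defined wave curves $W_1^{\rightarrow}(U_L)$ and $W_2^{\leftarrow}(U_R)$ and showing they meet in exactly one point, the paper doing this via the slope bounds of Lemma~\ref{intersect} in the $(w,z)$-plane while you do it via the monotone parametrization $\rho\mapsto\nu_{1,2}(\rho)$ and the intermediate value theorem. Your monotonicity signs ($\nu_1$ decreasing, $\nu_2$ increasing in $\rho$) are the ones actually implied by the explicit formulas \eqref{Rarefac-curves}--\eqref{Shock-curves}, and the remainder (assembly, Lax admissibility, wave ordering) matches the paper's construction.
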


\begin{proof} Consider the intersection of the two curves state $W_1^\rightarrow (U_L) \bigcap W_2^\leftarrow(U_R)$. Thanks to  Lemma~\ref{intersect} and our analysis above, the family of $1$-curves and $2$-curves covers the whole region in such a way that, for any given data $U_L, U_R$, the curves $W_1^\rightarrow (U_L)$ and  $W_2^{\leftarrow} (U_R)$ admit precisely one intersection point $U_M$. The Riemann solution is then solved by a $1$-wave connecting from $U_L$ to $U_M$, followed by a $2$-wave connecting from $U_M$ to $U_R$.
\end{proof}

Next, we define the {\bf total wave strength} of the Riemann solution connecting three states $U_L, U_M, U_R$ by
\bel{wavestrength}
\str(U_L, U_R) := |\ln \rho_L-\ln \rho_M |+ |\ln \rho_R-\ln \rho_M |,
\ee
where, by definition, $U_M= (\rho_M, v_M) $ is the intermediate state $\big\{ U_M \big\} = W_1^\rightarrow (U_L) \bigcap W_2^{\leftarrow}(U_R) $.  The following observation is the key in order to establish the global existence theory for the Cauchy problem. 

\begin{proposition}[Diminishing total variation property] 
\label{interaction}
Given three constant states $U_L$, $U_*$, and $U_R$, consider the associated Riemann problems $(U_L,U_*)$, $(U_*,U_R)$, and $(U_L,U_R)$. Then, the total wave strengths satisfy the inequality 
\bel{eq:565}
\str(U_L, U_R) \leq\str(U_L, U_*) + \str(U_*, U_R). 
\ee
\end{proposition}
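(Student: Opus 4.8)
The plan is to pass to the Riemann-invariant coordinates of Lemma~\ref{Riemann-inva} and to exploit the fact that, for the linear pressure law considered here (genuinely nonlinear by Proposition~\ref{nonlinearpo}), the wave curves of Lemmas~\ref{Rarefac} and \ref{avshock} are \emph{rigid templates translated to their base point}. Writing $w=\frac{1}{2\eps}\ln(2\eps\nu\rho^\chi)$ and $z=\frac{1}{2\eps}\ln(2\eps\nu\rho^{-\chi})$, one has $\ln\rho=\frac{\eps}{\chi}(w-z)$, so the wave strength \eqref{wavestrength} of any Riemann solution equals $\frac{\eps}{\chi}$ times the sum of the absolute increments of $w-z$ across its $1$-wave and its $2$-wave. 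Moreover every curve in \eqref{Rarefac-curves} and \eqref{Shock-curves} is described purely through the ratios $\nu/\nu_\ast$ and $\rho/\rho_\ast$, hence through the \emph{differences} $w-w_\ast$ and $z-z_\ast$ alone. Consequently, in the $(w,z)$-plane the forward $1$-wave curve $W_1^{\rightarrow}(U_\ast)$ and the backward $2$-wave curve $W_2^{\leftarrow}(U_\ast)$ are translates of two fixed $C^2$ curves $\Gamma_1$ and $\Gamma_2$ through the origin, each a monotone graph (the rarefaction part a coordinate ray, the shock part obeying the slope bounds of Lemma~\ref{intersect}).

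I would then parametrize $\Gamma_1$ by the signed log-density increment $\sigma$ and $\Gamma_2$ by $\tau$, so that $\Gamma_1=\{\phi_1(\sigma)\}$, $\Gamma_2=\{\phi_2(\tau)\}$ with $\phi_i(0)=0$. With this normalization, solving the Riemann problem $(U_L,U_R)$ of Proposition~\ref{Riemannpro} amounts exactly to solving $\phi_1(\sigma)+\phi_2(\tau)=(w_R-w_L,\,z_R-z_L)$ for the unique pair $(\sigma,\tau)$, and then $\str(U_L,U_R)=|\sigma|+|\tau|$. In particular $\str(U_L,U_R)$ depends only on $\Delta:=(w_R,z_R)-(w_L,z_L)\in\RR^2$; write $\Theta(\Delta):=\str(U_L,U_R)$. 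Setting $\Delta_1:=(w_*,z_*)-(w_L,z_L)$ and $\Delta_2:=(w_R,z_R)-(w_*,z_*)$, so that $\Delta=\Delta_1+\Delta_2$, the claimed inequality \eqref{eq:565} is precisely the \textbf{subadditivity}
\[
\Theta(\Delta_1+\Delta_2)\ \le\ \Theta(\Delta_1)+\Theta(\Delta_2).
\]

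To establish this I would argue geometrically on $\Gamma_1,\Gamma_2$. Since the rarefaction portion of $\Gamma_1$ is the ray $\{w=0\}$ and its shock portion has $dz/dw\in[0,1)$ (Lemma~\ref{intersect}), while the rarefaction portion of $\Gamma_2$ is the ray $\{z=0\}$ and its shock portion has $dz/dw>1$, the two families are uniformly transverse and each $\phi_i$ is a monotone graph with one-sided derivatives confined to a fixed cone. Given the wave resolutions $\phi_1(\sigma_j)+\phi_2(\tau_j)=\Delta_j$ of the two incoming Riemann problems, one tracks the total increment of $w$ and of $z$ separately through the superposition, reconstructs the resolution of $\Delta_1+\Delta_2$, and uses the monotonicity and slope bounds to force $|\sigma|+|\tau|\le(|\sigma_1|+|\tau_1|)+(|\sigma_2|+|\tau_2|)$. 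Concretely this is a finite case analysis over which of the four incoming elementary waves (family $1$ or $2$, shock or rarefaction) occur: for rarefactions the contribution runs along a coordinate ray and cancels exactly against the opposite-family coordinate (giving equality in \eqref{eq:565} in the purely rarefactive case), whereas for shocks the shape of the shock template together with the estimates $0\le dz/dw<1$ and $dz/dw>1$ guarantee that the ``reflected'' wave of the other family is weaker than the strength absorbed, so that the $\ell^1$-sum of wave strengths cannot grow. This is exactly the diminishing-total-variation mechanism of Nishida \cite{Nishida} and Smoller--Temple \cite{SmollerTemple}, carried out here on the curved wave curves via the translation invariance established above.

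The step I expect to be the main obstacle is the interaction of two shocks of the \emph{same} family — a $1$-shock of $(U_L,U_*)$ combined with a $1$-shock of $(U_*,U_R)$, and symmetrically for $2$-shocks: there one must show that merging two compressive shocks and emitting a wave of the opposite family does not increase $|\sigma|+|\tau|$, which is where the monotonicity and curvature of the shock template and the precise slope estimates of Lemma~\ref{intersect} are indispensable. Once that case is in hand, the remaining shock--rarefaction and rarefaction--rarefaction configurations are comparatively routine.
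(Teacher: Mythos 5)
Your proposal follows essentially the same route as the paper's proof: pass to the $(w,z)$-plane of Riemann invariants, use the fact that the wave curves are translates of fixed templates (rarefactions along coordinate rays, shocks given by the functions $g_\pm$), measure the strength via $\ln\rho\sim w-z$, and run the Nishida/Smoller--Temple interaction analysis. The one structural fact the paper invokes that you do not name explicitly --- and which is precisely what settles the same-family shock--shock case you flag as the main obstacle --- is the identity $g_+(\beta)g_-(\beta)=1$, which makes the $2$-shock template the reflection of the $1$-shock template across the line $z=w$; combined with the slope bounds of Lemma~\ref{intersect} that you cite, this symmetry closes the case analysis, and the paper's own argument is in fact no more detailed than your sketch.
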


\begin{proof}

We consider the wave curves in the $(w,z)$-plane of the Riemann invariants. Recall that, in this plane, rarefaction curves are straigthlines, while shock curves are described by explicit formulas. Importantly, the shock curves have the same geometric shape independently of the base point $U_L$ or $U_R$ and are described by the functions $g_\pm(\beta)$. 
Moreover, by observing the 
remarkable algebraic property  $g_+(\beta)g_-(\beta) =1$, we see that the $2$-shock curve is the symmetric of the $1$-shock curve with respect to the straightline $z =w$ (in the $(w,z)$-plane). 
Note that the strength $\str$ does not change at interactions involving two rarefaction waves of the same family, only. 
Since the wave strengths, by definition, are measured in $w-z \sim \ln \rho$, these symmetry properties imply that the wave strengths are non-decreasing at interactions. 
\end{proof}
 

\section{The generalized Riemann problem} 
\label{sec:7}

\subsection*{Discontinuous steady states}

Our strategy is now to solve the Riemann problem for the full Euler model, by replacing the two initial constant states by two equilibrium solutions. We refer to this problem as the {\sl generalized Riemann problem.} In order to proceed, we need first to revisit our analysis in  (cf. Section~\ref{sec:5} and to introduce first global equilibrium solutions, defined for all radius $r \in (2M, +\infty)$ and possibly containing
a jump discontinuity. This is necessary since some (smooth) steady state solutions are defined on a sub-interval of $r>2M$, only; this happens when the velocity component may reach the sonic value $\pm k$. 

Recall that, according to Theorem~\ref{steady-relativistic}, two possible behavior may arise, which are determined by the sign of the function $P_\eps(r_0, v_0)$ defined in \eqref{function-P-eps}, that is, 
$$
\aligned
P_\eps(r_0, v_0)
=&
 \ln \Bigg( {(2 - \kappa)^2 \over (1- \kappa)^2} {M^2 k \over r_0^2 v_0} \Bigg)
+ {\kappa \over 1- \kappa} \ln \Bigg( {2(2 - \kappa) \over 1 +\kappa} {(r_0-2M) \over  r_0(1-\eps^2 v_0^2)} \Bigg)
\\
=& \ln \Bigg( {(1+3\eps^2 k^2)^2 M^2 \over 4 \eps^4  k^3 r_0^2 v_0} \Big))
+ {\kappa \over 1- \kappa} \ln\Bigg( {(1+3\eps^2 k^2) (r_0-2M)  \over  r_0(1-\eps^2 v_0^2)} \Bigg). 
\endaligned
$$
The function $P_\eps$ beign regular, the existence of a sonic point depends also continuously upon the data $r_0, v_0$. 
Recall also that in the special case that the data satisfy $P_\eps(r_0, v_0) = 0$, then the associated two sonic points $\barr_*$ and $\underr_*$ are both equal to (cf. our notation \eqref{not349} and \eqref{nota-85}) 
\be
r_* := {2 - \kappa \over 1 - \kappa} M, 
\ee
which we refer to as the {\bf critical sonic point.} 
We now consider this limiting case, which was excluded in our earlier analysis. 

\begin{proposition}[The global construction for sonic initial data]
\label{one-point}
When the initial data $r_ 0>2M$, $\rho_0 > 0$, and $0 \leq v_0 < 1/\eps$ satisfy the {\bf sonic condition}
\be
P_\eps(r_0, v_0) = 0, 
\ee
then the steady Euler system \eqref{steady2-con} admits a {\bf global steady state solution} $\rho=\rho(r)$ and $v=v(r)$ satisfying the initial conditions $\rho(r_0) =\rho_0$ and $v(r_0) =v_0$ and such that $v(r) - k$ changes sign precisely once.  
\end{proposition}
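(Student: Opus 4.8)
In the notation of the proof of Lemma~\ref{sonic-point}, the sonic condition $P_\eps(r_0,v_0)=0$ says exactly that the function $R_\eps$ meets the level $L_\eps(v_0)$ only at its minimiser, so the two sonic radii of Lemma~\ref{sonic-point} coincide with the critical sonic point $r_*=\frac{2-\kappa}{1-\kappa}M$. In particular $v_0\neq k$ (were $v_0=k$, then $L_\eps(v_0)=0=R_\eps(r_0)$ would force $r_0=r_*$, a degenerate case handled by the same construction below), so Lemma~\ref{local-sol} produces a smooth local solution through $(r_0,\rho_0,v_0)$, and Lemma~\ref{extension4} extends it — with $v(r)\to k$ at the sonic radius and $\sgn(v-k)=\sgn(v_0-k)$ on the whole interval — either to $(2M,r_*)$, when $r_0<r_*$ (cases $\widetilde{\mathfrak{B1ii}}$--$\widetilde{\mathfrak{B2ii}}$), or to $(r_*,+\infty)$, when $r_0>r_*$ (cases $\widetilde{\mathfrak{B1i}}$--$\widetilde{\mathfrak{B2i}}$). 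By the obvious left/right symmetry I treat $r_0<r_*$; the case $r_0>r_*$ is its mirror image.

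The heart of the matter is the local structure of the level set $\{G_\eps(\cdot,\cdot;r_0,v_0)=0\}$ near $(r_*,k)$. Reading off \eqref{first-der}, the numerator $\frac{1-\kappa}{\kappa}(r-2M)-M$ of $\frac{dv}{dr}$ vanishes exactly at $r=r_*$ while its denominator $\frac{\eps^2 v^2}{1-\eps^2 v^2}-\frac{1-\kappa}{2\kappa}$ vanishes exactly at $v=k$, so at $(r_*,k)$ the right-hand side of \eqref{first-der} is an indeterminate $0/0$; equivalently, writing $\frac{dv}{dr}=-\partial_r G_\eps/\partial_v G_\eps$, both partial derivatives of $G_\eps$ vanish at $(r_*,k)$, which is precisely the content of $P_\eps=0$ and is what distinguishes this situation from the generic fold of Lemma~\ref{extension4}. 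Since $\partial_v G_\eps$ depends on $v$ alone (see \eqref{fon-G-eps}) one has $\partial_r\partial_v G_\eps\equiv0$; moreover $\partial_v^2 G_\eps(r_*,k)>0$ because $v=k$ is the nondegenerate minimiser of $v\mapsto G_\eps(r,v)$, and $\partial_r^2 G_\eps(r_*,k)<0$ because $r\mapsto G_\eps(r,k)$ attains a strict local maximum at $r_*$ (this is precisely how the two sonic radii of Lemma~\ref{sonic-point} merge). Hence $(r_*,k)$ is a nondegenerate critical point of $G_\eps$ with indefinite Hessian, and by the Morse lemma (equivalently, by a second-order Taylor expansion of $G_\eps$) the zero set of $G_\eps$ agrees, near $(r_*,k)$, with the union of two $C^\infty$ arcs crossing transversally at $(r_*,k)$, each of which is a graph $v=v(r)$ with finite nonzero slope. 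Thus the singularity of $\frac{dv}{dr}$ in \eqref{first-der} is removable along each branch, and along each of them $v-k$ changes sign as $r$ crosses $r_*$.

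It remains to glue. By connectedness the extended solution from $r_0$ lies on exactly one of these two arcs; I follow that arc smoothly through $(r_*,k)$ into $\{r>r_*\}$, obtaining a smooth steady state on some interval $(r_*,r_*+\delta)$ on which $\sgn(v-k)$ is the opposite of its value on $(2M,r_*)$. Reparametrising from any base point $r_1\in(r_*,r_*+\delta)$, and using that $G_\eps(r,v;r_0,v_0)$ has the form $\phi(r,v)-\phi(r_0,v_0)$ for a suitable $\phi$ — so that all points of a given solution curve determine the same sonic radii — one gets $P_\eps(r_1,v(r_1))=0$; since also $r_1>r_*=\frac{2-\kappa}{1-\kappa}M$, Lemma~\ref{extension4} (case $\widetilde{\mathfrak{B1i}}$ or $\widetilde{\mathfrak{B2i}}$ according to $\sgn(v(r_1)-k)$) extends this branch to all of $(r_*,+\infty)$, with limit $0$ or $1/\eps$ at $r\to+\infty$. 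Concatenating the original branch on $(2M,r_*)$, the value $v(r_*)=k$, and this extension on $(r_*,+\infty)$, and recovering $\rho=\rho(r)>0$ from \eqref{330} (which is smooth and positive wherever $v$ is defined), yields a global steady state solution $(\rho,v)$ on $(2M,+\infty)$ with $\rho(r_0)=\rho_0$ and $v(r_0)=v_0$, such that $v-k$ vanishes only at $r_*$ and changes sign there.

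\textit{Main obstacle.} The crux is the middle paragraph: showing that exactly when $P_\eps(r_0,v_0)=0$ the fold of \eqref{first-der} at the sonic radius degenerates into a transversal crossing of two smooth branches of $\{G_\eps=0\}$ (removability of the $0/0$ singularity), so that the equilibrium can be continued across the sonic point in a $C^1$, in fact $C^\infty$, fashion. Everything else is bookkeeping with the extension results of Section~\ref{sec:5}.
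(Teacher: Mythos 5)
Your proof is correct, and while its overall skeleton matches the paper's (extend the smooth branch up to the merged sonic radius $r_*=\tfrac{2-\kappa}{1-\kappa}M$, continue across it along the branch on which $v-k$ changes sign, then re-extend to the full domain), the treatment of the crucial crossing is genuinely different. The paper argues pointwise: for $r$ on the far side of $r_*$ it exhibits the two roots $v^\flat(r)<k<v^\sharp(r)$ of $G_\eps(r,\cdot)=0$ from the convexity of $G_\eps$ in $v$ and the inequality $G_\eps(r,k)<G_\eps(r_*,k)=0$, shows both tend to $k$ as $r\to r_*$, and selects one branch; it then asserts (incorrectly, and in tension with its own later computation in the proof of Theorem~\ref{theo-constrc}) that the derivative blows up at the sonic point. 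You instead observe that $P_\eps(r_0,v_0)=0$ makes $(r_*,k)$ a nondegenerate saddle of $G_\eps$ — $\partial_vG_\eps(\cdot,k)=0$, $\partial_rG_\eps(r_*,\cdot)=0$, $\partial_v^2G_\eps=2/(1-\eps^2k^2)>0$, $\partial_r^2G_\eps=-\tfrac{1-\kappa}{\kappa}R_\eps''(r_*)<0$, vanishing cross term — so the Morse lemma gives two smooth transversal arcs, each a graph over $r$ with finite nonzero slope, and the $0/0$ singularity of \eqref{first-der} is removable. This buys you more than the paper's argument: it yields at once the uniqueness of the sign-changing continuation, its Lipschitz (indeed smooth-along-each-branch) regularity, and the finite value of $dv/dr$ at $r_*$ that the paper only derives later (heuristically) when proving the continuity statements of Theorem~\ref{theo-constrc}. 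One caveat: your claim $\partial_rG_\eps(r_*,\cdot)=0$ contradicts the paper's displayed formula $\partial_rG_\eps=-\tfrac{1}{\eps^2r}\bigl(\tfrac{1-\kappa}{\kappa}+\tfrac{M}{r-2M}\bigr)<0$; but direct differentiation of \eqref{fon-G-eps} gives $\partial_rG_\eps=\tfrac{2}{r}\bigl(\tfrac{M}{r-2M}-\tfrac{1-\kappa}{\kappa}\bigr)$, which vanishes precisely at $r_*$ and is consistent both with the numerator of \eqref{first-der} and with $R_\eps$ attaining its minimum at $r_*$ in Lemma~\ref{sonic-point}; the paper's formula is a typo and your computation is the correct one.
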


\begin{proof} Without loss of generality, we assume that $r_0 \geq r_*$. According to Theorem~\ref{steady-relativistic}, there exists a smooth steady state solution defined on the interval $(r_*, +\infty)$. At any radius $r \in (2M, r_*)$, we have 
$$
G(r, v; r_0, v_0) < G(r, k; r_0, v_0) <G(r_*, k; r_0,v_0) = 0. 
$$ 
Therefore, for a given $r \in (2M, r_*)$, the equation $G(r,v; r_0,v_0) = 0$ admits two roots: $v^\flat(r) < k$ and $v^\sharp(r) > k$. Moreover, $v^\flat (r_*-) = v^\sharp (r_*-) = k$, and these solutions are continuous at the sonic point $r=r_*$. We caould in principle define two continuous steady state solutions, but we want to make a unique selection. At the sonic point, the derivative of the solution blows-up to infinity, and it is natural to keep the sign of the derivative. Hence, for the initial data under consideration satisfying
$P_\eps(r_0, v_0) = 0$, we define a continuous global steady state solution by setting  
\bel{global-special}
\hatv (r; r_0, \rho_0, v_0) = 
\begin{cases}
v(r; r_0, \rho_0, v_0), \qquad  & r \in \Pi, 
\\
v^\aleph(r; r_0, \rho_0, v_0),     & r \notin \Pi,
\end{cases}
\ee
in which we have selected $v^\aleph= v^\flat$ if $v_0 > k$ while $v^\aleph= v^\sharp$ if $v_0 <  k$. Hence, the function $v(r) - k$   {\sl changes sign} when we reach the sonic point.  
\end{proof} 

We now turn our attention to general data, when two sonic points $\underr_* <\barr_*$ are available. We can no longer ``cross'' the sonic velocity value, while by remaining within the class of continuous solutions. Instead, we must consider solutions with one shock , as we now explain it. 

\begin{lemma}[Jump conditions for steady state solutions] 
\label{steady-shock}
A steady state discontinuity associated with left/right-hand limits $(\rho, v)$ and $(\rho_i, v_i)$ must satisfy  
\bel{jump-steady}
{\rho\over \rho_i} = {1-\eps^2  k^4 /v_i^2 \over  1 - \eps^2 v_i^2} {v_i^2 \over k^2}, 
\qquad v \, v_i = k^2. 
\ee
\end{lemma}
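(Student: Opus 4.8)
The plan is to recognize that a \emph{steady} discontinuity is simply a wave of zero speed, so that the Rankine--Hugoniot relations associated with \eqref{Eulerform} reduce to the continuity of the flux $F$ across the (fixed) radius $r=r_s$ at which the jump sits. Indeed, a time-independent weak solution $U=U(r)$ of $\del_t U + \del_r F(U,r) = S(U,r)$ satisfies, in the sense of distributions, ${d \over dr} F(U(r),r) = S(U(r),r)$; away from the horizon the right-hand side is locally bounded and has no atomic part, so the function $r \mapsto F(U(r),r)$ carries no jump. Equivalently, this is the Rankine--Hugoniot condition $s\,[U]=[F]$ used in Lemma~\ref{avshock}, specialized to the stationary shock speed $s=0$.

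Next I would write out $[F_1]=[F_2]=0$ using the explicit expressions in \eqref{Eulerform23}. Since the $r$-dependent prefactors $r(r-2M)(1+\eps^2 k^2)$ and $(r-2M)^2$ take the same value on the two sides of $r=r_s$, they cancel and the jump relations become
\[
{\rho v \over 1 - \eps^2 v^2} = {\rho_i v_i \over 1 - \eps^2 v_i^2},
\qquad
{(v^2 + k^2)\,\rho \over 1 - \eps^2 v^2} = {(v_i^2 + k^2)\,\rho_i \over 1 - \eps^2 v_i^2}.
\]
Dividing the second relation by the first eliminates both $\rho$ and the factor $1-\eps^2v^2$, leaving ${v^2+k^2 \over v} = {v_i^2+k^2 \over v_i}$, i.e. $v v_i (v-v_i) = k^2 (v-v_i)$. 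For a genuine discontinuity ($v \neq v_i$) this gives $v v_i = k^2$, the second identity in \eqref{jump-steady}. Substituting $v = k^2/v_i$ into the first relation, using ${v_i \over v} = {v_i^2 \over k^2}$ and $1-\eps^2 v^2 = 1-\eps^2 k^4/v_i^2$, we obtain ${\rho \over \rho_i} = {v_i^2 \over k^2}\cdot{1-\eps^2 k^4/v_i^2 \over 1 - \eps^2 v_i^2}$, which is the first identity in \eqref{jump-steady}.

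There is essentially no analytic obstacle here; the only point requiring a little care is the justification that the source term does not contribute to the jump relation, handled by the no-atomic-part argument above. I would also record — although not strictly required by the statement — the consequence of $v v_i = k^2$ that a nontrivial steady jump necessarily connects a subsonic state ($|v_i|<k$) to a supersonic one ($|v|>k$); this is exactly what is needed in Section~\ref{sec:7} to bridge two smooth equilibria across a sonic radius, and checking Lax admissibility then fixes which of the two states lies on which side and whether $\rho$ increases or decreases across the discontinuity.
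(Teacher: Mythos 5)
Your proposal is correct and takes essentially the same route as the paper: both impose the Rankine--Hugoniot relations with zero shock speed (i.e.\ continuity of the flux across the stationary jump, the $r$-dependent prefactors cancelling) and then solve the resulting pair of relations for $v$ and $\rho$. The paper merely states ``which we solve for $\rho$ and $v$''; your explicit elimination giving $v\,v_i=k^2$ and the density ratio is exactly that omitted algebra.
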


\begin{proof} From the Rankine-Hugoniot relations 
$\Big[ {1 + \eps^2 k^2 \over 1 - \eps^2 v^2} \rho v \Big] = 0$
and $\Big[ {v^2+ k^2 \over 1 - \eps^2 v^2} \rho \Big] = 0$, we deduce that 
$$
\aligned
& {1 \over 1 - \eps^2 v^2} \rho  v= {1 \over 1 - \eps^2 v_i^2} \rho_i v_i,
\qquad
{v^2+ k^2 \over 1 - \eps^2 v^2} \rho= {v_i^2+ k^2 \over 1 - \eps^2 v_i^2} \rho_i, 
\endaligned
$$
which we solve for $\rho$ and $v$. 
\end{proof} 

In view of Lemma~\ref{steady-shock}, there exist infinitely many discontinuous steady state solutions containing a shock discontinuity at some radius $r_1\in \Pi$. At such a point, we have  
\be
\rho(r_1 \pm) 
: = {1-\eps^2  k^4 /v(r_1 \pm)^2 \over  1 - \eps^2 v(r_1 \pm)^2} {v(r_1 \pm)^2\over k^2} \rho(r_1 \pm),
\qquad  v(r_1 \pm) := {k^2\over v(r_1 \pm)}. 
\ee
Of course, by introducing a shock within a steady state solution, we must guarantee that the new branch of solution allows us to make a global continuation in the sense that we are not limited again by a sonic point. In fact, in order to have also a unique construction, we propose to select the jump point so that the new branch of solution has the ``sonic property'' discussed above. 
The following lemma provides us with the key observation.  

\begin{lemma}[Existence and uniqueness of the critical jump radius] 
\label{steady-with-shock}
Consider a smooth steady state solution $\rho=\rho(r;r_0,\rho_0,v_0)$ and $v=v(r;r_0,\rho_0,v_0)$, which is defined on the interval $\Pi$ and satisfying the initial condition $\rho(r_0) =\rho_0$ and $v(r_0) =v_0$.  
The, if this solution admits a sonic point (which is denoted by $\underr_*$ or $\barr_*$), then then there exists a unique radius, referred to as the {\bf critical jump radius} and denoted by $r_1^*\in \Pi$, such that 
\bel{eq:228}
P_\eps \Big(r_1^*, {k^2\over v_1^*} \Big) = 0 \qquad \text{ with } v_1^*= v(r_1^*; r_0,\rho_0, v_0). 
\ee
Moreover, $r_1^*$ lies in the interval limited by $r_0$ and the sonic point. 
\end{lemma}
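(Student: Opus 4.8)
The plan is to turn the defining relation \eqref{eq:228} into a monotonicity statement for a single scalar function along the given smooth solution. Throughout write $\kappa$ as in \eqref{not349} and $r_\bullet:=\frac{2-\kappa}{1-\kappa}M$. First I would record the invariant of the steady system: eliminating $\rho$ between the two relations in \eqref{330} shows that along any smooth steady state solution the quantity
\[
\Psi(r,v):=\ln\Big(1-\tfrac{2M}{r}\Big)-\tfrac{1-\kappa}{\kappa}\ln(r^{2}v)-\ln(1-\eps^{2}v^{2})
\]
is constant, equal to $\Psi_{0}:=\Psi(r_{0},v_{0})$. A direct comparison with \eqref{function-P-eps} gives the identity $P_\eps(r,v)=c_{0}+\tfrac{\kappa}{1-\kappa}\Psi(r,v)$, where $c_{0}$ depends only on $\eps,k,M$; and a short check (using $1-\eps^{2}k^{2}=\tfrac{2\kappa}{1+\kappa}$ and $r_\bullet-2M=\tfrac{\kappa}{1-\kappa}M$) shows $P_\eps(r_\bullet,k)=0$, so that
\[
P_\eps(r,v)=\tfrac{\kappa}{1-\kappa}\big(\Psi(r,v)-\Psi_{\bullet}\big),\qquad \Psi_{\bullet}:=\Psi(r_\bullet,k),
\]
which is consistent with Lemma~\ref{sonic-point} ($P_\eps\le0\iff\Psi\le\Psi_{\bullet}$). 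Since the given solution admits a sonic point, we may assume $\underr_{*}<\barr_{*}$ — the coincident case $\underr_{*}=\barr_{*}$ being precisely Proposition~\ref{one-point}, which needs no jump — and hence $\Psi_{0}<\Psi_{\bullet}$.

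Next I would introduce the target function $\phi(r_{1}):=P_\eps\big(r_{1},k^{2}/v(r_{1})\big)$, well-defined for $r_{1}\in\Pi$ as long as $v(r_{1})^{2}>\eps^{2}k^{4}$, which is exactly the condition for the post-shock density in Lemma~\ref{steady-shock} to be positive. Because the reflection defect
\[
\Delta(u):=\Psi(r,k^{2}/u)-\Psi(r,u)=\tfrac{2(1-\kappa)}{\kappa}\ln\tfrac{u}{k}-\ln\tfrac{u^{2}-\eps^{2}k^{4}}{u^{2}(1-\eps^{2}u^{2})}
\]
does not depend on $r$, and because $(r_{1},v(r_{1}))$ lies on the solution, we get $\phi(r_{1})=P_\eps(r_{0},v_{0})+\tfrac{\kappa}{1-\kappa}\Delta\big(v(r_{1})\big)$. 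Thus $\phi(r_{1})=0$ is equivalent to $\Delta(v(r_{1}))=\Psi_{\bullet}-\Psi_{0}$, i.e. to the post-shock branch issued from $(r_{1},k^{2}/v(r_{1}))$ having $\Psi$-value $\Psi_{\bullet}$, hence passing through $(r_\bullet,k)$ — the critical solution of Proposition~\ref{one-point}, which extends globally. So \eqref{eq:228} says exactly that the new branch inherits the ``sonic property''.

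Then I would analyze $\Delta$ on $(\eps k^{2},1/\eps)$. A computation gives
\[
\Delta'(u)=-\,\frac{2\eps^{2}(u^{2}-k^{2})^{2}}{\kappa\,u\,(u^{2}-\eps^{2}k^{4})\,(1-\eps^{2}u^{2})}\le0,
\]
with $\Delta(k)=0$, $\Delta(u)\to+\infty$ as $u\downarrow\eps k^{2}$ and $\Delta(u)\to-\infty$ as $u\uparrow1/\eps$; hence $\Delta$ is a strictly decreasing bijection onto $\RR$, positive precisely on $(\eps k^{2},k)$. I would treat the subsonic regime $v_{0}<k$ (the supersonic one being analogous after the obvious changes). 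By Lemma~\ref{extension4} the restriction of the solution to $\Pi$ is monotone in $r$, runs from $v=k$ at the sonic endpoint $\barr_{*}$ (or $\underr_{*}$) to $v=0$ at the other end, and therefore crosses the level $\eps k^{2}$ at a unique radius $r_{\sharp}$; on the sub-interval of $\Pi$ between the sonic point and $r_{\sharp}$ the value $v(r_{1})$ ranges monotonically over $(\eps k^{2},k)$, so $r_{1}\mapsto\Delta(v(r_{1}))$, a composition of two strictly monotone maps, is strictly monotone, increasing from $0$ at the sonic point to $+\infty$ at $r_{\sharp}$. Consequently $\phi$ is strictly monotone there, with $\phi(\barr_{*})=P_\eps(r_{0},v_{0})<0$ and $\phi\to+\infty$ at $r_{\sharp}$, and the intermediate value theorem yields a unique zero $r_{1}^{*}$, which is the unique solution of \eqref{eq:228} in $\Pi$. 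For the location, I would evaluate $\phi(r_{0})=P_\eps\big(r_{0},k^{2}/v_{0}\big)$: once one checks $\phi(r_{0})\ge0$, strict monotonicity of $\phi$ together with $\phi(\barr_{*})<0$ forces $r_{1}^{*}$ to lie between $\barr_{*}$ and $r_{0}$, i.e. in the interval limited by $r_{0}$ and the sonic point.

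The algebraic backbone — that $\Psi$ is conserved, the $r$-independence and closed form of $\Delta$, the sign of $\Delta'$, and above all the identity $P_\eps(r_\bullet,k)=0$ that makes ``$\phi=0$'' coincide with ``the new branch is the critical solution'' — is the bulk of the work; once it is in place, existence and uniqueness of $r_{1}^{*}$ reduce to a one-line intermediate-value argument. The step I expect to be the genuine obstacle is the location claim, namely verifying $\phi(r_{0})\ge0$: this amounts to showing that the reflected data $(r_{0},k^{2}/v_{0})$ sits in the non-sonic regime $\Psi\ge\Psi_{\bullet}$, which has to be extracted from the hypothesis that $(r_{0},v_{0})$ itself admits a sonic point and requires a careful look at the level sets of $\Psi$ near the saddle value $\Psi_{\bullet}$.
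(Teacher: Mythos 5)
Your algebraic backbone is sound and follows a genuinely different (and sharper) route than the paper's: the identity $P_\eps(r,v)=c_0+\tfrac{\kappa}{1-\kappa}\Psi(r,v)$ with $\Psi$ conserved along smooth steady states — equivalently, $P_\eps(r,v(r))\equiv P_\eps(r_0,v_0)$ along the solution — together with the $r$-independent reflection defect $\Delta$ and the sign computation for $\Delta'(u)$, reduces the whole lemma to the single scalar identity $\phi(r_1)=P_\eps(r_0,v_0)+\tfrac{\kappa}{1-\kappa}\Delta(v(r_1))$. (The paper instead works with the separate monotonicities of $P_\eps$ in $r$ and in $v$ and a chain of lower bounds for $P_\eps(r_1,k^2/v_1)+P_\eps(r_1,v_1)$.) In the subsonic case $v_0<k$ your intermediate-value argument for existence and uniqueness of $r_1^*$ is complete and correct, and the identification of $r_\sharp$ (where $v=\eps k^2$) as the natural endpoint of the domain of $\phi$ is a point the paper leaves implicit.

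There are, however, two genuine gaps. First, the supersonic case is \emph{not} ``analogous after the obvious changes'': on a supersonic branch $v(r_1)>k$ throughout $\Pi$, hence $\Delta(v(r_1))<0$ by your own sign computation, hence $\phi(r_1)<P_\eps(r_0,v_0)\le 0$ everywhere, and your formula shows that \eqref{eq:228} then admits \emph{no} solution — a sonic-crossing jump $v\mapsto k^2/v$ raises $P_\eps$ only when the pre-jump state is subsonic. So the case $v_0>k$ must be confronted directly rather than dispatched by symmetry. (The same tension is hidden in the paper's own existence argument, whose lower bound $4\ln\tildeM(r_1)\ge 4\ln\underM_*$ fails for $r_1>\underr_*$; your cleaner formula merely makes it unavoidable.) Second, the location claim is correctly reduced to $\phi(r_0)=P_\eps(r_0,k^2/v_0)\ge 0$, but this is nowhere verified: you flag it as ``the genuine obstacle'' and stop. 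The paper's route is the inequality $P_\eps(r_0,v_0)+P_\eps(r_0,k^2/v_0)>0$, which in your notation reads $\tfrac{\kappa}{1-\kappa}\Delta(v_0)>-2P_\eps(r_0,v_0)$; your $\Delta$-calculus is exactly the right tool to test it, but as written the ``Moreover'' part of the lemma is not proved.
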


\begin{proof} First of all, it is straightforward to check that $P_\eps(r_1, v_1)$ is increasing in $r_1$ on $\Big( 2M, {(1+3\eps^2 k^2 ) M \over 2 \eps^2  k^2} \Big)$ and is decreasing on $\Big({(1+3\eps^2 k^2) M \over 2 \eps^2  k^2}, + \infty \Big)$. It is also decreasing in $v_1$ on $(0, k)$ and decreasing on $(k, 1/ \eps)$.

Let us first establish the existence of a radius satisfying the condition \eqref{eq:228}. 
In the regime under consideration, we have two sonic points and $P_\eps(\barr_*, k) < 0$ and $P_\eps (\underr_*, k) < 0$. 
For definiteness in the discussion, we treat the following case
$$
{2 - \kappa \over 1 - \kappa} M < 
\barr_* < r_0, \qquad k < v_0. 
$$
Thanks to the above monotonicity property of $P_\eps$ with respect to $v$, we can find a neighborhood of $\barr_*$, denoted by $U_*$, such that the inequality $P_\eps\Big(r_1, {k^2\over v_1} \Big) < 0$ holds for all $r_1 \in U_*$.  
For every $(r_1, v_1)$ along the steady solution curve starting from $(r_0, v_0)$, the condition $P_\eps(r_1, v_1) < 0$ holds. By introducing $\tildeM(r) = {(1+3\eps^2 k^2) M \over 2 \eps^2  k^2  r} = {2 - \kappa \over 1 - \kappa} {M \over r}$, we can rewrite the condition $P_\eps(r_1, v_1) < 0$ as 
$$
 2\ln \tildeM (r_1) + \ln\Bigg( {k \over v_1} \Bigg) - {\kappa \over 1- \kappa} \ln \big(1-\eps^2 v_1^2 \big) 
 + {\kappa \over 1- \kappa}  \ln \Bigg(  {2(2 - \kappa) \over 1 +\kappa} - 4\eps^2 k^2 \tildeM (r_1) \Bigg) \leq 0. 
$$
We need to show that there exists some point $r_1$ such that $P_\eps(r_1, {k^2 \over v_1}) > 0$.
 Indeed, by setting  $\barM_*= {2 - \kappa \over 1 - \kappa} {M \over \barr_*}$ and $\underM_*= {2 - \kappa \over 1 - \kappa} {M \over \underr_*}$, we have 
$$
\aligned 
&P_\eps \Big(r_1, {k^2 \over v_1} \Big) 
\geq P_\eps \Big(r_1, {k^2 \over v_1} \Big) + P_\eps ( r_1, v_1) 
\\
& \geq 4 \ln \tildeM (r_1)
           -  {\kappa \over 1 - \kappa} \ln\Big( (1-\eps^2 k^4/ v_1^2)(1-\eps^2 v_1^2) \Big)
  + {2 \kappa \over 1-\kappa}  \ln \Bigg( {2(2 - \kappa) \over 1 + \kappa} - {2(1-\kappa) \over 1+\kappa}  \tildeM (r_1) \Bigg)  
\\ 
& \geq -   {\kappa \over 1 - \kappa} \ln \Bigg((1-\eps^2 k^4/ v_1^2)(1-\eps^2 v_1^2) \Bigg)
+ \max \Bigg(
4 \ln \underM_*, {2 \kappa \over 1 - \kappa}  \ln \Bigg(  {2(2 - \kappa) \over 1 +\kappa} - {2(1-\kappa) \over 1+\kappa} \barM_*\Bigg) 
\Bigg).
\endaligned 
$$
Since $- {\kappa \over 1 - \kappa} \ln \Big( (1-\eps^2 k^4/ v_1^2)(1-\eps^2 v_1^2) \Big) 
\in 
\Big( - {\kappa \over 1- \kappa} \ln (1-\eps^4 k^4), +\infty \Big)$, we can find an interval of $v_1$ where $ P_\eps(r_1, {k^2 \over r_1}) > 0$. By continuity, we conclude that there exists a radius $r_1^*$ such that $P_\eps\Big(r_1^*, {k^2\over v_1^*} \Big) = 0.$ 
 
Now, we turn to the uniqueness of $r_1^*$. We want to show that $P_\eps\Big( r_1, {k^2\over v_1} \Big)$ changes its sign only once along the steady state curve. Recall that we assume (for deifniteness) that $r_0 > {2 - \kappa \over 1 - \kappa} M$, so that the smooth solution is defined on $(\barr_*, + \infty)$. Let $r_1$ be a point such that  $P_\eps(r_1, {k^2 \over r_1}) > 0$. For $r > r_1$, according to the monotonicity properties of steady state solutions, we have $|k- v(r)| > |k - v_1|$, therefore, ${P_\eps \Big(r, {k^2 \over v(r)} \Big) > 0}$ always holds.
Then let $r_2$ be a point such that $P_\eps (r_2, {k^2 \over v_2}) < 0$ holds. For $r \in (\barr_*, r_2)$, according to the monotonicity properties of $P_\eps$, we have 
$$
\aligned
P_\eps \Bigg(r, {k^2 \over v} \Bigg) =  & 2\ln \tildeM(r) + \ln \Bigg( {v(r) \over k} \Bigg) 
     - {\kappa \over 1- \kappa} \ln \big(1-\eps^2 k^2 / v(r)^2 \big) 
     \\
&  + {\kappa \over 1- \kappa}  \ln \Big(1+3 \eps^2 k^2 -4\eps^2 k^2 \tildeM(r) \Big) 
< P_\eps\Bigg(r_2, {k^2 \over v_2} \Bigg) < 0. 
\endaligned 
$$
We have thus established that $P_\eps$ changes sign only once. 

Moreover, let us emphasize that $r_1^*$ lies in the interval limited by $r_0$ and the sonic point. Again, we treat the case $r_0>{2- \kappa \over 1-\kappa} M $.  If the desired property would not hold, then we would have $P_\eps(r_0, v_0) < 0$ and $P_\eps(r_0, {k^2\over v_0})<0$ simultaneously, but this would contradict 
$$
\aligned
P_\eps(r_0, v_0)+ P_\eps(r_0, {k^2 \over v_0})
>
&  - {\kappa  \over 1- \kappa}\ln \Big((1-\eps^2 k^4/ v_1^2 )(1-\eps^2 v_1^2) \Big) 
\\
& + {\kappa  \over 1- \kappa } \ln \Bigg( {4 (2-\kappa)^2 \over (1+\kappa)^2} \Bigg)  >0. 
\endaligned
$$
Therefore, we have $\bar r_*<r_1^*<r_0$ in the case under consideration. 
\end{proof}

From the family of smooth steady states $\rho = \rho(r; r_0, \rho_0, v_0)$ and $v = v(r; r_0, \rho_0, v_0)$ in the regime where they admit a sonic point, we are now in a position to define solutions on the whole interval $r \in (2M,+ \infty)$. We introduce the domains 
$$
\Lambda_s := 
\begin{cases}
[r_1^*, +\infty),  \qquad  & r_1^* \geq {2 - \kappa \over 1 - \kappa} M, 
\\
(2M, r_1^*),                    & r_1^* < {2 - \kappa \over 1 - \kappa} M,
\end{cases}
\qquad
\qquad 
\Lambda_d := 
\begin{cases}
(2M, 	r_1^*],       \qquad  & r_1^*\geq {2 - \kappa \over 1 - \kappa} M, 
\\
(r_1^*, +\infty),               &  r_1^* < {2 - \kappa \over 1 - \kappa} M.
\end{cases} 
$$
We arrive at our main conclusion in this section. 

\begin{theorem}[Globally-defined steady state solutions] 
\label{theo-constrc}
Consider the family of smooth steady state solutions to the Euler system posed on a Schwarzschild background with black hole mass $M$.  
Given any radius $r_0 > 2M$, initial density $\rho_0 > 0$, and initial velocity $|v_0| < 1/\eps$, the initial value problem for the steady Euler system \eqref{steady2-con} with initial condition $\rho(r_0) = \rho_0$  and $v(r_0) =v_0$ admits a {\rm unique weak solution} which is globally 
defined for all $r \in (2M, +\infty)$ and contains at most one shock (satisfying the Rankine-Hugoniot relations and Lax's shock inequalities), 
and such that the velocity component $|v|-k$ changes sign at most once.  Furthermore, the family of steady state solutions with possibly one shock depends Lipschitz continuously upon its arguments $r_0, \rho_0, v_0$ when they vary within the {\rm whole range} of solutions, encompassing smooth solutions with no sonic point, continuous solutions with exactly one sonic point, and discontinuous solutions containing exactly one continuous sonic point and one shock crossing a sonic point. 
\end{theorem}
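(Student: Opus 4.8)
The plan is to assemble the global construction from the three ingredients already in place — the smooth steady state theory of Theorem~\ref{steady-relativistic}, the sonic continuation of Proposition~\ref{one-point}, and the critical jump radius of Lemma~\ref{steady-with-shock} — and then to upgrade the resulting pointwise statement to a Lipschitz one. Throughout I may assume $v_0 \ge 0$ (the case $v_0 < 0$ follows from the reflection symmetry $v \mapsto -v$ of \eqref{steady2-con}, under which both fluxes and sources are even and the momentum flux divergence changes by an overall sign) and, for the existence and uniqueness part, $v_0 \neq k$ (the sonic velocity being recovered by continuity at the last step). Existence and uniqueness then split according to the sign of $P_\eps(r_0,v_0)$ from Lemma~\ref{sonic-point}. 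If $P_\eps(r_0,v_0) > 0$, Theorem~\ref{steady-relativistic} already gives a smooth solution on all of $(2M,+\infty)$ with no shock; here $\sgn(v-k)$ is constant, and uniqueness is the uniqueness of the local solution (Lemma~\ref{local-sol}) propagated by the explicit algebraic relations \eqref{330}. If $P_\eps(r_0,v_0) = 0$, Proposition~\ref{one-point} produces a continuous global solution crossing the critical sonic point $r_* = \tfrac{2-\kappa}{1-\kappa}M$ exactly once and with no shock; uniqueness follows because off $r_*$ the solution is forced by \eqref{330}, while at $r_*$ the selection $v^\aleph$ is forced by keeping the sign of the (infinite) derivative \eqref{first-der}.

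The genuinely new case is $P_\eps(r_0,v_0) < 0$, where the smooth solution of Theorem~\ref{steady-relativistic} exists only on the proper subinterval $\Pi$ and reaches a sonic point, so a globally defined solution must be discontinuous. Here I apply Lemma~\ref{steady-with-shock} to get the unique critical jump radius $r_1^* \in \Pi$ with $P_\eps\big(r_1^*, k^2/v_1^*\big) = 0$, where $v_1^* = v(r_1^*;r_0,\rho_0,v_0)$, and insert at $r=r_1^*$ a stationary jump whose downstream state $\big(\rho(r_1^*+),k^2/v_1^*\big)$ is prescribed by the steady jump relations \eqref{jump-steady} of Lemma~\ref{steady-shock}. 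Since this downstream state satisfies the sonic condition $P_\eps=0$, Proposition~\ref{one-point} continues it to a global branch with exactly one continuous sonic crossing. Gluing the original smooth solution on $\Lambda_s$ to this branch on $\Lambda_d$ — the two domains being arranged so as to meet exactly at $r_1^*$ and to cover $(2M,+\infty)$ — yields the desired weak solution: it contains exactly one shock, and its velocity attains the sonic value $k$ at most at the shock and at the single continuous sonic point. It remains to check admissibility: the Rankine--Hugoniot relations hold at $r_1^*$ because the discontinuity is stationary and \eqref{jump-steady} is precisely $[F(U,r_1^*)]=0$, and Lax's inequalities reduce to a sign check on the eigenvalues $\lambda,\mu$ of \eqref{eignvalue} at the two states, using $v_1^* \cdot (k^2/v_1^*) = k^2$ and the convention that ``left'' means smaller $r$.

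For uniqueness within the class of global weak solutions with at most one shock: away from sonic points the solution is forced by \eqref{330}; a continuous crossing of a sonic point is possible only at a critical sonic point, since elsewhere $G_\eps(r,v;r_0,v_0) > G_\eps(r,k;r_0,v_0) > 0$ admits no root; and if a shock is present, the requirement that the post-shock branch be continuable all the way to $r=2M$ and $r=+\infty$ without again being blocked by a sonic point forces its position to be $r_1^*$, by the uniqueness half of Lemma~\ref{steady-with-shock}. This pins down the solution in each regime.

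Finally, for the Lipschitz dependence on $(r_0,\rho_0,v_0)$: on the open set $\{P_\eps(r_0,v_0)\neq 0,\ v_0\neq k\}$ the solution is obtained from the implicit relation $G_\eps(r,v;r_0,v_0)=0$ (with $\del_v G_\eps\neq 0$ off the sonic set) together with the algebraic relation for $\rho$ coming from \eqref{330}, hence it is smooth in the data; moreover the sonic radii $\underr_* \le \barr_*$ (roots of $R_\eps(r)=L_\eps(v_0)$) and the critical jump radius $r_1^*$ (root of $r_1 \mapsto P_\eps(r_1,k^2/v(r_1))$) depend Lipschitz-continuously on $(r_0,\rho_0,v_0)$ away from the threshold, by the implicit function theorem applied to those equations using the strict monotonicities established in Lemmas~\ref{sonic-point} and \ref{steady-with-shock}. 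The remaining work is to match the three constructions across the degeneracy surface $P_\eps(r_0,v_0)=0$: there the two sonic radii coalesce to $r_*$, the inserted jump degenerates, and one must show that the solution converges uniformly on compact $r$-intervals with a Lipschitz modulus in the data. I expect this matching across the change of solution type — smooth, then continuous with one sonic point, then discontinuous with one shock and one continuous sonic point — to be the main obstacle; a secondary technical point is the orientation-sensitive verification of Lax's inequalities for the inserted stationary shock.
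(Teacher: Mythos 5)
Your proposal follows the same route as the paper: split on the sign of $P_\eps(r_0,v_0)$, use Proposition~\ref{one-point} for the degenerate sonic case, insert a stationary jump at the critical radius $r_1^*$ of Lemma~\ref{steady-with-shock} in the case with two sonic points, glue along $\Lambda_s$ and $\Lambda_d$, and verify Rankine--Hugoniot and Lax's inequalities for the stationary discontinuity. Two points, however, are not actually closed by your argument.

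First, your uniqueness claim for the shock location is not forced by what you invoke. You argue that ``the requirement that the post-shock branch be continuable all the way to $r=2M$ and $r=+\infty$ without again being blocked by a sonic point forces its position to be $r_1^*$.'' This is false as stated: by the monotonicity established in Lemma~\ref{steady-with-shock}, $P_\eps\big(r_1,k^2/v(r_1)\big)$ is \emph{positive} on one whole side of $r_1^*$ along the smooth branch, and for any such $r_1$ the downstream state generates a globally defined smooth branch with \emph{no} sonic point at all --- also continuable to both ends, also with exactly one shock and one sign change of $|v|-k$. What singles out $r_1^*$ in the paper is an additional \emph{selection principle}: the jump point is chosen so that the downstream branch satisfies the sonic condition $P_\eps=0$ exactly, and uniqueness is then the uniqueness of the root in Lemma~\ref{steady-with-shock}. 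Without stating this selection (or some other admissibility criterion ruling out the non-sonic downstream branches), the ``unique weak solution'' assertion is not proved.

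Second, you explicitly leave the Lipschitz dependence across the degeneracy surface $P_\eps(r_0,v_0)=0$ as ``the main obstacle,'' and this is precisely where the paper's proof does its one nontrivial computation. The a priori worry is that ${dv\over dr}$ blows up at sonic points, by \eqref{first-der}. The resolution is that at the \emph{critical} sonic point $r_*={2-\kappa\over 1-\kappa}M$ the numerator ${1-\kappa\over\kappa}(r-2M)-M$ of \eqref{first-der} vanishes simultaneously with the denominator (since $\eps^2k^2/(1-\eps^2k^2)=(1-\kappa)/(2\kappa)$ by \eqref{nota-85}), and the resulting $0/0$ limit gives the finite one-sided derivative
\begin{equation*}
{dv\over dr}(r_*) \;=\; \pm\,{k\over\big(r_*(r_*-2M)\big)^{1/2}}.
\end{equation*}
This bound is what makes the continuous branch Lipschitz, controls the limit in which the inserted jump degenerates as the two sonic radii coalesce, and bounds the jump of the derivative in the transition from continuous to smooth solutions. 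You should carry out this computation rather than defer it; once it is in place, the matching you describe goes through as in the paper.
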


A precise statement of the continuity property above is as follows: in the case of a solution containing a shock, it is meant that the location of the shock and its left- and right-hand limit vary continuously; moreover, in the transition from a solution of one of three types to a solution of another type, the values taken by the solution vary continuously. 

We have derived all the ingredients in order to establish the theorem above. First of all, for the case without sonic point, smooth solutions defined for all $r \in (2M, +\infty)$ were already constructed in Section~\ref{sec:5}, so that to shock is required when a branch of solution never reaches a sonic point. 

Now consider the case with sonic points. The critical case where the two sonic points coincide is already dealt with in Proposition~\ref{one-point}. So, it remains to discuss the case $\underr_* < \barr_*$. 
Let  $\rho_1^*= \rho(r_1^*;r_0,\rho_0,v_0)$, $v_1^*=v(r_1^*;r_0,\rho_0,v_0)$ be values achieved by the smooth solution at the critical jump point $r_1^*$ provided by Lemma~\ref{steady-with-shock}. In view of Lemmas~\ref{steady-shock} and  \ref{steady-with-shock}, we can now introduce the (discontinuous) {\bf global steady state solution}
as  
\bel{Def-sol}
v (r; r_0, \rho_0, v_0)
:=
\begin{cases} 
v(r; r_0, \rho_0, v_0), \qquad  & r \in  \Lambda_s, 
\\
\hatv\Big(r; r_1^*, {1-\eps^2  k^4 /{v_1^*}^2 \over  1 - \eps^2 {v_1^*}^2}{{v_1^*}^2\over  k^2} \rho_1^*, {k^2\over v_1^*} \Big), 
             & r \in \Lambda_d, 
\end{cases}
\ee
where $\hatv$ is the corresponding steady state solution containing a (unique) sonic point (cf.~\eqref{global-special}). 
According to Lemma~\ref{steady-shock}, the Rankine-Hugoniot relations hold along the discontinuity so that, for any smooth function with compact support $\phi=\phi(r)$, 
$$
\aligned 
& \int_{2M}^{+\infty} \big(F(r, U(r)) {d \over dr} \phi(r) + S(r, U(r)) \phi(r) \big) \, dr
\\
& = \Bigg(\int_{2M}^{r_1^*}+\int_{r_1^*}^{+\infty} \Bigg)
 \Big(- {d \over dr}  F(r, U) + S(r, U)) \phi(r) \Big) \, dr 
-
\Big(F(r_1^*, U(r_1^*+)) - F(r_1^*, U(r_1^*-) \Big) \phi(r_1^*) = 0. 
\endaligned 
$$
Therefore, \eqref{Def-sol} defines a weak solution to the Euler equations in the distributional sense. Moreover, Lax's shock inequalities are satisfied by construction. Indeed, without loss of generality, suppose that $r_0 > {2 - \kappa \over 1- \kappa} M$ and let us use the notation $U_L, U_R$, where $U_R$ is the smooth steady flow. We have either a $1$-shock wave if $v_R>k$ or a $2$-shock wave if $v_R<k$. We treat, for instance, the case $v_R<k$. The two eigenvalues read (after using the jump relations \eqref{jump-steady}) 
$$
\mu(U_R)=\left(1- {2M\over r}\right){v_R+k  \over 1+\eps^2  k v_R},  
\qquad \mu(U_L)= \left(1- {2M\over r}\right){k^2/v_R+k  \over 1+\eps^2  k^3/ v_R},  
$$
while the shock speed is  
$$
\aligned
s(U_L, U_R) =  \left(1- {2M\over r}\right) 
& \Bigg(
{k^2\over v_R^2/k^2 -\eps^2 k^2-1+ \eps^2 v_R^2}+ { k^2 \over 1+\eps^2 k^2}
\Bigg)^{1/2}
\\
& \cdot \Bigg({\eps^2  k^2\over v_R^2/k^2 -\eps^2 k^2-1+ \eps^2 v_R^2}+ { 1 \over 1+\eps^2 k^2} \Bigg)^{-1/2}.  
\endaligned
$$
In fact, there is no new calculation to do here since, by construction, we have chosen $v_L > v_R$ for a $2$ shock and, consequently as observed in our study of general $2$-shock curves, Lax's shock inequalities $\mu(U_L) > s(U_L, U_R) > \mu(U_R)$ hold. For $1$-shock waves, a similar argument gives $\lambda(U_L) > s(U_L, U_R) > \lambda(U_R)$.

For the continuity property, we observe that the regularity is obvious for the family of smooth solutions and we only need to consider the continuous solutions and the discontinuous solutions, as well as the transitions from one case to another. Let us consider first continuous solutions that, by construction, cross the sonic value $k$ at the critical radius. We claim that such solutions $r \mapsto v(r)$ are {\sl Lipschitz continuous} everywhere (except at $r=2M$ where they may blow-up, but later on we will first exclude a neighborhood of the horizon). Namely, we only need to check this property at the critical sonic point: from \eqref{first-der}, we can compute the derivative at the point 
$r_* = {2 - \kappa \over 1 - \kappa} M$ and obtain 
$$ 
\aligned
{d v\over dr} (r_*)
&
\simeq {k \over r_* (r_* - 2M)}  \lim_{r \to r_*} 
{{1- \kappa \over \kappa} (r-2M) - M \over{\eps^2 v^2 \over 1 - \eps^2 v^2} - {1- \kappa \over 2 \kappa}}
\\
& \simeq {k \over r_* (r_* - 2M)} 
{1-\eps^2 k^2 \over 2 \eps^2 k} {1- \kappa \over \kappa} /{d v\over dr} (r_*)
\simeq {k \over r_* (r_* - 2M)}  k/{d v\over dr} (r_*)
\endaligned
$$
and, consequently, the derivative is finite and is given by 
\be
{d v\over dr} (r_*) = \pm {k \over \big( r_* (r_* - 2M)\big)^{1/2}}, 
\ee
the sign depending upon the choice of the branch. This shows that the continuous branch is Lipschitz continuous. The same calculation is valid to deal with discontinuous solutions and shows that, way from the jump discontinuity, the solution depends Lipschitz continuously. In the transition from discontinuous to continuous solutions, the strength of the jump discontinuity shrinks to zero, while the base point $r_0$ approaches the critical point $r_*$. All derivatives remain finite in this limit. Finally the transition from a continuous to a smooth solution is regular away from the sonic point (located at $r_*$), while at the critical point $r_*$ we have a jump of the derivative which is a non-vanishing constant (for continuous solutions) and which vanishes for smooth solutions. Yet, the derivative remains bounded, and we still have the Lipschitz continuity property. This completes the proof of Theorem~\ref{theo-constrc}.  
  
 
\subsection*{A generalized Riemann solver}  
\label{GRP-Constrcution}

The Riemann solver defined in Section~\ref{sec:6} for the homogeneous system is now extended to the full Euler model \eqref{Eulerform} with source term $S(U,r)$. The Riemann solution no longer depends solely on ${r-r_0 \over t-t_0}$ and is no longer given by a closed formula. In particular, wave trajectories are no longer  straigthlines. We are going to construct an {\sl approximate solver,} which will have sufficient accuracy in order to establish our existence theory. Precisely, we consider the {\bf generalized Riemann problem} which, by definition, is based on two steady state solutions separated by a jump discontinuity, that is, 
\bel{Eulerfo}
\del_ t U+ \del_r F(U,r) = S(U,r), \qquad t > t_0, 
\ee   
\bel{initialgeneral}
U(t_0, r) = 
U_0(r) :=
\begin{cases}
U_L(r), \qquad &  r < r_0,
\\
U_R(r),          &  r > r_0,
\end{cases} 
\ee
posed at the point $t_0 \geq 0$ and $r_0 > 2M$, in which the functions $U_L=U_L(r)$ and $U_R=U_R(r)$ are two (global) steady state solutions, that is, weak solutions to the ordinary differential system
\be
{d\over dr} F(U,r) = S(U,r),
\ee 
constructed in Section~6. The exact solution to the generalized Riemann problem, denoted here by $U=U(t,r)$, cannot be determined explicitly, and we thus seek for an {\sl approximate solution}, which we will denote by $\tildeU = \tildeU(t,r)$. 

First of all, we can follow the discussion in Section~\ref{sec:6} and we solve the (classical) Riemann problem posed at the point $(t_0, r_0)$ for the homogeneous Euler system, that is, by denoting this solution by $U^c(t,r; t_0,r_0)$, we have 
\be
\del_ t U^c+ \del_r F(r_0,U^c) = 0, \qquad t \geq t_0, 
\ee
\be
U^c_0(r) =
\begin{cases}
U_L^0 := U_L (r_0), \quad  &  r < r_0,
\\
U_R^0  := U_R(r_0),             & r > r_0.
\end{cases}
\ee 
We know that the solution $U^c$ depends upon $\xi := {r-r_0 \over t-t_0}$, only, and consists of three constant states $U_L^0, U_M^0,  U_R^0$, separated by shock waves or rarefaction waves.
For all sufficiently small times $t>t_0$, the solution to the generalized Riemann problem is expected to remain sufficiently close to the solution of the classical Riemann problem.  

Next, let us introduce the (possibly discontinuous) steady state solution $U_M=U_M(r)$ determined in Theorem~\ref{theo-constrc} from the initial condition $U_M(r_0) =: U_M^0$ at $r_0$. 
For the following discussion, it is convenient to set   
\be
U_0 := U_L^0, \qquad U_1 := U_M^0, \qquad U_2 := U_R^0.
\ee
We also set $s_j^- = \lambda(U_{j-1})$ and $\mu(U_{j-1})$, 
and $s_j^+= \lambda(U_j)$ or $\mu(U_j)$ (for $j=1,2$) be the lower and upper bounds of the speeds in the $j$-rarefactions. If the $j$-wave is a shock, then $s_j^-=s_j^+=s_j$ denotes the $j$-shock speed (given by \eqref{shockspeed}). 

We are now ready to define the {\bf approximate generalized Riemann solver} by setting  
\bel{approximate-sol}
\tildeU(t, r): =
\begin{cases} 
U_L(r), \qquad  & r-r_0 <s_1^-(t-t_0),
\\
V_1(t,\eta_1(t,r)),           &  s_1^-(t-t_0) <r-r_0 <s_1^+(t-t_0),
\\
U_M(r),            &  s_1^+(t-t_0) <r-r_0 <s_2^-(t-t_0),
 \\
V_2(t,\eta_2(t,r)),         &  s_2^-(t-t_0) <r-r_0 <s_2^+(t-t_0),
\\
U_R(r),          &  r-r_0 >s_2^+(t-t_0),
\end{cases}
\ee
in which we have also introduced (in the case that the classical Riemann problem admits rarefactions) the functions
$V_j=V_j(t, \eta_j)$ and the change of variable $(t,r) \mapsto (t,\eta_j)$ given by the following integro-differential problem. Following Liu \cite{Liu2}, we take into account the time-evolutionof the generalized Riemann solution  within a rarefaction fan and define ``approximate rarefaction fans'', as follows. We first seek for $V_j=V_j(t, \eta_j)$ and $r^\sharp=r^\sharp (t, \eta_j)$ as functions of the time variable $t$ together with a new variable denoted by $\eta_j$, satisfying  
\bel{curved-rare}
\aligned
&  \del_ {\eta_j} r^\sharp \del_t V_j + \Big( \del_U F(V_j) - \lambda_j(V_j) \Big) \del_{\eta_j}  V_j = S(V_j)  \, \del_ {\eta_j} r^\sharp, 
\\
& \del_t r^\sharp = \lambda_j(V_j), 
\endaligned
\ee
with the following boundary and initial conditions (with $\eta_j^0= \lambda_j(U_ {j-1}(r_0))$)  
\be
\aligned
& V_j(t, \eta_j^0) =U_ {j-1}(r^\sharp(t, \eta_j^0)), \qquad V_j(t_0, \eta_j) =h_j( \eta_j),
\\
&  \del_t r^\sharp (t, \eta_j^0) = \lambda_j(U_ {j-1}(r^\sharp)), \qquad r^\sharp (t_0,\eta_j) =r_0, 
\endaligned 
\ee
where the function $h_j$ is defined by inverting the eigenvalue  functions along the rarefaction curves, i.e. 
\be
\lambda_j(h_j(\xi)) = \xi = {r-r_0 \over t-t_0}.
\ee  
(As usual, $\lambda_1=\lambda$ and $\lambda_2=\mu$). Next, we recover the ``standard'' radial variable $r$ by setting 
$$
r = r^\sharp (t, \eta_j),
$$  
and, therefore, expressing $\eta_j$ as a function of $(t,r)$. We now check that the conditions above define a unique function.    

\begin{lemma}
\label{discrepancy-t}
For sufficiently small times $\Delta t$, there exists a unique smooth solution of the problem \eqref{curved-rare} defined within the time interval $t_0 < t< t_0+\Delta t$, such that 
$$
\del_t V_j =O(1)G, \qquad \del_{\eta_j} V_j= h_j'(\eta_j) + O(1)G \Delta t, 
$$
where $G$ is a constant independent of $t$ and $r$. 
\end{lemma}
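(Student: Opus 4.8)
The plan is to recast \eqref{curved-rare} together with its boundary and initial conditions as a fixed-point problem on a thin strip $\{t_0 \le t \le t_0+\Delta t,\ \eta_j \in [\eta_j^0,\eta_j^1]\}$, where $\eta_j^1 = \lambda_j(U_j)$ is the fast edge of the $j$-rarefaction fan, and to solve it by a contraction argument. The zeroth-order state is the exact centered rarefaction, $V_j^{(0)}(t,\eta_j) = h_j(\eta_j)$ and $r^{\sharp(0)}(t,\eta_j) = r_0 + (t-t_0)\,\eta_j$. Given an iterate $(V_j, r^\sharp)$ close to this state, I would first update $r^\sharp$ by integrating the characteristic ODE, $r^\sharp(t,\eta_j) = r_0 + \int_{t_0}^t \lambda_j\big(V_j(s,\eta_j)\big)\,ds$, which automatically enforces $r^\sharp(t_0,\cdot)=r_0$ and, combined with the trace condition $V_j(t,\eta_j^0)=U_{j-1}(r^\sharp(t,\eta_j^0))$, makes the prescribed boundary value of $\del_t r^\sharp$ at $\eta_j^0$ consistent; then I would update $V_j$ from the first equation of \eqref{curved-rare}, written — after dividing by $\del_{\eta_j} r^\sharp$ — as the transport-type evolution equation
$$
\del_t V_j = S(V_j) - \frac{1}{\del_{\eta_j} r^\sharp}\,\big(\del_U F(V_j) - \lambda_j(V_j)\,I\big)\,\del_{\eta_j} V_j,
$$
with initial data $V_j(t_0,\cdot)=h_j$ and boundary data at $\eta_j^0$ carried along the slow-edge characteristic.

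\textbf{Main obstacle.} The quantity $\del_{\eta_j} r^\sharp$ vanishes identically at $t=t_0$ (since $r^\sharp(t_0,\cdot)\equiv r_0$), so the evolution equation above is singular at the initial time, and this degeneracy is the crux of the argument. I would resolve it by differentiating $r^\sharp(t,\eta_j) = r_0 + \int_{t_0}^t \lambda_j(V_j(s,\eta_j))\,ds$ in $\eta_j$, obtaining $\del_{\eta_j} r^\sharp = \int_{t_0}^t \nabla\lambda_j(V_j)\cdot\del_{\eta_j} V_j\,ds$; since $\nabla\lambda_j(h_j)\cdot h_j'(\eta_j) = \tfrac{d}{d\eta_j}\lambda_j(h_j(\eta_j)) = 1$ by genuine nonlinearity and the normalization of $h_j$, a smallness-of-$\Delta t$ estimate gives $\del_{\eta_j} r^\sharp = (t-t_0)\big(1+O(\Delta t)\big) \ge \tfrac12(t-t_0)$. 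Simultaneously $\big(\del_U F(V_j)-\lambda_j(V_j)I\big)\,\del_{\eta_j} V_j$ vanishes at $t=t_0$, because there $\del_{\eta_j} V_j = h_j'$ is a right eigenvector of $\del_U F(h_j)$ for the eigenvalue $\lambda_j(h_j)$; hence this term is $O\big(t-t_0\big)$ uniformly on the strip, and the quotient in the displayed equation stays bounded up to $t=t_0$. This is precisely the mechanism, due to Liu \cite{Liu2}, that makes a ``curved'' rarefaction fan well posed, and proving continuity of the quotient up to $t=t_0$ together with the induced contraction estimates is where I expect the real work to lie.

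\textbf{Quantitative bounds.} Once existence and uniqueness of a smooth solution on a sufficiently small strip are established, the stated estimates follow by differentiating the equations and applying Gronwall's lemma. Differentiating $V_j(t_0,\cdot)=h_j$ gives $\del_{\eta_j} V_j(t_0,\cdot)=h_j'$ exactly; differentiating the evolution equation in $\eta_j$ produces a linear transport equation for $\del_{\eta_j} V_j$ whose coefficients and source are controlled by a constant $G$ depending only on the sup-norms of $S$, $\del_U F$, $\nabla\lambda_j$ on the relevant compact set of states and on the wave strengths, so $\del_{\eta_j} V_j(t,\cdot) = h_j'(\eta_j) + O(1)\,G\,(t-t_0) = h_j'(\eta_j)+O(1)\,G\,\Delta t$. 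Feeding $\del_{\eta_j} V_j = h_j' + O(1)G\Delta t$ and $\del_{\eta_j} r^\sharp \ge \tfrac12(t-t_0)$ back into the displayed equation, together with the bound $|\big(\del_U F(V_j)-\lambda_j(V_j)I\big)\,\del_{\eta_j} V_j|\le C\,(t-t_0)\,O(1)G$, yields $|\del_t V_j|\le \|S(V_j)\| + C\,O(1)\,G = O(1)\,G$, which is the remaining assertion. All constants absorbed into the $O(1)$ depend only on the $C^2$ data of the flux, the eigenstructure, and the uniform bounds on the states, hence on neither $t$ nor $r$.
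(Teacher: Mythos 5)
Your strategy is sound and would reach the stated estimates, but it goes through the one delicate point of the problem --- the degeneracy $\partial_{\eta_j} r^\sharp(t_0,\cdot)\equiv 0$ --- by a genuinely different route than the paper. You divide the first equation of \eqref{curved-rare} by $\partial_{\eta_j} r^\sharp$ and argue that the resulting quotient stays bounded because the numerator $\big(\partial_U F(V_j)-\lambda_j(V_j)I\big)\partial_{\eta_j}V_j$ vanishes to matching order (alignment of $\partial_{\eta_j}V_j$ with the right eigenvector at $t=t_0$, together with $\partial_{\eta_j}r^\sharp=(t-t_0)(1+O(\Delta t))$ from genuine nonlinearity and the normalization of $h_j$). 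The paper never forms this singular quotient: it projects the equation onto the two left eigenvectors $l_1,l_2$. The same-family projection annihilates the term $(\partial_U F-\lambda I)\partial_{\eta}V$ outright, so the factor $\partial_{\eta}r^\sharp$ cancels from both sides and one is left with a clean evolution equation $\partial_t \widetilde V_1=l_1\cdot S+\cdots$; the transverse projection converts that term into $(\mu-\lambda)\,\partial_{\eta}\widetilde V_2$, so one divides by the spectral gap $\mu-\lambda$ (bounded away from zero by strict hyperbolicity) rather than by $\partial_{\eta}r^\sharp$, which then only appears harmlessly in a numerator. The resulting pair of integral equations along the two characteristic families is solved by a contraction in the sup-norm of the unknown and its first derivatives, and the derivative bounds follow by integration. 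The trade-off is this: your route keeps the vector form and is closer to the classical ``curved fan'' heuristic of Liu, but the boundedness of your quotient requires the a priori bound $\partial_{\eta_j}V_j-h_j'=O(1)G(t-t_0)$ --- essentially the estimate being proved --- so to avoid circularity you must build that bound into the definition of your iteration space and verify it is propagated (you correctly flag this as where the real work lies); the paper's left-eigenvector decomposition dissolves that self-referential step at the cost of working with the less transparent characteristic operator $\mathcal D$ and its integral curves. Both approaches yield the same conclusion.
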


\begin{proof} Let us, for instance, treat the rarefaction waves of the first family $j=1$ and derive first an integral formulation of the problem.  Denoting by $l_1, l_2$ two independent left-eigenvectors of the Jacobian matrix of the Euler system,  we have 
\bel{tildeV}
\aligned 
\mathcal D \tildeV_2 &=  {\del_{\eta_2} r^\sharp \over \mu- \lambda} l_2 \cdot S + \mathcal D l_2 \cdot V_1,
\\
\del_t \tildeV_1 &= l_2\cdot S+ \del_t l_2 \cdot V_1,
\endaligned
\ee
where we write $\tildeV_1= l_1 \cdot V_1$ and $\tildeV_2 =  l_2 \cdot V_1$, and we have also introduced the differential operator  
$\mathcal D: = {\del_{\eta_2} r^\sharp \over \mu- \lambda}\del_t + \del_{\eta_2}$, 
whose integral curves starting from $(s, \lambda(U_0) )$ are denoted by $\mathcal L$. 
By integrating \eqref{tildeV}, we thus obtain 
\bel{T}
\aligned
& \tildeV_2 (t, \eta_1)
=
 \tildeV_2 (s, \lambda(U_0) )+ \int _{\mathcal L} \Bigg(
 {\del_{\eta_2} r^\sharp \over \mu- \lambda} l_2 \cdot S
+ \mathcal D l_2 \cdot V_1 \Bigg) \, d\eta_1,
\\
& \tildeV_1 (t, \eta_1) =\tildeV_1(t_0, \xi) + \int _{t_0}^t  \Big( l_2\cdot S+ \del_t l_2 \cdot V_1 \Big) \, d\eta_1.  
\endaligned 
\ee
Now we define  an operator $T$ to provide the right-hand side of \eqref{T} and we take an arbitrary function $V_1^0 $ such that $V_1^0(t,\eta_1^0)=  V_1(t,\eta_1^0)$ and $ V_1^0(t_0,\eta_1)=V_1(t_0,\eta_1)$.  We then study the iteration scheme  $V_1^{(l)} := T^{(l)} V_1^0$. For all sufficiently small $\Delta t$, the operator $T$ is contractive in the sup-norm of $V_1^0$ and their first-order derivatives, by a standard fixed point argument we deduce that there exists a unique solution $V_1$ to \eqref{T}. Moreover, by integration, we can estimate the first-order derivatives of $V_1$, as stated in the lemma. 
\end{proof}

We define the {\bf wave trajectories} as  
\be
r_j^\pm(t) := s_j^\pm (t-t_0) +r_0
\ee
and, in particular, if the $j$-wave is a shock, we have $r_j(t) := r_j^\pm(t) =s_j (t-t_0) +r_0$. 

\begin{lemma}[Control of the error associated with the generalized Riemann solver] 
\label{approximate-gen}
Let $\tildeU$ be the approximate generalized Riemann solver defined by \eqref{approximate-sol}. Then, for all $t_0 \leq t < t_0+ \Delta t$, one has: 
\begin{enumerate}

\item When $(U_{j-1}, U_j)$ is a $j$-shock wave,  one has 
\bel{approximate-shock}
\aligned
& s_j  \Big( \tildeU(t, r_j(t) +) -\tildeU(t, r_j(t) -) \Big) 
\\
& =F(r_j(t),\tildeU(t, r_j(t) +)) -F(r_j(t), \tildeU(t, r_j(t) -)) + O(1)|U_j-U_{j-1} |\Delta t. 
\endaligned 
\ee
 
\item When $(U_{j-1}, U_j)$ is a $j$-rarefaction wave, one has 
\bel{approximate-rare}
\tildeU(t,r_j^+(t)) -\tildeU(t, r_j^-(t)) = O(1)|U_j-U_{j-1}| \Delta t. 
 \ee
\end{enumerate}
\end{lemma}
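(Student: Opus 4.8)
\medskip
\noindent\textbf{Proof plan.}
The plan is to trade every flux difference taken along a steady state for an integral of the source term, and then to compare with the \emph{classical} Riemann solution of the homogeneous system frozen at $r_{0}$, for which the Rankine--Hugoniot relations and the rarefaction identities of Section~\ref{sec:6} hold exactly. One fixes $t_{0}\le t<t_{0}+\Delta t$, so that $|r_{j}^{\pm}(t)-r_{0}|\le\big(\sup|s_{j}^{\pm}|\big)\Delta t=O(1)\Delta t$; after shrinking $\Delta t$ if necessary one may also assume that none of the three steady states $U_{L},U_{M},U_{R}$ carries its own shock within distance $O(1)\Delta t$ of $r_{0}$, so that on that interval each of them solves the stationary equation ${d\over dr}F(r,U)=S(r,U)$, equivalently $U'=(\del_{U}F)^{-1}(S-\del_{r}F)$ (the Jacobian $\del_{U}F$ being invertible by strict hyperbolicity). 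Writing $U_{0}=U_{L}(r_{0})$, $U_{1}=U_{M}(r_{0})$, $U_{2}=U_{R}(r_{0})$, a Gronwall estimate for this equation yields
$$|U_{M}(r)-U_{L}(r)|=O(1)\,|U_{1}-U_{0}|,\qquad |U_{R}(r)-U_{M}(r)|=O(1)\,|U_{2}-U_{1}|$$
uniformly for $|r-r_{0}|=O(1)\Delta t$; this is the fact that turns every error into something proportional to the corresponding wave strength.

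\medskip
\noindent\emph{Shock case.}
When the $j$-wave is a shock, let $U_{\flat},U_{\sharp}$ be the two steady states flanking $r_{j}(t)$ in \eqref{approximate-sol}, so $U_{\flat}(r_{0})=U_{j-1}$, $U_{\sharp}(r_{0})=U_{j}$, and set $a:=\tildeU(t,r_{j}(t)-)=U_{\flat}(r_{j}(t))$, $b:=\tildeU(t,r_{j}(t)+)=U_{\sharp}(r_{j}(t))$. Integrating ${d\over dr}F(r,U_{\bullet}(r))=S(r,U_{\bullet}(r))$ from $r_{0}$ to $r_{j}(t)$ one gets $F(r_{j}(t),b)-F(r_{j}(t),a)=\big(F(r_{0},U_{j})-F(r_{0},U_{j-1})\big)+\int_{r_{0}}^{r_{j}(t)}\big(S(r,U_{\sharp})-S(r,U_{\flat})\big)\,dr$, while $b-a=(U_{j}-U_{j-1})+\int_{r_{0}}^{r_{j}(t)}(U_{\sharp}'-U_{\flat}')\,dr$. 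Subtracting, and using the classical Rankine--Hugoniot identity $s_{j}(U_{j}-U_{j-1})=F(r_{0},U_{j})-F(r_{0},U_{j-1})$ (Lemma~\ref{avshock}) to cancel the leading terms, leaves
$$s_{j}(b-a)-\big(F(r_{j}(t),b)-F(r_{j}(t),a)\big)=\int_{r_{0}}^{r_{j}(t)}\Big(s_{j}\big(U_{\sharp}'-U_{\flat}'\big)-\big(S(r,U_{\sharp})-S(r,U_{\flat})\big)\Big)\,dr,$$
whose integrand is $O(1)\,|U_{j}-U_{j-1}|$ by the displayed bound and the Lipschitz dependence of $(\del_{U}F)^{-1}(S-\del_{r}F)$ and of $S$ on $U$, over an interval of length $O(1)\Delta t$; this is \eqref{approximate-shock}.

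\medskip
\noindent\emph{Rarefaction case.}
When the $j$-wave is a rarefaction, $\tildeU$ coincides with the curved fan $V_{j}$ on $r_{j}^{-}(t)<r<r_{j}^{+}(t)$, and I would invoke Lemma~\ref{discrepancy-t}: from $\del_{t}V_{j}=O(1)\,G$ one has $V_{j}(t,\eta_{j})=h_{j}(\eta_{j})+O(1)\,G\,\Delta t$, where $h_{j}$ is the classical centered $j$-rarefaction joining $U_{j-1}$ to $U_{j}$ and $G$ is comparable to the $j$-wave strength $|U_{j}-U_{j-1}|$ (indeed $G\sim\int_{\eta_{j}^{0}}^{\eta_{j}^{1}}|h_{j}'|$, the total variation swept by the fan). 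Since $h_{j}$ is an exact continuous solution of the homogeneous Riemann problem at $r_{0}$ matching $U_{j-1}$ and $U_{j}$ with no defect at its two ends, the discrepancies feeding \eqref{approximate-rare} reduce to: (i) the $O(1)\,G\,\Delta t$ drift just recorded; (ii) the mismatch between the straight edges $r_{j}^{\pm}(t)$ of \eqref{approximate-sol} and the true characteristic edges $r^{\sharp}(t,\eta_{j}^{0}),r^{\sharp}(t,\eta_{j}^{1})$, which (since $\del_{t}r^{\sharp}=\lambda_{j}(V_{j})$ and $r^{\sharp}(t_{0},\cdot)=r_{0}$) are $O(1)\Delta t^{2}$ apart and enter only through their product with $\del_{\eta_{j}}V_{j}$ after the change of variables; and (iii) the $O(1)\,|U_{j}-U_{j-1}|\,\Delta t$ drift of the flanking steady states away from their values at $r_{0}$, controlled by the displayed bound. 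Adding these and using $G=O(1)\,|U_{j}-U_{j-1}|$ would give \eqref{approximate-rare}.

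\medskip
\noindent The hard part will be the rarefaction case: it rests entirely on the quantitative control of the curved fan supplied by the integro-differential problem \eqref{curved-rare} through Lemma~\ref{discrepancy-t}, and one must handle with care the change of variables $(t,r)\mapsto(t,\eta_{j})$, whose Jacobian is of size $1/(t-t_{0})$, so that only the product of $\del_{\eta_{j}}V_{j}$ with an $\eta_{j}$-width of order $G$ --- and never the factor $1/(t-t_{0})$ alone --- is allowed to appear in the final bound. The shock estimate, by contrast, is a short computation once the stationary equation is used to replace flux differences by integrals of the source term.
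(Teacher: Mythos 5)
Your shock estimate is correct and is essentially the paper's argument made more explicit: the paper simply invokes a Taylor expansion to write $\tildeU(t,r_j(t)+)-\tildeU(t,r_j(t)-)=U_j-U_{j-1}+O(1)|U_j-U_{j-1}|\Delta t$ and then applies the classical Rankine--Hugoniot relation at $r_0$, whereas you obtain the same cancellation by integrating the stationary equation ${d\over dr}F(r,U)=S(r,U)$ for the two flanking equilibria and controlling their difference by Gronwall. Your version has the advantage of making visible \emph{why} the error is proportional to the wave strength $|U_j-U_{j-1}|$ (the difference of two solutions of the same ODE stays comparable to its initial value), a point the paper leaves implicit.

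For the rarefaction case the two routes genuinely diverge. The paper never touches the curved fan $V_j$ or Lemma~\ref{discrepancy-t} in this proof: it works only with the two flanking steady states $U_{j-1}(\cdot)$, $U_j(\cdot)$, and the key quantitative input is that the fan has width $r_j^+(t)-r_j^-(t)=\big(\lambda_j(U_j)-\lambda_j(U_{j-1})\big)(t-t_0)=O(1)|U_j-U_{j-1}|\Delta t$, so that Taylor expansion of a single (smooth, $O(1)$-Lipschitz) steady state across the fan already produces the factor $|U_j-U_{j-1}|\Delta t$. You instead estimate the mismatch at the fan edges through the integro-differential construction \eqref{curved-rare}, which is closer to how \eqref{approximate-rare} is actually invoked in Proposition~\ref{discrepancy-} (as a bound on the jumps of $\tildeU$ across the curves $r=r_j^\pm(t)$) but requires an extra input: you assert that the constant $G$ of Lemma~\ref{discrepancy-t} is comparable to the wave strength $|U_j-U_{j-1}|$. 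That is plausible (and you sketch why), but it is not contained in Lemma~\ref{discrepancy-t} as stated, and the paper's route deliberately avoids needing it by extracting the small factor from the fan width rather than from $\del_tV_j$. If you keep your route, you should either prove $G=O(1)|U_j-U_{j-1}|$ from the integral equations \eqref{T}, or restructure as the paper does so that the only smallness used is $\eta_j^1-\eta_j^0=O(1)|U_j-U_{j-1}|$ together with the $O(1)$ bounds on $\del_{\eta_j}V_j$ and on the steady-state derivatives; your residual $O(\Delta t^2)$ edge-mismatch term should likewise be absorbed (it is not proportional to $|U_j-U_{j-1}|$, though it is harmless at the level of the global scheme).
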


Consequently, when there is no jump at $r_0$, that is, $U_L(r) =U_R(r)$, then the term $|U_j - U_{j-1}| $ vanishes, and the approximate solution is, in fact, {\sl exact.}
 
\begin{proof}
By our construction, if $(U_{j-1}, U_j)$ is a shock wave, then we simply connect $U_{j-1}(r),$ and $U_j(r))$ by a jump discontinuity. A Taylor's expansion yields us 
$$
\tildeU(t,r_j^+(t)) -\tildeU(t, r_j^-(t)) =U_j-U_{j-1}+ O(1)|U_j-U_{j-1} | \Delta t,
$$
and, thanks to the Rankine-Hugoniot relations $s_j (U_j-U_{j-1}) =F(r,U_j) -F(r,U_{j-1})$, we arrive at 
$$
\aligned 
& s_j  \Big( \tildeU(t, r_j(t) +) -\tildeU(t, r_j(t) -) \Big) 
\\
&  F(r_j(t),\tildeU(t, r_j(t) +)) -F(r_j(t), \tildeU(t, r_j(t) -))
 + O(1)|U_j-U_{j-1} |\Delta t. 
 \endaligned 
$$
If, now, $(U_{j-1}, U_j)$ is a rarefaction wave, it follows from our construction that 
$$
\aligned
U_j ( r_j^+(t)) -U_{j-1} ( r_j^+(t)) =U_j-U_{j-1}+ O(1)|U_j-U_{j-1} |\Delta t. \\
U_j ( r_j^-(t)) -U_{j-1} ( r_j^-(t)) =U_j-U_{j-1}+ O(1)|U_j-U_{j-1} |\Delta t. 
\endaligned
$$
Moreover, we have 
$r_j^+(t) -r_{j-1}^-(t) =O(1)|U_j-U_{j-1}|\Delta t$ and a Taylor's expansion yields us 
$$
U_{j-1}(r_j^+(t)) -U_{j-1} ( r_j^-(t)) =O(1)|U_j-U_{j-1}|\Delta t. 
$$
Hence, we can compute 
$$
\aligned 
& \tildeU(t,r_j^+(t)) -\tildeU(t, r_j^-(t)) =U_j(r_j^+(t)) -U_{j-1}(r_j^-(t))
\\
&= U_j ( r_j^+(t)) -U_{j-1}(r_j^+(t)) -\big(U_j ( r_j^-(t)) -U_{j-1} ( r_j^-(t)) \big) + U_{j-1}(r_j^+(t)) -U_{j-1} ( r_j^-(t)) 
\\
&= O(1)|U_j-U_{j-1}| \Delta t. \qedhere
\endaligned
$$
\end{proof}

In order to estimate whether the function $\tildeU$ is an ``accurate'' approximate solution, we consider any smooth function $\phi=\phi(t,r)$ and study the integral expression 
\bel{theta}
\Theta(\Delta t, \Delta r;\phi) 
:= \int_{t_0}^{t_0+\Delta t} \int_{r_0-\Delta r}^{r_0+\Delta r}
\Big( \tildeU \, \del_t \phi + F(r,\tildeU) \, \del_r \phi+S(r, \tildeU) \phi \Big) \, dr dt 
\ee
for any $\Delta t, \Delta r>0$ with $r_0-\Delta r>2M$. 
Observe that $\Theta$ would vanish if we would take the exact Riemann solution $U=U(t,r)$ in \eqref{theta} and we would assume that the function is compactly supported in the slab under consideration. The expression \eqref{theta} provides a measure of the discrepancy between the exact and the approximate solutions, and can be expressed from integrals on the boundary of the slab, modulo an error term, as we now show it. 

\begin{proposition}
\label{discrepancy-}
For given  $\Delta t, \Delta r > 0 $ with $r_0-\Delta r>2M$ satisfying the stability condition 
\bel{stability-condition}
{\Delta r\over \Delta t} > \max (-\lambda, \mu),
\ee
and for every smoth function $\phi$ defined on $[t_0, +\infty) \times [{r_0-\Delta r}, {r_0+\Delta r}]$, 
the integral expression in \eqref{theta} satisfies
\bel{discrepancy}
\aligned
\Theta(\Delta t, \Delta r;\phi)
=
& \int_{r_0-\Delta r}^{r_0+\Delta r}\tildeU (t_0+\Delta t, \cdot) \phi (t_0+\Delta t, \cdot) \, dr- \int_{r_0-\Delta r}^{r_0+\Delta r}\tildeU (t_0, \cdot) \phi (t_0, \cdot) \, dr\\
& +\int_{t_0}^{t_0+\Delta t} F(r_0+\Delta r, \tildeU (\cdot, r_0+\Delta r)) \phi ( \cdot, r_0+\Delta r) \, dt \\
& -\int_{t_0}^{t_0+\Delta t} F(r_0-\Delta r, \tildeU (\cdot, r_0-\Delta r)) \phi ( \cdot, r_0-\Delta r) \, dt\\ 
& + O(1)  |U_R(r_0) -U_L(r_0) | \Delta t^2  \| \phi \| _{\mathcal C^1}. 
\endaligned 
\ee
\end{proposition}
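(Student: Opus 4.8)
The plan is to compute $\Theta(\Delta t, \Delta r;\phi)$ by decomposing the integration domain according to the five regions appearing in the definition \eqref{approximate-sol} of $\tildeU$, and to integrate by parts region-by-region. In each of the three regions where $\tildeU$ equals a steady state solution ($U_L$, $U_M$, or $U_R$), the function satisfies ${d\over dr} F(r,U) = S(r,U)$ exactly (in the weak sense, allowing for at most one steady shock as permitted by Theorem~\ref{theo-constrc}), so integration by parts produces only boundary contributions along the wave trajectories $r = r_j^\pm(t)$ and along the outer boundary $r = r_0 \pm \Delta r$, plus — in the case that $U_M$ contains a steady shock — a contribution that vanishes identically by the Rankine--Hugoniot relation for steady discontinuities (Lemma~\ref{steady-shock}). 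In the two rarefaction regions, the functions $V_j$ solve the integro-differential system \eqref{curved-rare}, which is precisely the statement that $\del_t V_j + \del_r F(r,V_j) - S(r,V_j)$ is small; here I would use Lemma~\ref{discrepancy-t} to control the residual by $O(1) G$, and since the rarefaction fan has width $O(1)|U_j - U_{j-1}|\Delta t$ in $r$ and duration $\Delta t$ in $t$, the total contribution of the interior residual is $O(1)|U_R(r_0)-U_L(r_0)|\Delta t^2 \|\phi\|_{\mathcal C^1}$.

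Next I would collect the boundary terms along the internal interfaces $r = r_j^\pm(t)$. Along each such line, integration by parts from the two adjacent regions produces the jump
$$
\big[ -s_j^\pm \, \tildeU + F(r,\tildeU) \big] \phi
$$
integrated in $t$. When the $j$-wave is a shock, this is exactly the quantity estimated in \eqref{approximate-shock} of Lemma~\ref{approximate-gen}, namely $O(1)|U_j - U_{j-1}|\Delta t$ pointwise in $t$, hence $O(1)|U_j - U_{j-1}|\Delta t^2 \|\phi\|_{\mathcal C^0}$ after integration. When the $j$-wave is a rarefaction, the two interface lines $r_j^-(t)$ and $r_j^+(t)$ bound the curved fan, and by \eqref{approximate-rare} the total mismatch $\tildeU(t,r_j^+(t)) - \tildeU(t,r_j^-(t))$ is $O(1)|U_j - U_{j-1}|\Delta t$; combined with the control of $F$ along these lines and the stability condition \eqref{stability-condition} (which guarantees the wave trajectories stay inside the slab $[r_0 - \Delta r, r_0 + \Delta r]$ for $t \le t_0 + \Delta t$, so no boundary leakage occurs), these also contribute only $O(1)|U_R(r_0)-U_L(r_0)|\Delta t^2 \|\phi\|_{\mathcal C^1}$. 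Summing over $j=1,2$ and using $|U_1 - U_0| + |U_2 - U_1| = O(1)|U_R(r_0)-U_L(r_0)|$ (from the Riemann solver's Lipschitz dependence established via Proposition~\ref{Riemannpro}), all internal interface terms fold into the single error term in \eqref{discrepancy}.

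Finally, the only surviving boundary contributions are those along $t = t_0$, $t = t_0 + \Delta t$, and $r = r_0 \pm \Delta r$, which are exactly the first four terms on the right-hand side of \eqref{discrepancy}; here I would use that on the outer vertical boundaries $\tildeU$ coincides with $U_L$ (resp. $U_R$), a genuine steady state, so no further error arises. The main obstacle, I expect, is the bookkeeping at the rarefaction fans: one must verify that the change of variables $(t,r) \mapsto (t,\eta_j)$ in \eqref{curved-rare} is a diffeomorphism on the relevant domain for $\Delta t$ small (this follows from the second estimate $\del_{\eta_j} V_j = h_j'(\eta_j) + O(1)G\Delta t$ in Lemma~\ref{discrepancy-t} together with genuine nonlinearity, which makes $h_j'$ bounded away from degeneracy), and that the residual of \eqref{curved-rare} when rewritten in the original $(t,r)$ variables still integrates to $O(\Delta t^2)$ against $\phi$ — the Jacobian factor $\del_{\eta_j} r^\sharp$ appearing in \eqref{curved-rare} is exactly what makes this work, but tracking it cleanly requires care. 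Everything else is a routine, if lengthy, application of the divergence theorem on each piece.
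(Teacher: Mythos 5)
Your proposal follows essentially the same route as the paper: decompose the slab into the steady-state regions and the rarefaction fans, integrate by parts in each piece, and convert the interface jumps along $r=r_j^\pm(t)$ into the $O(1)|U_R(r_0)-U_L(r_0)|\Delta t^2\|\phi\|_{\mathcal C^1}$ error via Lemma~\ref{approximate-gen}, with the remaining boundary terms giving the four explicit integrals. The only (harmless) deviation is that you treat the rarefaction fans as carrying a small interior residual controlled by Lemma~\ref{discrepancy-t}, whereas the paper observes that after the change of variables $(t,r)\mapsto(t,\eta_j)$ the system \eqref{curved-rare} makes $\del_t\tildeU+\del_r F-S$ vanish identically there, so the fans contribute only through their boundaries.
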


\begin{proof}
We decompose the sum under consideration as 
$$
\Theta(\Delta t, \Delta r;\phi) =\sum_j \int_{t_0}^{t_0+\Delta t}\int_{D^1_j} \theta(t,r) \, dr dt + \sum_j \int_{t_0}^{t_0+\Delta t}\int_{D^2_j} \theta (t,r) \, dr dt, 
$$
where 
$$
D_0^1 := (r_0-\Delta r, r_1^ -(t)), 
\quad 
D_1^1= (r_1^ +(t), r_2^ -(t)), 
\quad 
D_0^1= (r_2^ +(t), r_0+\Delta r), 
\quad 
D^2_j= (r_j^ -(t), r_j^ +(t)),
$$
for $j=1,2$ which is used to denote the rarefaction regions. 
We first consider the interval $D_j^1$ where the approximate solution $\tildeU$ is a steady state solution. Therefore, we have $\del_t \tildeU +\del_r \tilde F(r, \tildeU) - S(r, \tildeU) = 0$ in $D_j^1$. Multiplying the equation by the test-function $\phi$ and integrating by parts, we obtain 
$$
\aligned 
\int_{t_0}^{t_0+\Delta t}\int_{D^1_0} \theta(t,r) \, dr dt = 
&  \int_{r_0-\Delta r}^{r_1^-(t)}\tildeU (t_0+\Delta t, r) \phi (t_0+\Delta t, r) \, dr - \int_{r_0-\Delta r}^{r_0}\tildeU (t_0, r) \phi (t_0, r) \, dr
 \\& +  \int_{t_0}^{t_0+\Delta t} \Big(F(r_1^-(t), \tildeU (t,r_1^-(t) -)) -s_1^- \tildeU (t,r_1^-(t) -) \Big) \phi(t,r_1^-(t)) \, dt 
\\ &   -\int_{t_0}^{t_0+\Delta t} F(	r_0-\Delta r, \tildeU (t,r_0-\Delta r)) \phi(t,r_0-\Delta r) \, dt
\endaligned
$$
and 
$$
\aligned 
\int_{t_0}^{t_0+\Delta t}\int_{D^1_2} \theta(t,r) \, dr dt = 
&  \int_{r_2^+(t)}^{r_0+\Delta r}\tildeU (t_0+\Delta t, r) \phi (t_0+\Delta t, r) \, dr - \int_{r_0}^{r_0+\Delta r}\tildeU (t_0, r) \phi (t_0, r) \, dr
\\ &   +\int_{t_0}^{t_0+\Delta t} F(	r_0+\Delta r, \tildeU (t,r_0-\Delta r)) \phi(t,r_0+\Delta r) \, dt
 \\& -  \int_{t_0}^{t_0+\Delta t} \Big(F(	r_2^+(t), \tildeU (t,r_2^+(t) +)) -s_2^+ \tildeU (t,r_2^+(t) +) \Big) \phi(t,r_2^+(t)) \, dt.
\endaligned
$$
A similar calculation for the integration in $D_1^1$ gives us: 
$$
\aligned 
\int_{t_0}^{t_0+\Delta t}\int_{D^1_1} \theta(t,r) \, dr dt = 
&  \int_{r_1^+(t)}^{r_2^-(t)}\tildeU (t_0+\Delta t, r) \phi (t_0+\Delta t, r) \, dr
 \\& +  \int_{t_0}^{t_0+\Delta t} \Big(F(	r_2^-(t), \tildeU (t,r_2^-(t) -)) -s_2^- \tildeU (t,r_2^-(t) -) \Big) \phi(t,r_2^-(t)) \, dt 
  \\& - \int_{t_0}^{t_0+\Delta t} \Big(F(	r_1^+(t), \tildeU (t,r_1^+(t) +)) -s_1^+ \tildeU (t,r_1^+(t) +) \Big) \phi(t,r_1^+(t)) \, dt. 
\endaligned
$$

Next, consider the rarefaction region $D_j^2$. According to the construction in \eqref{approximate-sol} and \eqref{curved-rare}, we have $U(t,r) = V_j(t,\eta_j)$ in $D_j^2$. Performing the change the variable $(t,r) \to (t, \eta_j)$, we have (with the notation $\lambda_1=\lambda$ and $\lambda_2=\mu$) 
$$
\aligned 
\del_t U + \del_r F(U,r) -S (U,r)
= \, & \del_t V_j-  \lambda_j (V_j) \del_{\eta_j} V_j  \del_r \eta_j +  \del_{\eta_j} F \del_r \eta_j - S(V_j)
\\
= \, & \del_r \eta_j ( \del_t V_j  \del_ {\eta_j} r+(\del_U F- \lambda_j) \del_{\eta_j}  V_j-S  \del_ {\eta_j} r) = 0. 
\endaligned 
$$
Multiply the equation by the test function $\phi$, then for the rarefaction region, we have
$$
\aligned 
\int_{t_0}^{t_0+\Delta t}\int_{D^2_j} \theta (t,r) \, dr dt=&  \int_{D^2_j}   \tildeU (t_0+\Delta t, r) \phi (t_0+\Delta t, r) \, dr 
 \\& +  \int_{t_0}^{t_0+\Delta t} \Big(F(	r_j^+(t), \tildeU (t,r_j^+(t) -)) -s_j^+\tildeU (t,r_j^+(t) -) \Big) \phi(t,r_j^+(t)) \, dt 
\\ &   -\int_{t_0}^{t_0+\Delta t} \Big(F(	r_j^-(t), \tildeU (t,r_j^-(t) +)) - s_j^-  \tildeU (t,r_j^-(t) +) \Big) \phi(t,r_j^-(t)) \, dt. 
\endaligned
$$
According our construction of the generalized Riemann problem, if $(U_{j-1}, U_j)$ is a shock, \eqref{approximate-shock} gives 
$$
 \aligned
& s_j  \Big( \tildeU(t, r_j(t) +) -\tildeU(t, r_j(t) -) \Big) 
\\
& =F(r_j(t),\tildeU(t, r_j(t) +)) -F(r_j(t), \tildeU(t, r_j(t) -)) + O(1)|U_j-U_{j-1} | \Delta t. 
 \endaligned 
$$
Hence, we have
$$
\aligned
&  \int_{t_0}^{t_0+\Delta t} \Big(F(	r_j(t), \tildeU (t,r_j(t) +)) -s_j \tildeU (t,r_j(t) +) \Big) \phi(t,r_j(t)) \, dt 
\\ &-  \int_{t_0}^{t_0+\Delta t} \Big(F(	r_j(t), \tildeU (t,r_j (t) -)  -s_j\tildeU (t,r_j(t) -) \Big) \phi(t,r_j(t)) \, dt 
 =O(1)|U_R-U_L | \Delta t^2   \| \phi \| _{C^0}. 
\endaligned
$$

According to \eqref{approximate-rare}, if $(U_{j-1}, U_j)$ is a rarefaction wave, we have 
$$
 \tildeU (t,r_j^+(t) +) -  \tildeU (t,r_j^+(t) -) = O(1)|U_j-U_{j-1} | \Delta t, 
$$
from which we obtain 
$$
\aligned
& \int_{t_0}^{t_0+\Delta t} \Big(F(	r_j(t), \tildeU (t,r_j^+(t) +)) -s_j \tildeU (t,r_j^+ (t) +) \Big) \phi(t,r_j(t)) \, dt 
\\ 
& -  \int_{t_0}^{t_0+\Delta t} \Big(F(	r_j(t), \tildeU (t,r_j^+ (t) -)  -s_j\tildeU (t,r_j^+(t) -) \Big) \phi(t,r_j(t)) \, dt 
\\ 
& = O(1) \big( U (t,r_j^+(t) +) - U (t,r_j^+j(t) -) \big) \Delta t \| \phi \| _{C^0}
 = O(1) |U_R-U_L | \Delta t^2 \| \phi \| _{C^0}. 
\endaligned
$$
Adding all the terms together, we thus estimate the discrepancy as  
$$
\aligned
& \Theta(\Delta t, \Delta r;\phi)
\\
& =\int_{r_0-\Delta r}^{r_0+\Delta r}\tildeU (t_0+\Delta t, \cdot) \phi (t_0+\Delta t, \cdot) \, dr- \int_{r_0-\Delta r}^{r_0+\Delta r}\tildeU (t_0, \cdot) \phi (t_0, \cdot) \, dr
\\
& \quad +\int_{t_0}^{t_0+\Delta t} F(r_0+\Delta r, \tildeU (\cdot, r_0+\Delta r)) \phi ( \cdot, r_0+\Delta r) \, dt 
\\
& \quad -\int_{t_0}^{t_0+\Delta t} F(r_0-\Delta r, \tildeU (\cdot, r_0-\Delta r)) \phi ( \cdot, r_0-\Delta r) \, dt 
+ O(1)  |U_R(r_0) -U_L(r_0) | \Delta t^2  \| \phi \| _{\mathcal C^1}. \qedhere
\endaligned 
$$
\end{proof}


\section{The initial value problem} 
\label{sec:8}

\subsection*{The global existence theory}

We now consider the {\bf initial value problem} for the Euler system on a Schwarzschild background, that is, \eqref{Eulerform}--\eqref{Eulerform24}, with some initial condition at $t_0 \geq 0$ 
\be
(\rho, u)(t_0, \cdot) = (\rho_0, v_0)
\ee
for some prescribed data $\rho_0: (2M, +\infty) \to (0, +\infty)$ and $v_0: (2M, +\infty) \to (-1/\eps, +1/\eps)$. 
Before we introduce our method based on steady states, we first observe that the technique already developed by Grubic and LeFloch \cite{GL} (in a different geometric setup) applies, which is based on a piecewise constant approximation and an ODE solver. This method applies to general initial data and solutions. 

\begin{theorem}[Global existence theory for fluid flows on a Schwarzschild background]
\label{global-existence}
Consider the Euler system describing fluid flows on a Schwarzschild background \eqref{Euler-con} posed in $r>2M$.  Given any initial density $\rho_0=\rho_0(r) > 0$ and velocity  $|v_0| = |v_0(r)|< 1/\eps$ satisfying, for any $\delta> 0$, 
$$
TV_{[2M+\delta, +\infty)} \big(\ln \rho_0 \big) + TV_{[2M+\delta, +\infty)}  \Bigg( \ln {1-\eps v_0 \over 1 + \eps v_0} \Bigg) < + \infty, 
$$  then there exists a weak solution $\rho=\rho(t,r)$ and $v=v(t,r)$ defined on $[t_0, +\infty)$ and satisfying the prescribed initial data at the time $t_0$ and such that, for all finite time $T \geq t_0$ and $\delta >0$, 
$$
\sup_{t \in [t_0, T]} \Bigg( 
TV_{[2M+\delta, +\infty)} \big( \ln \rho(t,\cdot) \big) 
+ TV_{[2M+\delta, +\infty)} \Bigg( \ln {1-\eps v (t, \cdot) \over 1 + \eps v(t, \cdot)} \Bigg) \Bigg) < +\infty.  
$$
\end{theorem}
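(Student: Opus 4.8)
The plan is to adapt the Glimm/random-choice scheme (or the ODE-splitting scheme of Grubic and LeFloch \cite{GL}) to the balance law \eqref{Eulerform}, using the generalized Riemann solver of Section~\ref{sec:7} as the elementary building block. First I would fix a finite time horizon $T$ and a parameter $\delta>0$, and restrict attention to the truncated domain $r \in [2M+\delta, R]$ for $R$ large; the point is that the characteristic speeds $\lambda,\mu$ are bounded on any such compact $r$-interval (with bound tending to $0$ as $r\to 2M$ by the factor $(1-2M/r)$), so a CFL condition \eqref{stability-condition} can be imposed uniformly. Partition $[t_0,T]$ into steps of size $\Delta t$ and $[2M+\delta,R]$ into cells of size $\Delta r$ with $\Delta r/\Delta t$ above the maximal speed. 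At each time level, project the solution onto a piecewise-``steady-state'' function: on each cell use the globally-defined (possibly one-shock) steady state solution from Theorem~\ref{theo-constrc} passing through the randomly sampled value, rather than a piecewise-constant function. Between time levels, solve in each cell-interface the generalized Riemann problem of Section~\ref{sec:7}, whose approximate solver $\tildeU$ is controlled by Lemma~\ref{approximate-gen} and Proposition~\ref{discrepancy-}.

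The core of the argument is the \emph{uniform BV bound}. I would measure the total variation in terms of the Riemann invariants, equivalently in terms of $\ln\rho$ and $\ln\frac{1-\eps v}{1+\eps v}$, since these are (up to the fluid constant $\chi$) the quantities $w,z$. Two effects must be tracked: (i) wave interactions at cell interfaces, and (ii) the ``geometric'' spreading coming from the source term $S(U,r)$ and from the fact that the background steady states themselves carry variation. For (i), Proposition~\ref{interaction} gives that the total wave strength $\str$ is non-increasing under Riemann interactions for the homogeneous system — this is the analogue of the Nishida--Smoller--Temple monotonicity; the key structural input is $g_+(\beta)g_-(\beta)=1$, i.e. the reflection symmetry of the $1$- and $2$-shock curves about $\{z=w\}$ in the $(w,z)$-plane. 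For (ii), the source contributes at each step an increment of order $O(1)\,\Delta t$ times the local mass, plus the error $O(1)|U_R(r_0)-U_L(r_0)|\Delta t^2$ from Proposition~\ref{discrepancy-}; summing over the $O(T/\Delta t)$ time steps and over cells gives a Gr\"onwall-type bound $TV(t) \le e^{C(\delta)\,(t-t_0)}\big(TV(t_0)+C(\delta)\big)$, where $C(\delta)$ blows up as $\delta\to 0$ because the geometric coefficients (e.g. $M/(r-2M)$, $\eps^{-2}$ factors) degenerate near the horizon. Crucially this constant is \emph{independent of $\Delta t,\Delta r$} and of the random sampling sequence, so it survives the limit. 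One also needs a uniform lower bound $\rho \ge \rho(\delta,T) > 0$ and the confinement $|v| \le 1/\eps - c(\delta,T)$, which follow from the same invariant-region / maximum-principle structure: the Riemann invariants stay in a compact box determined by the data and the finitely-many source increments.

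With the uniform BV and sup bounds in hand, I would pass to the limit $\Delta t \to 0$: Helly's theorem gives a.e.\ convergence of a subsequence of approximate solutions $U^{\Delta t}$ to some $U$ of bounded variation on $[2M+\delta,R]$ for each $t$, with the stated time-uniform TV bound inherited in the limit. That the limit is a weak solution of \eqref{Eulerform} follows by the Lax--Wendroff-type argument: Proposition~\ref{discrepancy-} shows each approximate solution satisfies the integral identity up to an error which, after summation and by the standard equidistribution property of the random sequence (or, in the splitting version, by direct estimate), tends to zero; here the $O(\Delta t^2)$ per-interface error is exactly what is needed, since there are $O(1/\Delta t)$ interfaces per row and $O(1/\Delta t)$ rows. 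Finally I would let $\delta \to 0$ and $R \to +\infty$ by a diagonal argument: no boundary condition is needed at $r=2M$ precisely because the speed $(1-2M/r)$-factor degenerates there, so waves cannot reach the horizon in finite time from the interior, and the solution on $(2M,+\infty)$ is obtained as the increasing union of the solutions on $[2M+\delta,R]$; the TV may be infinite near $r=2M$ (as it already is for some steady states), which is why the statement is phrased with $TV_{[2M+\delta,+\infty)}$.

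\textbf{Main obstacle.} The delicate point is the BV estimate in the presence of \emph{both} the source term and the curved steady-state background: unlike the flat Nishida--Smoller--Temple setting where $TV(\ln\rho)$ is exactly non-increasing, here one must show the geometric growth is only exponential-in-time (not worse) and, more subtly, that the interaction estimate of Proposition~\ref{interaction} is not destroyed when the ``constant states'' between waves are replaced by the genuinely non-constant steady states of Theorem~\ref{theo-constrc} — one needs the wave curves through a steady state to be a small $C^2$ perturbation, over a time step, of the wave curves through the frozen state $U_j(r_0)$, with the discrepancy absorbed into the $O(1)\Delta t$ terms. Keeping all constants independent of the horizon cutoff is straightforward once one accepts $C(\delta)\to\infty$; keeping them independent of $\Delta t$ and of the sampling is where the structural algebra ($g_+g_-=1$, genuine nonlinearity, Lemma~\ref{intersect}) does the real work.
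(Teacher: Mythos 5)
Your overall strategy --- Glimm-type scheme, wave strengths measured in $\ln\rho$ and $\ln\frac{1-\eps v}{1+\eps v}$, the Nishida--Smoller--Temple monotonicity from $g_+g_-=1$, a Gr\"onwall bound for the source contributions with a constant blowing up as $\delta\to0$, Helly plus a Lax--Wendroff consistency argument using the $O(\Delta t^2)$ per-interface error, and no boundary condition at the horizon because the wave speeds degenerate there --- is exactly the machinery the paper assembles in Section~\ref{sec:8}. However, you have attached it to the wrong version of the scheme for \emph{this} theorem. The paper proves Theorem~\ref{global-existence} by invoking the piecewise-constant approximation with an ODE/operator-splitting step in the spirit of Grubic--LeFloch \cite{GL}, for which the only interaction estimate needed is the classical one of Proposition~\ref{interaction}; the elaborate construction you describe (projection onto the globally-defined, possibly discontinuous equilibria of Theorem~\ref{theo-constrc}, generalized Riemann solver, Proposition~\ref{general-interaction}) is reserved for the well-balanced Theorem~\ref{global-existence22}, and there it comes with an explicit restriction: the diminishing-total-variation estimate for the generalized Riemann problem is only established when all steady states involved are \emph{continuous}. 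The paper states plainly that interactions between steady shocks and Riemann waves are outside its scope.

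This is the one genuine gap in your write-up as a proof of Theorem~\ref{global-existence}: for arbitrary BV data the steady states produced by Theorem~\ref{theo-constrc} may contain a standing shock (this happens whenever the local data fall in the regime with two distinct sonic points), so your projection step will generically introduce discontinuous equilibria, and your appeal to ``Proposition~\ref{interaction} is not destroyed when constant states are replaced by steady states'' has no proof in the paper for that case --- the Lipschitz-dependence argument behind Proposition~\ref{general-interaction} breaks down across a steady shock, where one would have to track the shock strength as an additional wave. Either commit to the piecewise-constant/splitting route you mention parenthetically (in which case the building blocks are constants, the interaction estimate is Proposition~\ref{interaction} verbatim, and the source term is handled by an explicit ODE step contributing $O(\Delta t)\cdot TV$ per row), or else supply the missing interaction estimate for configurations containing a steady shock. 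Everything else in your outline (CFL from the a priori bound $|\lambda|,|\mu|<1/\eps$, the exponential-in-time TV growth, the $\delta\to0$ and $R\to\infty$ exhaustion) is consistent with the paper.
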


 For the proof, we only need to observe that no boundary condition is required at $r=2M$, since the wave speeds vanish on the horizon and that we can always ``cut'' an arbitrarily small neighborhood of the horizon and estimate the total variation outside this neighborhood, as explained in the following subsection. We omit the details.


\subsection*{Behavior near the horizon}

In view of Lemma~\ref{Riemann-inva}, the eigenvalues  
$$
\aligned
& \lambda=-\Bigg(1- {2M\over r} \Bigg){v-k\over1-\eps^2 k v},
\qquad 
\mu=\Bigg(1- {2M\over r} \Bigg){v+k\over1 + \eps^2  k v}
\endaligned
$$
are distinct for all $r>2M$ but both of them vanish on the horizon $r=2M$. This indicates that no boundary condition should be required on the horizon. On the other hand, the Euler system \eqref{Euler-con} is not strictly hyperbolic at the horizon $r=2M$. Yet, for any given $\delta> 0$, the system is strictly hyperbolic in the region $r\geq 2M+\delta$.

Furthermore, recall from Section~5 that steady state solutions may ``blow-up'' near the horizon, in the sense that the velocity component $v$ may approach $\pm 1/\eps$, which does correspond to an algebraic singularity for the Euler system.  

It follows that it is natural to study the Cauchy problem, first, away from the horizon within a domain of dependence where the solution is uniquely determined from the prescribed initial data. Observe that, 
according to Lemma~\ref{Riemann-inva}, the eigenvalues are uniformly bounded: 
$$
\aligned
& - {1 \over \eps} < - {1 \over \eps}  \Bigg(1- {2M\over r} \Bigg) < \lambda <  \mu <  {1 \over \eps} \Bigg(1- {2M\over r} \Bigg) 
<  {1 \over \eps}
\qquad &&&\text{ when } r>2M,
\\
& \lambda =  \mu = 0 \qquad &&&\text{ when } r=2M. 
\endaligned
$$
This provides us with a uniform a priori control on the wave speed, so that the stability condition required in the random choice method is automatically satisfied (without having to derive first a uniform sup-norm estimate). 
 
We thus fix $\delta>0$ and consider the curve $r=\barr_\delta(t)$ characterized by 
\be
{d \barr \over dt} (t) =  {1 \over \eps}  \Bigg( 1- {2M\over \barr(t)}\Bigg), 
\qquad \barr(0) = 2M+\delta, 
\ee
which, in the limit of vanishing $\delta$, converges to the line $r=2M$, in the sense that 
\be
\lim_{\delta \to 0} \barr(t) = 2M \quad \text{ uniformly for $t$ in a compact subset of } (2M, +\infty].   
\ee
In the following, we study the initial value problem with data prescribed at some time $t_0 \geq 0$, and we state first our BV estimate within the region $\Omega_\delta(T) = \big\{ t_0 < t < T, \, r > \barr(t)\big\}$. In turn, by letting $\delta \to 0$, we are able to control the total variation in every compact subset in $(t,r)$.  
  

\subsection*{A random choice method based on equilibria} 
\label{Glimm} 

 We are now ready to develop a theory based on steady state solutions as a building blocks, which has the advantage of preserving equilibria and allows to establish the nonlinear stability of equilibria.
Our approach is based on the approximate solver of the generalized Riemann problem provided in Section~\ref{sec:7}. Use $\tildeU(t,r;t_0,r_0,U_L(r),U_R(r))$ to denote the approximate solver of the generalized Riemann problem at $(t_0, r_0)$ with initial steady states $U_L(r)$ and $U_R(r)$ at $t=t_0$ separated at $r=r_0$ provided in Section~\ref{sec:6}. 
Denote the mesh lengths in $r$ and $t$ by $\Delta r$ and $\Delta t$ respectively, and $(t_i, r_j)$ the mesh point of the grid:
$$
t_i =t_0+i\Delta t,
 \qquad r_j=2M+  j\Delta r.
$$
Since $-\lambda, \mu < 1/\eps$, we assume 
${\Delta r \over \Delta t}> {2\over \eps}$
to guarantee the stability condition \eqref{discrepancy-}. Interactions can thus be avoided within one step. 
First of all, we approximate the initial data $U_0$ by a piecewise steady state profile determined from the initial condition at $r=r_ {j +1}$: 
\be
\aligned
& {d\over dr}  F(r,U_\Delta (t_0,r)) = S(r,U_\Delta (t_0,r)), \qquad \text{ $j$ even, \, $r_j<r < r_{j +2}$},\\
& U_\Delta (t_0,r_{j +1}) =U_0(r_ {j +1}). 
\endaligned
\ee

We set 
$$
r_{i,j} =2M+(w_i+j) \Delta r,
$$
where $(w_i)_i$ is a given random sequence in $(-1,1)$. If the approximate solution $U_\Delta  $ has been defined for all $t_{i-1}\leq t< t_i$, we define $U_\Delta (t,r)$ for all $r$ and $t_i\leq t< t_{i+1}$, as follows: 
\begin{enumerate}

\item At the time level $t=t_i$, we define $U_\Delta $ to be the piecewise smooth steady solution given by solving
\bel{piecewise}
\aligned
&{d \over dr} F(r,U_\Delta (t_i,r)) = S(r,U_\Delta (t_i,r)), \quad \text{ $i+j$ even, \quad  $r_j<r < r_{j +2}$,}\\
& U_\Delta (t_i,r_{i, j +1}) =U_\Delta (t_i-,r_{i,j +1}). 
\endaligned
\ee

\item Now define $U_\Delta  $ on  $t_i< t< t_{i+1}$: 
\begin{enumerate}

\item[] For $j\geq 1$, define the solution on $\{t_i< t< t_{i+1},  r_{j-1} < r < r_{j +1}\}$ (with $i+j$ even) by  
$$
U_\Delta (t,r):= \tildeU\Big(t,r; t_i,r_j,\barU_L(r_j), \barU_R(r_j) \Big)
$$
with $\barU_L(r) =U_\Delta (t_i,r)$, $r \in ( r_{j-1}, r_j)$ and  $\barU_R(r) =U_\Delta (t_i,r)$, $r \in (  r_j,r_{j +1})$ the steady state components of $U_\Delta (t_i,r)$. 
 
\end{enumerate}
\end{enumerate}

This completes the definition of the approximate solution $U_\Delta  $ on $ [t_ 0, +\infty) \times (2M, +\infty)$.


\subsection*{Wave interactions of the generalized Riemann problem} 

In Proposition~\ref{interaction}, we studied wave interactions in the context of the classical Riemann problem and established a monotonicity property. For the generalized Riemann problem under consideration now, the initial data is no longer piecewise constant and we need to revisit this issue.  Given a  pattern consisting of three (possibly discontinuous) steady state solutions $U_L=U_L(r)$, $U_M= U_M(r)$, and $U_R= U_R(r)$, we are interested in the solution to the Euler system \eqref{Eulerfo} with Cauchy data (with $r_1<r_0 <r_2$ given)
\bel{three-states} 
U_0(r) =
\begin{cases}
U_L(r),  \qquad & r < r_1, 
\\
U_M(r),            & r_1<r < r_2,
\\
U_R(r),            & r > r_2, 
\end{cases} 
\ee
and we want to compare it with the solution with Cauchy data \eqref{initialgeneral}, that is, 
\bel{initialgeneral-29}
U_0(r) =
\begin{cases} 
U_L(r),  \qquad & r < r_0,
\\
U_R(r),             & r > r_0. 
\end{cases} 
\ee 
The following statement in a generalization of Proposition~\ref{interaction}, which corresponds to the special case $r_1=r_2=r_0$. We restrict attention to continuous steady states. (A generalization to discontinuous steady states could possibly be established too, by including the strength of the steady shock.) 

\begin{proposition}[Diminishing total variation property for the generalized Riemann problem] 
\label{general-interaction}  
Suppose that all steady state under consideration are continuous. 
The wave strengths  
associated with radii  $r_1<r_0 <r_2$ and three steady state solutions $U_L=U_L(r)$, $U_M = U_M(r)$, and $U_R= U_R(r)$ to the Euler system \eqref{Eulerform}. Then, one has 
\be
\str(U_L(r_0-), U_R(r_0+) \leq \Big(\str(U_L(r_1-), U_M(r_1+)) + \str(U_M(r_2-), U_R(r_2+)) \Big) \big(1+ O(1)(r_2-r_1) \big). 
\ee
\end{proposition}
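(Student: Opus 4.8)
The plan is to reduce the configuration in which the three states are posed at the distinct radii $r_1<r_0<r_2$ to the \emph{coincident} configuration $r_1=r_2=r_0$, for which the asserted subadditivity is exactly Proposition~\ref{interaction}. The reduction will be carried out by transporting the four boundary states $U_L(r_1-)$, $U_M(r_1+)$, $U_M(r_2-)$, $U_R(r_2+)$ along their own steady-state curves to the common radius $r_0$, and by controlling how much the wave strengths are perturbed by this transport. Since all of $r_1,r_0,r_2$ lie in an interval of length $r_2-r_1$, each transport moves the corresponding state a radial distance at most $r_2-r_1$, and the factor $1+O(1)(r_2-r_1)$ will emerge from a Gronwall-type differential inequality.

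The natural setting is the $(w,z)$-plane of Riemann invariants: as recalled in Section~\ref{sec:6}, in these coordinates the rarefaction curves are straight lines and the shock curves are graphs of the universal functions $g_\pm$, so the wave curves — hence the intermediate state of a Riemann solution and the total strength $\str$ — do not depend on the base radius. Let $a_1(U,V)$ and $a_2(U,V)$ be the signed strengths of the $1$- and $2$-waves in the Riemann solution of $(U,V)$, so that $\str(U,V)=|a_1|+|a_2|$; these are $C^2$ away from $\{a_1a_2=0\}$, they vanish on the diagonal $U=V$, and since the wave curves are $C^2$ and transversal (Lemma~\ref{intersect}) one has $\str(U,V)\simeq|U-V|$ in the $(w,z)$-metric with bounded gradients $|\nabla a_i|=O(1)$ on compact subsets. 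Fix two continuous steady states $r\mapsto U_L(r),U_M(r)$; away from sonic crossings they solve a system $\tfrac{dU}{dr}=G(U,r)$ obtained by inverting $\del_U F$ in \eqref{steady2-con}, with $G$ bounded and Lipschitz in $U$ on compact subsets of $\{r>2M\}$. Setting $e(r):=\str\bigl(U_L(r),U_M(r)\bigr)$ and using, where $a_i\neq0$, the two cancellations $\nabla_{U}a_i+\nabla_{V}a_i=O(|U-V|)$ (as $a_i$ vanishes on the diagonal) and $G(U_L,r)-G(U_M,r)=O(|U_L-U_M|)$, I would obtain $\bigl|\tfrac{d}{dr}e(r)\bigr|=O(1)\,e(r)$, whence by Gronwall $e(r_0)\le\bigl(1+O(1)|r_0-r|\bigr)\,e(r)$.

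With this transport estimate in hand, I would apply Proposition~\ref{interaction} at the single radius $r_0$ to the three states $U_L(r_0-)$, $U_M(r_0)$, $U_R(r_0+)$ — each being the continuation of the corresponding steady state to $r=r_0$ — which gives
\be
\str\bigl(U_L(r_0-),U_R(r_0+)\bigr)\;\le\;\str\bigl(U_L(r_0),U_M(r_0)\bigr)+\str\bigl(U_M(r_0),U_R(r_0)\bigr).
\ee
Then I would invoke the transport estimate once for the pair $(U_L,U_M)$ over the interval joining $r_1$ to $r_0$ and once for $(U_M,U_R)$ over the interval joining $r_0$ to $r_2$; since $|r_0-r_1|,|r_2-r_0|\le r_2-r_1$, the right-hand side above is bounded by $\bigl(1+O(1)(r_2-r_1)\bigr)\bigl(\str(U_L(r_1-),U_M(r_1+))+\str(U_M(r_2-),U_R(r_2+))\bigr)$, which is the claim.

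The main obstacle is the degeneracy of the steady-state ODE: $\del_U F$ is singular exactly where an eigenvalue vanishes, i.e. at the horizon $r=2M$ and at sonic points (where $\lambda=0$), so the clean form $dU/dr=G(U,r)$ — and with it the Gronwall step — breaks down precisely at the radii a continuous steady state of the type produced in Proposition~\ref{one-point} is allowed to cross. I would deal with this in two ways, consistently with the rest of the paper: first, restrict to the region $r>2M+\delta$ away from the horizon, as in Section~\ref{sec:8}, which is harmless since the random choice scheme is set up there; second, near an isolated sonic crossing, replace the radial variable $r$ by a regular parameter along the steady curve in the $(w,z)$-plane — the curve being Lipschitz, and the family of such solutions depending Lipschitz-continuously on its data by Theorem~\ref{theo-constrc} — carry the differential inequality in that parameter, and pass to the limit using continuity of $\str$ and of the steady states in $r$. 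Securing these uniform bounds (on $G$, on $\nabla a_i$, and on the reparametrised flow near sonic crossings) over the region actually swept by the approximate solutions is the technical heart of the argument; once it is in place, the reduction to Proposition~\ref{interaction} is routine.
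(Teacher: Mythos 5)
Your proposal is correct and follows essentially the same route as the paper: transport the states along their steady-state curves to the common radius $r_0$, obtain the factor $1+O(1)(r_2-r_1)$ from continuous dependence (Gronwall) for the steady ODE, apply the coincident-radius estimate of Proposition~\ref{interaction} at $r_0$, and handle merely continuous steady states via the Lipschitz property of Theorem~\ref{theo-constrc}. Your treatment of the Gronwall step (working directly with $\str$ via the diagonal cancellation) and of the degeneracy at sonic points is in fact more detailed than the paper's own argument.
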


\begin{proof} 
Consider first smooth steady state solutions (which do not contain shocks). Since solutions to an ordinary differential system depend continuously upon their data, it is immediate that 
$$
U_L(r_1) -U_*(r_1) =U_L(r_0) -U_*(r_0) + O(1)(r_0-r_1) |U_L(r_1) - U_*(r_1) |
$$
and, since $|U_R- U_L|=O(1) \str(U_L, U_R)$, we obtain  
$$
\str(U_L(r_1), U_*(r_1)) =\str (U_L(r_0), U_*(r_0)) \Big((1+ O(1)(r_0-r_1) \Big). 
$$
With the same argument, we have 
$$
\str(U_*(r_2), U_R(r_2)) =\str(U_*(r_0), U_R(r_0)) \Big(1+ O(1)(r_2-r_0) \Big)
$$
 and the conclusion follows for smooth equilibrium  solutions. For steady state solutions which are only continuous, we recall the conclusion in Theorem~\ref{theo-constrc}, where we established a Lipschitz continuity property satisfied by global steady state solutions.  
\end{proof}

\subsection*{The existence theory based on equilibria}
 
The existence property below is established under the restriction that only continuous steady states are involved in the scheme. Dealing with discontinuous steady states require a further investigation of the interaction between steady shocks and Riemann solutions (which is outside the scope of the present paper).  

\begin{theorem}[The generalized random method based on equilibria]
\label{global-existence22}
Consider the Euler system describing fluid flows on a Schwarzschild background \eqref{Euler-con} posed in $r>2M$. The generalized random choice scheme above has the following properties: 

1. {\bf Convergence to a weak solution.} Given any initial density $\rho_0=\rho_0(r) > 0$ and velocity  $|v_0| = |v_0(r)|< 1/\eps$ satisfying, for any $\delta> 0$, 
$$
TV_{[2M+\delta, +\infty)} \big(\ln \rho_0 \big) + TV_{[2M+\delta, +\infty)}  \Bigg( \ln {1-\eps v_0 \over 1 + \eps v_0} \Bigg) < + \infty, 
$$
and provided on some (possibly infinite) interval $[t_0, T) \subset [t_0, +\infty)$, the generalized Riemann solver involves continuous steady states, only, then there exists a weak solution $\rho=\rho(t,r)$ and $v=v(t,r)$ defined on $[t_0, T)$ and satisfying the prescribed initial data at the time $t_0$ and such that, for all finite $T' \in [t_0, T)$ and $\delta >0$, 
$$
\sup_{t \in [t_0, T']} \Bigg( 
TV_{[2M+\delta, +\infty)} \big( \ln \rho(t,\cdot) \big) 
+ TV_{[2M+\delta, +\infty)} \Bigg( \ln {1-\eps v (t, \cdot) \over 1 + \eps v(t, \cdot)} \Bigg) \Bigg) < +\infty.  
$$

2. {\bf The well-balanced property for smooth steady states.} When the initial density $\rho_0=\rho_0(r) > 0$ and the initial velocity $|v_0|=|v_0(r)|< 1/\eps$ consist of a smooth steady state solution to \eqref{steady2-con}, the corresponding approximate solution to the Euler system \eqref{Euler-con} constructed by the proposed generalized random choice method (in Section~\ref{sec:7}) coincides with the given solution, so that our method provides the exact solution in this special case. 
 
3. {\bf The well-balanced property for discontinuous steady states.} 
Consider an initial data $U_0 = (\rho_0(r), v_0(r))$ with $\rho_0(r) > 0$ and $|v_0(r)|< 1/\eps$ of the following form  
\bel{eq-244} 
U_0(r) = 
\begin{cases}
U_L(r), \qquad  & r \in (2M, r^\natural),
\\
U_R(r),           & r \in (r^\natural, +\infty), 
\end{cases} 
\ee
where $r^\natural>2M$ is a given radius, 
$U_L=U_L(r)$ and $U_R=U_R(r)$ are global smooth steady solutions such that the states $U_L(r^\natural)$ and $U_R(r^\natural)$ satisfy the equilibrium Rankine-Hugoniot relations \eqref{jump-steady}. Then, the solution constructed by the generalized random choice method has, at each time, the same form \eqref{eq-244}, that is, a discontinuous steady state solution with possibly ``shifted'' location $r^\natural$. 
\end{theorem}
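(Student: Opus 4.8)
The plan is to establish the three assertions in turn. Part~1 is a global-in-time convergence and compactness argument in the spirit of Grubic and LeFloch \cite{GL}, now carried out with the generalized Riemann solver of Section~\ref{sec:7} as the building block; Parts~2 and~3 are the well-balanced (exactness) properties, which will follow essentially by construction once the scheme in Section~\ref{sec:8} is set up. For Part~1, I would fix $\delta > 0$ and work in the truncated region $\Omega_\delta(T) = \{t_0 < t < T,\ r > \barr_\delta(t)\}$, where $\barr_\delta$ is the characteristic curve introduced above; since the eigenvalues obey $-1/\eps < \lambda < \mu < 1/\eps$ uniformly and vanish at $r = 2M$, the mesh restriction $\Delta r/\Delta t > 2/\eps$ ensures non-interaction within one time step and that no data need be prescribed at the horizon. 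The core of the argument is a Glimm-type functional measured in the variables $\ln\rho$ and $\ln\frac{1-\eps v}{1+\eps v}$ (equivalently, in the Riemann invariants $w,z$, since $w - z \sim \ln\rho$), namely a weighted sum of the wave strengths $\str$ across the mesh at a fixed time level. The key estimate is that, advancing from $t_i$ to $t_{i+1}$, Proposition~\ref{general-interaction} gives at each mesh point $\str(U_L(r_0-), U_R(r_0+)) \le \big(\str(U_L, U_M) + \str(U_M, U_R)\big)\big(1 + O(1)\Delta r\big)$, so that the functional grows by at most a factor $1 + O(1)\Delta t$ per step and stays bounded by $e^{O(1)(T - t_0)}$ times its initial value on any finite interval; Helly's theorem then provides a subsequence of the approximate solutions $U_\Delta$ converging in $L^1_{\mathrm{loc}}$. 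To identify the limit as a weak solution I would use the consistency bound of Proposition~\ref{discrepancy-}: the residual of $U_\Delta$ in the weak formulation over each mesh rectangle is $O(1)|U_R(r_0) - U_L(r_0)|\,\Delta t^2\,\|\phi\|_{C^1}$, which sums to $O(\Delta t)$ and hence vanishes, while the random-sampling error vanishes for almost every random sequence by Glimm's equidistribution argument. Finally, letting $\delta \to 0$ and using $\barr_\delta(t) \to 2M$ uniformly on compact $t$-intervals yields the total variation bound on every compact subset of $\{r > 2M\}$, and with it the claimed global weak solution.

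For Parts~2 and~3 I would argue by induction over the time levels. Assume first the prescribed data is a smooth steady state $(\rho,v)$; if $U_\Delta(t_i,\cdot)$ equals $(\rho,v)$, the piecewise reconstruction \eqref{piecewise} — which solves $\frac{d}{dr}F = S$ on each cell through the exact midpoint value — recovers $(\rho,v)$ on every cell by uniqueness for the steady ODE system, so each generalized Riemann problem posed at $(t_i,r_j)$ has $\barU_L \equiv \barU_R$ and hence $|U_R(r_0) - U_L(r_0)| = 0$; by Lemma~\ref{approximate-gen} the approximate solver is then \emph{exact}, $\tildeU(t,r) = (\rho,v)$, the random choice at $t_{i+1}$ samples a point on $(\rho,v)$, and the next reconstruction returns the same steady state — closing the induction and proving Part~2. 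For Part~3, the configuration \eqref{eq-244} is itself an exact (discontinuous) weak steady solution in the sense of Theorem~\ref{theo-constrc}, its jump at $r^\natural$ being a standing Lax shock governed by the equilibrium Rankine--Hugoniot relations \eqref{jump-steady} (so $[F(U,r^\natural)] = 0$ and the shock speed vanishes). Inducting again, in the reconstruction step every cell not containing the jump carries a smooth steady state recovered exactly as in Part~2, while on the single cell containing $r^\natural$ the midpoint value lies on $U_L$ or $U_R$, so one branch is reconstructed on the whole cell and the jump is merely relocated to the adjacent cell edge; thus the reconstructed data is again of the form \eqref{eq-244} with $r^\natural$ shifted by $O(\Delta r)$. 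In the evolution step the generalized Riemann problems away from the shifted jump are trivial by Part~2, while the one straddling it sees the two steady branches joined — up to an $O(\Delta r)$ mismatch in the equilibrium relations — by the standing shock, so its solution is that shock plus spurious waves of total strength $O(\Delta r)$; these spurious contributions from successive steps are controlled, just as the error terms in Part~1, by Proposition~\ref{discrepancy-} and Glimm's equidistribution argument, so that in the limit the solution retains the form \eqref{eq-244} with a possibly drifted jump.

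The delicate point is the interplay, in Part~1, between the geometric source terms, the curved (non-straight) wave trajectories inside the generalized Riemann solver, and the Glimm functional: one must verify that the interaction factor of Proposition~\ref{general-interaction} accumulates only to $e^{O(1)T}$ and that the consistency error of Proposition~\ref{discrepancy-}, together with the random-sampling error, truly vanishes. Here the uniform control of the wave speeds near the horizon and the Lipschitz dependence of global steady states from Theorem~\ref{theo-constrc} are essential; the restriction to continuous steady states built into the statement avoids the as-yet-unresolved interaction between steady shocks and Riemann fans, which would otherwise obstruct the same estimate for the spurious waves appearing in Part~3.
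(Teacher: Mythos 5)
Your treatment of Parts 1 and 2 follows the paper's proof essentially step for step: the Glimm-type functional is the sum of wave strengths $\str$ measured in $\ln\rho$, Proposition~\ref{general-interaction} yields the per-step growth factor $1+O(1)(\Delta t+\Delta r)$ and hence the bound $e^{O(1)(T-t_0)}$ (the paper organizes this with mesh curves and interaction diamonds, but the estimate is the same), Helly's theorem gives compactness, and consistency is obtained by summing the residual of Proposition~\ref{discrepancy-} together with Glimm's equidistribution argument. Part 2 is the paper's induction argument essentially verbatim.

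Part 3 is where your proposal departs from the paper and, as written, does not prove the stated claim. You accept an $O(\Delta r)$ mismatch in the equilibrium Rankine--Hugoniot relations at the relocated jump and argue that the resulting ``spurious waves of total strength $O(\Delta r)$'' per step are controlled so that the form \eqref{eq-244} is recovered only in the limit. Two problems. First, under the mesh restriction $\Delta r/\Delta t> 2/\eps$ there are $O((T-t_0)/\Delta t)$ time steps, so spurious waves of strength $O(\Delta r)$ per step accumulate to $O(1)$, not $o(1)$; neither Proposition~\ref{discrepancy-} (which bounds the weak-form residual, not wave strengths) nor the equidistribution of the random sequence (which handles the sampling of wave positions, not systematically generated waves) removes this contribution. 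Second, the theorem asserts an exact discrete well-balanced property: the constructed solution has, at each time, exactly the form \eqref{eq-244}. The paper's proof achieves this with no error terms at all: since $U_L(r^\natural)$ and $U_R(r^\natural)$ satisfy \eqref{jump-steady}, the two traces lie on a single shock curve ($U_R^0\in S_1^\rightarrow(U_L^0)$ if $|v_L^0|>k$, or $U_L^0\in S_2^\leftarrow(U_R^0)$ if $|v_R^0|>k$), so the generalized Riemann problem posed at the jump is solved by exactly one shock joining the two global branches; on each slab $[t_i,t_{i+1})$ the approximate solution is exactly $U_L$ to the left of the tracked discontinuity and $U_R$ to the right, and the reconstruction step merely reassigns the jump to the neighboring cell interface $r_{j_0-\sgn(r_{1,j_0}-r_1^\natural)}$ according to the random sample. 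The induction then closes exactly, the ``shift'' of $r^\natural$ coming from the shock speed and the random relocation rather than from any accumulated error that would need to be estimated.
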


\begin{proof} {\bf Step 1a. Consistency of the method.} With the proposed generalized random method, we obtain a sequence $\{U_\Delta (t,r) \}$. Once the uniform BV bound (established below) is known, it follows from Helly's theorem that there exists a subsequence of  $\{U_\Delta (t,r) \}$ (still denoted by $\{U_\Delta (t,r) \}$) depending on the mesh length $\Delta r \to 0$ and a limit function $U=U(t,r)$ such that $U_\Delta \to U$ pointwise for all times $t$. 
To check that the limit function is a weak solution to the Euler system \eqref{Euler-con}, we consider a compactly supported and smooth function $\phi:   [t_ 0, +\infty) \times (2M, +\infty)  \to \RR$, and from the approximate solution $U_\Delta$ with mesh length $\Delta t, \Delta r$, we define
\bel{Delta}
\Delta(U_\Delta, \phi): = \int_{t_0}^{+\infty} \int_{2M+\Delta r}^{+\infty} \big(U_\Delta  \del_t \phi + F(r,U_\Delta) \del_r \phi +S(r,U_\Delta ) \big) \, dr dt+ \int_{2M+\Delta r}^{+\infty} U_0(r) \phi(t_0,r) \, dr. 
\ee
By definition, $U$ is a weak solution to the Euler system \eqref{Euler-con} with initial data $U(t_0,\cdot) =U_0$ 
if and only if $\Delta(U, \phi) = 0$. 
We write
$
\Delta(U_\Delta, \phi) = \sum_i \Delta_i^1(U_\Delta, \phi) +\Delta_i^2(U_\Delta, \phi)
$
with 
$$
\Delta_i^1(U_\Delta, \phi) = \int_{2M+\Delta r}^{+\infty} \Big(U_\Delta (t_i+,r) -U_\Delta (t_i-,r) \Big) \phi(t_i,r) \, dr, 
$$
$$
\Delta_i^2(U_\Delta, \phi) = \int_{t_i}^{t_{i+1}} \int_{2M+\Delta r}^{+\infty} \Big(U_\Delta  \del_t \phi + F(r,U_\Delta) \del_r \phi +S(r,U_\Delta ) \Big) \, dr dt+ \int_{2M+\Delta r}^{+\infty} U_0(r) \phi(t_0,r) \, dr.
$$
According to Proposition~\ref{discrepancy-}, $\sum_i  \Delta_i^2(U_\Delta, \phi) \to 0$ when $\Delta t \to 0$. 
Furthermore, it a standard matter that, since the sequence  $(w_i)$ is equidistributed and thanks to the approximation result in  Lemma~\ref{approximate-gen}, we have $\sum_i  \Delta_i ^1(U_\Delta, \phi) \to 0$ when $\Delta t \to 0$, and therefore 
$\Delta(U_\Delta, \phi) \to 0$ when $\Delta t,  \Delta r \to 0$.  

\

\noindent {\bf Step 1b. Uniform total variation bound.} 
Next, in order to study globally the total variation of the solution, we introduce the notion of  \emph{mesh curves} $J$, that is, polygonal curves connecting the points $(t_i, r_{i,j +1})$ (with $i+j$ even). Observe that $J$ separates $[t_0, +\infty) \times [2M, +\infty)$ into two parts: the part including the initial time $t=t_0$ denoted by $J-$ and the other part $J+$. We call $J_2$ an immediate successor of $J_1$ if the every point of $J_2$ is either on $J_1$ or in the part $J_1+$. 

For the mesh point, set
$$
U_\Delta (t_i, r_{j +1}) =U_{i,j +1}. 
$$
Denote by $\hat U_{i,j +1}$ as the solution of classical Riemann problem at the mesh point $(t_i, r_{j +1})$. 
We define the total variation $L(J)$ of $J$ as 
\bel{TVofJ}
L(J) =\sum\str (U_{i-1,j}, \hat U_{i,j-1}) + \str (U_{i-1,j},\hat U_{i,j +1}). 
\ee
Observe that we can divide the $(t,r)$ plane as a set of diamonds $\Diamond_{i,j}$ centered at $(t_i, r_j)$, $i+j$ even with vertices $(t_{i-1}, r_{i-1,j})$, $ (t_i, r_{i,j-1})$, $(t_{i}, r_{i,j +1})$. In particular, for $j=1$, $i$ odd, we only have a half diamond cut by the straightline $r=r_1$. 

Now, consider a diamond $\Diamond_{i,j}$, with $i+j$ even, and define 
$$
\str_1 (\Diamond_{i,j}) := \str (U_{i-1,j}, \hat U_{i,j-1}) + \str (U_{i-1,j},\hat U_{i,j +1})
$$
and 
$$
\str_2 (\Diamond_{i,j}) :=\str (U_{i,j-1}, \hat U_{i+1,j}) + \str (U_{i,j +1},\hat U_{i+1,j})
$$
which represent the total strength of waves entering and leaving the diamond $\Diamond_{i,j}$, respectively. We write $\trianglerightNEW_{i,1}$, with $i$ odd, for the right-hand part of the diamond   $\Diamond_{i,1}$ cut by the straightline $r= 2M$. We define similarly 
$$
\str_1 (\trianglerightNEW_{i,1}) := \str (U_{i-1,1},\hat U_{i,2}), 
\qquad 
\str_2 (\trianglerightNEW_{i,1}) := \str ( U_{i,2},\hat U_{i+1,1})
$$
which represent the total wave strength entering and leaving $\trianglerightNEW_{i,1}$, respectively. We now consider the total variation contribution``between'' the mesh curve $J_1$ and its immediate successor $J_2$. 

We now claim that: 
Let $J_1$ and $J_2$ be two mesh curves such that $J_2$ is an immediate successor of $J_1$. Then there exists a constant $C_1> 0$ such that the total variation on the mesh curves satisfies 
$$
L(J_2) -L(J_1) \leq C_1 (\Delta t + \Delta r) L(J_1). 
$$ 
Namely, suppose the mesh curve $J_1$ is sandwiched between the time levels $t_{i-1}$ and $t_i$. In view of \eqref{TVofJ}, we have
$$
L(J_2) -L(J_1) =\str_2 (\trianglerightNEW_{i,1}) -\str_1(\trianglerightNEW_{i,1}) + \sum_{\text {$i+j$ even}}\str_2(\Diamond_{i,j}) -\str_1(\Diamond_{i,j}). 
$$
Now consider the difference $\str_2(\Diamond_{i,j}) -\str_1(\Diamond_{i,j})$: 
$$
\aligned
\str_2(\Diamond_{i,j}) -\str_1(\Diamond_{i,j}) =& \str (U_{i,j-1}, \hat U_{i+1,j}) + \str (U_{i,j +1},\hat U_{i+1,j})
-\str ( \hat U_{i,j-1}, \hat U_{i,j +1}) \\
& + \str ( \hat U_{i,j-1}, \hat U_{i,j +1}) -\str (U_{i-1,j}, \hat U_{i,j-1}) -\str (U_{i-1,j},\hat U_{i,j +1}). 
\endaligned
$$
According to Proposition~\ref{general-interaction}, we have the inequality of the wave strength: 
$$
\str (U_{i-1,j}, \hat U_{i,j-1}) - \str (U_{i-1,j},\hat U_{i,j +1}) -\str ( \hat U_{i,j-1}, \hat U_{i,j +1}) \leq C_1 \Delta r \str ( \hat U_{i,j-1}, \hat U_{i,j +1}). 
$$
Using Lemma~\ref{discrepancy-t}, we have
$$
\aligned
& \str (U_{i,j-1}, \hat U_{i+1,j}) + \str (U_{i,j +1},\hat U_{i+1,j}) -\str ( \hat U_{i,j-1}, \hat U_{i,j +1})
\\
& 
\leq C_1  \str \big( \hat U_{i,j-1}, \hat U_{i,j +1}) 
 (|U_{i+1,j} -\hat U_{i+1,j}|
+|U_{i-1,j} -\hat U_{i-1,j}|) +C_1(|U_{i-1,j} -U_{i+1,j}|+|\hat U_{i-1,j} -\hat U_{i+1,j} \big) \\
& \leq C_1 \Delta t \Big( \str (U_{i-1,j},\hat U_{i,j-1}) + \str ( U_{i,j-1}, \hat U_{i,j +1}) \Big)
\endaligned 
$$
for some constants $C_1$ which need not be the same at each occurence. Therefore, we find 
$$
\str_2(\Diamond_{i,j}) -\str_1(\Diamond_{i,j}) \leq C_1 (\Delta t +\Delta r) \Big( \str (U_{i-1,j},\hat U_{i,j +1}) + \str ( U_{i-1,j}, \hat U_{i+1,j}) \Big), 
$$
and a similar analysis gives 
$\str_2 (\trianglerightNEW_{i,1}) -\str_1(\trianglerightNEW_{i,1}) \leq C_1 (\Delta t +\Delta r) \str ( \hat U_{i,2}, U_{i-1,1})$.

\

\noindent{\bf Step 1c. Convergence property.} Let $T>t_0$ be given, and let $J_0, J_T$ be the mesh curves lying below and above any other mesh curves between $t_0 \leq t \leq T$, respectively.  Thanks to Step~2, there exist uniform constants $C_2, C_3>0$ such that 
$$
L(J_T) \leq C_3e^{C_2(T-t_0)}L(J_0). 
$$
We now claim that for small $\Delta r$, the total variation of the approximate solver $\ln  \rho_ \Delta$ on the mesh curve $J$ can be regarded equivalent as the total wave strength $L(J)$. In fact, according to construction, for the mesh curve between $(t_i, t_{i+1})$, 
$$
 \aligned
& |TV( \ln  \rho_ \Delta(J)) -L(J) | =  \sum_\text{ $i+j$ even}|TV_{(r_j +, r_{j +2} -)}( \ln  \rho_ \Delta(r))|
 \\& =O(1) \sum_\text{ $i+j$ even} \Delta r   |\ln  \rho_ \Delta(t_i, r_{j +2} -) -\ln  \rho_ \Delta(t_i, r_{j}+)| \leq O(1) \Delta r  L(J). 
 \endaligned 
$$
Letting  $\Delta t, \Delta r \to 0$, we see that  $TV_{[2M+ \delta, L]} \big(\ln \rho(T,\cdot) \big) 
\leq
 C_3 TV_{[2M+ \delta, L]} \Big( \ln \rho_0 \Big)e^{C_2(T-t_0)}$ for any given $\delta> 0$ and $L> 0$. We have arrived at our main result stated in Theorem~\ref{global-existence}. 
 \end{proof} 

\

\noindent{\bf Step 2. The well-balanced property for smooth steady states.} 
2. We proceed by induction and assume that the numerical solution coincides with the steady state solution within the time interval $t_{i-1}\leq t< t_i$, and we consider the next interval $t_i\leq t < t_{i+1}$. In our method, the approximate solution is determined in two steps: 
$(\textrm{i})$ First of all, we must solve the steady state problem at the time $t=t_i$; $(\textrm{ii})$ Second, we must solve the generalized Riemann problem on the interval $t_i< t < t_{i+1}$.
Since the initial data is a smooth steady state solution, it is clear that Step $(\textrm{i})$ is exact. On the other hand, Lemma~\ref{approximate-gen} provides us a control of the error associated with the generalized Riemann problem and implies that Step $(\textrm{ii})$ is also exact. This completes our argument.  

\

\noindent{\bf Step 3. The well-balanced property for discontinuous steady states.} We start from the initial data $U_\Delta (t_0, \cdot) = U_0$ at some time $t_0$. 
Writing $U_L(r^\natural) =: U_L^0$ and $U_R(r^\natural) =: U_R^0$, we have either $U_R^0 \in S_1^\rightarrow (U_L^0) $ (if $|v_L^0|> k$ )
or $U_L^0 \in S_2^\leftarrow (U_R^0)$ (if $|v_R^0|> k$). For definiteness, we assume that  $U_L^0 \in S_2^\leftarrow (U_R^0)$. 
Consider the solution for the time interval $t_0 < t< t_1$, and consider the unique even number $j_0$ such that $r^\natural \in (r_{j_0 -1},r_{j_0+1} ]$. We distinguish between two cases:

\vskip.3cm

\noindent{\bf Case $r^\natural \neq r_{j_0}$.} 
The solution is a steady state solution with a shock at $r=r^\natural$. 

\vskip.3cm

\noindent{\bf Case $r^\natural = r_{j_0}$.} Wee solve the generalized Riemann problem at $r= r^\natural$. According to our construction, for all $t_0 < t< t_1$, the solution is defined by  
$$
U_\Delta(t, r) = 
\begin{cases}
U_L(r), \qquad   & r \in (2M, s_2^0(t-t_0) + r^\natural),
\\
U_R(r),               & r \in ( s_2^0(t-t_0) + r^\natural, +\infty),
\end{cases}  
$$
where 
$$
s_2^0 := 
\begin{cases}
s_2( U_L^0, U_R^0),      \qquad  & r^\natural= r_{j_0}, 
\\
0,                  & r^\natural \neq  r_{j_0}.
\end{cases} 
$$
To extend the construction, we solve the differential equation \eqref{steady2-con} iand obtain the steady state solution at the time level $t=t_1$. We write $r_1^\natural := s_2^0 \Delta t+ r^\natural$. Thanks to the stability condition \eqref{stability-condition}, we have $r_1^\natural  \in [r_{j_0},r_{j_0+1} ]$. The definition of the approximate solution depends on the position of $r_{1, j_0} = r_{j_0}+ w_i$. We have 
$$
U_\Delta(t_1, r) = 
\begin{cases} 
U_L(r), \qquad  & r \in (2M, r_{j_1}),
\\
U_R(r),           & r \in ( r_{j_1}, +\infty), 
\end{cases} 
$$
where $r_{j_1} =  r_{j_0-sgn(r_{1, j_0} - r_1^\natural)}$. We then solve the generalized Riemann problem at $r_{j_1} $. By induction, we find the solution defined on the time interval $[t_i, t_{i+1})$:  
\bel{def-on-slab}
U_\Delta(t, r) = 
\begin{cases} 
U_L(r),  \qquad  & r \in (2M, s_2^i(t-t_0) + r^\natural_{i+1}),
\\
U_R(r),               & r \in ( s_2^i(t-t_0) + r^\natural_{i+1}, +\infty), 
\end{cases}
\ee
where $s_2^i$ is (randomly) determined by the sequence $(w_i)$. This completes the proof.


\section{Remarks on special models}
\label{sec:9}

\subsection*{Stiff fluids on a Schwarzschild background}
\label{sec:91}

Consider now the model corresponding to the pressure-law $p = {1\over \eps^2} \rho$, so that the sound speed coincides with the light speed ${1\over \eps}$. That is, consider the Euler model for stiff fluid flows on a Schwarzschild background $\Mscr (\eps,{1\over \eps}, m)$ presented in \eqref{sonetlu}
Recall that it admits two real and distinct eigenvalues $\lambda=-(1-2M/r)/\eps$ and $\mu= (1-2M/r) \ \eps$. 
They satisfy $- {1\over \eps}<\lambda< 0 < \mu< 1/\eps$. and, in the limit $r \to +\infty$, we have $\lambda, \mu \to \pm {1\over \eps}$. 
According to Proposition~\ref{nonlinearpo}, the two characteristics fields are both linearly degenerate. Denote by $D^\rightarrow_1(U_L)$ and $D^\leftarrow_2(U_R)$ the 1- and 2-contact discontinuities (that is, the notions of shock and rarefaction coincide in this case) corresponding to any given constant states $U_L$ and $U_R$ respectively. 

\begin{lemma}[Riemann problem for stiff fluids]
Consider the Euler model  $\Mscr (\eps,{1\over \eps}, m)$ in \eqref{sonetlu}. Given any constant states $U_L, U_R$, there exists a unique intermediate $U_M$, such that $U_L$ can be connected to $U_M$ by a contact discontinuity with the speed $-(1-2M/r) / \eps$, while $U_M$ is connected to $U_R$ by a contact discontinuity with speed $(1-2M/r) /\eps$. 
\end{lemma}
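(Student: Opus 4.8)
The plan is to exploit the fact that, when $p = \eps^{-2}\rho$, the two characteristic fields are linearly degenerate with eigenvalues $\lambda = -(1-2M/r_0)/\eps$ and $\mu = +(1-2M/r_0)/\eps$ that are \emph{independent of the state} $U$ (for fixed $r_0$). Consequently the Riemann invariants are constant across each wave, and a contact discontinuity of the first family is precisely a jump along which the Riemann invariant $z$ is preserved, while a contact of the second family preserves $w$. Recall from Proposition~\ref{linearlly-degenerate} that in this case
$$
w = {1\over2\eps}\ln\Bigg({1 + \eps v \over 1 - \eps v} \Bigg) + {1\over2\eps}\ln\rho,
\qquad
z = {1\over2\eps}\ln\Bigg({1 + \eps v \over 1 - \eps v} \Bigg) - {1\over2\eps}\ln\rho,
$$
and recall also the scaled velocity $\nu = {1\over2\eps}{1+\eps v\over1-\eps v}$, so that $w = {1\over2\eps}\ln(2\eps\nu\rho)$ and $z = {1\over2\eps}\ln(2\eps\nu/\rho)$. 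The map $(\rho,v)\mapsto(w,z)$ is a smooth bijection from the physical domain $\{\rho>0,\ |v|<1/\eps\}$ onto $\RR^2$, since $\rho = \rho(w,z)$ and $\nu = \nu(w,z)$ are recovered by $w-z = \eps^{-1}\ln\rho$ and $w+z = \eps^{-1}\ln(2\eps\nu)$.

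First I would verify that the jump connecting $U_L$ to $U_M$ with speed $s = \lambda = -(1-2M/r_0)/\eps$ is exactly the locus $\{z(U_M) = z(U_L)\}$, by checking that the Rankine--Hugoniot relations for \eqref{sonetlu} with this constant speed reduce, after using $(1-2M/r)v^0v^0 - \eps^2(1-2M/r)^{-1}v^1v^1 = 1$, to a single scalar relation equivalent to $\nu_M\rho_M^{-1} = \nu_L\rho_L^{-1}$ (the computation is the degenerate analogue of the one carried out in Lemma~\ref{avshock}, and in fact simpler because the speed does not depend on the states). Symmetrically, the jump from $U_M$ to $U_R$ with speed $s = \mu = +(1-2M/r_0)/\eps$ is the locus $\{w(U_R) = w(U_M)\}$. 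Since both fields are linearly degenerate, the Lax inequalities degenerate to equalities and no entropy selection is needed; admissibility is automatic.

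With this in hand, the existence and uniqueness of $U_M$ is immediate: in the $(w,z)$-plane the $1$-contact from $U_L$ is the horizontal line $\{z = z_L\}$ and the $2$-contact to $U_R$ is the vertical line $\{w = w_R\}$, so they meet at the single point $(w_R, z_L)$, which determines a unique $U_M$ in the physical domain via the inverse map above; explicitly $\rho_M = \rho_L\rho_R^{1/2}\rho_L^{-1/2}$... more precisely $\eps^{-1}\ln\rho_M = w_R - z_L$ and $\eps^{-1}\ln(2\eps\nu_M) = w_R + z_L$. I would then note that the ordering $\lambda < 0 < \mu$ guarantees the three states occupy the correct angular sectors of the $(t,r)$-plane, so the concatenation is a genuine weak solution. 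The main (very mild) obstacle is purely bookkeeping: carefully reducing the two vector Rankine--Hugoniot equations with the constant speeds $\mp(1-2M/r_0)/\eps$ to the single invariant-preservation identity, keeping track of the $(1-2M/r_0)$ factors and the constraint $u^\alpha u_\alpha = -1$; once that algebraic identity is established, the rest is an elementary intersection-of-two-lines argument.
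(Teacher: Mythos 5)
Your overall strategy is the natural one (the paper states this lemma without proof, and the intended argument is presumably exactly what you outline: both fields are linearly degenerate, each contact preserves one Riemann invariant, and the two invariant curves are transversal lines in the $(w,z)$-plane). However, you have attached the two invariants to the wrong families, and the central algebraic identity you propose to verify is false as stated. Write $a := 1-2M/r_0$, $A := \tfrac{1+\eps^2v^2}{1-\eps^2v^2}\rho$, $B := \tfrac{2\rho v}{1-\eps^2 v^2}$; the homogeneous part of \eqref{sonetlu} at frozen $r_0$ has conservative variables $(r_0^2 A,\, r_0^2 a B)$ and fluxes $(r_0^2 a B,\, r_0^2 a^2 A/\eps^2)$, so the Rankine--Hugoniot relations read $s[A]=a[B]$ and $s[B]=a[A]/\eps^2$. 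For $s=-a/\eps$ both reduce to $[A+\eps B]=0$, and $A+\eps B = \rho\,\tfrac{1+\eps v}{1-\eps v}=2\eps\,\rho\nu$; for $s=+a/\eps$ they reduce to $[A-\eps B]=0$ with $A-\eps B = \rho\,\tfrac{1-\eps v}{1+\eps v}=\rho/(2\eps\nu)$. Hence the left-moving contact (the one connecting $U_L$ to $U_M$) preserves $\rho\nu$, i.e.\ $w$, while the right-moving contact preserves $\nu/\rho$, i.e.\ $z$ --- the opposite of what you assert. (The same conclusion follows from the paper's shock curves: letting $\chi\to1$ in \eqref{Shock-curves}, $S_1^{\rightarrow}(U_L)$ degenerates exactly to $\nu\rho=\nu_L\rho_L$.) Consequently the intermediate state is $(w_M,z_M)=(w_L,z_R)$, not $(w_R,z_L)$, and your explicit formula $\eps^{-1}\ln\rho_M=w_R-z_L$ gives the wrong density and velocity.

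The error is localized and fixable: once the invariants are swapped, the rest of your argument (bijectivity of $(\rho,v)\mapsto(w,z)$ onto $\RR^2$, intersection of the line $\{w=w_L\}$ with the line $\{z=z_R\}$ at a single point, automatic admissibility for linearly degenerate fields, and the ordering $\lambda<0<\mu$ of the angular sectors) is correct and yields the lemma. But as written, the verification you defer --- ``the Rankine--Hugoniot relations with speed $-a/\eps$ reduce to $\nu_M\rho_M^{-1}=\nu_L\rho_L^{-1}$'' --- would fail if actually carried out, so the proof contains a genuine error. The likely source of the confusion is the paper's displayed diagonal form, which pairs $w$ with $\lambda$; the explicit curves \eqref{Rarefac-curves} and \eqref{Shock-curves} show that the function called $w$ is in fact the quantity conserved across $1$-waves.
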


Unlike the case when the sound speed is strictly less than the light speed, in this linearly degenerate regime, 
steady state solutions are always defined globally.  The system for steady state solutions reads 
\be
\label{nonlin}
\aligned
{d\over dr} \Bigg(r(r-2M) {\rho v \over 1 - \eps^2 v^2} \Bigg) &= 0,
\\
{d \over dr} \Bigg((r-2M)^2 {1 + \eps^2 v^2 \over 1 - \eps^2 v^2} \rho \Bigg) 
& = 2M  {r-2M \over  r} {1 + \eps^2 v^2 \over 1 - \eps^2 v^2} \rho  +2{(r-2M)^2 \over  r} \rho.\endaligned
\ee
 
\begin{lemma} By imposing an initial condition $\rho(r_0) = \rho_0$ and $v(r_0) = v_0$, 
the system \eqref{nonlin} has a unique global smooth solution given explicitly by 
\bel{extremeso}
\aligned
& \rho(r) =\Bigg(1- {r_0^4\eps^2 v_0^2\over r^4} \Bigg){(r_0-2 M) r  \over r_0 (r-2M) (1-\eps^2 v_0^2)} \rho_0,
\qquad
 v(r) = {r_0^2 \over  r^2} v_0. 
\endaligned
\ee
\end{lemma}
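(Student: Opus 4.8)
The statement to be proved is the last \begin{lemma}...\end{lemma}: the steady state system \eqref{nonlin} for stiff fluids, with initial data $\rho(r_0)=\rho_0$, $v(r_0)=v_0$, admits the unique global smooth solution \eqref{extremeso}. The natural approach is to specialize the general steady state analysis of Section~\ref{sec:5} — in particular Lemma~\ref{sol-ODE} — to the linearly degenerate case $p=\rho/\eps^2$, for which $k=1/\eps$ and hence (in the notation of \eqref{not349}) $\kappa = {1-\eps^2 k^2 \over 1+\eps^2 k^2} = 0$. One sees immediately that the two conserved quantities in \eqref{eq:322} degenerate in a way that decouples the system, which is exactly why global smooth solutions always exist here.

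First I would record the two first integrals. The equation \eqref{nonlin}$_1$ is in divergence form, so $r(r-2M)\,{\rho v \over 1-\eps^2 v^2}$ is constant along the flow; I will prefer, however, to directly integrate \eqref{steady2a} with $p=\rho/\eps^2$, giving $r(r-2M){(1+\eps^2 k^2)\rho v \over 1-\eps^2 v^2} = \text{const}$, i.e. $r(r-2M){\rho v\over 1-\eps^2 v^2}=D_0$ with $D_0$ fixed by the data. For the second integral I apply Lemma~\ref{sol-ODE}: here $l'(\rho) = {p'(\rho)\over \rho+\eps^2 p(\rho)} = {1/\eps^2 \over 2\rho} = {1\over 2\eps^2 \rho}$, so $l(\rho) = {1\over 2\eps^2}\ln\rho$, and the second relation in \eqref{eq:322} becomes
\[
-{1\over 2\eps^2}\ln(1-\eps^2 v^2) + {1\over 2\eps^2}\ln\rho + {1\over 2\eps^2}\ln\Big(1-{2M\over r}\Big) = C_0,
\]
equivalently ${(1-2M/r)\,\rho \over 1-\eps^2 v^2}$ is constant. (This is also the $\kappa\to 0$ limit of \eqref{330}.) Thus the two invariants are $r(r-2M){\rho v\over 1-\eps^2 v^2}$ and $(1-2M/r){\rho\over 1-\eps^2 v^2}$.

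Next I would solve the algebraic system. Dividing the first invariant by the second kills $\rho$ and $1-\eps^2 v^2$ entirely: ${r(r-2M)\over 1-2M/r}\cdot v = r^2 v$, so $r^2 v = r_0^2 v_0$, which gives the stated formula $v(r) = (r_0^2/r^2)v_0$ at once — no implicit function theorem needed, since the velocity equation is already globally solved and $|v(r)|\le |v_0|<1/\eps$ for $r\ge r_0$ (and one checks $v$ stays in range for $r\in(2M,r_0)$ as well, the only subtlety being $r\to 2M$). Substituting back into the second invariant, $(1-2M/r){\rho\over 1-\eps^2 v^2} = (1-2M/r_0){\rho_0\over 1-\eps^2 v_0^2}$, and using $1-\eps^2 v^2 = 1 - \eps^2 r_0^4 v_0^2/r^4$, I solve for $\rho(r)$ and obtain exactly \eqref{extremeso}. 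Smoothness on all of $(2M,+\infty)$ is clear from the explicit formulas (the only vanishing denominator would be at $r=2M$, which is excluded); uniqueness follows because the first integrals determine $(\rho,v)$ uniquely.

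**Main obstacle.** There is essentially no analytic difficulty here — the content is that in the linearly degenerate case the steady ODE system is \emph{exactly integrable} and the characteristic speeds $\pm(1-2M/r)/\eps$ never reach a sonic obstruction (there is no sonic point to cross, in contrast to Theorem~\ref{steady-relativistic}). The only thing requiring a small amount of care is verifying directly that the two first integrals I wrote down are consistent with \eqref{nonlin}$_2$ after multiplying by $r/(r-2M)$ as in the proof of Lemma~\ref{sol-ODE} — i.e. that the source terms on the right of \eqref{nonlin}$_2$ are precisely what is needed — and confirming that the explicit $(\rho,v)$ in \eqref{extremeso} indeed satisfies both equations by substitution. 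That verification is routine algebra, so I would state it and leave the computation to the reader.
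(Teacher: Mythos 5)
Your proposal is correct and follows essentially the same route as the paper: the paper's proof simply sets $k=1/\eps$ in the first integrals of Lemma~\ref{sol-ODE} (equation \eqref{eq:322}), obtaining the two conserved quantities $r(r-2M)\,\rho v/(1-\eps^2v^2)$ and $(1-2M/r)\,\rho/(1-\eps^2v^2)$, and solves them explicitly exactly as you do. (Your parenthetical claim that $|v|$ automatically stays below $1/\eps$ on $(2M,r_0)$ is not literally true for all data, since $v=r_0^2v_0/r^2$ grows toward the horizon, but the paper's own statement glosses over the same point.)
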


\begin{proof} By taking $k = 1/\eps$ in \eqref{eq:322}, we obtain 
$$
\aligned
& r(r-2M){\rho v\over 1 - \eps^2 v^2} =r_0(r_0-2M){\rho_0 v_0 \over 1 - \eps^2 v_0^2},\\
&  \Big( 1 - {2M \over r} \Big){\rho \over 1 - \eps^2 v^2} = {r_0-2 M \over r_0}{\rho_0 \over 1 - \eps^2 v_0^2},
\endaligned
$$
which we can solve explicitly for the density and velocity functions. 
\end{proof}

In view of the classical Riemann solver and the explicit form of the steady state solutions, it is now straighforward to follow all the steps of the general proof and check the following result. Our main observation here is that all of our earlier estimates when the sound speed is strictly less than the light speed are {\sl uniform} when the sound speed approaches the light speed.   

\begin{theorem}[Stiff fluid flows on a Schwarzschild background] 
\label{global-existenceSTIFF}
Consider the Euler model $\Mscr (\eps,{1\over \eps}, m)$ for stiff fluids evolving on a Schwarzschild background, as presented in \eqref{sonetlu}.
Given any initial density ${\rho_0=\rho_0(r) > 0}$ and velocity $v=v_0(r)$ defined for $r>2M$ and satisfying 
(for all $\delta> L> 0$)
$$
TV_{[2M+\delta, +\infty)}  \big( \ln \rho_0 \big) + 
TV_{[2M+\delta, +\infty)}  \Bigg( \ln {1-\eps v_0 \over 1 + \eps v_0} \Bigg) < + \infty,
$$
there exists a weak solution $\rho=\rho(t,r)$ and $v = v(t,r)$ satisfying the prescribed initial data at some given time $t_0$, together with the following bound on every time interval $[t_0, T]$ and for all $\delta, L> 0$ 
$$
\sup_{t \in [t_0,T]} \Bigg( 
TV_{[2M+\delta, +\infty)} \Big( \ln \rho(t,\cdot) \Big)  
+ TV_{[2M+\delta, +\infty)} \Bigg( \ln {1-\eps v (t, \cdot) \over 1 + \eps v(t, \cdot)} \Bigg) \Bigg)< + \infty.  
$$
\end{theorem}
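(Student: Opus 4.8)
The plan is to follow the template already established for the strictly-hyperbolic, genuinely nonlinear model (Theorem~\ref{global-existence} and its proof in Theorem~\ref{global-existence22}), verifying that every ingredient has a counterpart in the stiff, linearly degenerate regime $\Mscr(\eps, 1/\eps, m)$ and that the relevant bounds are uniform as $k \uparrow 1/\eps$. First I would set up the approximate solutions by the random choice scheme of Section~\ref{sec:8}: the stability condition is automatic since $-\lambda = \mu = (1-2M/r)/\eps < 1/\eps$ uniformly, so no a priori sup-norm bound is needed. The classical Riemann solver for the homogeneous system is trivial here because $-\lambda = \mu = (1-2M/r_0)/\eps$ are constant (independent of $U$), so every Riemann solution consists of two contact discontinuities of fixed speeds separated by a single intermediate state $U_M$, whose existence and uniqueness is exactly the stated Riemann lemma; in the $(w,z)$-plane the wave curves are the coordinate lines $\{w = \text{const}\}$ and $\{z = \text{const}\}$, so the diminishing total variation property (Proposition~\ref{interaction}) holds with equality and the interaction potential is entirely absent.

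Next I would treat the generalized Riemann problem. Since the steady states are now \emph{globally defined and explicit} (Lemma with formula \eqref{extremeso}), there is no sonic point, hence no discontinuous equilibria and no critical jump radius — the whole apparatus of Section~\ref{sec:7} (Theorem~\ref{theo-constrc}) collapses to the easy case of smooth steady states defined on all of $(2M,+\infty)$, depending smoothly (indeed, by the explicit formula, real-analytically) on $r_0, \rho_0, v_0$. The approximate generalized Riemann solver \eqref{approximate-sol} and the error estimate of Lemma~\ref{approximate-gen} and Proposition~\ref{discrepancy-} then apply verbatim: wave trajectories are straight lines of slope $\pm (1-2M/r_0)/\eps$, and the $O(1)|U_R(r_0)-U_L(r_0)|\Delta t$ and $O(1)|U_R(r_0)-U_L(r_0)|\Delta t^2$ terms are controlled with constants depending only on the $C^1$-norm of $F$ and $S$ over compact $r$-intervals away from the horizon, which remain bounded as $k \to 1/\eps$.

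Then I would run the convergence argument of Theorem~\ref{global-existence22}, Steps~1a--1c, essentially unchanged. The generalized interaction estimate (Proposition~\ref{general-interaction}) holds — indeed more cheaply, since all steady states are smooth — giving $L(J_2) - L(J_1) \leq C_1(\Delta t + \Delta r)L(J_1)$ with $C_1$ uniform in $k$; Gronwall then yields $L(J_T) \leq C_3 e^{C_2(T-t_0)} L(J_0)$ on a bounded time interval, and the equivalence of $L(J)$ with $TV(\ln\rho_\Delta)$ up to $O(\Delta r)$ gives the stated bound on $TV_{[2M+\delta,+\infty)}(\ln\rho(t,\cdot)) + TV_{[2M+\delta,+\infty)}(\ln\frac{1-\eps v}{1+\eps v})$ after passing to the limit via Helly's theorem; weak-solution consistency follows from Proposition~\ref{discrepancy-} and equidistribution of $(w_i)$. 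One should note, as in Section~\ref{sec:8}, that near the horizon the eigenvalues vanish, no boundary condition is needed, and the bound is obtained on $\Omega_\delta(T)$ and then extended to every compact subset by letting $\delta \to 0$.

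The main obstacle — really the only substantive point to check — is the \emph{uniformity of all constants as $k \uparrow 1/\eps$}. Concretely one must verify: (i) the fluid constant $\chi = 2\eps k/(1+\eps^2 k^2) \to 1$ remains $\leq 1$, so that the shock/wave-curve geometry degenerates smoothly rather than blowing up (here the curves actually straighten out, which is favorable); (ii) the source term $S(U,r)$ in \eqref{Eulerform24}, which contains the factor $M(r-2M)/(\eps^2 r)$, stays Lipschitz in $U$ with constant uniform in $k$ on compact $r$-sets bounded away from $2M$ — this is where one checks that the $1/\eps^2$-type prefactors are harmless since $\eps$ is fixed; and (iii) the steady-state profiles \eqref{extremeso} and their derivatives are bounded uniformly in $k$ on such sets, which is immediate from the explicit formula. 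Once these uniformity checks are in place, the theorem follows by the cited machinery, and no new analytic difficulty arises — the linearly degenerate case being, if anything, simpler than the genuinely nonlinear one already treated.
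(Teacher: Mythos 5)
Your proposal is correct and follows essentially the same route as the paper: the paper's own argument is precisely to rerun the random choice machinery of Sections 6--8, exploiting that the stiff Riemann problem reduces to two contact discontinuities with state-independent speeds, that the equilibria are explicit and globally defined (so no sonic points or discontinuous steady states arise), and that all earlier estimates are uniform as the sound speed approaches the light speed. Your added uniformity checks (i)--(iii) simply make explicit the paper's closing remark that the constants remain uniform in this limit.
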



\subsection*{Non-relativistic Euler equations on a Schwarzschild background}
\label{sec:92} 

In this section, we state the existence theory for the non-relativistic Euler system \eqref{nonrelativ-con}: 
$$
\aligned
&\del_t(r^2\rho) +\del_r(r^2\rho v) = 0,
\\
&\del_t(r^2\rho v) +\del_r\Big(r^2( v^2+k^2) \rho\Big) -2k^2 \rho r+m \rho= 0.
\endaligned
$$
For \eqref{nonrelativ-con}, we have the eigenvalues
$\lambda= v-k$ and $\mu= v+k$
and a pair of Riemann invariants: 
$w =-v-k \ln \rho$ and $z =-v+k \ln \rho$.
We can also give the form of the $1$-shock and the $2$-shock associated with the constant states $U_L$ and $U_R$ respectively: 
 \bel{Shock-non}
\aligned 
 & S_1^{\rightarrow}(U_L) = \Big\{ v- v_L=  -k \Big( \sqrt{\rho \over \rho_L} -\sqrt{\rho_L\over \rho} \Big), \quad
 \rho>\rho_L \Big\},
 \\
& S_2^{\leftarrow}(U_R) =  \Big\{ v-v_R= k \Big( \sqrt{\rho \over \rho_R} -\sqrt{\rho_R\over \rho} \Big), 
\quad
 \rho<\rho_R \Big\}. 
\endaligned 
\ee
A direct calculation gives the the rarefaction curves issuing from the constant states $U_L$ and $U_R$ respectively: 
\bel{Rarefac-non} 
\aligned
& R_1^{\rightarrow}(U_L) =  \Big\{ {v \over v_L} = \Big({\rho \over \rho_L} \Big)^{-k}, \quad
 \rho>\rho_L \Big\},
\qquad
 R_2^{\leftarrow}(U_R) =\Big\{ {v \over v_R} = \Big({\rho \over \rho_R} \Big)^k,
\quad 
 \rho<\rho_R \Big\}. 
\endaligned
\ee
In view of Proposition~\ref{Riemannpro}, we can solve the Riemann problem of the non-relativistic Euler equations with the help of \eqref{Shock-non} and \eqref{Rarefac-non}. Similarly as the case, the generalized Riemann problem requires a global steady state solution. 

Let $\rho=\rho(r; r_0, \rho_0, v_0)$ and $v=v (r; r_0, \rho_0, v_0)$ be a smooth steady state solution with sonic point of Euler equation \eqref{steady1-con} on $\Xi$. Recall the function $P$ in \eqref{function-P}
which determines the regime of the solutions: 
$$
 P(r_0, v_0) : = {3\over 2}+ \ln {m^2 \over 4 k^3 r_0^2 v_0}+ {1 \over 2 k^2}(v_0^2- {2 m \over r_0}). 
$$
Let $r_1^*$ be the unique point such that $P(r_1^*, {k^2\over v_1^*}) = 0$ where $v_1^*= v(r_1^*; r_0, v_0)$, and introduce the regions  
$$
\Lambda_s := 
\begin{cases} 
[r_1^*, +\infty), \quad  & r_1^* \geq {m\over 2 k^2},
\\
(0, r_1^*),          &r_1^* < {m\over 2 k^2}, 
\end{cases} 
\qquad \qquad
\Lambda_d= 
\begin{cases} 
(0, r_1^*], \quad  & r_1^*\geq {m\over 2 k^2},
\\
(r_1^*, +\infty),             & r_1^* <{m\over 2 k^2}.
\end{cases} 
$$
For this non-relativistic model, we can repeat our construction above. 

\begin{theorem} Consider the family of non-relativistic steady flows on Schwarzschild spacetime with the constant sound speed $k> 0$. Given arbitrary density $\rho_0 > 0$, velocity $v_0 \geq 0 $, and radius $r_0 > 0$, the boundary value problem of the steady Euler system \eqref{steady1-con} with 
$\rho(r_0) =\rho_0$ and $v(r_0) =v_0$, 
admits a global weak solution of \eqref{steady1-con} defined all $r \in (0, +\infty)$. 
\end{theorem}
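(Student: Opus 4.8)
The plan is to mirror the construction carried out in Sections~\ref{sec:4}--\ref{sec:7} for the full Euler model, but now for the non-relativistic system \eqref{steady1-con}, where every ingredient is already available in explicit form. First I would recall from Theorem~\ref{steady-non} that, given $r_0>0$, $\rho_0>0$, $v_0\geq 0$, there is a unique \emph{smooth} steady state solution $\rho=\rho(r;r_0,\rho_0,v_0)$, $v=v(r;r_0,\rho_0,v_0)$ to \eqref{steady1-con}. If $P(r_0,v_0)>0$ this solution is already defined on all of $(0,+\infty)$ and there is nothing further to do. The only case requiring work is $P(r_0,v_0)\leq 0$, where by Theorem~\ref{steady-non} the smooth solution is defined merely on $\Xi\varsubsetneqq(0,+\infty)$ and $v(r)\to k$ as $r$ approaches a sonic point.

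In that case I would proceed exactly as in Proposition~\ref{one-point}, Lemma~\ref{steady-shock}, Lemma~\ref{steady-with-shock}, and Theorem~\ref{theo-constrc}, transcribing each to the non-relativistic setting. The jump conditions for a steady state discontinuity of \eqref{steady1-con} follow from the Rankine--Hugoniot relations $[r^2\rho v]=0$ and $[r^2(\rho v^2+k^2\rho)]=0$ at a fixed radius; eliminating gives $v\,v_i=k^2$ together with a density relation, the non-relativistic analogue of \eqref{jump-steady} (indeed one recovers it by letting $\eps\to0$). Next, using the function $P$ in \eqref{function-P}, one shows---just as in Lemma~\ref{steady-with-shock}---that along the smooth steady curve emanating from $(r_0,v_0)$ there is a \emph{unique critical jump radius} $r_1^*\in\Xi$ with $P\big(r_1^*,k^2/v_1^*\big)=0$, $v_1^*=v(r_1^*;r_0,\rho_0,v_0)$, and that $r_1^*$ lies between $r_0$ and the sonic point. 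The monotonicity facts needed here (that $P(r_1,\cdot)$ is monotone in $v_1$ on $(0,k)$ and on $(k,\infty)$, and that $P(\cdot,v_1)$ has a single interior extremum at $r=m/(2k^2)$, the non-relativistic counterpart of $r=\frac{2-\kappa}{1-\kappa}M$) follow by direct differentiation of \eqref{function-P}, exactly as in the relativistic case.

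With $r_1^*$ in hand I would define the global weak solution by the formula analogous to \eqref{Def-sol}: keep the smooth steady solution on $\Lambda_s$, jump at $r_1^*$ to the new branch determined by the steady Rankine--Hugoniot relations with new data $\big(\tfrac{1-\eps^2 k^4/{v_1^*}^2}{1-\eps^2 {v_1^*}^2}\tfrac{{v_1^*}^2}{k^2}\rho_1^*,\ k^2/v_1^*\big)$ at $r_1^*$ (in the non-relativistic limit simply $(\rho_1^* v_1^{*2}/k^2,\ k^2/v_1^*)$), and on $\Lambda_d$ take the steady state solution through a single sonic point, which by the analogue of Proposition~\ref{one-point} is continuous and crosses $v=k$ exactly once. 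One verifies that this is a distributional weak solution by the same integration-by-parts argument as after \eqref{Def-sol}: the interior pieces solve the steady ODE, and the jump at $r_1^*$ satisfies Rankine--Hugoniot, so the boundary terms cancel. Lax's shock inequalities hold by construction (the density ratio across the jump has the correct sign since $v$ crosses the sonic value), using the non-relativistic shock curves \eqref{Shock-non} and eigenvalues $\lambda=v-k$, $\mu=v+k$.

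The main obstacle I expect is the verification of the existence and uniqueness of the critical jump radius $r_1^*$ and, in particular, making sure the new branch issuing from the jump is \emph{not} again obstructed by a sonic point before reaching $r=0$ or $r=+\infty$; this is precisely the content of the delicate sign estimates in the proof of Lemma~\ref{steady-with-shock} (the inequalities chaining $P_\eps(r_1,k^2/v_1)+P_\eps(r_1,v_1)$ from below). These must be redone with the non-relativistic $P$ of \eqref{function-P} in place of $P_\eps$, but since $P$ is the $\eps\to0$ limit of $P_\eps$ and retains the same monotonicity structure, the argument goes through essentially verbatim. Everything else---local existence (Lemma~\ref{localsol}), extension dichotomy (Lemmas~\ref{extension1}--\ref{extension2}), and the non-existence of solutions between the two sonic points---is already established in Section~\ref{sec:4}, so the proof reduces to assembling these pieces in the manner of Theorem~\ref{theo-constrc}.
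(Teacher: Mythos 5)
Your proposal is correct and follows essentially the same route as the paper, which simply observes that the construction of Theorem~\ref{theo-constrc} (critical jump radius where $P(r_1^*,k^2/v_1^*)=0$, steady Rankine--Hugoniot jump $v\,v_i=k^2$ with $\rho=\rho_i v_i^2/k^2$, continuation through a single sonic point on $\Lambda_d$) can be repeated verbatim with $P_\eps$ replaced by the non-relativistic $P$ of \eqref{function-P}. The details you flag as the main obstacle are exactly the sign estimates of Lemma~\ref{steady-with-shock}, which the paper likewise leaves to carry over unchanged.
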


Observe the the solutions are now defined in the whole half-line and that the eigenvalue $\lambda, \mu$ are not vanishing at $r=0$. By considering a domain $r >r_b$ for a given boundary radius $r_b>0$ and imposing the boundary condition $v=0$ at $r=r_b$, it is conceivable that the following statement could be established with our generalized random choice method.

\begin{theorem}[Non-relativistic fluid flows on a Schwarzschild background]
For the non-relativistic Euler system on a Schwarzschild background \eqref{nonrelativ-con} posed on $r> r_b$ and given any initial data $\rho_0=\rho_0(r) > 0$ and $v_0=v_0(r)$ and any boundary data $\rho_b=\rho_b(t)$ at $r=r_b$, satisfying for any $T>t_0$ 
$$
TV_{[r_b, +\infty)}  \big( \ln \rho_0 \big) +  TV_{[r_b, +\infty)} (v_0) + TV_{[t_0, T)} (\ln \rho_b) < + \infty, 
$$
then there exists a weak solution $\rho=\rho(t,r)$ and $v=v(t,r)$ defined for all $t\geq t_0$ and $r>r_b$ such that for all $T>t_0$ 
$$
\sup_{t \in [t_0,T]} \Bigg( 
TV_{[r_b, +\infty)]} \big( \ln \rho(t,\cdot) \big)   
+ TV_{[r_b, +\infty)} \big( v(t,\cdot) \big)  \Bigg) < + \infty. 
$$ 
\end{theorem}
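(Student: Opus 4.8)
The plan is to transpose the generalized random choice method of Section~\ref{sec:8} — precisely the argument that establishes Theorem~\ref{global-existence22} — to the non-relativistic model $\Mscr(0,k,m)$ posed on the half-line $r>r_b$, the only genuinely new ingredient being the treatment of the boundary at $r=r_b$. All the structural facts needed in the interior are already available: Proposition~\ref{Riemannpro} together with the explicit shock and rarefaction curves \eqref{Shock-non}--\eqref{Rarefac-non} solves the classical Riemann problem for the homogeneous system; the (possibly discontinuous) global steady state solutions of \eqref{steady1-con}, obtained by combining Theorem~\ref{steady-non} with the critical-jump construction encoded in the function $P$ of \eqref{function-P} and the regions $\Lambda_s,\Lambda_d$, furnish the equilibrium building blocks; and the approximate generalized Riemann solver of Section~\ref{sec:7} (Lemmas~\ref{discrepancy-t} and \ref{approximate-gen}, Proposition~\ref{discrepancy-}) carries over verbatim, since those constructions only exploited strict hyperbolicity, genuine nonlinearity, and the Lipschitz dependence of equilibria on their data.

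First I would fix a mesh $(t_i,r_j)$ with $r_j=r_b+j\Delta r$ and a ratio $\Delta r/\Delta t$ larger than a uniform bound on $|\lambda|,|\mu|=|v\mp k|$ (valid on the a priori bounded range of the solution once the BV estimate is in hand), approximate the initial data by a piecewise equilibrium profile as in Section~\ref{sec:8}, and run the random choice scheme. In each interior diamond $\Diamond_{i,j}$ the update is exactly as in the proof of Theorem~\ref{global-existence22}: solve the generalized Riemann problem with the approximate solver and invoke the analogue of Proposition~\ref{general-interaction} — whose proof uses only the $g_\pm$-geometry of the shock curves in the Riemann-invariant plane $(w,z)=(-v-k\ln\rho,-v+k\ln\rho)$ and the Lipschitz continuity of steady states — to obtain $\str_2(\Diamond_{i,j})-\str_1(\Diamond_{i,j})\le C(\Delta t+\Delta r)\,\str_1(\Diamond_{i,j})$. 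Summing over a mesh curve $J$ and its immediate successor gives $L(J_2)-L(J_1)\le C(\Delta t+\Delta r)L(J_1)$, hence $L(J_T)\le C e^{C(T-t_0)}\big(L(J_0)+TV_{[t_0,T)}(\ln\rho_b)\big)$ once the boundary contribution is accounted for; Helly's theorem together with the consistency estimate (Proposition~\ref{discrepancy-} applied diamond by diamond, plus equidistribution of the random sequence) then produces a weak solution satisfying the stated total variation bound, exactly as in the proof of Theorem~\ref{global-existence}.

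At $r=r_b$ I would introduce a boundary Riemann solver. Since, unlike the horizon, the eigenvalues do not vanish at $r_b$ and exactly one characteristic enters the domain there (when $v$ is small, $\lambda=v-k<0$ exits through the wall while $\mu=v+k>0$ enters), one boundary condition is needed. Adopting the solid-wall formulation $v|_{r=r_b}=0$, the outgoing $2$-wave issuing from the interior trace $U=(\rho,v)$ must terminate on the line $\{v=0\}$, and the mismatch with the prescribed density $\rho_b(t)$ is absorbed by a reflected wave propagating into the domain; equivalently, for a genuine inflow/outflow boundary one prescribes the incoming Riemann invariant in place of the wall condition, and the analysis is the same. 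The decisive point is the reflection estimate: in the Riemann-invariant coordinates the reflected wave strength is a fixed fraction bounded away from $1$ (governed by $k$) of the incoming strength, so reflections at the wall contribute no net growth to $L(J)$ beyond the additive increment $|\,\ln\rho_b(t_{i+1})-\ln\rho_b(t_i)\,|$ — this is the half-line analogue of the wall reflection analysis of Nishida~\cite{Nishida} and Smoller--Temple~\cite{SmollerTemple}. The well-balanced features (Parts~2 and~3 of Theorem~\ref{global-existence22}) transfer unchanged once $\rho_b$ is identified with the boundary trace of the equilibrium.

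The step I expect to be the main obstacle is precisely this boundary bookkeeping: one must verify that the boundary Riemann solver is well defined and Lipschitz for all relevant interior states, including those whose associated equilibrium develops a sonic point, and that the reflected wave strengths do not accumulate, despite their coupling with the geometric source terms $-2k^2r\rho+m\rho$ (which near $r_b$ behave like $m\rho/r_b^2$ and are therefore merely bounded, not small, so they feed the Gronwall constant rather than being absorbed). A secondary technical point is to make the informal phrase ``boundary data $\rho_b$'' precise in one of the two ways indicated above; in either case the interior estimates of Section~\ref{sec:8} are untouched, and only the boundary diamonds require the new reflection lemma.
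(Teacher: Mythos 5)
You should first be aware that the paper does not actually prove this theorem. It is introduced by the sentence ``it is conceivable that the following statement could be established with our generalized random choice method'' after imposing $v=0$ at $r=r_b$, and no argument is supplied beyond the preceding catalogue of ingredients (the curves \eqref{Shock-non}--\eqref{Rarefac-non}, the function $P$ of \eqref{function-P}, and the global steady states of \eqref{steady1-con}). Your proposal is therefore not an alternative to the paper's proof but a detailed elaboration of exactly the route the authors gesture at: transplant the interior machinery of Sections~7--8 to $\Mscr(0,k,m)$, and supply the one genuinely new ingredient, a boundary Riemann solver with a reflection estimate in the spirit of Nishida \cite{Nishida} and Smoller--Temple \cite{SmollerTemple}. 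Your diagnosis of where the work lies is correct, and the interior part of your plan (the analogues of Propositions~\ref{Riemannpro}, \ref{general-interaction} and \ref{discrepancy-}, the Glimm functional in $\ln\rho$, Helly plus equidistribution) is sound.

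Two points in your boundary treatment need repair before this becomes a proof. First, for the isothermal law $p=k^2\rho$ the wall reflection coefficient measured in $\ln\rho$ equals $1$, not a fraction bounded away from $1$: in the Riemann-invariant plane the $1$- and $2$-shock curves are mirror images under $w\leftrightarrow z$, so an incoming wave of strength $|\Delta\ln\rho|$ reflects as an outgoing wave of the same strength. This is still sufficient (reflections are neutral, so they contribute nothing to the growth of $L(J)$ and the boundary datum enters only additively through $|\ln\rho_b(t_{i+1})-\ln\rho_b(t_i)|$), but you cannot lean on any geometric decay of reflected waves, e.g.\ to absorb errors coming from the source term near $r_b$; those must go into the Gronwall constant as you say. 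Second, the theorem's hypothesis ($TV(\ln\rho_b)$ finite) and the paper's remark ($v=0$ at $r=r_b$) are two mutually exclusive boundary conditions for the single incoming characteristic; you correctly flag this ambiguity, but a complete proof must choose one formulation and restate the theorem consistently with it. A further small gap: unlike the relativistic case, the speeds $v\pm k$ are not a priori bounded, so the CFL ratio requires a sup-norm bound that itself comes from the BV estimate; this forces a standard but non-trivial bootstrap that your parenthetical remark does not fully discharge.
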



\subsection*{Fluid flows in Minkowski spacetime}

When the black hole mass $M \to 0$ vanishes, the Schwarzschild metric approaches Minkowski metric and we find the Euler system \eqref{EulerMI}:, that is, 
\bel{eq-885}
\aligned 
& \del_t\Bigg( {1 + \eps^4 k^2 v^2 \over 1 - \eps^2 v^2}\rho \Bigg) +\del_r \Bigg( {1 + \eps^2 k^2 \over 1 - \eps^2 v^2}\rho  v\Bigg) = 0,
\\
& \del_t \Bigg({1 + \eps^2 k^2  \over 1 - \eps^2 v^2}\rho v \Bigg) + \del_r \Bigg( {v^2+ k^2 \over 1 - \eps^2 v^2} \rho \Bigg) = 0.
\endaligned 
\ee
We recover also the standard existence theory \cite{SmollerTemple} for this model. 

\begin{theorem}[Fluid flows in Minkowski spacetime]
Given any initial data $\rho_0=\rho_0(r) > 0$ and $|v_0|=|v_0(r)| \leq 1/ \eps$ defined for $r>0$ and satisfying 
$$
TV  \big( \ln \rho_0 \big) + TV \Bigg( {1-\eps v_0 \over 1 + \eps v_0}\bigg) < + \infty, 
$$
then there exists a corresponding weak solution $\rho=\rho(t,r)$ and $v = v(t,r)$ to \eqref{eq-885}, which is defined for all $t > t_0$ and all $r>0$ with 
$$
\sup_{t \in [t_0,T]} \Bigg( 
TV \big( \ln \rho(t,\cdot) \big)  
+ TV \Bigg( \ln {1-\eps v (t, \cdot) \over 1 + \eps v(t, \cdot)} \Bigg) \Bigg) < +\infty. 
$$ 
\end{theorem}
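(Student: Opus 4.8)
The plan is to read this statement as the degenerate case $M=0$ of the construction of Sections~\ref{sec:6}--\ref{sec:8}, in which every geometric term disappears and the whole machinery collapses onto the classical random choice scheme. Setting $M=0$ in the family $\Mscr(\eps,k,m)$ of \eqref{Euler-con}, both the source term and the explicit $r$-dependence of the flux vanish identically, so \eqref{eq-885} is a genuine homogeneous system of two conservation laws $\del_t U + \del_r F(U)=0$. By Proposition~\ref{nonlinearpo}, for $0<k<1/\eps$ this system is strictly hyperbolic and genuinely nonlinear, and its wave speeds --- those of Lemma~\ref{Riemann-inva} with the Schwarzschild factor $1-2M/r$ equal to $1$ --- remain distinct and obey the uniform bound $\max(|\lambda|,|\mu|)<1/\eps$. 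The Riemann problem is solved in the large for arbitrary data by Proposition~\ref{Riemannpro}, using the rarefaction and shock curves of Lemmas~\ref{Rarefac} and \ref{avshock}. Observe also that the steady-state equation $\tfrac{d}{dr}F(U)=0$ now forces $U$ to be constant, so the generalized Riemann problem of Section~\ref{sec:7} and the equilibrium-based scheme of Section~\ref{sec:8} reduce to their classical counterparts; in particular there is no horizon and no coordinate point $r=0$ to be handled separately, and no boundary condition is required on $r>0$.

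Next I would run the random choice approximations $U_\Delta$ directly for \eqref{eq-885}: pick mesh lengths $\Delta r,\Delta t$ with the Courant condition $\Delta r/\Delta t>2/\eps$ --- automatically consistent with the uniform wave-speed bound above, so that no preliminary sup-norm estimate is needed --- and a randomly equidistributed sequence $(w_i)$ in $(-1,1)$, and define $U_\Delta$ by solving exact Riemann problems at the mesh points and sampling. The heart of the matter is the diminishing total variation property of Proposition~\ref{interaction}: in the $(w,z)$-plane of the Riemann invariants, shock and rarefaction curves have the base-point-independent shape described by the functions $g_\pm(\beta)$, and, since the wave strength $\str$ is measured in $w-z\sim\ln\rho$, the total wave strength does not increase across any interaction. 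Because \eqref{eq-885} carries no source term, the Glimm functional is simply the total wave strength along a mesh curve and is non-increasing in time; hence the approximations satisfy, uniformly in $\Delta r$,
\[
\sup_{t\ge t_0}\Big(TV\big(\ln\rho_\Delta(t,\cdot)\big)+TV\big(\ln\tfrac{1-\eps v_\Delta(t,\cdot)}{1+\eps v_\Delta(t,\cdot)}\big)\Big)\ \le\ TV\big(\ln\rho_0\big)+TV\big(\ln\tfrac{1-\eps v_0}{1+\eps v_0}\big),
\]
using that the invariants $w,z$ of Lemma~\ref{Riemann-inva} are affine combinations of $\ln\rho$ and $\ln\tfrac{1-\eps v}{1+\eps v}$.

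With this uniform bound in hand, I would invoke Helly's selection theorem to extract a subsequence $U_\Delta\to U$ pointwise (hence in $L^1_{loc}$), and then the standard consistency argument for the random choice method --- exactly as in Step~1a of the proof of Theorem~\ref{global-existence22}, but now with the generalized-Riemann error term $O(1)\,|U_R-U_L|\,\Delta t^2$ absent and the Riemann solver exact --- to conclude, using the equidistribution of $(w_i)$, that $U$ is a weak solution of \eqref{eq-885} attaining the prescribed data at $t_0$, with the stated total variation bound. Finally I would record that every constant appearing above (the wave-speed bound, the interaction estimate, the Courant number) is uniform as $k\uparrow1/\eps$; together with the explicit steady states and Riemann solver of the stiff case, this is precisely what makes Theorem~\ref{global-existenceSTIFF} a corollary of the same scheme, and the identification of the present result with the existence theory of \cite{SmollerTemple} is then immediate.

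The main obstacle, to the extent that there is one, is not conceptual but a matter of bookkeeping: the consistency step requires showing that the sampling residuals $\sum_i\Delta_i^1(U_\Delta,\phi)$ tend to $0$ along a subsequence for an equidistributed sequence $(w_i)$, which is the classical --- and technically delicate --- core of the analysis of Glimm and Liu; in the present homogeneous, strictly hyperbolic and genuinely nonlinear setting it applies verbatim. Everything else is a direct specialization, by sending $M\to0$, of the Schwarzschild results already established in Sections~\ref{sec:6}--\ref{sec:8}.
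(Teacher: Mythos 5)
Your proposal is correct and follows essentially the route the paper intends: for the homogeneous system \eqref{eq-885} the paper simply invokes the Smoller--Temple existence theory, and your argument is exactly that theory reassembled from the paper's own ingredients (the Riemann solver of Proposition~\ref{Riemannpro}, the diminishing total variation property of Proposition~\ref{interaction} measured in $\ln\rho$ via the Riemann invariants, the uniform wave-speed bound $\max(|\lambda|,|\mu|)<1/\eps$ making the Courant condition automatic, Helly's theorem, and the Glimm consistency argument with equidistributed sampling). The only slight inaccuracy is the claim that setting $M=0$ in \eqref{Euler-con} makes the source term and the $r$-dependence vanish identically --- that limit is \eqref{inMinkowski}, which still carries the $r^2$ weights and the geometric source $2rk^2\rho$ --- but since the theorem is stated for \eqref{eq-885}, which is genuinely homogeneous and $r$-independent, your proof applies verbatim to the stated system.
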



\section*{Acknowledgments}

This paper was written in April 2015 when the first author (PLF) enjoyed the hospitality of the Courant Institute of Mathematical Sciences at New York University. The authors were partially supported by the Centre National de la Recherche Scientifique (CNRS) and the Agence Nationale de la Recherche through the grant ANR SIMI-1-003-01. 



\begin{thebibliography}{10}

\bibitem{BLSS} \auth{Barnes A.P., LeFloch P.G.,Schmidt  B.G., and Stewart J.M.,}  
The Glimm scheme for perfect fluids on plane-symmetric Gowdy spacetimes,
Class. Quantum Grav. 21 (2004), 5043--5074. 

\bibitem{BL} \auth{Burtscher A. and LeFloch P.G.,}  
The formation of trapped surfaces in spherically-symmetric Einstein-Euler spacetimes with bounded variation,  
J. Math. Pures Appl. 102 (2014), 1164--1217. 

\bibitem{Christodoulou1} \auth{Christodoulou D.,} 
Bounded variation solutions of the spherically symmetric Einstein-scalar field equations, 
Comm. Pure Appl. Math. 46 (1992), 1131--1220.  

\bibitem{DH} \auth{Dafermos C.M. and Hsiao L.,}
Hyperbolic systems of balance laws with inhomogeneity and dissipation, 
Indiana Univ. Math. J. 31 (1982), 471--491.

\bibitem{Dafermos} \auth{Dafermos C.M.,}
{\em Hyperbolic conservation laws in continuum physics,} 
Grundlehren der Mathematischen Wissenschaften, Vol.~325, Springer Verlag, Berlin, 2010.

\bibitem{Glimm} \auth{Glimm J.,}
Solutions in the large for nonlinear hyperbolic systems of equations,
Comm. Pure Appl. Math. 18 (1965), 697--715.

\bibitem{Glimm-marshal} \auth{Glimm J., Guillermo. M., and Bradley P.,}
A generalized Riemann problem for quasi-one-dimensional gas flows,
Adv. in Appl. Math. 5 (1984), 1--30.

\bibitem{GroahTemple} \auth{Groah J. and Temple B.,} 
{\sl Shock-wave solutions of the Einstein equations: existence and consistency by a locally inertial Glimm scheme}, 
Mem. Am. Math. Soc.~172, No. 813 (2004).
 
\bibitem{GL} \auth{Grubic N. and LeFloch P.G.,} 
Weakly regular Einstein--Euler spacetimes with Gowdy symmetry. The global areal foliation, 
Arch. Rational Mech. Anal. 208 (2013), 391--428.

\bibitem{HL} \auth{Hong J.M. and LeFloch P.G.,}
A version of the Glimm method based on generalized Riemann problems, 
Port. Math. 64 (2010), 635--650. 

\bibitem{Lax57} \auth{Lax P.D.,}
Hyperbolic systems of conservation laws. II, 
Comm. Pure Appl. Math. 10 (1957), 537--566.
 
\bibitem{LeFloch-book} \auth{LeFloch P.G.,} 
{\em Hyperbolic systems of conservation laws}, 
Lectures in Mathematics, ETH Z\"urich, Birkh\"auser, 2002.

\bibitem{PLFbook} \auth{LeFloch P.G.,} 
Weak solutions to the Einstein equations, in preparation. 

\bibitem{LR} \auth{LeFloch P.G. and Rendall A.D.,} 
A global foliation of Einstein-Euler spacetimes with Gowdy-symmetry on $T^3$, 
Arch. Rational Mech. Anal. 201 (2011), 841--870. 

\bibitem{LS2} \auth{LeFloch P.G. and Stewart J.M.,}  
Shock waves and gravitational waves in matter spacetimes with Gowdy symmetry, 
Portugal. Math. 62 (2005), 349--370. 

\bibitem{LS3} \auth{LeFloch P.G. and Stewart J.M.,}  
The characteristic initial value problem for plane--symmetric spacetimes with 
weak regularity, Class. Quantum Grav. 28 (2011), 145019--145035. 

\bibitem{Liu} \auth{Liu T.P.,} 
The deterministic version of the Glimm scheme, 
Comm. Math. Phys. 57 (1977), 135--148.

\bibitem{Liu2} \auth{Liu T.P.,} 
Quasilinear hyperbolic systems, 
Comm. Math. Phys. 68 (1979), 141--172.

\bibitem{Nishida}  \auth{Nishida T.,} 
Global solutions for an initial boundary value problem of a quasilinear hyperbolic system, 
Japan Acad. 44 (1964), 642--646. 

\bibitem{SmollerTemple} \auth{Smoller J. and Temple B.,} 
Global solutions of the relativistic Euler equations,
Comm. Math. Phys. 156 (1993), 67--99. 

\bibitem{Thompson} \auth{Thompson K.,}
The special relativistic shock tube, J. Fluid Mech. 171 (1986), 365--375.

\bibitem{Wald} \auth{Wald R.M.,}
General relativity, Univ. of Chicago Press, 1984.

\bibitem{WE} \auth{Wainwright J. and Ellis G.F.R.,}
{\sl Dynamical systems in cosmology,} Cambridge Univ. Press, 1997.

\end{thebibliography}
\end{document}